\documentclass[preprint,11pt]{imsart}

\RequirePackage[OT1]{fontenc}
\RequirePackage{amsthm,amsmath,natbib}
\usepackage{hyperref}
\usepackage[ngerman,english]{babel}
\usepackage[ansinew]{inputenc}
\usepackage{a4wide}
\usepackage{calc}
\usepackage{times}
\usepackage{fancyhdr}
\usepackage{amssymb,amsmath,amstext,amsthm, ams fonts, amsfonts}
\usepackage{colortbl,color,graphics,graphicx,epsfig}
\usepackage{framed}
\definecolor{gray}{rgb}{0.8,0.8,0.8}
%\usepackage{romannum}
%\graphicspath{{pics/}}
\usepackage{dsfont}
\usepackage{bbm}
\usepackage{nicefrac}
%\usepackage{sublabel}
%\usepackage{eucal}
%\usepackage{ipa}
%\journal{}
\usepackage{comment}
%\arxiv{1301.2074}
\newcommand{\KLEINO}{{\scriptstyle{\mathcal{O}}}}
\startlocaldefs
\newcommand{\KK}{L}

\renewcommand{\P}{\mathbb{P}}
\newcommand{\F}{\mathcal{F}}
\newcommand{\td}{\theta}
\newcommand{\ld}{\mathfrak{l}}

\newcommand{\E}{\mathbb{E}}
\newcommand{\N}{\mathds{N}}
\newcommand{\R}{\mathds{R}}
\newcommand{\aalpha}{\mathfrak{a}}
\newcommand{\1}{\mathbbm{1}}
\DeclareMathAccent{\verywidehat}{\mathord}{largesymbols}{'144}
\newcommand{\var}{\mathbb{V}\hspace*{-0.05cm}\textnormal{a\hspace*{0.02cm}r}}

\newcommand{\Q}{\mathbb{Q}}

\newcommand{\dd}{\Delta^n_i}

\newcommand{\pn}{\stackrel{\P}{\longrightarrow}}
\newcommand{\weak}{\stackrel{w}{\longrightarrow}}

\newcommand{\WU}{\widetilde{U}}
\newcommand{\WV}{\widetilde{V}}

\newcommand{\WY}{\widetilde{Y}}

\DeclareMathOperator{\Var}{Var}

\newcommand{\xiv}{\mbox{\boldmath$\xi$}}

\newcommand{\chiv}{\mbox{\boldmath$\chi$}}

\newtheorem{prop}{Proposition}[section]
\newtheorem{ass}[prop]{Assumption}
\newtheorem{testingproblem}[prop]{Testing problem}
\newtheorem{exa}[prop]{Example}

\newtheorem{lem}[prop]{Lemma}
\newtheorem{theo}[prop]{Theorem}
\newtheorem{rem}[prop]{Remark}
\newtheorem{alg}[prop]{Algorithm}

\allowdisplaybreaks[4]

\begin{document}
\begin{frontmatter}
\title{Nonparametric change-point analysis of volatility}
\runtitle{Nonparametric change-point analysis of volatility}

\begin{aug}
\author{\fnms{Markus} \snm{Bibinger}\thanksref{t2,t3}\ead[label=e1]{bibinger@uni-mannheim.de}},
%\ead[label=e1,mark]{bibinger@math.hu-berlin.de}}
\author{\fnms{Moritz} \snm{Jirak}\thanksref{t2,t3}\ead[label=e2]{jirak@math.hu-berlin.de}},
%\ead[label=e2,mark]{mykland@pascal.uchicago.edu}}
%\and 
\author{\fnms{Mathias} \snm{Vetter}\thanksref{t3}\ead[label=e3]{vetter@math.uni-kiel.de}}%vetterm@mathematik.uni-marburg.de}}
%\ead[label=e2,mark]{mykland@pascal.uchicago.edu}}

%\thankstext{t1}{Corresponding author.}
\thankstext{t2}{Financial support from the Deutsche Forschungsgemeinschaft via SFB 649 {\it \"Okonomisches Risiko} and FOR 1735 {\it Structural Inference in Statistics: Adaptation and Efficiency} is gratefully acknowledged.}
\thankstext{t3}{We thank Marc Hoffmann for helpful remarks on testing hypotheses of smoothness classes. We also thank two anonymous referees for valuable
comments on an earlier version.}
\address{Markus Bibinger,\\ Department of Economics,\\ Mannheim University,\\ L7,3-5, 68131 Mannheim, Germany,\\ \printead{e1}}
\address{Moritz Jirak,\\
Institut f\"ur Mathematik,\\ Humboldt-Universit\"at zu Berlin,\\
Unter den Linden 6,\\ 10099 Berlin,
Germany,\\% bibinger@math.hu-berlin.de
\printead{e2}}
%\address{
%Institut f\"ur Mathematik,\\ Humboldt-Universit\"at zu Berlin,\\
%Unter den Linden 6,\\ 10099 Berlin,
%Germany, %jirak@math.hu-berlin.de
%\printead{e2}}
\address{Mathias Vetter,\\ Mathematisches Seminar,\\ Christian-Albrechts-Universität zu Kiel,\\ Ludewig-Meyn-Straße 4,\\ 24118 Kiel, Germany,\\  %\\ Fachbereich Mathematik und Informatik,\\ Philipps-Universität Marburg,\\ Hans-Meerwein-Straße 6,\\ 35032 Marburg, Germany,\\ %vetterm@mathematik.uni-marburg.de
\printead{e3}}

\runauthor{Bibinger, Jirak, Vetter}

%\affiliation{Humboldt-Universit\"at zu Berlin and Philipps-Universität Marburg}
\affiliation{Mannheim University, Humboldt University of Berlin and Christian-Albrechts University of Kiel}

\end{aug}

\begin{abstract}
This work develops change-point methods for statistics of high-frequency data. The main interest is in the volatility of an It\^{o} semi-martingale, the latter being discretely observed over a fixed time horizon. We construct a minimax-optimal test to discriminate continuous paths from paths comprising volatility jumps. This is embedded into a more general theory to infer the smoothness of volatilities. In a high-frequency framework we prove weak convergence of the test statistic under the hypothesis to an extreme value distribution. Moreover, we develop methods to infer changes in the Hurst parameter of fractional volatility processes. A simulation study demonstrates the practical value in finite-sample applications.
\end{abstract}

\begin{keyword}[class=AMS]
\kwd[Primary ]{62M10}
\kwd[; secondary ]{62G10}
\end{keyword}
%\begin{keyword}[class=]
%\hspace*{-.1cm}\textbf{JEL classification:}~
%\kwd[Primary ]{C12}
%\kwd[; secondary ]{C14}
%\end{keyword}
\begin{keyword}
\kwd{high-frequency data}
\kwd{nonparametric change-point test}
\kwd{minimax-optimal test}
\kwd{stochastic volatility}
\kwd{volatility jumps}
\end{keyword}

\end{frontmatter}

\section{Introduction\label{sec:1}}
Change-point theory classically focuses on detecting one or several structural breaks in the trend of time series. Statistical methods to infer change-points have a long and rich history, dating back to the pioneering work of \cite{page}. Prominent approaches as e.g.\, by \cite{hinkley}, \cite{pettitt}, \cite{andrews} or \cite{baiperron}, among many others, provide statistical tests for the hypothesis of no change-point against the alternative that changes occur. Moreover, they allow for localization of change-points (estimation) and confidence intervals. Change-point methods usually rely on maximum statistics and exploit limit theorems from extreme value theory; see \cite{horvath} for an overview. Less focus has been laid on discriminating jumps from continuous motion in a nonparametric framework. Important exceptions are \cite{mueller}, \cite{stadtmueller}, \cite{spokoiny} and \cite{wuzhao2007} in the framework of nonparametric regression analysis. The latter serves as an important point of orientation for this work. 

Statistics of high-frequency data is concerned with discretizations of continuous-time stochastic processes, most generally It\^o semi-martingales. The continuous part of an It\^{o} semi-martingale is of the form
\begin{align}\label{sm}
X_t = X_0 + \int_0^t a_s \,ds + \int_0^t \sigma_s \,dW_s\,,
\end{align}
defined on a filtered probability space $(\Omega,\mathcal{F},(\mathcal{F}_t),\P)$ with a standard $(\mathcal{F}_t)$-Brownian motion $W$ and %locally bounded
adapted drift and volatility processes $a$ and $\sigma$. One key topic is statistical inference on the volatility under high-frequency asymptotics when the mesh of a discretization on a fixed time horizon tends to zero. There is a vast body of works related to this problem and its economic implications; see e.g.\, \cite{andersenbollerslev98}, \cite{inference} and \cite{jacodrosenbaum}, among many others. Statistics for a discretized continuous-time martingale is closely related to Gaussian calculus as highlighted by \cite{gaussian} what is also at the heart of our analysis. Many contributions evolve around the question if jumps are present in the It\^o semi-martingale modeling the log-price of a financial asset; see \cite{sahaliajacod} for a statistical test.

A more involved problem which is of key interest for economics and finance is to infer the smoothness of the underlying stochastic volatility process and to check whether volatility jumps occur. In particular, inference on volatility jumps allows to investigate the impact of certain news arrivals on financial risk. A first empirical study by \cite{voljumps} indicates that volatility jumps can occur but, due to the lack of statistical methods, has been based on direct observations of the VIX, the most prominent available volatility index. Further contributions consider joint price-volatility jumps. \cite{jacodtodorov} have designed a test to decide from high-frequency observations if contemporaneous jumps of an It\^o semi-martingale and its volatility process have taken place at least once over some fixed time interval. These methods do not generalize to test directly for volatility jumps without restricting to a finite set of large price adjustments first. One main profit from our change-point analysis of high-frequency data is a general test for volatility jumps. Moreover, results on estimation of the time of a volatility jump are provided.

\begin{figure}[t]\begin{framed}
\includegraphics[width=7.0cm]{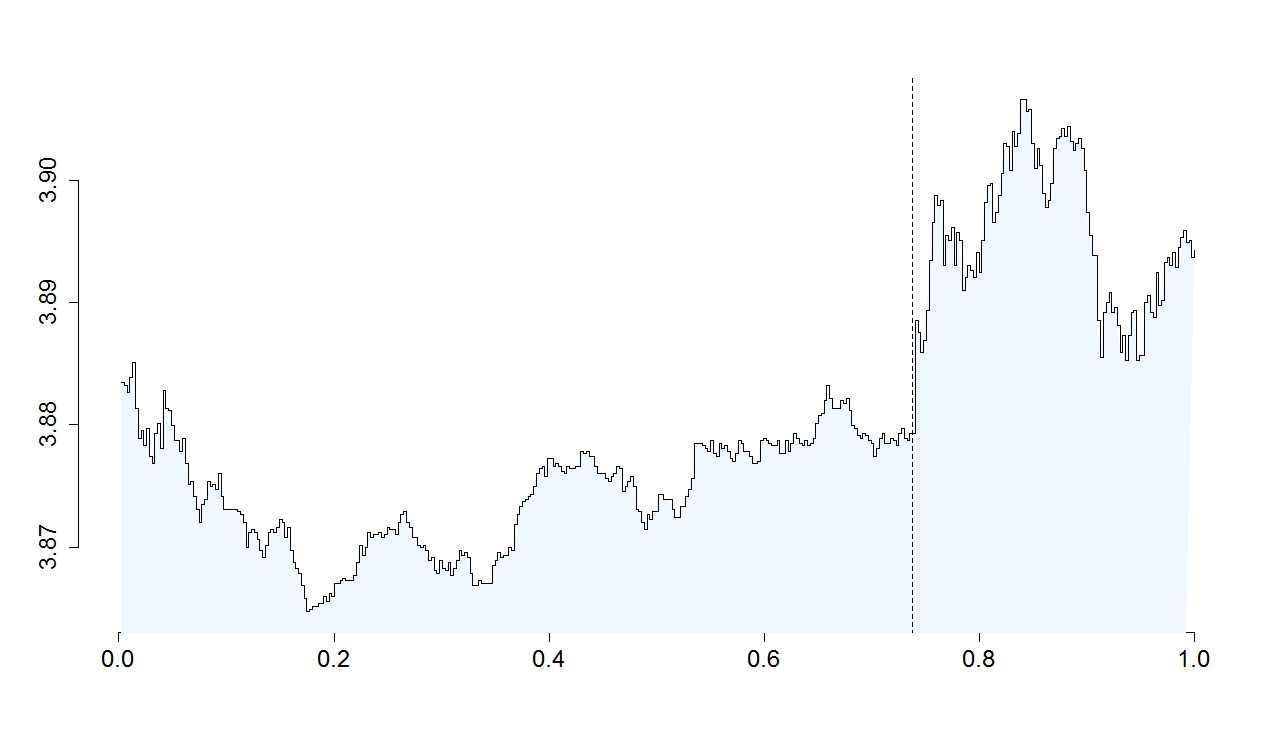}\includegraphics[width=7.0cm]{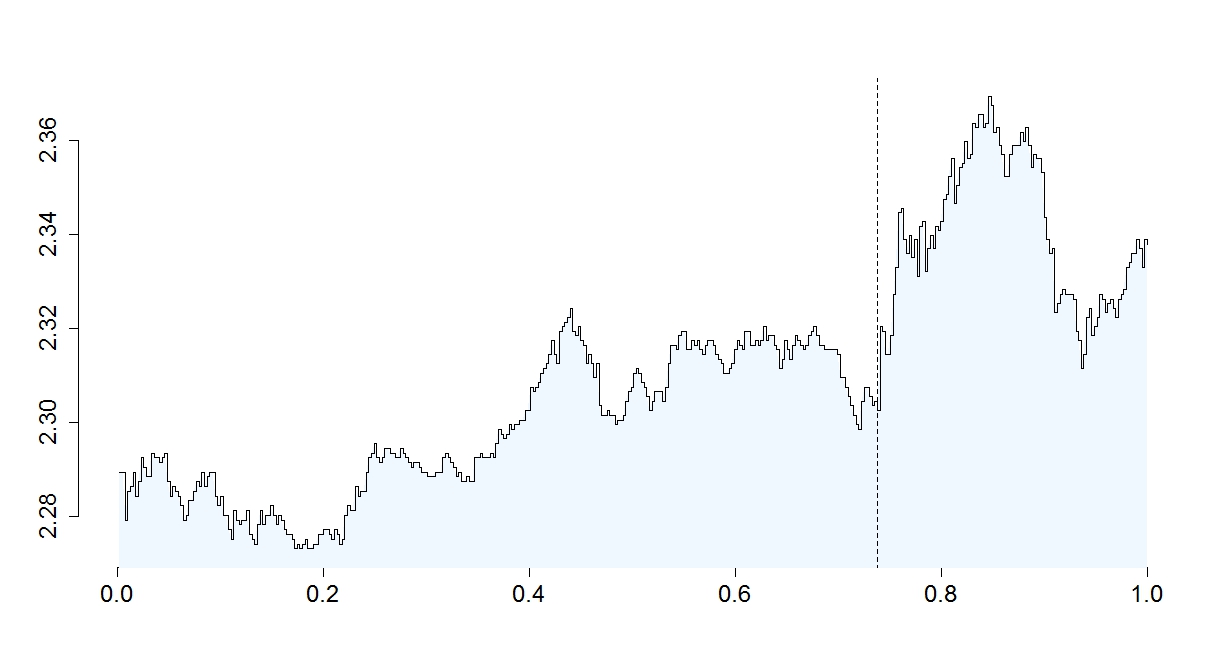}
\includegraphics[width=7.0cm]{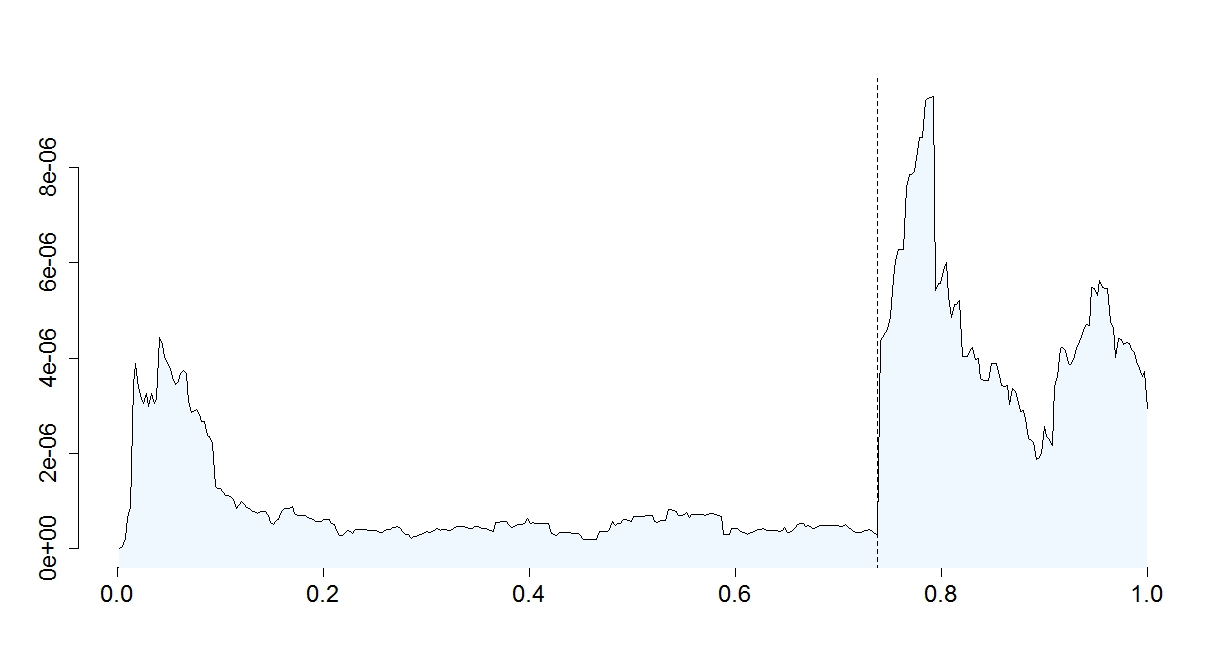}\includegraphics[width=7.0cm]{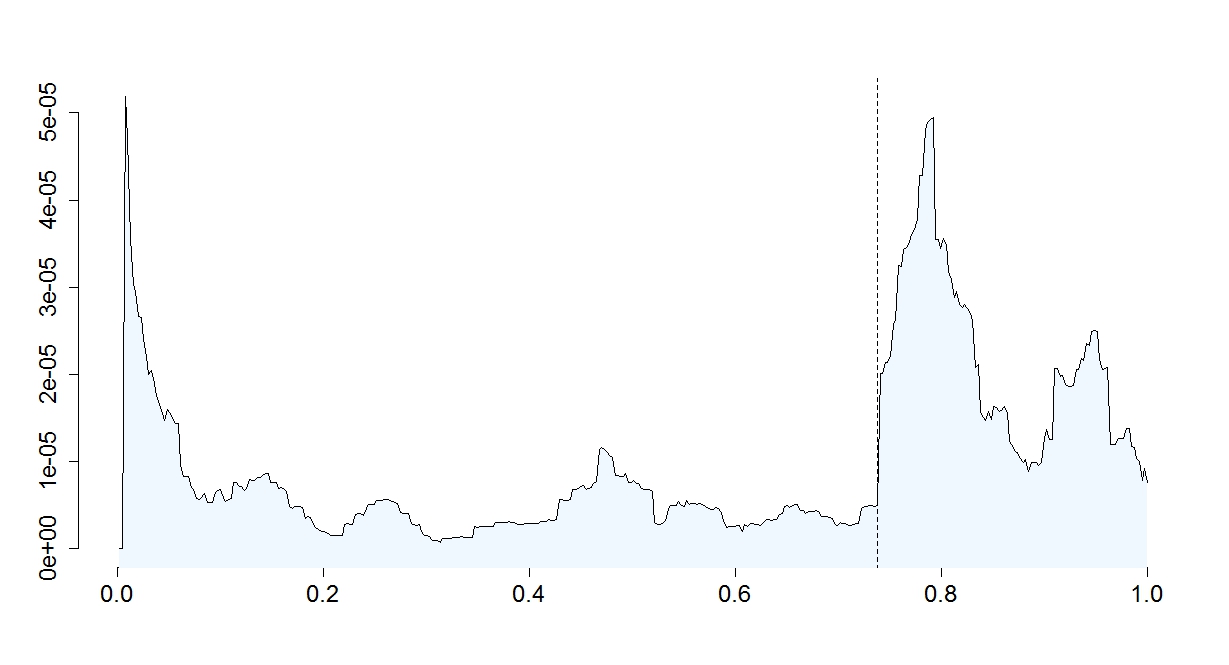}
\end{framed}
\caption{\label{Fig:Intro}Log-price intra-day evolutions (top) and estimated spot squared volatilities (bottom) for MMM (left) and GE (right) on March 18th, 2009.}
\end{figure}

As an example, we illustrate in Figure \ref{Fig:Intro} the evolution of log-prices of two blue-chip stocks, 3M and GE, over the NASDAQ intra-day trading period (6.5\,h rescaled to the unit interval) on March 18th, 2009. We consider one minute returns from executed trades\footnote{reconstructed from the order book using LOBSTER, \url{https://lobster.wiwi.hu-berlin.de/}} to ensure the semi-martingale model is adequate and limit a manipulation by market microstructure frictions. Available tests and criteria do not identify price adjustments so large to be ascribed to jumps such that the test by \cite{jacodtodorov} is not applicable. It seems as if a common source of news drives price dynamics at the end of that day concertedly. The picture becomes much clearer when focusing on the estimated spot squared volatilities in Figure \ref{Fig:Intro}, for which we average at each time point the previous 20 rescaled squared returns. This example suggests that volatility dynamics vary over time. Here, the volatilities of both assets sky-rocket at exactly the same time. This common volatility jump coincides in time with a press release at 02:15 p.m. EST subsequent to a meeting of the Federal Open Market Committee. The time is marked in Figure \ref{Fig:Intro} by the dashed lines. In light of increasing economic slack, the FOMC announced ``to employ all available tools to promote economic recovery and to preserve price stability''\footnote{source: \url{www.federalreserve.gov/monetarypolicy/fomcminutes20090318.htm}}, including a guarantee for an exceptionally low level of the federal funds rate for an extended period and a considerable increase of the size of the Federal Reserve's balance sheet. The mathematical concepts developed in this work provide a novel device for assessing volatility dynamics and jumps.

Change-point methods for volatility in a time-series environment, which is quite different to our high-frequency semi-martingale setting, have been discussed by \cite{spok}.
Quasi-likelihood estimation of a change-point in a diffusion parameter in a high-frequency setting has been considered by \cite{iacusyoshida}, pointing out already one very useful bridge between change-point theory and high-frequency statistics. Our main focus is on testing for the presence of changes in a general setup exploiting localization techniques. Beyond the analysis of possible jumps of volatility there is great interest in the smoothness regularity of volatilities; see e.g.\, \cite{gatheral} for a recent work, not least because of its crucial role for setting up volatility models.

We focus on volatilities which are almost surely locally bounded and strictly positive adapted processes.
For our testing problem we consider classes of squared volatilities
\begin{align}\label{hyposet}\Sigma(\aalpha,\KK_n)=\Big\{(\sigma_t^2)_{t\in[0,1]}\big|\sup_{s,t\in[0,1],|s-t|<\delta}\big|\sigma_t^2-\sigma_s^2\big|\le \KK_n\delta^{\aalpha}~\Big\}\,,\end{align}
for an appropriate sequence $\KK_n$ converging to infinity; cf.\,Assumption \ref{assVola} for a precise statement about the conditions under the null hypothesis. The regularity exponent $\aalpha>0$ is the key parameter to describe the null hypothesis $H_0$. We may now more formally ask the following questions:
\begin{description}
\item[(i)] Is there a jump in the volatility, i.e.\, $\Delta \sigma_{\td}^2=\big(\sigma^2_{\td}-\lim_{s\uparrow\theta}\sigma^2_{s}\big) > 0$ for some $\td \in (0,1)$?
\item[(ii)] Does volatility get rougher in the sense of a regularity exponent $\aalpha' < \aalpha$ on $(\theta,1]$?
%\item[(iii)] Can we discriminate different regularity exponents by a statistical test?
\end{description}

Question (i) poses a \textit{local} problem, whereas question (ii) entails a \textit{local} or \textit{global} problem. In particular, one single jump is a discontinuity point which is not informative about the volatility's smoothness elsewhere. More general than jumps, our theory to address the local problem includes abrupt yet continuous adjustments of the volatility over a short time period. This is an example of a local change of regularity, where $\aalpha$ drops to $0<\aalpha' < \aalpha$ for a short period of time before attaining its original value again. In this framework, the case of a jump corresponds to $\aalpha' = 0$. 

On the other hand, at some time price fluctuations may considerably increase or decrease and this rougher behaviour persists permanently over the remaining time interval. Then, we witness a \textit{global} change. As a key example, our methods devoted to the global problem allow to infer changes in the Hurst parameter in a fractional volatility model. Naturally, as presumably in Figure \ref{Fig:Intro}, both local and global events may occur simultaneously. This work presents methods to test local and global alternatives, both relying on different foundations. A desirable property for a global approach is robustness with respect to local changes which is crucial to distinguish the two problems. This means in particular that a test statistic to decide the question of global changes should not be affected by a fixed number of jumps, which may be interpreted as a part of the hypothesis of no global change. The other way round, an approach to test for local changes may very well be affected by a global change as well, for instance at the very time of change or by an exceptionally large fluctuation. 

We consider minimax-optimal testing and estimation in both situations covering broad classes of volatility processes. For a conceptual introduction to minimax-optimal tests, let us focus first on discriminating smooth volatilities in the sense of \eqref{hyposet} from volatilities with at least one jump. From a statistical perspective, the key question is which sizes of volatility jumps can be detected. For example, it is clear that we cannot detect jumps of arbitrarily small size. Loosely speaking, if we say that `no jump' is our null hypothesis $H_0$ and `there is a jump' is our alternative $H_1$, then we face the problem of \textit{distinguishability between $H_0$ and $H_1$}. The minimum size $b_n$ of a jump $\Delta \sigma_{\td}^2$, such that we are still able to uniformly control the type I and type II errors, is called \textit{detection boundary}. If we are interested to test for the presence of jumps, we are thus led to consider for $\td\in(0,1)$ alternatives of the form
\begin{align}\label{jumpalternatives}
\mathcal{S}^{J}_{\td}(\aalpha,b_n,\KK_n)=\bigg\{(\sigma^2_t)_{t\in[0,1]}\big| (\sigma^2_t - \Delta \sigma^2_t)_{t \in [0,1]} \in \Sigma(\aalpha,\KK_n) \, ; \,  |\Delta \sigma^2_{\td}|\ge b_n\bigg\}\,
\end{align}
with a decreasing sequence $b_n$. We then address the testing problem
\begin{align}\label{hypo}
H_0:(\sigma_t^2(\omega))_{t\in[0,1]}\in\Sigma(\aalpha,\KK_n)\quad vs. \quad H_1: \exists \,\td \in(0,1)\,\text{with}\,(\sigma_t^2(\omega))_{t\in[0,1]}\in\mathcal{S}_{\td}^{J}(\aalpha,b_n,\KK_n)\,.
\end{align}
In this context $\td$ is commonly referred to as a \textit{change-point}. The test alternative means that we demand at least one jump but do not exclude multiple jumps. The dependence on $\omega$ in \eqref{hypo} is natural in the definition of the hypotheses, as different realizations might lead to different paths on $[0,1]$.

For the testing problem \eqref{hypo}, we establish the minimax-optimal rate of convergence under high-frequency asymptotics. We follow the notion of minimax-optimality of statistical tests from the seminal contributions of \cite{Ingster1993}. For tests $\psi$ that map a sample ${\bf X}_n$ to zero or one, where $\psi$ accepts the null hypothesis $H_0$ if $\psi = 0$ and rejects if $\psi = 1$, we consider the maximal type I error $\alpha_{\psi}\bigl(\aalpha\bigr) = \sup_{\sigma^2\in\Sigma(\aalpha,\KK_n)} \P_{\sigma}\bigl(\psi = 1\bigr)$ and the maximal type II error\\ $\beta_{\psi}\bigl(\aalpha,b_n\bigr) = \sup_{\theta\in(0,1)}\sup_{\sigma^2 \in \mathcal{S}^J_{\td}(\aalpha,b_n,\KK_n)}\P_{\sigma}\bigl(\psi = 0\bigr)$ and define the global testing error as
\begin{align}\label{testerror}
\gamma_{\psi}\bigl(\aalpha,b_n \bigr) = \alpha_{\psi}\bigl(\aalpha\bigr) + \beta_{\psi}\bigl(\aalpha,b_n\bigr).
\end{align}
The primary interest now lies on tests that minimize $\gamma_{\psi}\bigl(\aalpha,b_n\bigr)$, given the boundary $b_n$. We aim to find sequences of tests $\psi_n$ and boundaries $b_n$ with the property that
\begin{align*}
\gamma_{\psi_n}\bigl(\aalpha, b_n \bigr) \to 0 \quad \text{as ~ $n \to \infty$.}
\end{align*}
The smaller $b_n>0$, the harder it is for a test to control the global testing error, i.e.\;to distinguish between $H_0$ and $H_1$. It is thus natural to pose the question given $\aalpha$, what is the minimal size of $ b_n>0$ such that
\begin{align}\label{defn_minimax_optimal_test}
\lim_{n\rightarrow\infty} \inf_{\psi} \gamma_{\psi}\bigl(\aalpha,b_n \bigr) = 0
\end{align}
holds? The optimal $b_n^{opt}$ is called \textit{minimax distinguishable boundary}, and a sequence of tests $\psi_n$ that satisfies \eqref{defn_minimax_optimal_test} for all  $b_n \geq b_n^{\text{opt}}$ \textit{minimax-optimal}.
If $\KK$ in \eqref{hyposet} is constant, we prove that $b_n\propto (n/\log (n))^{\frac{-\aalpha}{2\aalpha+1}}$ constitutes the minimax distinguishable boundary for testing \eqref{hypo} and our constructed test is eligible to attain minimax-optimality. If $\KK_n$ is indeed a sequence, the rate only slightly changes; see Section \ref{sec:4.1} for precise results.

For the lower bound proof we simplify the problem by information-theoretic reductions passing to more informative sub-classes of the parameter space. The lower bound established for the sub-class then serves a fortiori as a lower bound in the more general and less informative model. After gradually transforming the problem by showing strong Le Cam equivalences of the considered sub-experiment to more common situations with i.i.d.\,chi-square and Gaussian variables, the lower bound is proved by classical arguments based on the theory in \cite{ingster}.

The paper is organized as follows: Section \ref{sec:2} serves as an illustration for the benefit of cusum-based statistics in the simple, yet important model of a continuous It\^o semimartingale with constant volatility. This illuminates the connection of classical change-point methods and high-frequency statistics. More involved, but also more important in practice is the case where the volatility is both time-varying and random. 
Section \ref{sec:3} is devoted to this nonparametric local problem. 
As the volatility process is latent, which requires estimation based on smoothed squared increments of the semi-martingale, this poses an intricate statistical problem which to the best of the authors' knowledge had not been addressed so far. We establish a consistent test and derive a limit theorem under the hypothesis. The asymptotic analysis utilizes nonparametric change-point theory, stochastic calculus and bounds on the approximation error in the invariance principle. Our test allows to distinguish paths with jumps from continuous paths under remarkably general smoothness assumptions on the hypothesis. In Section \ref{sec:3.2} we discuss the situation in which the underlying It\^o semimartingale might have jumps as well. Section \ref{sec:4.1} provides the theory on minimax-optimality with the lower bound, while Section \ref{sec:4.2} deals with the estimation of the location of the change in volatility under the alternative. Finally, a minimax-optimal nonparametric test for the global problem is established in Section \ref{sec:5}. In practice, it is attractive to interconnect both methods which complement each other. A simulation study that investigates the finite-sample performance of the proposed methods and discusses some practical issues can be found in Section \ref{sec:6}. All proofs are postponed to the Appendix.

\section{Change-points in a parametric volatility model\label{sec:2}}
Arguably, the simplest model of a continuous-time It\^{o} diffusion $X$ is the case of no drift and a constant volatility, such that $X$ is given by
\begin{align}X_t=X_0+\int_0^t\sigma \,dW_s\,,\end{align}
where $W$ denotes a standard Brownian motion. Throughout this work, the underlying process $X$ is recorded at discrete regular times ${i\Delta_n}$ with a mesh $\Delta_n \to 0$. To keep the notation uncluttered, we assume to be on the fixed time interval $[0,1]$ and set $n = \Delta_n^{-1}\in\N$, so that we have observations $X_{i\Delta_n}, i=0,\ldots,n$.

Inference on the squared volatility $\sigma^2$ is usually based on increments $\dd X=X_{i\Delta_n}-X_{(i-1)\Delta_n}$. In case one is interested in changes in the volatility, a natural quantity to discuss is the cusum statistic which reads
\begin{align}\label{cusum}S_{n,m}=\frac{1}{\sqrt{n}}\sum_{i=1}^m\Big(n\big(\dd X\big)^2-\sum_{j=1}^n\big(\Delta_j^n X\big)^2\Big)\,,m\in\{1,\ldots,n\}\,.\end{align}

In order to derive the asymptotics of the cusum statistic, recall the functional (stable) central limit theorem for the realized volatility from observations of a continuous  It\^{o} semi-martingale \eqref{sm} by \cite{jacod1}. Under mild assumptions, we have
\begin{align}\label{cltjacod}\sqrt{n}\Bigg(\sum_{i=1}^{\lfloor nt\rfloor }\big(\dd X\big)^2-\int_0^t\sigma_s^2\,ds\Bigg)\rightarrow \int_0^t\sqrt{2}\sigma_s^2\, dB_s~,t\in[0,1]\,,\end{align}
as $n\rightarrow \infty$ weakly in the Skorokhod space with a standard \begin{figure}[t]
\fbox{
\includegraphics[width=7.5cm]{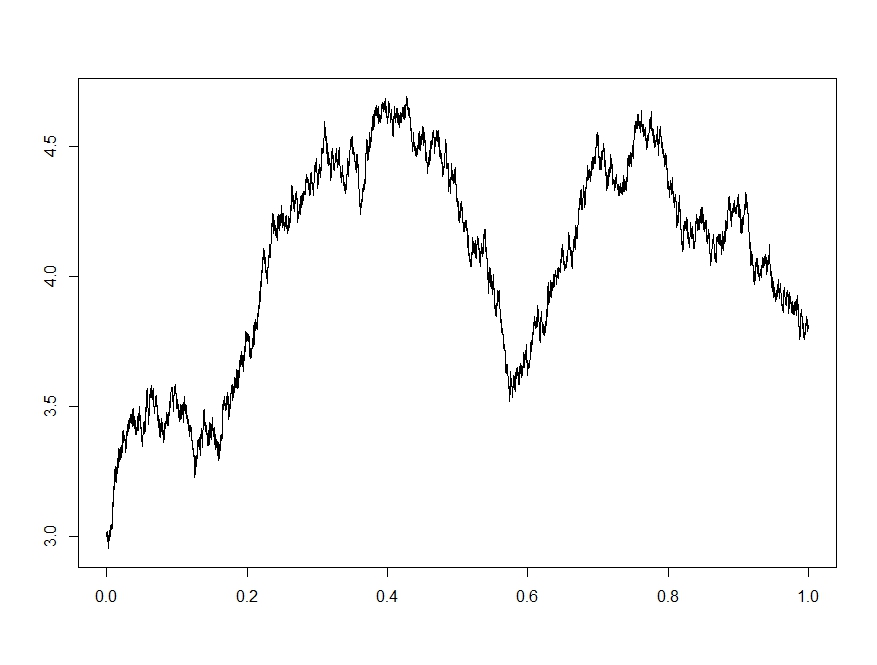}~\includegraphics[width=7.5cm]{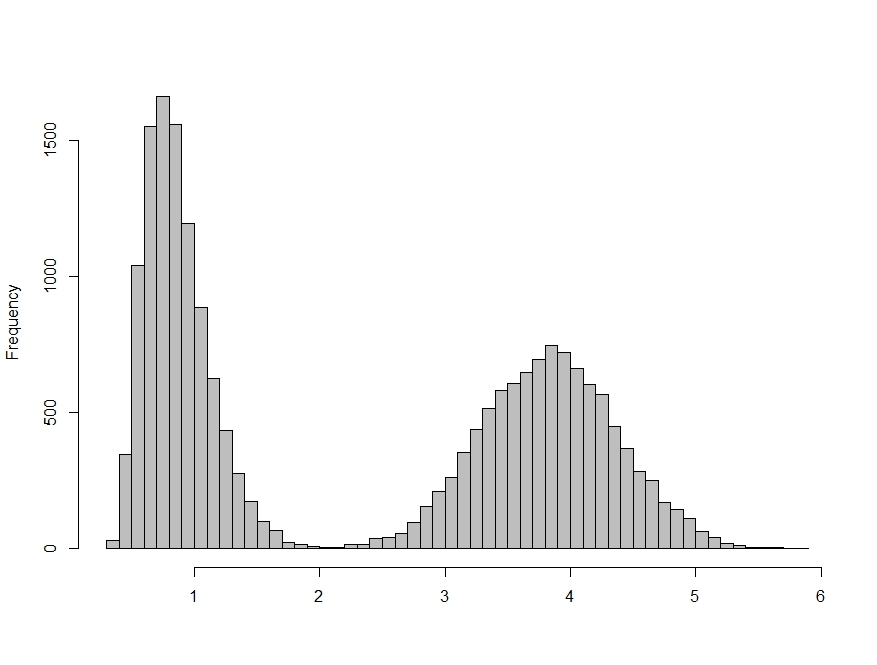}}
\caption{\label{Fig:1}Left: One realized path of $X$ with structural break in volatility at $t=1/2$. Right: Empirical results for the test statistic from 10000 iterations under both alternative and hypothesis.}\end{figure} Brownian motion $B$ independent of $W$. In particular, if $\sigma_s=\sigma$ is constant, this result directly implies
\begin{align}S_{n,{\lfloor nt\rfloor}}\rightarrow \gamma\big(B_t-tB_1\big)\,,\end{align}
with $\gamma^2=\lim_{n\rightarrow\infty} n\,\var\big(\sum_{i=1}^n\big(\dd X\big)^2\big)=2\sigma^4$, which coincides with a standard cusum limit theorem in the vein of \cite{phillips}. The quarticity estimator by \cite{bn}, $\hat\gamma^2=(2n/3)\sum_{i=1}^n\big(\dd X\big)^4$, may be used to obtain a self-normalizing version:
\begin{align}\Bigg(\frac{2n}{3}\sum_{i=1}^n\big(\dd X\big)^4\Bigg)^{-1/2}\,S_{n,\lfloor nt\rfloor}\rightarrow B_t-tB_1\,,\end{align}
where the limit process is a standard Brownian bridge. Testing for jumps (resp.\,structural breaks, change-points) of the volatility is then pursued based on
\begin{align}\label{testks}T_n=\sup_{m=1,\ldots,n}\Big|\big(\hat\gamma^2\big)^{-1/2}\,S_{n,m}\Big|\,,\end{align}
as the test statistic which (under the null that the volatility is constant) tends as $n\rightarrow\infty$ to a Kolmogorov-Smirnov law; see \cite{eval}. Under the alternative $T_n$ diverges almost surely.

Figure \ref{Fig:1} shows an example in which we observe $n=10000$ values of a standard Brownian motion under the hypothesis, while under the alternative the volatility jumps in $t=1/2$ from 1 to 1.1. Out of 10000 Monte Carlo iterations for hypothesis and alternative, only 21 realizations of \eqref{testks} under the hypothesis are larger than the minimum under the alternative. The other way round, in 11 iterations the values under the alternative fall below the maximum of the generated values from the hypothesis. The cusum approach hence clearly allows to separate hypothesis and alternative here, even for the relatively small volatility jump which is not readily identifiable from the path of $X$ in Figure \ref{Fig:1}.

This is illustrated within the histogram in Figure \ref{Fig:1}. The left part stems from realizations under the hypothesis which closely track the asymptotic Kolmogorov-Smirnov law. The right part instead is due to realizations under the alternative. For larger volatility jumps the right part moves further to the right such that the two distributions separate even more clearly.
This test based on \eqref{testks} of Kolmogorov-Smirnov type permits to test the hypothesis of a constant volatility against structural breaks in an efficient way.

Beyond this bridging of classical change-point analysis and structural breaks in a parametric volatility model, our main focus in the sequel is nonparametric: to distinguish volatility jumps from a continuous motion of volatility or to identify changes in the regularity exponent.

\section{Nonparametric change-point test for the volatility against jump alternatives\label{sec:3}}
\subsection{Construction and limit behavior under the hypothesis\label{sec:3.1}}
Suppose we observe a continuous It\^{o} semi-martingale \eqref{sm} at the regular times $i\Delta_n,i=0,\ldots,n \in\N$. In this setting, we want to construct a test for \eqref{hypo}. %We write shortly
With the volatility process being time-varying, it becomes apparent from \eqref{cltjacod} that the test statistic \eqref{testks} is not suitable to test $H_0$ against $H_1$. Our core idea is to utilize local two-sample $t$-tests over asymptotically small time blocks instead. As a first test statistic, we consider
\begin{align} \label{statV}
V_n =  \max_{i=0,\ldots,\lfloor n/k_n \rfloor -2} \vert RV_{n,i}/RV_{n,i+1} - 1 \vert,
\end{align}
where $k_n \to \infty$ is an auxiliary sequence of integers depending on $n$ and
\begin{align} \label{statX}
RV_{n,i} = \frac{n}{k_n} \sum_{j=1}^{k_n} (\Delta_{ik_n + j}^n X)^2\,,i=0,\ldots,\lfloor n/k_n\rfloor -1\,,
\end{align}
is a rescaled local version of realized volatility over blocks of the partition \([ik_n\Delta_n,(i+1)k_n\Delta_n]\). The $RV_{n,i}$ estimate a block-wise constant proxy of the spot volatility $\sigma_{ik_n\Delta_n}$ on the respective blocks.
Asymptotic properties of $RV_{n,i}$ were e.g.\, derived in \cite{alvaetal2012}. A large distance between $RV_{n,i}$ and $RV_{n,i+1}$ suggests the presence of a jump or unsmooth breaks in the volatility close to time $ik_n \Delta_n$. In order to obtain normalized statistics we work with ratios instead of differences. Identifying breaks, in particular jumps, from large deviations between the ratios of two successive local volatility estimators and one, $V_n$ appears to be a reasonable test statistic for our problem.

Our second test statistic is of the same nature as \eqref{statV}, but instead of non-overlap\-ping blocks it takes into account all overlapping blocks of $k_n$ increments:
\begin{align} \label{statV2}
V_n^* = \max_{i=k_n,\ldots,n-k_n} \Bigg\vert  \frac{ \frac{n}{k_n}\sum_{j=i-k_n+1}^{i}(\Delta_{j}^n X)^2}{\frac{n}{k_n}\sum_{j=i+1}^{i+k_n} (\Delta_{j}^n X)^2}-1\Bigg\vert\,.
\end{align}
In comparison to nonparametric change-point approaches like the one by \cite{wuzhao2007}, both statistics \eqref{statV} and \eqref{statV2} are based on ratios rather than differences. This makes sense intuitively, since we are not dealing with the typical additive error structure of time series models. In our setting, we have e.g.\, $n(\Delta_i^n X)^2\approx \sigma_{i\Delta_n}^2\chi_i^2, i=1,\ldots,n$, with i.i.d. $\chi_1^2$-distributed random variables $\chi_i^2$, so that the volatility $\sigma$ plays the role of a multiplicative error. Therefore, by computing ratios first, we basically deal with a maximum of identically distributed variables in the asymptotics. This is of key importance to obtain a distribution free limit under the hypothesis.

In order to discuss the asymptotics of $V_n$ and $V_n^*$ under the null hypothesis we need a couple of additional assumptions, all of which are rather mild and are covered by a variety of stochastic volatility models.

\begin{ass} \label{assVola}
The following assumptions on the processes $a$ and $\sigma$ are in order:
\begin{enumerate}
	\item[(1)] $a$ and $\sigma$ are locally bounded processes.
	\item[(2)] $\sigma$ is almost surely strictly positive, i.e.\, $\inf_{t\in[0,1]}\sigma_t^2\ge \sigma_-^2>0$.
	\item[(3)] On the hypothesis, $\Omega^c\subset\Omega$, %On $\Omega^c$, 
	the modulus of continuity
\[
w_\delta(\sigma)_t = \sup_{s,r \leq t} \{ |\sigma_s - \sigma_r| : |s-r| < \delta \}
\]
is locally bounded in the sense that there exists $\aalpha > 0$ and a sequence of stopping times $T_n \to \infty$ such that $w_\delta(\sigma)_{(T_n\wedge 1)} \leq \KK_n \delta^\aalpha$, for some $\aalpha > 0$ and some (a.s.\ finite) random variables $\KK_n$. 	
\end{enumerate}
\end{ass}
In particular this implies $\sigma_t^2\in\Sigma(\aalpha,L_n)$ for some $n$ almost surely on $\Omega^c$. To consider sequences $L_n$ becomes important when developing lower bounds, see the first paragraph of Section \ref{sec:4.1} for a detailed explanation.
We choose the sequence $k_n\rightarrow\infty$, as $n\rightarrow\infty$, such that the following growth condition holds:
\begin{align}\label{assk}k_n^{-1} \Delta_n^{-\epsilon}+\sqrt{k_n} (k_n \Delta_n)^\aalpha \sqrt{\log(n)}&\to 0\,,\end{align}
for some $\epsilon>0$ and with $\aalpha >0$ from Assumption \ref{assVola} (3).

There are two conditions contained in \eqref{assk}. First, $k_n\rightarrow\infty $ faster than some power of $n$ which is a mild lower bound on the growth of $k_n$ as $n\rightarrow\infty$. This ensures consistency of the estimates \eqref{statX}. The second condition gives an upper bound related to the continuity of $\sigma$. Naturally, the smaller $\aalpha$ (and the less smooth $\sigma$), the smaller we have to choose the size of the blocks over which we estimate $\sigma$.
\begin{theo} \label{thm1}
Set $m_n = \lfloor n/k_n\rfloor $ and $\gamma_{m_n} = [4 \log(m_n) - 2 \log(\log (m_n))]^{1/2}$. If Assumption \ref{assVola} holds and $k_n$ satisfies condition \eqref{assk}, then we have on $\Omega^c$ (under $H_0$)
\begin{align}\label{thm1lt}
\sqrt{\log(m_n)} \big(\big(k_n^{1/2}/\sqrt{2}\big) V_{n} - \gamma_{m_n}\big) \weak V,\\
\label{thm1lt2}\sqrt{\log(m_n)}\big(k_n^{1/2}/\sqrt{2}\big) V^*_{n} - 2\log{(m_n)}-\frac12 \log{\log{(m_n)}}-\log{(3)} \weak V,
\end{align}
where $V$ follows an extreme value distribution with distribution function
\begin{align}\P(V \leq x) = \exp(-\pi^{-1/2} \exp(-x))\,.\label{V}\end{align}

\end{theo}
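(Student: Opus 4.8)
The plan is to reduce the convergence of $V_n$ and $V_n^*$ to an extreme-value limit for a maximum of approximately i.i.d.\ smooth functionals of Gaussian vectors, and then invoke a Gaussian approximation (strong invariance principle) together with classical extreme-value asymptotics. First I would pass from the increments $\Delta^n_i X$ to their leading Gaussian term: on $\Omega^c$, by Assumption~\ref{assVola} and local boundedness, $\Delta^n_i X = \sigma_{(i-1)\Delta_n}(W_{i\Delta_n}-W_{(i-1)\Delta_n}) + R^n_i$ with a remainder whose contribution to $RV_{n,i}$ is negligible after normalization; the drift contributes $\mathrm{O}(\Delta_n)$ per increment and the volatility fluctuation over a block of length $k_n\Delta_n$ is controlled by $w_{k_n\Delta_n}(\sigma)\le \KK_n(k_n\Delta_n)^\aalpha$. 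The second term in the growth condition \eqref{assk}, namely $\sqrt{k_n}(k_n\Delta_n)^\aalpha\sqrt{\log n}\to 0$, is precisely what guarantees that this volatility-bias, once multiplied by the norming $k_n^{1/2}\sqrt{\log m_n}$ appearing in \eqref{thm1lt}--\eqref{thm1lt2}, vanishes uniformly over all $\mathrm{O}(n)$ blocks. After this reduction, $RV_{n,i}$ behaves like $\sigma^2_{ik_n\Delta_n}\cdot \frac1{k_n}\sum_{j}\chi^2_{ik_n+j}$ with i.i.d.\ $\chi^2_1$ summands, so the ratio $RV_{n,i}/RV_{n,i+1}-1$ has the $\sigma^2$ factors cancel and equals, to the relevant order, $\frac1{k_n}\sum_{j=1}^{k_n}(\chi^2_{ik_n+j}-\chi^2_{(i+1)k_n+j})$ — a centered sum of i.i.d.\ variables with variance $2/k_n$, explaining the $k_n^{1/2}/\sqrt2$ scaling and the distribution-free limit.

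Next I would make the Gaussian coupling rigorous: using a KMT-type strong approximation (or the ``Gaussian calculus'' framework referenced via \cite{gaussian}) one constructs on an enriched space i.i.d.\ standard normals coupling $\Delta^n_i X/\sqrt{\Delta_n}$ to $\sigma$-scaled normals with polynomially small error, so that $\big(k_n^{1/2}/\sqrt2\big)(RV_{n,i}/RV_{n,i+1}-1)$ is uniformly within $\mathrm{o}_P(1/\sqrt{\log m_n})$ of $Y_i := \big(1/\sqrt{2k_n}\big)\sum_{j=1}^{k_n}(N^2_{ik_n+j}-N^2_{(i+1)k_n+j})$ for standard normals $N$. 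The first term $k_n^{-1}\Delta_n^{-\epsilon}\to0$ in \eqref{assk} is what controls the approximation error here. It then remains to establish, for the max of the $|Y_i|$ over $i=0,\dots,m_n-2$ (and the analogous overlapping version), the extreme-value convergence
\[
\sqrt{\log m_n}\big(\max_i|Y_i| - \gamma_{m_n}\big)\weak V,\qquad \P(V\le x)=\exp(-\pi^{-1/2}e^{-x}).
\]
For the non-overlapping statistic $V_n$ the $Y_i$ are row-wise independent (disjoint blocks), so this is a triangular-array version of the classical result that the maximum of $m$ i.i.d.\ standardized normalized sums converges to a Gumbel law; one needs a uniform Cramér-type moderate deviation / Berry--Esseen bound for $\P(|Y_i|>u)$ at $u\asymp\sqrt{\log m_n}$, which holds because $Y_i$ is a normalized sum of i.i.d.\ sub-exponential variables. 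The constant $\pi^{-1/2}$ and the centering $\gamma_{m_n}=(4\log m_n-2\log\log m_n)^{1/2}$ arise from the Gaussian tail $\P(|Y_i|>u)\approx \sqrt{2/\pi}\,u^{-1}e^{-u^2/2}$ together with the two-sided maximum; the $\sqrt{\log m_n}$ prefactor is the standard delta-method rescaling turning the $\max$ of a quantity near $\sqrt{2\log m_n}$ into a nondegenerate limit.

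For the overlapping statistic $V_n^*$ the summands are strongly dependent (consecutive ones share $2k_n-1$ increments), so the extreme-value analysis is the genuinely hard part. Here I would follow the Chibisov--O'Reilly / Révész approach to maxima of moving sums of i.i.d.\ variables, or equivalently the extreme-value theory for discretized stationary Gaussian-type processes: after the Gaussian coupling, $Y^*_i$ is close to a normalized increment of a two-sided sum of squared normals, which on the relevant scale behaves like $\big(\xi_{i}-\xi'_{i}\big)$ built from partial sums of i.i.d.\ variables, i.e.\ essentially a standardized Brownian-motion increment functional $\big(W(i)-2W(i-k_n/?)+\dots\big)$-type object. The key technical device is a blocking argument (Berman's method): partition the $n$ indices into long blocks separated by short gaps of length $\gg k_n$, show the contributions of separate long blocks are asymptotically independent (mixing of the underlying i.i.d.\ structure makes far-apart $Y^*_i$ nearly independent), and show the max within a long block is governed by its "effective" number of nearly independent sub-maxima. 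This produces the shifted centering $2\log m_n + \tfrac12\log\log m_n + \log 3$ in \eqref{thm1lt2} — the extra $\log 3$ and the different scaling relative to \eqref{thm1lt} reflecting the correlation structure of the moving-block sums (the factor $3$ coming from the local covariance geometry of differences of adjacent $k_n$-windows). I expect this dependent extreme-value computation, including pinning down the exact constant $\log 3$ via a Pickands-type constant calculation for the limiting covariance, to be the main obstacle; the reduction steps and the Gaussian coupling, while lengthy, are comparatively routine given Assumption~\ref{assVola} and \eqref{assk}.
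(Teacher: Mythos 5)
Your reduction steps are sound and essentially mirror the paper's own strategy: the drift and the volatility modulus are removed exactly as you describe (the second part of \eqref{assk} killing the bias term $\sqrt{k_n}(k_n\Delta_n)^{\aalpha}\sqrt{\log n}$, uniform lower bounds on the block denominators handling the ratios), and the problem is indeed reduced to the maximum of (non-overlapping or moving) block averages of centered i.i.d.\ $\chi^2_1$ variables scaled by $k_n^{1/2}/\sqrt 2$. One small misplacement: the KMT coupling is not needed to pass from $\Delta_i^n X/\sqrt{\Delta_n}$ to Gaussians --- after the reduction the increments are exactly $\sigma$-scaled Brownian increments; in the paper the strong approximation (Koml\'os--Major--Tusn\'ady/Sakhanenko) is applied one level higher, to the partial sums of the centered squared increments, so that the moving-block difference becomes a kernel functional $\int H((s-u)/k_n)\,d\mathbb{B}(u)$ of a single Brownian motion with error $\KLEINO_{\P}(\sqrt{k_n}(\log n)^{-1/2})$.

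The genuine gap is the final extreme-value step for the overlapping statistic, which you yourself flag as the main obstacle and then leave open. Establishing \eqref{thm1lt2} requires not just Gumbel-type behaviour but the exact centering $2\log m_n+\tfrac12\log\log m_n+\log 3$, and your proposal only gestures at Berman-type blocking and a ``Pickands-type constant calculation'' without carrying it out; for strongly overlapping moving sums the asymptotic-independence blocking argument alone does not deliver the constant, and getting $\log 3$ wrong changes the limit law by a shift, i.e.\ the critical values. The paper closes this by citing Lemma 2 of Wu and Zhao (2007) for maxima of kernel-smoothed Brownian increments, where the kernel $H(u)=(\1(0\le u<1)-\1(-1<u<0))/\sqrt 2$ has the relevant constant $D_{H,1}=3$ already computed (and Lemma 1 there for the non-overlapping case, giving $\gamma_{m_n}$); the only new work is verifying that the KMT error and the discretization term $R_n=\KLEINO_{\P}(\sqrt{\log n})$ are $\KLEINO_{\P}(\sqrt{k_n}/\sqrt{\log n})$ under \eqref{assk}, which is exactly the role of the polynomial-growth condition $k_n^{-1}\Delta_n^{-\epsilon}\to 0$. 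Your Cram\'er-moderate-deviation route is plausible for the non-overlapping maximum \eqref{thm1lt}, but as written the proposal does not prove the overlapping limit theorem, so the key constant in \eqref{thm1lt2} remains unjustified.
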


\begin{rem} \rm
It is remarkable that Theorem \ref{thm1} in combination with condition \eqref{assk} allows asymptotically to distinguish between volatility paths with jumps and volatility paths without jumps, where we only require some granted smoothness $\aalpha>0$ in Assumption \ref{assVola} (3). Note that less smooth paths require smaller block lengths $k_n$ by \eqref{assk} which reduces the rate in Theorem \ref{thm1} and the power of the test.
Most importantly, we can cope with standard models for $\sigma$. For a continuous semi-martingale volatility, we have $\aalpha\approx 1/2$. In this case, we take $k_n\propto n^{1/2-\epsilon}$ for $\epsilon>0$ and $\epsilon$ small to preserve the highest possible power. Similarly, for a Lipschitz volatility, i.e.\;$\aalpha=1$, one might choose $k_n\propto n^{2/3-\epsilon}$. Thus, the choice of the block length is close to optimal window sizes for spot volatility estimation.
\qed
\end{rem}
As we show in Theorem \ref{thm_upper1} that $V_n^*$ and $V_n$ diverge under the alternative almost surely, Theorem \ref{thm1} provides a consistent test with asymptotic power 1 by critical values from the limit law under the hypothesis.

\subsection{A test in presence of jumps in the observed process\label{sec:3.2}}
In order to provide a valid approach for various economic applications an important aim is to account for possible jumps in the process $X$ as well. Thereto, consider a general It\^{o} semi-martingale
\begin{align}\label{sm2}
X_t = X_0 + \int_0^t a_s \,ds + \int_0^t \sigma_s \,dW_s&+\int_0^t\int_{\mathds{R}}\kappa(\delta(s,x))(\mu-\nu)(ds,dx)\\
&\notag+\int_0^t\int_{\mathds{R}}\bar\kappa(\delta(s,x))\mu(ds,dx)\,,
\end{align}
where a truncation function $\kappa$, $\bar\kappa(x)=x-\kappa(x)$, separates large from compensated small jumps. The compensating intensity measure $\nu$ of the Poisson random measure $\mu$ admits the form $\nu(ds,dx)=ds\otimes \lambda(dx)$ for a $\sigma$-finite measure $\lambda$. Our notation follows \cite{jacodjumps}.
\begin{ass} \label{assjumps}
Grant Assumption \ref{assVola} for the continuous part of $X$. Suppose $\sup_{\omega,x}|\delta(s,x)|/\gamma(x)$ is locally bounded for some deterministic non-negative function $\gamma$ which satisfies for some $r<2$:
\begin{align}\label{bg}\int_{\mathds{R}}(1\wedge \gamma^r(x))\lambda(dx)<\infty\,.\end{align}
\end{ass}
In condition \eqref{bg}, $r$ is a jump activity index that bounds the path-wise generalized Blumenthal-Getoor index from above. Imposing $r<1$ restricts to jumps of finite variation and $r=0$ to finite jump activity. To develop test statistics which are robust against jumps we employ a truncation principle as introduced for integrated volatility estimation by \cite{mancini} and \cite{jacodjumps}. The analogue of \eqref{statV} with truncated squared increments reads
\begin{align} \label{statVtr}
V_{n,u_n} &=  \max_{i=0,\ldots,\lfloor n/k_n \rfloor -2} \vert TRV_{n,u_n,i}/TRV_{n,u_n,i+1} - 1 \vert,\\
\label{statXtr}
TRV_{n,u_n,i} &= \frac{n}{k_n} \sum_{j=1}^{k_n} (\Delta_{ik_n + j}^n X)^2\1_{\{|\Delta_{ik_n + j}^n X|\le u_n\}}\,,i=0,\ldots,\lfloor n/k_n\rfloor -1.
\end{align}
The truncation sequence $u_n\propto n^{-\tau},\tau\in(0,1/2)$, is used to exclude large squared increments which can be ascribed to jumps.
In the same way we can generalize statistic \eqref{statV2} with overlapping blocks:
\begin{align} \label{statV2tr}
V_{n,u_n}^* =  \max_{i=k_n,\ldots,n-k_n} \Bigg\vert  \frac{ \frac{n}{k_n} \sum_{j=i-k_n+1}^{i} (\Delta_{j}^n X)^2\1_{\{|\Delta_{j}^n X|\le u_n\}}}{\frac{n}{k_n} \sum_{j=i+1}^{i+k_n}(\Delta_{j}^n X)^2\1_{\{|\Delta_{j}^n X|\le u_n\}}}-1\Bigg\vert\,.
\end{align}
We prove below that truncation is an appropriate concept to asymptotically eliminate the influence by jumps, at least under certain restrictions on the jump activity, on $k_n$ and on $\tau$. In particular, under the hypothesis we obtain the same limit behaviour of the test statistics as in Theorem \ref{thm1}.
\begin{prop}\label{propjumps}
Suppose $k_n \propto n^{\beta}$ for $0 < \beta < 1$, such that condition \eqref{assk} is satisfied. Furthermore, grant Assumption \ref{assjumps} for some
\begin{align}\label{conjumps}r<\min{\Big(2\big(2-\tau^{-1}(1-\beta/2)\big), \tau^{-1}\min(1/2,1-\beta)\Big)}
\end{align}
as well. Then, with $m_n = \lfloor n/k_n\rfloor $ and $\gamma_{m_n} = [4 \log(m_n) - 2 \log(\log (m_n))]^{1/2}$ as before, and if either $r=0$ or the jump process is a time-inhomogeneous L\'evy process, we have on $\Omega^c$ (under $H_0$) for the statistics \eqref{statVtr} and \eqref{statV2tr} the weak convergences
\begin{align}
\label{thmj1}\sqrt{\log(m_n)} \big(\big(k_n^{1/2}/\sqrt{2}\big) V_{n,u_n} - \gamma_{m_n}\big) \weak V,\hspace*{-.1cm}\\
\label{thmj2}\sqrt{\log(m_n)}\big(k_n^{1/2}/\sqrt{2}\big) V^*_{n,u_n} \hspace*{-.1cm}-\hspace*{-.05cm} 2\log{(m_n)}\hspace*{-.025cm}-\hspace*{-.05cm}\frac12 \log{\log{(m_n)}}\hspace*{-.025cm}-\hspace*{-.025cm}\log{(3)} \weak V,\hspace*{-.1cm}
\end{align}
where $V$ is distributed according to \eqref{V}.
\end{prop}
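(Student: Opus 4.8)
The plan is to derive Proposition \ref{propjumps} from Theorem \ref{thm1} by a truncation argument. I would decompose $X=X^c+X^d$, where $X^c_t=X_0+\int_0^t a_s\,ds+\int_0^t\sigma_s\,dW_s$ is the continuous part and $X^d$ collects the two jump integrals in \eqref{sm2}. By Assumption \ref{assjumps} the process $X^c$ satisfies Assumption \ref{assVola}, and $k_n\propto n^{\beta}$ meets \eqref{assk}, so Theorem \ref{thm1} applies verbatim to the statistics $V_n^{(c)}$ and $V_n^{*(c)}$ built from the untruncated squared increments $(\Delta_j^n X^c)^2$ of $X^c$; write $RV_{n,i}^{c}$ for the associated non-overlapping, resp.\ overlapping, block realized volatilities. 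A standard localisation — using the local boundedness in Assumption \ref{assVola}(1),(3) and of $\sup_{\omega,x}|\delta(s,x)|/\gamma(x)$ together with the stopping times $T_n\to\infty$ — reduces matters to the case where on $\Omega^c$ the quantities $a$, $\sigma$, $\sigma^{-1}$, $\KK_n$ and $|\delta|/\gamma$ are bounded by a constant. Then $RV_{n,i}^{c}\in[\sigma_-^2/2,C]$ for all $i$ with probability tending to one, and, once the error estimate below is in place, the same interval traps all $TRV_{n,u_n,i}$; combining this with $\bigl|\,\max_i|x_i|-\max_i|y_i|\,\bigr|\le\max_i|x_i-y_i|$ and $|a/b-c/d|\le C(|a-c|+|b-d|)$ for bounded numerators and denominators bounded away from $0$, it suffices to prove that $\max_i\bigl|TRV_{n,u_n,i}-RV_{n,i}^{c}\bigr|=o_{\PP}\bigl(k_n^{-1/2}(\log m_n)^{-1/2}\bigr)$, together with the analogous bound for the overlapping-block versions from \eqref{statV2tr}. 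Multiplying by the normalisation $\sqrt{\log m_n}\,k_n^{1/2}/\sqrt2$ then carries \eqref{thm1lt}, \eqref{thm1lt2} over to \eqref{thmj1}, \eqref{thmj2}, the different centring of the overlapping statistic being irrelevant for this last step.

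For the error estimate I would fix a block and split $TRV_{n,u_n,i}-RV_{n,i}^{c}=\tfrac{n}{k_n}\sum_j\bigl[(\Delta_{ik_n+j}^n X)^2\1_{\{|\Delta_{ik_n+j}^n X|\le u_n\}}-(\Delta_{ik_n+j}^n X^c)^2\bigr]$ according to whether the increment carries a ``large'' jump — one of $\gamma$-size above a fixed multiple of $u_n$ — or not. On a large-jump increment the truncation indicator vanishes with overwhelming probability, since the continuous-plus-small-jump part there is $O_{\PP}(\sqrt{\Delta_n\log n})$, which is $\ll u_n$ because $\tau<1/2$; hence such an increment contributes only $-(\Delta^n X^c)^2\,n/k_n=O_{\PP}(k_n^{-1})$, and since $\log m_n=o(k_n)$ by \eqref{assk} (recall $k_n\propto n^{\beta}$, $m_n\asymp n^{1-\beta}$), a uniformly bounded number of large-jump increments per block already produces an error of the required order. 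That this number is bounded with probability tending to one over all blocks (overlapping or not) is trivial when $r=0$, where there are a.s.\ finitely many jumps, and for a time-inhomogeneous L\'evy jump part it follows from Poisson tail bounds for the number of jumps above the threshold in an interval of length $k_n\Delta_n$, using $\lambda(\{\gamma>u\})\lesssim u^{-r}$ from \eqref{bg}; this dichotomy is the sole reason for the corresponding restriction in the statement.

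On the remaining increments one has $|\Delta^n X|\le u_n$ and $\Delta^n X^d$ is built only from (compensated) small jumps, so, suppressing the block index,
\begin{align*}
&(\Delta^n X)^2\1_{\{|\Delta^n X|\le u_n\}}-(\Delta^n X^c)^2\\
&\qquad=\bigl(2\,\Delta^n X^c\,\Delta^n X^d+(\Delta^n X^d)^2\bigr)\1_{\{|\Delta^n X|\le u_n\}}-(\Delta^n X^c)^2\1_{\{|\Delta^n X|> u_n\}}.
\end{align*}
The last term is negligible at every polynomial order in $\Delta_n$ by Gaussian/Bernstein tail bounds for continuous increments; for the first, the small-jump moment estimates of \cite{mancini} and \cite{jacodjumps} give $\E|\Delta^n X^{d}|^p\lesssim\Delta_n u_n^{p-r}$ for $p>r$ while $\E|\Delta^n X^c|^q\lesssim\Delta_n^{q/2}$, so that (Cauchy--Schwarz for the cross term, $n\Delta_n=1$) the expected absolute per-block contribution is $O(u_n^{1-r/2}+u_n^{2-r})$, i.e.\ a strictly negative power of $n$ under \eqref{conjumps}. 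To upgrade this to a uniform-in-block statement one raises the summands to a high power $p$, controls the block sums through Rosenthal/Burkholder inequalities applied to their conditional moments, and takes a union bound over the $m_n$ (resp.\ $\sim n$) blocks; the exponent of $n$ so produced stays negative precisely because \eqref{conjumps} keeps $r$ below the three critical values at which the lost continuous mass at large-jump increments, the continuous--small-jump cross term, and the $L^p$-leakage of the truncation on ordinary increments would respectively cease to be $o\bigl(k_n^{-1/2}(\log m_n)^{-1/2}\bigr)$. \textbf{The main obstacle} will be exactly this rate-sharp, uniform bookkeeping: the three competing error contributions are governed by the exponents $\beta$, $\tau$ and $r$, and one must balance them simultaneously against the only logarithmically relaxed spot-volatility rate $k_n^{-1/2}$; the L\'evy/finite-activity restriction is what makes the counting of over-threshold jumps inside the short blocks clean enough to close the estimate.
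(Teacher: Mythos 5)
Your skeleton is the same as the paper's: reduce to Theorem \ref{thm1} for the continuous part, then show that the maximum over (overlapping) blocks of the truncation error is $o_{\P}\bigl(k_n^{-1/2}(\log m_n)^{-1/2}\bigr)$ by splitting increments into those carrying a ``large'' jump and the rest, and using truncated-jump moment bounds as in \cite{aitjac10}. The genuine gap is your treatment of the indicator $\1_{\{|\Delta_j^n X|>u_n\}}$ on the increments you classify as small-jump only. You discard $(\Delta_j^n X^c)^2\1_{\{|\Delta_j^n X|>u_n\}}$ as ``negligible at every polynomial order by Gaussian/Bernstein tail bounds for continuous increments'', but for $r>0$ this event is jump-driven: jumps each below your cutoff (a \emph{fixed multiple} $c\,u_n$ of the truncation level) can aggregate above $u_n$. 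With that cutoff, a Bernstein bound for the compensated small-jump integral (jumps bounded by $c\,u_n$, predictable quadratic variation $O(\Delta_n u_n^{2-r})$) only gives a per-increment probability of order $\exp(-K/c)$, a constant, so the union bound over $\sim n$ increments fails and the contribution is not $o_{\P}\bigl(k_n^{-1/2}(\log m_n)^{-1/2}\bigr)$. Closing this requires either a Bennett/Poissonian-regime tail estimate with $c$ chosen small relative to $1-r\tau$, or, as the paper does, a strictly smaller cutoff $u_n^p$ with $1<p<(2r\tau)^{-1}$, the sets $A_j^n=\{|\Delta_j^n X^{'n}|\le u_n/2\}$ and (13.1.10) of \cite{JP}; this step is precisely the origin of the second component $r<\tau^{-1}\min(1/2,1-\beta)$ of \eqref{conjumps} (via \eqref{con1} and $2r\tau<1$), which your sketch never produces. (The converse claim, that an increment containing a $\gamma$-large jump is surely truncated away, is also not automatic since $\gamma$ only dominates $|\delta|$; it is harmless because the truncated square is then at most $u_n^2$, but it needs an argument.)

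Beyond that, the rate bookkeeping that should deliver the first component of \eqref{conjumps} is asserted rather than carried out, and you mislocate the role of the ``$r=0$ or time-inhomogeneous L\'evy'' hypothesis: in the paper it is not only used to count over-threshold jumps, but crucially to make the centered truncated-jump squares $\mathcal{Z}_i=(|\Delta_i J|\wedge u_n)^2-\E[(|\Delta_i J|\wedge u_n)^2]$ independent, so that Kolmogorov's maximal inequality gives the uniform fluctuation bound $O_{\P}(u_n^{2-r/2})$ and hence condition \eqref{con3}, i.e.\ $r<2\bigl(2-\tau^{-1}(1-\beta/2)\bigr)$ (conditions \eqref{con1}--\eqref{con3}, with \eqref{con2} shown to be obsolete, assemble into \eqref{conjumps}). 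Your Rosenthal/union-bound alternative faces the same issue: centering at unconditional means destroys the martingale-difference structure for a general It\^o semimartingale, and centering conditionally re-introduces a compensator term of order $k_n\Delta_n u_n^{2-r}$ per block, i.e.\ a \eqref{con2}-type condition you would then have to dispose of. So the route is repairable (shrink the large-jump cutoff, control the threshold-crossing event properly, and verify that the three error exponents close under \eqref{conjumps}), but as written the proof has a real hole at the threshold-crossing term and does not recover the stated boundary \eqref{conjumps}.
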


\begin{rem}
\rm A simple computation shows that a necessary condition in order for \eqref{conjumps} to hold is $r<1$, but obviously it involves further conditions on the interplay between $r$, $\beta$ and $\tau$. This restriction on the jump activity is stronger than the usual $r<1$ for truncated realized volatility as in \cite{jacodjumps}, and it is due to the maximum in \eqref{statV2tr} compared to linear estimators.
\end{rem}

\begin{rem} \label{trunc}
\rm Arguably, it is most relevant from an applied perspective that the test based on \eqref{statV2tr} copes with finite activity jumps. In this case \eqref{conjumps} reads as $\tau>1/2-\beta/4$, and the only requirement is that $u_n$ is not chosen too large. Beyond the finite activity case, the choice of the tuning parameters becomes more complex and depends on the statistician's interest. Ideally, one would choose $\beta$ large to secure a high power, but it should not become too large as \eqref{conjumps} then is more restrictive and interesting models on the jumps might be ruled out. As an equilibrium choice $\beta \approx 1/2$ is recommended in which case $\tau \approx 1/2$ is optimal, leading to the condition $r < 1$. Choosing $\tau$ close to $1/2$ and $u_n$ small improves the precision of localized truncated realized volatilities \eqref{statXtr}. As an exact choice for $u_n$ one typically picks it sufficiently large to just not interfere with the continuous component of $X_t$. Based on extreme value theory for Gaussian sequences,
\begin{align}u_n=C \sqrt{2\log(n)} n^{-1/2}\end{align}
with constant $C$ is an accurate choice. Once it is guaranteed that $C>\sup_{t\in[0,1]}\sigma^2_t$, almost surely no increments of the continuous part of \eqref{sm2} are truncated. In practice, some suitable upper bound $C$ for the volatility can be obtained from historical data.\qed 
\end{rem}

\section{Asymptotic minimax-optimality results for the local change problem\label{sec:4}}
\subsection{Consistency and minimax-optimal rate of convergence\label{sec:4.1}}
In this section it becomes important that \textit{stochastic} squared volatility processes lie under $H_0$ in $\Sigma(\aalpha,\KK_n)$, defined in \eqref{hyposet}, where we take into account strictly positive increasing sequences $\KK_n$. This is crucial as we cannot describe the random processes as members of a fixed H\"{o}lder class. If $\sigma_t^2$ satisfies
\begin{align*}
\E\bigl[|\sigma_t^2 - \sigma_s^2|^{\mathfrak{b}} \bigr] \leq C \bigl|t - s \bigr|^{\gamma + \mathfrak{b} \aalpha} \quad \text{for some $\mathfrak{b},C > 0$ and $\gamma>1$},
\end{align*}
then the Kolmogorov-\u{C}entsov Theorem implies that \(\lim_{n \to \infty} \P\bigl((\sigma_t^2)_{0 \leq t \leq 1} \in\Sigma(\aalpha,\KK_n) \bigr) = 1\), provided $\KK_n \to \infty$ arbitrarily slowly. Hence, up to a negligible set, $\Sigma(\aalpha,\KK_n)$ contains the \textit{paths} generated by a huge number of popular volatility models when considering $\KK_n \to\infty$. On the other hand, if $\KK$ is fixed, we are in the familiar framework of Hölder classes.

At this stage, we integrate alternatives where the volatility is less smooth than under the hypothesis, but which require not necessarily jumps. The statistical devices developed above may be applied to discriminate $H_0$ from alternatives without jumps where until some change-point $\td \in [0,1)$, the process $(\sigma_{t\wedge \theta}^2)$ behaves as a process in $\Sigma(\aalpha,\KK_n)$.
After $\td$, the regularity exponent drops to some $0 < \aalpha' < \aalpha$. Since $\Sigma\bigl(\aalpha,\KK_n\bigr) \subset \Sigma\bigl(\aalpha',\KK_n\bigr)$, we require functions that `exploit their roughness' in a certain sense. Processes in the alternative set may be smoother on parts of the interval, but we require that they `exploit their roughness' somewhere on $[0,1)$ (close to $\td$ respectively), such that in particular $(\sigma_t^2)_{t\in[0,1]}\not \in \Sigma\bigl(\aalpha,\KK_n\bigr)$. To describe the alternative sets, define
\begin{align*}
\Delta_h^{\aalpha'} f_t = \frac{f_{t + h} - f_{t}}{|h|^{\aalpha'}}, \quad t \in[0,1],h\in [-t,1-t].
\end{align*}
We then express the set of possible alternatives 
\vspace*{.4cm}
\begin{align*}
&\mathcal{S}_{\td}^{\text{R}}\bigl(\aalpha,\aalpha',b_n,\KK_n\bigr) =\Bigl\{\bigl(\sigma^2_{t \wedge \td}\bigr)_{t \in [0,1]} \in \Sigma(\aalpha',\KK_n) \bigl|
\inf_{|h|\leq 2k_n\Delta_n}\Delta_h^{\aalpha'} \sigma^2_{\td} \ge b_n%\hspace*{1.5cm}\\&\hspace*{8.5cm} 
~~\text{or} \sup_{|h| \leq 2k_n\Delta_n}\Delta_h^{\aalpha'} \sigma^2_{\td} \le -b_n \Bigr\},\end{align*}
and consider the testing problem
\begin{align}\label{hypo_change}
& H_0\hspace*{-.05cm}:(\sigma_t^2(\omega))_{t\in[0,1]}\in\Sigma(\aalpha,\KK_n) \; vs.\,%\\&\notag\hspace*{.1cm}
H_1^{\text{R}}\hspace*{-.05cm}:\exists \, \td \in [0,1) \,\vert (\sigma_t^2(\omega))_{t\in[0,1]}\in\mathcal{S}_{\td}^{\text{R}}\bigl(\aalpha,\aalpha',b_n,\KK_n,k_n\bigr).\hspace*{-.15cm}
\end{align}
Since $k_n$ is selected dependent on $L_n$ and $\aalpha$, the dependence of $\mathcal{S}_{\td}^{\text{R}}$ on $k_n$ becomes redundant. Demanding the exceedance period of the difference quotient in $\mathcal{S}_{\td}^{\text{R}}$ to be at least two block lengths ensures that our block-wise comparison in the test statistics \eqref{statV}, \eqref{statV2} is able to detect the roughness, also for $\td=0$.

Let us elaborate on the specific form of the alternative sets. In general, it is impossible to test $\Sigma(\aalpha,\KK_n)$ against $\Sigma(\aalpha',\KK_n)$ for $\aalpha > \aalpha'$, and it is necessary to consider special subsets of $\Sigma(\aalpha',\KK_n)$. Intuitively, it is clear that one needs at least to remove $\Sigma(\aalpha,\KK_n)$ from $\Sigma(\aalpha',\KK_n)$, but this is not sufficient. In fact, one needs to focus on the functions which exploit their roughness in a certain sense; cf.\,\cite{nicklhoffmann} for a detailed discussion in a related context. Geometrically, this means that the functions of interest are those with discontinuities or rough behavior as characterized in $\mathcal{S}_{\td}^{\text{R}}$ (or which fluctuate considerably more, like the ones considered in Section \ref{sec:5}). 
However, as the sample size $n$ grows, we only require the difference quotient to exceed a level $b_n$ that becomes smaller and smaller.

For the testing problems \eqref{hypo} and \eqref{hypo_change}, we first present a negative result, that also serves as a minimax lower bound for the problem depicted in \eqref{defn_minimax_optimal_test}.
\begin{theo}\label{thm_lowerbound}
Assume that $\aalpha> \aalpha' >0$ and $\inf_t \sigma_t^2 \geq \sigma_{-}^2 > 0$. Consider either set of hypotheses $\bigl\{H_0,H_1\bigr\}$ or
$\bigl\{H_0,H_1^{\text{R}}\bigr\}$. Then for
\begin{align}\label{eq_thm_lower_bound_defn}
b_n  \leq \bigl(n/\log(m_n) \bigr)^{-\frac{\aalpha-\aalpha'}{2 \aalpha + 1}} \bigl(\KK_n \bigr)^{-\frac{2\aalpha'+1}{2 \aalpha + 1}}\,\sigma_-^2\,,
\end{align}
with $\aalpha'=0$ for $H_1$, we have in both cases \(\lim_{n \to \infty} \inf_{\psi}\gamma_{\psi}\bigl(\aalpha,b_n \bigr) = 1.\)
\end{theo}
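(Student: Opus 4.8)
\emph{Proof strategy.} The plan is to follow the classical route for nonparametric detection lower bounds going back to \cite{ingster}: first pass to a more informative sub-experiment, then reduce that sub-experiment via Le Cam equivalences to an explicit Gaussian-type model, and finally run a two-point mixture bound there. Since $\Sigma(\aalpha,\KK_n)\subset\Sigma(\aalpha',\KK_n)$ and the pair $\{H_0,H_1\}$ is the case $\aalpha'=0$ (a genuine jump) of $\{H_0,H_1^{\text{R}}\}$, it suffices to treat $\{H_0,H_1^{\text{R}}\}$ and specialize at the end. For the sub-experiment I would take the constant path $\bar\sigma^2\equiv\sigma_-^2\in\Sigma(\aalpha,\KK_n)$ as the single null instance, place a grid $\td_1<\dots<\td_{M_n}$ of candidate change-points in $[0,1)$ with spacing of order $\ell_n$ (a bump width with $\ell_n\ge 2k_n\Delta_n$, to be optimized), and around each $\td_j$ add a nonnegative, compactly supported perturbation $\psi_j$ of $\aalpha'$-cusp type with a local minimum at $\td_j$ (a one-sided step when $\aalpha'=0$), normalized so that its $\aalpha'$-H\"older seminorm is at most $\KK_n$ while $\Delta_h^{\aalpha'}(\bar\sigma^2+\psi_j)_{\td_j}\ge b_n$ for all $|h|\le 2k_n\Delta_n$. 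Then $\bar\sigma^2+\psi_j$ satisfies $\inf_t(\bar\sigma^2+\psi_j)_t\ge\sigma_-^2$, belongs to $\mathcal{S}_{\td_j}^{\text{R}}(\aalpha,\aalpha',b_n,\KK_n)$, and — because of the cusp exponent $\aalpha'<\aalpha$ — lies outside $\Sigma(\aalpha,\KK_n)$. Testing $\{\bar\sigma^2\}$ against $\{\bar\sigma^2+\psi_j:j\le M_n\}$ is a sub-problem of \eqref{hypo_change}, so its minimal testing error is a lower bound for $\inf_\psi\gamma_\psi(\aalpha,b_n)$.

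Next I would reduce this sub-experiment to a tractable one. On each mesh cell the drift contributes only order $\Delta_n$ per increment and is asymptotically negligible, so, by controlling Hellinger distances — freezing $\sigma$ on the grid and bounding the error in the Gaussian approximation of the conditional law of $\Delta_i^n X$ — one shows that observing $(X_{i\Delta_n})_{i\le n}$ under these hypotheses is, in Le Cam's sense, asymptotically equivalent to observing independent $\sigma^2_{i\Delta_n}\chi^2_{1,i}$ with i.i.d.\ $\chi^2_1$ variables. Passing to block-averaged logarithms reduces this further to a Gaussian location model $Y_i=\log\sigma^2_{i\Delta_n}+\eta_i$ with a fixed centred noise law, in which detecting $\psi_j$ amounts to detecting a bump of relative amplitude of order $b_n\ell_n^{\aalpha'}/\sigma_-^2$ carried by order $n\ell_n$ observations. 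A lower bound proved in this clean model transfers back through the chain of equivalences.

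In the reduced model I would run the standard argument. Endow $j\in\{1,\dots,M_n\}$ with the uniform prior, let $L=M_n^{-1}\sum_j L_j$ be the resulting likelihood ratio against $P_0$, and use $\inf_\psi\gamma_\psi\ge 1-\tfrac12(\E_0[L^2]-1)^{1/2}$. Because the $\psi_j$ have essentially disjoint supports, $\E_0[L_jL_k]\approx 1$ for $j\ne k$ and $\E_0[L_j^2]\approx e^{\kappa_n}$, so $\E_0[L^2]-1\le M_n^{-1}(e^{\kappa_n}-1)$ with per-bump chi-square energy $\kappa_n\asymp n\ell_n(b_n\ell_n^{\aalpha'}/\sigma_-^2)^2$. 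Choosing $\ell_n$ at its smallest admissible value, of order $k_n\Delta_n$, so that $M_n\asymp m_n$, and using that $k_n$ is the block length tied to $\aalpha$ and $\KK_n$ through \eqref{assk}, a direct computation shows that the hypothesis $b_n\le(n/\log m_n)^{-\frac{\aalpha-\aalpha'}{2\aalpha+1}}\KK_n^{-\frac{2\aalpha'+1}{2\aalpha+1}}\sigma_-^2$ forces $\kappa_n-\log M_n\to-\infty$, hence $\E_0[L^2]\to 1$ and $\inf_\psi\gamma_\psi\to 1$. Specializing $\aalpha'=0$ with the step perturbation gives the claim for $\{H_0,H_1\}$.

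\emph{Main obstacle.} The delicate part is the Le Cam reduction: establishing asymptotic equivalence with the i.i.d.\ chi-square and Gaussian models \emph{uniformly} over the still-rich subclass of volatility paths, i.e.\ controlling simultaneously the removal of the drift, the error from freezing $\sigma$ over the mesh cells inside the bump regions, and the non-Gaussianity of $\chi^2_1$ at the relevant scale. Once one is inside the Gaussian model, the second-moment computation above is entirely classical.
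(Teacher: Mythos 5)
Your strategy coincides with the paper's: reduce to a more informative sub-experiment in which the change occurs at one of finitely many grid locations around a constant baseline volatility, pass by Le Cam reductions (freezing the volatility, sufficiency of block sums, chi-square-to-Gaussian approximation) to a Gaussian signal-detection model, and conclude with the Ingster-type second-moment bound $\inf_\psi\gamma_\psi\ge 1-\tfrac12(\E_0[L^2]-1)^{1/2}$ under a uniform prior on the bump location, with per-bump energy $\kappa_n\asymp n\ell_n\bigl(b_n\ell_n^{\aalpha'}/\sigma_-^2\bigr)^2$ and $\ell_n\asymp k_n\Delta_n$. The paper treats the jump case first and obtains $H_1^{\text{R}}$ by replacing the step with a gradual increase of size $\KK_n(k_n\Delta_n)^{\aalpha+\aalpha'}$, whereas you argue in the reverse order; that difference is cosmetic.

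There is, however, one genuine gap. Your Gaussian reduction (``block-averaged logarithms'' with a fixed noise law) accumulates an approximation error of order $k_n^{-1}$ in Kullback--Leibler per block over all $m_n=n/k_n$ blocks, i.e.\ a total of order $n/k_n^2$ — exactly the constraint the paper faces when applying the entropy bound for normalized chi-squares. With the rate-optimal choice $k_n\propto(\sqrt{\log m_n}\,n^{\aalpha}/\KK_n)^{2/(2\aalpha+1)}$ this vanishes only for $\aalpha>1/2$, while the theorem asserts the lower bound for every $\aalpha>\aalpha'>0$ (including the semimartingale case $\aalpha\approx 1/2$ and rougher volatilities). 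The paper closes this regime by a further information-theoretic reduction that your sketch lacks: grant the experimenter the extra knowledge that $\td n\in\{k_n,2k_n,\ldots,l_nk_n\}$ with $l_n=n^{\ld}$, $\ld>0$ small, collapse all observations outside these $l_n$ windows into a single average by sufficiency, and rerun the Gaussian approximation and the second-moment computation subject only to $l_n/k_n=\KLEINO(1)$, paying a factor $l_n^{-1}$ instead of $k_n\Delta_n$ in the mixture bound. Without this (or some substitute, e.g.\ carrying out the second-moment computation directly in the chi-square likelihoods), your chain of equivalences breaks for $\aalpha\le 1/2$, so the argument as proposed does not prove the stated theorem in full. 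A smaller point: at the exact boundary \eqref{eq_thm_lower_bound_defn} with $\KK$ bounded, $\kappa_n$ is of the same order as $\log M_n$, so ``$\kappa_n-\log M_n\to-\infty$'' requires the constant freedom in the choice of $k_n$ (the paper's $c_k$ with $C_2<C_1^{-1}$, resp.\ $C_2<\ld\,C_1^{-1}$), which you should make explicit.
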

Theorem \ref{thm_lowerbound} reveals that it is impossible to construct a minimax-optimal test in the sense of \eqref{defn_minimax_optimal_test} if $b_n$ is bounded as in \eqref{eq_thm_lower_bound_defn}. 
Consequently, we deduce that
\begin{align}\label{eq_lower_bound_for_bn_opt}
b_n^{\text{opt}} \geq \bigl(n/\log(m_n) \bigr)^{-\frac{\aalpha-\aalpha'}{2 \aalpha + 1}} \bigl(\KK_n \bigr)^{-\frac{2\aalpha'+1}{2 \aalpha + 1}}\,\sigma_-^2\,.
\end{align}
In Theorem \ref{thm_upper1} we shall establish a corresponding upper bound up to a constant, and thus \eqref{eq_lower_bound_for_bn_opt} already gives the optimal rate for the minimax distinguishable boundary.
Observe that based on $V_n^{*}$ from \eqref{statV2}, we can obtain the following test $\psi^{\diamond}$.
\begin{align}\label{eq_defn_test}
\psi^{\diamond}\bigl((X_{i\Delta_n})_{0\le i\le n}\bigr) = 1,~\text{if}~ V_n^{*} \geq 2 C^{\diamond} \sqrt{2 \log(m_n^{\diamond})/k_n^{\diamond}},
\end{align}
\vspace*{-1.25cm}

\begin{align}\label{eq_kn_and_mn}
\mbox{where $C^{\diamond} > 2$ and}~~k_n^{\diamond} = \bigl(\sqrt{\log(m_n^{\diamond})}n^{\aalpha}/\KK_n \bigr)^{\frac{2}{2 \aalpha +1}}, \quad m_n^{\diamond} = \lfloor n/k_n^{\diamond} \rfloor.~~~~~~~~~~
\end{align}
Alternatively, one might base a test on $V_n$ from \eqref{statV}.

To simplify the discussion, we restrict to positive volatility jumps, i.e.\, $\inf_t \Delta \sigma_t \geq 0$, which appears natural from an economic point of view. We point out that an analogue result can be shown for negative, or positive and negative jumps, which however requires a further technical structural condition (that jumps do not cancel each other) in case of multiple jumps in a vicinity for the alternative set.
\begin{theo}\label{thm_upper1}Consider \eqref{hypo} with $\inf_t \Delta \sigma_t \geq 0$, or \eqref{hypo_change} with $0 < \aalpha' < \aalpha \leq 1$ and $\KK_n = \KLEINO\bigl((n/k_n^{\diamond})^{\aalpha - \aalpha'}\bigr)$.
\begin{align}\label{ass_b_n_diamond}\hspace*{-2cm}\mbox{If   }\hspace*{2cm}
b_n^{\diamond} >  \Big(4 C^{\diamond} \sqrt{2}\sup_{t \in [0,1]} \sigma_t^2+2\Big)\bigl(n/\log(m_n) \bigr)^{-\frac{\aalpha-\aalpha'}{2 \aalpha + 1}} \bigl(\KK_n \bigr)^{-\frac{2\aalpha'+1}{2 \aalpha + 1}}\,,
\end{align}
where $k_n^{\diamond}$, $m_n^{\diamond}$ and $C^{\diamond}$ are as in \eqref{eq_kn_and_mn}, then
$\lim_{n \to \infty} \gamma_{\psi^{\diamond}}\bigl(\aalpha, b_n^{\diamond} \bigr) = 0$.
This implies that
\begin{align*}
b_n^{\text{opt}}\propto  \bigl(n/\log(m_n) \bigr)^{-\frac{\aalpha-\aalpha'}{2 \aalpha + 1}} \bigl(\KK_n \bigr)^{-\frac{2\aalpha'+1}{2 \aalpha + 1}}\,.
\end{align*}
\end{theo}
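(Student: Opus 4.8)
The plan is to show two things: first, that the test $\psi^{\diamond}$ keeps the level, i.e.\ $\alpha_{\psi^{\diamond}}(\aalpha) \to 0$; and second, that it has asymptotic power one under the alternative, i.e.\ $\beta_{\psi^{\diamond}}(\aalpha,\aalpha',b_n^{\diamond})\to 0$. The matching lower bound from Theorem \ref{thm_lowerbound}, together with the upper bound just established, pins down $b_n^{\text{opt}}$ up to a constant, which gives the displayed rate. First I would address the level. By the choice of $k_n^{\diamond}$ in \eqref{eq_kn_and_mn}, one checks directly that \eqref{assk} is satisfied (with $\aalpha$ the regularity from Assumption \ref{assVola}(3)), since $\sqrt{k_n^{\diamond}}(k_n^{\diamond}\Delta_n)^{\aalpha}\sqrt{\log n} = \sqrt{\log(m_n^{\diamond})}\cdot (\text{const})$ up to the logarithmic corrections, which is exactly balanced by the definition — more precisely $k_n^{\diamond}$ is the sequence for which $\sqrt{k_n^{\diamond}}(k_n^{\diamond}\Delta_n)^{\aalpha}\KK_n \asymp \sqrt{\log(m_n^{\diamond})/k_n^{\diamond}}\cdot(\text{polynomial factor})$; I would verify the two pieces of \eqref{assk} separately. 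Then Theorem \ref{thm1} (specifically \eqref{thm1lt2}) applies under $H_0$ on $\Omega^c$, giving $k_n^{\diamond,1/2} V_n^{*}/\sqrt{2} = 2\log(m_n^{\diamond}) + O_P(\sqrt{\log(m_n^{\diamond})})$, hence $V_n^{*} = (1+o_P(1))\cdot 2\sqrt{2}\sqrt{\log(m_n^{\diamond})/k_n^{\diamond}}$. Since the critical value in \eqref{eq_defn_test} is $2C^{\diamond}\sqrt{2}\sqrt{\log(m_n^{\diamond})/k_n^{\diamond}}$ with $C^{\diamond}>2$, the statistic stays asymptotically below the threshold with probability tending to one, uniformly over $\Sigma(\aalpha,\KK_n)$ — here the uniformity is inherited from the uniform control built into Assumption \ref{assVola}(3) and the proof of Theorem \ref{thm1}. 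This yields $\alpha_{\psi^{\diamond}}(\aalpha)\to 0$.

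For the power, I would produce a lower bound on $V_n^{*}$ under any $\sigma^2 \in \mathcal{S}_{\td}^{\text{R}}(\aalpha,\aalpha',b_n^{\diamond},\KK_n)$ (the jump case \eqref{hypo} being the specialization $\aalpha'=0$). The key is that near the change-point $\td$ there is an index $i^{*}$ (with $i^{*}-k_n^{\diamond}+1,\dots,i^{*}$ on one side of $\td$ and $i^{*}+1,\dots,i^{*}+k_n^{\diamond}$ straddling or just past $\td$) such that the two adjacent block averages of $(\Delta_j^n X)^2$ differ by a deterministic amount of order $b_n^{\diamond}(k_n^{\diamond}\Delta_n)^{\aalpha'}$ in expectation, because the difference quotient $\Delta_h^{\aalpha'}\sigma^2_{\td}$ exceeds $b_n^{\diamond}$ in absolute value over a window of length $2k_n^{\diamond}\Delta_n$ — this is exactly why the exceedance period in $\mathcal{S}_{\td}^{\text{R}}$ was required to be at least two block lengths. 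Concretely: $\frac{n}{k_n^{\diamond}}\sum_{j}(\Delta_j^n X)^2 \approx \frac{1}{k_n^{\diamond}}\sum_j \sigma^2_{(j-1)\Delta_n}\chi_j^2$, and averaging the spot variances over the two blocks produces values that differ by at least (a constant times) $b_n^{\diamond}(k_n^{\diamond}\Delta_n)^{\aalpha'}$, so the ratio in \eqref{statV2} deviates from $1$ by at least (a constant times) $b_n^{\diamond}(k_n^{\diamond}\Delta_n)^{\aalpha'}/\sigma_-^2$ up to stochastic fluctuations of order $\sqrt{\log(m_n^{\diamond})/k_n^{\diamond}}$ (the law-of-large-numbers error for a block of $\chi^2$'s, which is where the $\sup_t\sigma_t^2$ enters) plus the H\"older approximation error of order $\KK_n(k_n^{\diamond}\Delta_n)^{\aalpha}$ from treating the spot volatility as block-wise constant. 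Plugging in $k_n^{\diamond}=(\sqrt{\log(m_n^{\diamond})}\,n^{\aalpha}/\KK_n)^{2/(2\aalpha+1)}$, both $b_n^{\diamond}(k_n^{\diamond}\Delta_n)^{\aalpha'}$ and the critical value $2C^{\diamond}\sqrt{2}\sqrt{\log(m_n^{\diamond})/k_n^{\diamond}}$ and the various error terms are all of the \emph{same} polynomial order $(n/\log(m_n))^{-(\aalpha-\aalpha')/(2\aalpha+1)}(\KK_n)^{-(2\aalpha'+1)/(2\aalpha+1)}$; the constant $4C^{\diamond}\sqrt{2}\sup_t\sigma_t^2+2$ in \eqref{ass_b_n_diamond} is precisely chosen so that the signal term dominates the sum of the threshold and all errors, giving $\P_{\sigma}(\psi^{\diamond}=1)\to 1$. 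The condition $\KK_n=\KLEINO((n/k_n^{\diamond})^{\aalpha-\aalpha'})$ guarantees that the H\"older error $\KK_n(k_n^{\diamond}\Delta_n)^{\aalpha}$ is of smaller order than the signal. Taking the supremum over $\td$ and over the alternative class gives $\beta_{\psi^{\diamond}}\to 0$; combined with the level bound, $\gamma_{\psi^{\diamond}}(\aalpha,b_n^{\diamond})\to 0$.

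The main obstacle, I expect, is the uniformity of the power bound over \emph{all} change-point locations $\td\in[0,1)$ and over the whole alternative class simultaneously, including the edge case $\td=0$ and the case where the spot volatility may be smoother than $\aalpha'$ away from $\td$: one must ensure that the single ``good'' pair of adjacent blocks near $\td$ always exists regardless of how $\td$ sits relative to the block grid, and that the stochastic fluctuation bound for the relevant block ratio holds uniformly — this requires a maximal inequality / exponential bound for block sums of $\chi^2$ variables of the type already underlying the proof of Theorem \ref{thm1}, applied now only to the two designated blocks but uniformly in their position. A secondary technicality is passing from the heuristic $n(\Delta_i^n X)^2\approx \sigma^2_{(i-1)\Delta_n}\chi_i^2$ to rigorous bounds including the drift $a$ and the discretization of $\sigma$, which is handled by standard localization (stopping at $T_n\wedge 1$) and the It\^o-semimartingale estimates used for Theorem \ref{thm1}. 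Finally, one combines \eqref{eq_lower_bound_for_bn_opt} from Theorem \ref{thm_lowerbound} with the upper bound \eqref{ass_b_n_diamond} to conclude $b_n^{\text{opt}}\propto (n/\log(m_n))^{-(\aalpha-\aalpha')/(2\aalpha+1)}(\KK_n)^{-(2\aalpha'+1)/(2\aalpha+1)}$, which is the asserted minimax distinguishable boundary.
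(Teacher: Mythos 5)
Your power argument (signal of size $b_n^{\diamond}(k_n^{\diamond}\Delta_n)^{\aalpha'}$, resp.\ $b_n^{\diamond}-2\KK_n(k_n^{\diamond}\Delta_n)^{\aalpha}$ in the jump case, dominating the threshold plus stochastic fluctuations of order $\sqrt{\log(m_n^{\diamond})/k_n^{\diamond}}$, with the two-block exceedance condition in $\mathcal{S}_{\td}^{\text{R}}$ doing exactly the job you describe) is essentially the paper's proof. The genuine gap is in your treatment of the type I error. You claim that for $k_n=k_n^{\diamond}$ from \eqref{eq_kn_and_mn} condition \eqref{assk} is satisfied and that the weak limit \eqref{thm1lt2} of Theorem \ref{thm1} can be applied under $H_0$. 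It cannot: by the very definition of $k_n^{\diamond}$ one has
\begin{align*}
\KK_n\bigl(k_n^{\diamond}\Delta_n\bigr)^{\aalpha}=\sqrt{\log(m_n^{\diamond})/k_n^{\diamond}}\,,
\end{align*}
so the H\"older bias is of exactly the same order as the statistic $V_n^*$ itself; multiplied by the normalization $\sqrt{k_n^{\diamond}\log(n)}$ of Theorem \ref{thm1} it equals $\sqrt{\log(m_n^{\diamond})\log(n)}\to\infty$, i.e.\ the second part of \eqref{assk} (with the class constant $\KK_n$, even a fixed constant $\KK$) fails for the rate-optimal block length. The extreme-value limit law is therefore not available for $\psi^{\diamond}$, and an argument for $\alpha_{\psi^{\diamond}}(\aalpha)\to 0$ cannot go through the distributional convergence \eqref{thm1lt2}.

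The paper circumvents this precisely because of the factor $C^{\diamond}>2$ in \eqref{eq_defn_test}: under $H_0$ it bounds $V_n^*$ by the maximum of the \emph{centered} block statistics $\overline{V}_{n,i}$ plus the deterministic bias $\KK_n(k_n^{\diamond}\Delta_n)^{\aalpha}$, uses $2C^{\diamond}\sqrt{2\log(m_n^{\diamond})/k_n^{\diamond}}\ge 2\sqrt{2\log(m_n^{\diamond})/k_n^{\diamond}}+\KK_n(k_n^{\diamond}\Delta_n)^{\aalpha}$, and then only needs a one-sided, first-order probability bound on the centered maximum (an adaptation of the arguments behind Theorem \ref{thm1} and Proposition \ref{prop4}), not the limit distribution. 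Your proposal in fact records the right balance ($\sqrt{k_n^{\diamond}}(k_n^{\diamond}\Delta_n)^{\aalpha}\KK_n\asymp\sqrt{\log(m_n^{\diamond})}$) but then draws the wrong conclusion from it; replacing the appeal to \eqref{thm1lt2} by this bias-plus-centered-maximum bound (which is also where the inflation constant in \eqref{ass_b_n_diamond} and in the critical value comes from) repairs the argument and brings it into line with the paper. The final step, combining the upper bound with Theorem \ref{thm_lowerbound} to get $b_n^{\text{opt}}$, is fine.
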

\begin{rem} \rm
If $\KK$ defined in \eqref{hyposet} is a deterministic constant, we get the minimax distinguishable boundary $b_n\propto (n/\log (n))^{\frac{-(\aalpha-\aalpha')}{2\aalpha+1}}$. \qed
\end{rem}
\begin{rem} \rm
Volatility paths lying in $\mathcal{S}_{\td}^{\text{R}}$ having (locally) a rough -- but still continuous -- increase or decrease cannot be distinguished from volatility paths with jumps at or below the boundary $b_n$ stated in \eqref{eq_thm_lower_bound_defn}. Both alternatives $H_1$ and $H_1^{\text{R}}$ are in that sense intimately connected. Formally, we cannot include $\mathcal{S}_{\td}^{J}$ in $\mathcal{S}_{\td}^{\text{R}}$ setting $\aalpha'=0$, since for $\mathcal{S}_{\td}^{\text{R}}$ we demand the roughness to persist over an (asymptotically small) interval. 

Let us point out that in this testing problem the union of hypothesis and alternative can not cover the set of all possible volatility paths. One situation of interest in which $\sigma_t^2\notin \{\mathcal{S}_{\td}^{\text{R}}\cup \mathcal{S}_{\td}^{J}\cup \Sigma(\aalpha,L_n)\}$ is the case of fractional processes with Hurst parameter $\aalpha'<\aalpha$. This different situation is addressed in Section \ref{sec:5}. \qed
\end{rem}
\subsection{Estimating the change-point\label{sec:4.2}}
Once one has opted to reject the null hypothesis of no change, the actual locations of jumps become of interest for further inference. This location problem has been extensively discussed in the literature in different frameworks; see for instance \cite{horvath} and \cite{mueller}.
\subsubsection{One change-point alternative}
First, we restrict ourselves to the `one change-point alternative' involving a jump in the volatility, i.e.\;we specify the alternative hypothesis $H_1^*$ as
\begin{align*}
H_1^*: \quad \bigl|\sigma_{\td}^2 - \sigma_{\td-}^2\bigr| =: \delta_n >0 \quad \text{for a unique $\td \in (0,1)$.}
\end{align*}
The jump size $\delta_n$ may be fixed or we consider a decreasing sequence $(\delta_n)$.
To assess the possible time of change, we use slightly modified versions of the building blocks of the test statistic $V_n^*$ from \eqref{statV2}, defined as
\begin{align*}
V_{n,i}^{\diamond} = \frac{1}{\sqrt{k_n}} \biggl|\sum_{j = i- k_n + 1}^{i}n(\Delta_j^n X)^2 - \sum_{j = i + 1}^{i + k_n} n(\Delta_j^n X)^2 \biggr|\,,
\end{align*}
for $i=k_n,\ldots, n - k_n$, and $V_{n,i}^{\diamond} =0$ else. In contrast to the construction of $V_n^*$, we may employ a simpler unweighted version. One can also consider the rescaled versions as in $V_n^*$, and we conjecture that the following theoretical results of these estimators coincide. Here, we switch from ratios to differences which simplifies the analysis a bit and is enough to obtain the following properties. The possible time of change is then estimated via
\begin{align}\label{defn_hat_td}
n\widehat{\td}_n = \operatorname{argmax}_{i=k_n,\ldots, n-k_n} V_{n,i}^{\diamond}\,.
\end{align}
The following proposition establishes quantitative bounds for the quality of estimation.
\begin{prop}\label{propest}
Assume that the assumptions of Theorem \ref{thm1} hold and that $H_1^*$ is valid. Then, for $\delta_n\ge 2k_n^{-1/2}\sqrt{\log(n)}\sup_{t\in[0,1]}\sigma_t^2$, we have that
\begin{align}\label{estrate}
\bigl|\widehat{\td}_n - \td \bigr| = \mathcal{O}_{\P}\biggl(\frac{\sqrt{k_n \log (n)}}{n \delta_n}\biggr)\,.
\end{align}
\end{prop}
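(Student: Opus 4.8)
The plan is to analyze the behavior of the building blocks $V_{n,i}^{\diamond}$ separately according to whether the window $[i-k_n+1,i+k_n]$ contains the change-point $n\td$ or not, and to show that the argmax must fall close to $n\td$. First I would decompose each $n(\Delta_j^n X)^2$ into its conditional mean $\sigma_{(j-1)\Delta_n}^2$ plus a centered noise term, using the approximation $n(\Delta_j^n X)^2\approx \sigma_{(j-1)\Delta_n}^2\chi_j^2$ already invoked in Section~\ref{sec:3.1}; the drift and the continuity of $\sigma$ on each side of $\td$ contribute only lower-order terms controlled via Assumption~\ref{assVola}~(3) and condition \eqref{assk}. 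Writing $V_{n,i}^{\diamond}=k_n^{-1/2}|D_i^{\sigma}+D_i^{\text{noise}}|$, the deterministic part $D_i^{\sigma}$ is the difference of the two block-averaged squared volatilities times $k_n$. If $i<n\td-k_n$ or $i>n\td+k_n$, both windows lie entirely on one side of the jump, so $D_i^{\sigma}=\mathcal{O}(k_n\cdot \KK_n(k_n\Delta_n)^{\aalpha})=o(k_n\delta_n)$, whereas if $i=n\td$ (or nearby) one gets $|D_i^{\sigma}|\sim k_n\delta_n$ exactly because the two windows straddle the jump. More precisely, for $i$ in the range $n\td-k_n\le i\le n\td+k_n$ one has $|D_i^{\sigma}|\approx (k_n-|i-n\td|)\,\delta_n$ up to continuity errors, a piecewise-linear tent peaking at $i=n\td$.

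Next I would handle the stochastic fluctuation. The term $D_i^{\text{noise}}=\sum_{j=i-k_n+1}^{i}\xi_j-\sum_{j=i+1}^{i+k_n}\xi_j$ with $\xi_j$ centered, $\mathcal F_{(j-1)\Delta_n}$-conditionally sub-exponential and of order $\sigma_{(j-1)\Delta_n}^2$; each such sum of $k_n$ roughly independent terms has magnitude $\mathcal{O}_{\P}(\sqrt{k_n})$, and a maximal inequality (Bernstein plus a union bound over the $\mathcal O(n)$ indices, exactly as in the proof of Theorem~\ref{thm1}) gives $\max_i |D_i^{\text{noise}}|=\mathcal{O}_{\P}(\sqrt{k_n\log n})\sup_t\sigma_t^2$. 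Combining, $\sqrt{k_n}\,V_{n,i}^{\diamond}=|D_i^{\sigma}|+\mathcal{O}_{\P}(\sqrt{k_n\log n})$ uniformly in $i$. Since by hypothesis $\delta_n\ge 2k_n^{-1/2}\sqrt{\log n}\sup_t\sigma_t^2$, the peak value $k_n\delta_n$ dominates the noise level $\sqrt{k_n\log n}$, so $\widehat\td_n$ lies with probability tending to one inside the tent region $|i-n\td|\le k_n$; one then excludes, with the same estimates, all $i$ with $|i-n\td|$ exceeding a constant multiple of $(\sqrt{k_n\log n})/\delta_n$, because for such $i$ the deterministic tent value $(k_n-|i-n\td|)\delta_n$ has dropped by at least $c\sqrt{k_n\log n}$ relative to its value near $n\td$, while the noise cannot compensate uniformly. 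This yields $|\widehat\td_n-\td|=\mathcal{O}_{\P}(\Delta_n\sqrt{k_n\log n}/\delta_n)=\mathcal{O}_{\P}(\sqrt{k_n\log n}/(n\delta_n))$, the claimed rate.

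The main obstacle is making the comparison step uniform: it is not enough to know the noise maximum is $\mathcal O_{\P}(\sqrt{k_n\log n})$; I need that for all $i$ simultaneously the \emph{gap} $\bigl(k_n\delta_n - |D_i^{\sigma}|\bigr) - |D_i^{\text{noise}}|$ stays positive at the right scale, i.e. a one-sided uniform lower deviation bound for the argmax. This requires controlling $\max_i\bigl(|D_i^{\text{noise}}| + |D_{n\td}^{\text{noise}}|\bigr)$ against the tent's slope, and here the near-independence of increments and the block structure (adjacent windows share all but $O(|i-i'|)$ summands) must be exploited so that the relevant fluctuation over a window of radius $\rho$ around $n\td$ is only $\mathcal O_{\P}(\sqrt{\rho\log n})$ rather than $\mathcal O_{\P}(\sqrt{k_n\log n})$ — otherwise one loses a factor and obtains a worse rate. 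A chaining or peeling argument over dyadic radii $\rho$, combined with the conditional Bernstein bound for the martingale-difference-type sums $\xi_j$, delivers the sharp $\mathcal O_{\P}(\sqrt{k_n\log n}/(n\delta_n))$ localization. The remaining inputs — negligibility of drift, jumps of $X$ being absent under the standing Assumption~\ref{assVola}, and the $\aalpha$-continuity error being $o(\delta_n)$ on each side — are routine given condition \eqref{assk} and the assumed lower bound on $\delta_n$.
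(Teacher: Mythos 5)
Your first two paragraphs reproduce the paper's argument in substance: you decompose each block statistic into a deterministic ramp/tent of slope $\delta_n$ per index (peaking at $k_n\delta_n$ when the two windows straddle $\lceil\td n\rceil$), bound the smoothness and drift contributions by $\mathcal{O}(k_n\KK_n(k_n\Delta_n)^{\aalpha})=\mathcal{O}(\sqrt{k_n\log n})$ via Assumption \ref{assVola}(3) and \eqref{assk}, control the stochastic part uniformly over all $i$ by $\mathcal{O}_{\P}(\sqrt{k_n\log n})$ with the same techniques as in Theorem \ref{thm1}, and then compare tent against noise. The paper does exactly this, organizing the comparison through the elementary Lemma \ref{lem_argmax_bound}: with the increasing step function $f$ and the perturbation $g$ satisfying $\sup_t|g(t)|=\mathcal{O}_{\P}(\sqrt{\log n})$ (after the $k_n^{-1/2}$ normalization), one chooses $\gamma_n$ with $\sqrt{k_n\log n}/(n\delta_n)=\KLEINO(\gamma_n)$ so that $f(i^*\Delta_n)-f(i^*\Delta_n-\gamma_n)=\lfloor\gamma_n n\rfloor\delta_n k_n^{-1/2}$ dominates $\sup|g|$, and repeats the argument on the other side of $i^*$.

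Where you deviate is your third paragraph, and the deviation is a misdiagnosis rather than a gap in the paper's sense: you claim the crude uniform bound $\max_i|D_i^{\text{noise}}|=\mathcal{O}_{\P}(\sqrt{k_n\log n})$ is not enough and that a chaining/peeling argument with local fluctuation control of order $\sqrt{\rho\log n}$ is needed to avoid "losing a factor". In fact the crude bound already yields the claimed rate: excluding all $i$ with $|i-i^*|>\rho$ only requires the tent's drop $\rho\,\delta_n$ to exceed twice the global noise amplitude plus the smoothness error, i.e. $\rho\gtrsim\sqrt{k_n\log n}/\delta_n$ indices, which in time units is precisely $\mathcal{O}_{\P}\bigl(\sqrt{k_n\log n}/(n\delta_n)\bigr)$; the assumed lower bound $\delta_n\ge 2k_n^{-1/2}\sqrt{\log n}\sup_t\sigma_t^2$ is what guarantees the peak also dominates outside the tent region. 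The local $\sqrt{\rho\log n}$ control you describe would buy a sharper localization (of order $\log n/(n\delta_n^2)$ when $\delta_n$ is large), but it is not needed for \eqref{estrate}, and the paper does not use it. So your proof is correct and essentially the paper's, modulo this unnecessary extra machinery and the incorrect assertion that the simpler comparison would give a worse rate.
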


\begin{rem} \rm 
If $\delta_n$ does not tend to zero, the condition on $\delta_n$ in the proposition is always satisfied. The estimator extends to jumps of $X$ using truncation as in \eqref{statV2tr}, and Proposition \ref{propest} then applies to the generalized estimator under the assumptions of Proposition \ref{propjumps}. In the setup of continuous breaks under alternative $\mathcal{S}_{\td}^{\text{R}}$ the same estimator is consistent (only) when except on a small interval around $\td$ the volatility is $\aalpha$-regular. When the maximal length of this interval is $\sqrt{k_n \log (n)}/(n \delta_n)$, \eqref{estrate} applies when we replace $\delta_n$ by $\delta_n (k_n\Delta_n)^{\aalpha'}$. Clearly, when the volatility violates $\aalpha$-regularity over longer time horizons such a result is not available.\qed
\end{rem}

Obviously, the quality of the estimator $\widehat{\td}_n$ depends on the bandwidth $k_n$, and the smaller, the better. This is the complete opposite case compared to the test based on statistic $V_n^*$, where a larger choice of $k_n$ increases the power. This is no contradiction, since both problems have a different, essentially reciprocal nature. Also note that $k_n$ cannot be chosen arbitrarily small; see condition \eqref{assk}.

While classical estimators as the argmax of statistic \eqref{cusum} attain a standard $\sqrt{n}$-rate, corresponding to $k_n\approx n$, our nonparametric localization approach readily facilitates improved convergence rates as known for state-of-the-art change-point estimators, as e.g.\,in \cite{aue}. The following proposition sheds light on optimal convergence rates for the estimation problem.
\begin{prop}\label{prop_time_est_optimal}
On the assumptions of Proposition \ref{propest} for $k_n \propto \bigl(\sqrt{\log (n)} n^{\aalpha}\bigr)^{\frac{2}{2\aalpha + 1}}$, a consistent estimator for $\td$ does not exist in the case that $ \delta_n = \KLEINO\bigl(\sqrt{\log (n)}k_n^{-1/2}\bigr)$.
\end{prop}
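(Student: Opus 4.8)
\textbf{Proof proposal for Proposition \ref{prop_time_est_optimal}.}

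The plan is to establish this impossibility result by a standard two-point (Le Cam) argument, showing that when $\delta_n = \KLEINO\bigl(\sqrt{\log(n)}\,k_n^{-1/2}\bigr)$ the statistical experiments indexed by two sufficiently separated change-point locations become asymptotically indistinguishable, so no estimator can localise $\td$ with vanishing error. First I would fix the bandwidth $k_n \propto (\sqrt{\log(n)}\,n^{\aalpha})^{2/(2\aalpha+1)}$ as prescribed, note that under Assumption \ref{assVola} and $H_1^*$ we may work conditionally on $\Omega^c$ with a volatility path that is $\aalpha$-regular away from the single jump, and reduce the observation vector to the rescaled squared increments $n(\Delta_i^n X)^2$, which — up to the approximation errors controlled in the proof of Theorem \ref{thm1} via the Gaussian coupling — behave like $\sigma_{i\Delta_n}^2\chi_i^2$ with i.i.d.\ $\chi_1^2$ variables $\chi_i^2$. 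The key structural point, already exploited in Proposition \ref{propest}, is that the jump of size $\delta_n$ influences $V_{n,i}^{\diamond}$ only through a window of length $O(k_n)$ around $n\td$, so that two candidate locations $\td_1,\td_2$ with $n|\td_1-\td_2|$ of order $k_n$ (or slightly less) produce increment laws that differ only on an $O(k_n)$-block, where the per-coordinate signal-to-noise ratio is governed by $\delta_n/\sigma^2$.

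The central computation is then a chi-square or total-variation bound between the two product measures. On the block of $O(k_n)$ increments that distinguishes $\td_1$ from $\td_2$, each coordinate is a scaled $\chi_1^2$ with scale either $\sigma_{\td-}^2$ or $\sigma_{\td-}^2+\delta_n$; the Kullback--Leibler divergence per coordinate between two such gamma$(1/2,\cdot)$ laws is of order $(\delta_n/\sigma_-^2)^2$ for small $\delta_n$, hence the total KL divergence between the two experiments is $O\bigl(k_n (\delta_n/\sigma_-^2)^2\bigr)$. Under the hypothesis $\delta_n = \KLEINO\bigl(\sqrt{\log(n)}\,k_n^{-1/2}\bigr)$ this is $\KLEINO(\log(n))$; I would actually want it to be $\KLEINO(1)$, which forces a slight sharpening — one should choose the separation $n|\td_1-\td_2|$ to be a small fraction $c\,k_n/\log(n)$ of a block rather than a full block, so that the distinguishing window has length $\KLEINO(k_n/\log(n))$ and the total divergence becomes $\KLEINO(1)$, while $|\td_1-\td_2| = \KLEINO(k_n/(n\log(n)))$ still tends to zero but is of the same order as the rate in \eqref{estrate} is claimed to fail at. By Le Cam's lemma (or Pinsker), the sum of type-I and type-II errors for testing $\td=\td_1$ versus $\td=\td_2$ is then bounded below by a positive constant, and a standard reduction from estimation to testing (if a consistent estimator existed, thresholding it at the midpoint $(\td_1+\td_2)/2$ would give a consistent test) yields the nonexistence of a consistent estimator.

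The main obstacle I anticipate is making the reduction to the clean i.i.d.\ chi-square model rigorous: the true increments are only approximately scaled chi-squares, with coupling errors coming from the drift, from the non-constancy of $\sigma$ within a block (controlled by $\aalpha$-regularity and the choice of $k_n$ via \eqref{assk}), and from the Gaussian invariance principle used in Theorem \ref{thm1}. One has to verify that all these errors are small enough — in total variation or in the relevant coupling metric — relative to the $\KLEINO(1)$ budget for the KL divergence, i.e.\ that the ``noise floor'' introduced by the approximations does not by itself separate $\td_1$ from $\td_2$; this is precisely where the specific rate-balancing choice of $k_n$ enters. A secondary technical point is handling the randomness of $\sigma^2$ and of $\KK_n$ under $\Omega^c$: one argues on the event where the localisation holds and the relevant stopping times exceed $1$, and absorbs the vanishing-probability complement into the error terms, exactly as in the proofs of Theorem \ref{thm1} and Proposition \ref{propest}.
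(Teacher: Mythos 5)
There is a genuine gap, and it lies precisely in your final reduction from estimation to testing. To contradict \emph{consistency} of an estimator of $\td$ you need two candidate change-points whose distance stays bounded away from zero, since consistency only asserts $\P(|\widehat{\td}_n-\td|>\epsilon)\to 0$ for each \emph{fixed} $\epsilon>0$. Your construction takes $|\td_1-\td_2|\asymp k_n/(n\log (n))\to 0$ (this shrinking is exactly what makes the Kullback--Leibler budget $\KLEINO(1)$), and indistinguishability of two alternatives at a vanishing separation only rules out estimators converging faster than that separation; it yields a \emph{rate} lower bound complementing \eqref{estrate}, not the nonexistence of a consistent estimator. The step ``thresholding a consistent estimator at the midpoint gives a consistent test'' fails here, because for a shrinking separation consistency of $\widehat{\td}_n$ does not imply $|\widehat{\td}_n-\td|=\KLEINO_{\P}(|\td_1-\td_2|)$. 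Nor can the two-point scheme be repaired by keeping $|\td_1-\td_2|$ of constant order: then the two laws differ on $\asymp n$ increments, the divergence is of order $n\delta_n^2$, and the hypothesis $\delta_n=\KLEINO\bigl(\sqrt{\log (n)}\,k_n^{-1/2}\bigr)$ does not force $n\delta_n^2\to 0$ (since $n/k_n\to\infty$); whenever $\sqrt{n}\,\delta_n\to\infty$, which is compatible with the assumption, two fixed simple alternatives can be distinguished consistently, so no two-point bound with fixed separation can exist. The impossibility asserted in the proposition is a genuinely composite phenomenon: the location is unknown among $\asymp m_n$ windows and the baseline path is unknown.

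The paper argues quite differently, by contradiction with the detection lower bound. Note that for the prescribed $k_n$ one has $\sqrt{\log(n)}\,k_n^{-1/2}\asymp (n/\log (n))^{-\aalpha/(2\aalpha+1)}$, the minimax distinguishable boundary of Theorem \ref{thm_lowerbound}. If a consistent estimator $\widehat{\td}^{\,*}$ existed for jumps of size $\delta_n=\KLEINO\bigl(\sqrt{\log (n)}\,k_n^{-1/2}\bigr)$, one could form the local two-sample statistic $T_{\widehat{\td}^{\,*}}$ from the $k_n$ rescaled squared increments on either side of $n\widehat{\td}^{\,*}$ and thereby obtain a test for the presence of a volatility jump with detection threshold below that boundary, contradicting Theorem \ref{thm_lowerbound}. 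If you prefer a direct information-theoretic proof instead of this reduction, you would have to mix over the many candidate jump locations (and the unknown smooth baseline), as in the proof of Theorem \ref{thm_lowerbound} itself, rather than compare two points.
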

\subsubsection{Multiple change-point alternatives}
In the sequel, we demonstrate how the previous theory can be extended to multiple change points. To keep this exposition at a reasonable length, we focus on the alternative where the volatility exhibits jumps. From a general perspective, multiple change-point detection is typically a challenging multiple testing problem compared to the one change-point detection problem. The main probabilistic difficulty usually lies in controlling the overall stochastic error. Fortunately, in the present context we have already successfully dealt with the overall stochastic error, see Theorems \ref{thm1} and \ref{thm_upper1}. Thus, treating the multiple change-point problem only requires small adjustments. For $N \in \N$, let
\begin{align}
0 < \td_1 < \ldots < \td_N < 1, \quad \Theta_N = \{\td_1,\ldots,\td_N\}.
\end{align}
We then consider the alternative 
\begin{align*}
H_1^*: \quad \bigl|\sigma_{\td_i}^2 - \sigma_{\td_i-}^2\bigr| =: \delta_{n,i} >0 \quad \text{for $1 \leq i \leq N$.}
\end{align*}
The number of changes $N$ in unknown to the experimenter. The goal is to provide uniformly consistent estimates for the multiple change points $\td \in \Theta_N$. To this end, given an index set $\mathcal{I} \subseteq \{k_n,\ldots,n-k_n\}$, we define in analogy to \eqref{defn_hat_td}
\begin{align}\label{defn_hat_td_multiple}
n\widehat{\td}_n(\mathcal{I}) = \operatorname{argmax}_{i \in {\mathcal{I}}} V_{n,i}^{\diamond}\,.
\end{align}

Based on the test $\psi^{\diamond}$ introduced in \eqref{eq_defn_test}, we propose the following algorithm for multiple change-point detection.
\begin{alg}\label{alg_multiple:detect}
\hfill
\begin{description}
\item[Initialize] Set $\hat{\mathcal{I}} = \{k_n,\ldots,n-k_n\}$, $\hat{\Theta} = \emptyset$ and select $r_n = \KLEINO(n)$ such that $k_n = \KLEINO(r_n)$, $k_n \to \infty$.
\item[(i)]If $\psi^{\diamond}\bigl((X_{i\Delta_n})_{i \in \hat{\mathcal{I}}}\bigr) = 0$, stop and return $\hat{\mathcal{I}}$ and $\hat{\Theta}$. Otherwise go to step {\bf (ii)}.
\item[(ii)] Estimate one time of change $\td$ using $\widehat{\td}_n(\hat{\mathcal{I}})$ from \eqref{defn_hat_td_multiple}.
\item[(iii)] Set $\hat{\mathcal{I}} = \hat{\mathcal{I}} \setminus \{\lfloor\hat{\td}_{n}(\hat{\mathcal{I}})n \rfloor - r_n, \ldots, \lceil\hat{\td}_{n}(\hat{\mathcal{I}})n\rceil + r_n\}$, $\hat{\Theta} = \hat{\Theta}\cup \{\widehat{\td}_n(\hat{\mathcal{I}})\}$, and go to step {\bf (i)}.
\end{description}
\end{alg}

Algorithm \ref{alg_multiple:detect} is a sequential top-down algorithm, similar in spirit to the well-known bisection methods. Observe that it not only returns an estimate for the set of change-points $\Theta_N$, but also the set $\hat{\mathcal{I}}$ of non-contaminated indices, which can be used for further inference. The following result provides consistency of the proposed set estimators.

\begin{prop}\label{thm_multiple:change-points}
On the assumptions of Theorem \ref{thm1} and under $H_1^*$, if
\begin{itemize}
  \item[(i)] for some $N' = \KLEINO(n/r_n)$, it holds that $\,\inf_{1 \leq i \leq N-1} |\theta_{i+1} - \theta_i|\geq (N')^{-1}$,
  \item[(ii)] $\inf_{1 \leq i \leq N}\delta_{n,i} \geq 2k_n^{-1/2}\sqrt{\log(n)}\sup_{t\in[0,1]}\sigma_t^2$,
\end{itemize}
then we have consistency of $\hat{\Theta}$, i.e.;
$\begin{aligned}\P(|\hat{\Theta}| = N) \to 1,~\mbox{and}~\sup_{n=1,\ldots,N}|\widehat{\td}_n-\td_n|\pn 0\,.
\end{aligned}$ 
\end{prop}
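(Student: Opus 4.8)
\textbf{Proof proposal for Proposition \ref{thm_multiple:change-points}.}
The plan is to argue by induction on the number of iterations of Algorithm \ref{alg_multiple:detect}, showing that at each step the algorithm correctly identifies one of the remaining change-points with the claimed precision and does not stop prematurely or continue too long. The key point is that the hard probabilistic work has already been done: by Theorem \ref{thm1} the test statistic $V_n^{\ast}$ is, under $H_0$-type behavior, of order $\sqrt{2\log(m_n)/k_n}$ uniformly, and by Theorem \ref{thm_upper1} it diverges (after rescaling) past the threshold of $\psi^{\diamond}$ whenever a jump of size at least $b_n^{\diamond}$ is present. First I would set up the good event $\Omega_n$ on which (a) the global stochastic fluctuation bound $\max_i V_{n,i}^{\diamond}\le C\sqrt{k_n\log(n)}$ holds on any index block that does not straddle a change-point (this is exactly the content underlying Theorem \ref{thm1}, transferred from the ratio statistic $V_n^{\ast}$ to the unweighted difference statistic $V_{n,i}^{\diamond}$), and (b) for each $i$ with some $\td_j\in\{(i-k_n)\Delta_n,\dots,i\Delta_n\}$ the quantity $V_{n,i}^{\diamond}$ is at least of order $k_n\delta_{n,j}/\sqrt{k_n}$ minus the same fluctuation term, via the elementary identity $\sum_{j=i-k_n+1}^i n(\Delta_j^nX)^2\approx \sum \sigma^2_{j\Delta_n}\chi_j^2$. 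Assumption (ii) guarantees the signal term dominates the noise term by a factor tending to infinity, and Assumption (i) together with $k_n=\KLEINO(r_n)=\KLEINO(n/N')$ guarantees that a window of $2k_n$ increments can straddle at most one true change-point, so the blocks "belonging" to distinct $\td_j$ are disjoint.

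The induction step then runs as follows. Suppose after $\ell$ iterations $\hat\Theta$ consists of $\ell$ points, each within $\mathcal{O}_\P(\sqrt{k_n\log(n)}/(n\delta_{n,i}))$ of a distinct true change-point, and $\hat{\mathcal{I}}$ has had a radius-$r_n$ neighborhood of each removed. On $\Omega_n$, if there remains an unremoved change-point $\td_j$, then the index closest to $\td_j$ still lies in $\hat{\mathcal{I}}$ (because $r_n$ is chosen so that removing radius $r_n$ around an estimate $\hat\td$ with $|\hat\td-\td|=\KLEINO(r_n/n)$ does not excise a genuinely different change-point — here one uses (i) to keep the $\td$'s separated by more than $2r_n\Delta_n$), hence $V_n^{\ast}$ restricted to $\hat{\mathcal{I}}$ still exceeds the threshold and $\psi^{\diamond}=1$, so the algorithm does not stop. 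Conversely, if all change-points have been removed, then $\hat{\mathcal{I}}$ contains only indices whose associated $2k_n$-window is free of jumps, so on $\Omega_n$ the rescaled $V_n^{\ast}$ on $\hat{\mathcal{I}}$ stays below $2C^{\diamond}\sqrt{2\log(m_n^{\diamond})/k_n^{\diamond}}$ (this is where $C^{\diamond}>2$ enters, exactly as in the proof of Theorem \ref{thm_upper1}), so $\psi^{\diamond}=0$ and the algorithm stops with $|\hat\Theta|=N$. When the algorithm does proceed to step (ii), Proposition \ref{propest} applied to the argmax over $\hat{\mathcal{I}}$ — whose noise structure is unchanged and which contains the relevant window — yields $|\widehat\td_n(\hat{\mathcal{I}})-\td_j|=\mathcal{O}_\P(\sqrt{k_n\log(n)}/(n\delta_{n,j}))$ for the $\td_j$ whose signal is largest among those remaining; the uniform lower bound in (ii) makes this a $\KLEINO(1)$ bound uniform in $j$, and taking a maximum over the at most $N'$ steps preserves the $\pn 0$ statement since $N'=\KLEINO(n/r_n)$ and each per-step error probability is summably small on $\Omega_n^c$.

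For the final bookkeeping I would note that the total number of iterations is bounded by $N+1$, and $N$ itself is at most $N'$ by (i) since the $\td$'s are $(N')^{-1}$-separated on the unit interval, so only $\KLEINO(n/r_n)$ applications of the uniform bounds are needed and a union bound over them still tends to zero — this is the reason $N'$, rather than $N$, is the quantity that must be controlled, since $N$ is unknown. The events $\{|\hat\Theta|=N\}$ and $\{\sup_n|\widehat\td_n-\td_n|\le\eps\}$ then both contain $\Omega_n$ up to a null sequence, giving the two claimed convergences.

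The main obstacle I anticipate is item (a) of the good event for the \emph{difference}-based building block $V_{n,i}^{\diamond}$ rather than the ratio-based $V_n^{\ast}$: Theorem \ref{thm1} is phrased for $V_n^{\ast}$, which is self-normalizing and therefore distribution-free in the limit, whereas $V_{n,i}^{\diamond}$ carries the unknown scale $\sigma^2$ and its supremum behaves like $\sqrt{k_n\log(n)}\sup_t\sigma_t^2$; one must extract the uniform-in-$i$ fluctuation bound for $V_{n,i}^{\diamond}$ on jump-free windows directly (this is essentially an extreme-value bound for a maximum of $\mathcal{O}(n)$ weighted sums of centered chi-squares, using local boundedness of $\sigma$ from Assumption \ref{assVola}(1) and the $\aalpha$-regularity to control the block-to-block drift via \eqref{assk}), and then check it interacts correctly with the signal lower bound of item (b) and with the removal radius $r_n$ in the recursion. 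The remaining steps are routine given Propositions \ref{propest}, \ref{prop_time_est_optimal} and Theorems \ref{thm1}, \ref{thm_upper1}.
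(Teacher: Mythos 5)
Your proposal is correct and takes essentially the paper's route: the paper's own proof is a one-line remark that one proceeds exactly as in the proof of Proposition \ref{propest}, i.e.\ the uniform fluctuation bound for the unweighted difference statistics $V_{n,i}^{\diamond}$ on jump-free windows plus the signal-dominance/argmax argument, combined with the separation condition (i), the removal radius $r_n$ and the stopping rule based on $\psi^{\diamond}$ from Theorem \ref{thm_upper1} — precisely the induction over the iterations of Algorithm \ref{alg_multiple:detect} that you describe. The one obstacle you flag (transferring Theorem \ref{thm1} from the ratio statistic $V_n^{*}$ to the difference statistic) is already resolved inside the proof of Proposition \ref{propest}, where $\max_i|A_{n,i}+B_{n,i}+C_{n,i}|=\mathcal{O}_{\P}\bigl(\sqrt{k_n\log(n)}\bigr)$ is established directly for the unweighted differences, so no additional extreme-value work is needed.
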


Note that we can allow for increasing $N = \KLEINO(N')$ as the sample size $n$ increases. The bound in condition $(ii)$ is optimal if $k_n$ is selected in the optimal way $k_n \propto \bigl(\sqrt{\log (n)} n^{\aalpha}\bigr)^{\frac{2}{2\aalpha + 1}}$.

\section{A change-point test and asymptotic results for the global change problem\label{sec:5}}
In Section \ref{sec:3} we present methods to test the hypothesis of $\aalpha$-regular volatilities against local alternatives of less regular volatilities. An important example have been volatility jumps which violate the hypotheses for any $\aalpha>0$. If the hypothesis is rejected for a pre-specified $\aalpha$, however, the test does not reveal if this is due to a volatility jump or due to a change of the regularity where the volatility is $\aalpha$-regular on $[0,\td)$ and $\aalpha'$-regular on $(\td,1]$ with $\aalpha'<\aalpha$. In case of global changes the test (under certain conditions) rejects as well. Moreover, alternatives in Section \ref{sec:4} do not cover a change in the Hurst parameter of a fractional volatility process.
The latter constitutes a different testing problem of great interest which is addressed in this section. We develop a new test which discriminates between volatilities which are $\aalpha$-regular on $[0,1]$, except for a finite number of discontinuities, and volatilities where at $\td\in(0,1)$ the regularity exponent drops to $\aalpha'<\aalpha$ such that $\aalpha$-regularity is permanently violated on $[\td,1]$. We consider processes which satisfy the following regularity assumptions.
\begin{ass}
\begin{enumerate}
\item[(i)] Drift and volatility process in \eqref{sm} are c\`adl\`ag, $\inf_{t\in[0,1]}\sigma_t^2> 0$.
\item[(ii)] For a finite set $\mathcal{T}=\{\tau_1,\ldots,\tau_N\}$ of $N<\infty$ stopping times\\ $\sup_{j=1,\ldots,N}|\sigma_{\tau_j}-\lim_{t<\tau_j,t\rightarrow\tau_j}\sigma_{t}|\le K$ with a constant $K<\infty$.
\item[(iii)] $\sigma_t^2=\nu_t+\varrho_t$ with the $\sigma$-algebra $\sigma\big(\varrho_s,0\le s\le 1\big)$ being independent of $\sigma\big(W_s,\nu_s,0\le s\le 1\big)$.
For any $0\leq s,\tau\leq 1$ with $[s,\tau]\cap \mathcal{T}=\emptyset$, we have with a constant $K$ and some $\epsilon>0$:
\begin{align}
\label{nusmooth}\bigl(\E\bigl[|\nu_{\tau}-\nu_{s}|^8\bigr]\bigr)^{1/8} & \le K\,|\tau-s|^{(1/2+\epsilon)}\,.
\end{align}
\item[(iv)]For $\begin{aligned}\Delta_i^n \varrho=n\,\int_{(i-1)\Delta_n}^{i\Delta_n}(\varrho_{s}-\varrho_{s-\Delta_n})\,ds\end{aligned}$, it holds that 
\begin{align}\max_{2\le m\le n}\Big|\sum_{i=2}^m \Big((\Delta_i^n\varrho)^2-\E\big[(\Delta_i^n\varrho)^2\big]\Big)\Big|
    =\KLEINO_{\P}(\sqrt{n})\,.
    \end{align}
\end{enumerate}
\label{assvolaglobal}
\end{ass}
\begin{testingproblem}\label{testingproblem}
\textbf{Hypothesis:} The process $(\varrho)_{t\in[0,1]}$ is $\aalpha$-regular in the following sense:
\begin{align}
\label{rhosmooth}\E\bigl[(\Delta_i^n\varrho)^2\bigr] &= \vartheta_n^2 + \KLEINO(n^{-1/2}), ~\mbox{for all $i$}, \vartheta_n^2 \le K n^{-2\aalpha}\,,\\
		\label{rhosmooth2}\big(\E\bigl[(\Delta_i^n\varrho)^8\bigr]\big)^{1/8}&\le K\,n^{-\aalpha},~\mbox{for all $i$}\,,
\end{align}
for some constant $K$ and $\aalpha > 0$. Thus, on the hypothesis $(\sigma_t^2)_{t\in [0,1]}$ is $\min(1/2,\aalpha)$-regular.\\
\textbf{Alternative:} For $\td\in(0,1)$, $(\varrho_{t\wedge \td})_{t\ge 0}$ satisfies \eqref{rhosmooth} and \eqref{rhosmooth2} on $[0,\td)$.
For some $\aalpha' <\min(1/2,\aalpha)$, $b_n'>0$ and for all $i\Delta_n \geq \td$, we have
\begin{align}\label{rhosmooth:alternative}
\E\bigl[(\Delta_i^n\varrho)^2\bigr] \geq b_n' n^{-2\aalpha'}\,.
\end{align}
\end{testingproblem}
Contrarily to the setup of Section \ref{sec:3}, the hypothesis allows for a finite number of discontinuities in $(\nu_t)_{t\in[0,1]}$. On the other hand, under the alternative the volatility \emph{permanently `exploits its roughness'} in the sense of violating \eqref{rhosmooth} permanently over $[\td,1]$. Condition \eqref{nusmooth} can be extended also to the case where $\nu$ is an It\^{o} semi-martingale with jumps of finite activity. Assumption \ref{assvolaglobal} $(iv)$ grants that $\E[(\varrho_t-\varrho_s)^2]$ does not vary too much over time. This condition is obsolete for $\aalpha>1/4$.

Our setup covers many stochastic volatility models, and in particular it applies to discriminate fractional volatility models with different Hurst parameters. While most fractional stochastic volatility models include independence of log-price and volatility, possible dependence (leverage) is usually allowed in the literature using a semi-martingale volatility. In this light, the decomposition in Assumption \ref{assvolaglobal} appears natural, where $(\varrho_t)_{t\ge 0}$ is independent of $(W_t)_{t\ge 0}$ and $(\nu_t)_{t\ge 0}$, whereas $(\nu_t)_{t\ge 0}$ comprises leverage. The following simple example reveals the interplay of Assumption \ref{assvolaglobal} $(iv)$ and \eqref{rhosmooth} and the Hurst parameter.
\begin{exa}[Fractional Brownian motion]\label{example:fractional}
Suppose that $(\varrho_t)$ is a fractional Brownian motion with Hurst parameter $H$. Let $\xi_{k} = n^H(\varrho_{k\Delta_n} - \varrho_{(k-1)\Delta_n})$, $k \geq 1$. Then, see e.g.\;\cite{embrechts},
\begin{itemize}
\item[(i)] $\varrho_r - \varrho_s \stackrel{d}{=} \varrho_{r-s} \stackrel{d}{=} (r-s)^H \xi_1$, $r \geq s$,
\item[(ii)] $\bigl|\E\bigl[\xi_{0} \xi_{k} \bigr]\bigr| \le K\, (k+1)^{2H - 2}$, $k \geq 1$, for some constant $K$.
\end{itemize}
The scaling property (i) implies \eqref{rhosmooth} and \eqref{rhosmooth2} with $H=\aalpha$. If $1/4 \le H \leq 1$, then (i) suffices also to guarantee the validity of Assumption \ref{assvolaglobal} $(iv)$. If $0 < H < 1/2$, then
\begin{align*}
\E\Big[\max_{2\le m\le n}\hspace*{-.05cm}\Big|\sum_{i=2}^m \hspace*{-.05cm}\Big(\hspace*{-.1cm}(\Delta_i^n \varrho)^2\hspace*{-.05cm}-\hspace*{-.05cm}\E\bigl[(\Delta_i^n\varrho_{})^2\bigr]\hspace*{-.05cm}\Big)\hspace*{-.025cm}\Big|\Big] \hspace*{-.05cm}\leq \hspace*{-.05cm} n^{-2H} \hspace*{-.1cm}\int_{[0,1]^2} \hspace*{-.05cm}\E\Big[\max_{2\le m\le n}\hspace*{-.05cm}\Big|\sum_{k=1}^{m-1}\hspace*{-.05cm} \Big(\xi_{k+r}\xi_{k+s}\hspace*{-.05cm} -\hspace*{-.05cm} \E\bigl[\xi_{k+r}\xi_{k+s} \bigr]\hspace*{-.05cm}\Big)\hspace*{-.025cm}\Big|\Big]dr ds.
\end{align*}
Then (ii) together with the joint Gaussianity of $(\xi_{k+r},\xi_{k+s})$ implies that $(\xi_{k+r}\xi_{k+s})$ is a short memory sequence. In particular, using the results in \cite{arcones} and \cite{moricz} yields that
\begin{align*}
n^{-2H}\sup_{0 \leq r,s \leq 1}\E\Big[\max_{2\le m\le n}\Big|\sum_{k=1}^{m-1} \Big(\xi_{k+r}\xi_{k+s} - \E\bigl[\xi_{k+r}\xi_{k+s} \bigr]\Big)\Big|\Big] = \mathcal{O}_{}\bigl(n^{-2H+1/2}\bigr)\,,
\end{align*}
and hence the validity of Assumption \ref{assvolaglobal} $(iv)$.
\end{exa}
More generally, our setup includes prominent realistic volatility models, such as fractional Ornstein-Uhlenbeck processes discussed in \cite{comte}, also considered in Section \ref{sec:6}.
   
Because of the different permanent nature of the change, we derive a new statistical device to address Testing problem \ref{testingproblem} which differs from the methods in Section \ref{sec:3}. In particular, we propose a global cusum-type test statistic instead of localized ones. Define for $i=2,\ldots,n$:
\begin{align}\label{q}Q_{n,i}=n^2\Big(\big((\Delta_i^n X)^2-(\Delta_{i-1}^n X)^2\big)^2-\frac{2}{3}\big((\Delta_i^n X)^4+(\Delta_{i-1}^n X)^4\big)\Big)\,.\end{align}
Our cusum-type test statistic based on the statistics \eqref{q} is
\begin{align}\label{testglobal}V_n^{\dagger}=\frac{1}{\sqrt{n-1}}\max_{m=2,\ldots,n}\Big|\sum_{i=2}^m\Big(Q_{n,i}-\frac{\sum_{i=2}^{n}Q_{n,i}}{n-1}\Big)\Big|\,.\end{align}
Intuitively, statistics $Q_{n,i}$ are small if $|\sigma_{i\Delta_n}^2-\sigma_{(i-1)\Delta_n}^2|$ is small and become larger the larger $|\sigma_{i\Delta_n}^2-\sigma_{(i-1)\Delta_n}^2|$. The regularity $\aalpha$ thus directly influences the average behaviour of the $Q_{n,i}$ and a change at time $\td$ can be detected by \eqref{testglobal}.
\begin{theo}\label{upperglobal}On the hypothesis of the Testing problem \ref{testingproblem} and Assumption \ref{assvolaglobal}, the cusum-process associated with statistic \eqref{testglobal} obeys the functional convergence
\begin{align}\label{cusumclt}\frac{\sqrt{3/80}}{\sqrt{n-1}}\Big(\sum_{i=2}^{\lfloor nt\rfloor}\Big(Q_{n,i}\hspace*{-.05cm}-\hspace*{-.05cm}\frac{\sum_{i=2}^{n}Q_{n,i}}{n-1}\Big)\hspace*{-.05cm}\Big)\hspace*{-.05cm}\stackrel{\omega-(st)}{\longrightarrow}\hspace*{-.1cm}\Big(\int_0^t \hspace*{-.1cm}\sigma_s^4\,dB_s\hspace*{-.05cm}-\hspace*{-.05cm}t\hspace*{-.05cm}\int_0^1\hspace*{-.1cm} \sigma_s^4\,dB_s \Big)\,,\end{align}
stable with respect to $\mathcal{F}$, weakly in the Skorokhod space, where $(B_s)$ denotes a Brownian motion independent of $\mathcal{F}$. 
\end{theo}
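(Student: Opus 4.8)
The plan is to establish \eqref{cusumclt} via a stable functional central limit theorem for the triangular array of summands built from $Q_{n,i}$, after first replacing $X$ by its continuous martingale part and localizing so that $a$, $\sigma$ and $\varrho$ are bounded. I would begin by writing $\Delta_i^n X \approx \sigma_{(i-1)\Delta_n}\Delta_i^n W + (\text{drift}) + (\text{jump in }\nu)$ and showing, using Assumption \ref{assvolaglobal}(i)--(iv) and \eqref{nusmooth}, \eqref{rhosmooth}, \eqref{rhosmooth2}, that the drift contributions, the discretization error from freezing $\sigma$, and the finitely many $\nu$-jumps are all $\KLEINO_{\P}(\sqrt n)$ uniformly in $m$ and hence negligible after normalization by $(n-1)^{-1/2}$. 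The key quantitative inputs here are that a single jump of $\nu$ affects only $O(1)$ summands, that \eqref{nusmooth} gives $L^8$-control of $\nu$-increments of order $|\tau-s|^{1/2+\epsilon}$, and that Assumption \ref{assvolaglobal}(iv) controls the maximal partial sum of the centered $(\Delta_i^n\varrho)^2$ terms by $\KLEINO_{\P}(\sqrt n)$; since $\varrho$ is independent of $(W,\nu)$, the $\varrho$-part of $Q_{n,i}$ can be handled conditionally.

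Next I would identify the Gaussian limit. Conditionally on $\sigma$, to leading order $n(\Delta_i^n X)^2 \approx \sigma^2_{(i-1)\Delta_n}\chi_i^2$ with $\chi_i$ i.i.d.\ standard normal, so
\[
Q_{n,i} \approx \sigma^4_{(i-1)\Delta_n}\Big((\chi_i^2-\chi_{i-1}^2)^2 - \tfrac23(\chi_i^4+\chi_{i-1}^4)\Big) =: \sigma^4_{(i-1)\Delta_n}\, \zeta_i\,.
\]
A direct moment computation with standard Gaussians gives $\E[\zeta_i]=0$ (which is exactly why the factor $2/3$ and the fourth-power correction appear in \eqref{q}), and $\var(\zeta_i)$ together with the one-step covariance $\Cov(\zeta_i,\zeta_{i-1})$ can be summed; matching this with the normalization $\sqrt{3/80}$ in \eqref{cusumclt} fixes the limiting variance density as $\sigma_s^8$, so that $\sqrt{3/80}\,(n-1)^{-1/2}\sum_{i=2}^{\lfloor nt\rfloor}\sigma^4_{(i-1)\Delta_n}\zeta_i$ converges stably to $\int_0^t\sigma_s^4\,dB_s$ with $B$ a Brownian motion independent of $\mathcal F$. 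The tool for this is the standard stable CLT for sums of the form $\sum f(\sigma_{(i-1)\Delta_n})\eta_i$ with $\eta_i$ a 1-dependent, row-wise stationary martingale-difference-type array of bounded (here, polynomially integrable) increments — e.g.\ Theorem IX.7.28 in Jacod--Shiryaev, or the blocking argument of \cite{jacod1} — and the block-wise freezing of $\sigma$ is justified by its càdlàg paths and boundedness. The centering term $\tfrac{1}{n-1}\sum_{i=2}^n Q_{n,i}$ converges in probability to $\int_0^1\sigma_s^4\,ds$ times a constant that, after the same normalization, produces the $t\int_0^1\sigma_s^4\,dB_s$ bridge-type correction; combined with tightness in the Skorokhod space (from the $L^2$-moment bound on increments of the partial-sum process plus 1-dependence) this yields \eqref{cusumclt}.

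The main obstacle I anticipate is twofold. First, controlling the $\varrho$-contribution uniformly in $m$: unlike the martingale part, $(\Delta_i^n\varrho)^2$ is not a martingale difference, and even after centering its partial sums are only bounded by Assumption \ref{assvolaglobal}(iv); one must check carefully that the cross terms between the $\varrho$-increments and the $\sigma_{(i-1)\Delta_n}\Delta_i^n W$ terms inside $Q_{n,i}$, conditionally on $\varrho$, do not contribute at the $\sqrt n$ scale — this uses the independence in Assumption \ref{assvolaglobal}(iii) to condition and then a maximal inequality for the resulting martingale. Second, handling the 1-dependence and the exact variance bookkeeping: because $Q_{n,i}$ involves both $\Delta_i^n X$ and $\Delta_{i-1}^n X$, consecutive summands are correlated, so the effective long-run variance is $\var(\zeta_i)+2\Cov(\zeta_i,\zeta_{i-1})$, and getting the constant $3/80$ right requires an honest Gaussian moment calculation (computing $\E[(\chi_i^2-\chi_{i-1}^2)^2-\tfrac23(\chi_i^4+\chi_{i-1}^4)]^2$ and the corresponding one-lag covariance). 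Everything else — localization, negligibility of drift and jumps, tightness — is routine in this high-frequency framework.
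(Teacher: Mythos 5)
Your plan is correct in substance and reaches the same destination as the paper, but it organizes the CLT step differently. The paper (Lemma \ref{propR} and Proposition \ref{propupperglobal}) first removes the drift, then uses It\^o's formula to write $R_{n,i}=I_i+II_i+III_i$, where the leading part telescopes into an \emph{exact} martingale difference array $n^2(2U_i-2C_iC_{i-1})$ with $U_i=C_i^2-[C,C]_i-N_i$; Jacod's Theorem 3--1 of \cite{jacod1} is then applied, the conditional variance computation yielding $16\int_0^t\sigma_s^8\,ds+\tfrac{32}{3}\int_0^t\sigma_s^8\,ds=\tfrac{80}{3}\int_0^t\sigma_s^8\,ds$, and the orthogonality conditions give stability; the terms $II_i$ and the centered $III_i$ are shown to be $\KLEINO_{\P}(\sqrt{n})$ uniformly using \eqref{nusmooth}, the independence in Assumption \ref{assvolaglobal}(iii) plus Burkholder, and Assumption \ref{assvolaglobal}(iv). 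You instead reduce to the $1$-dependent stationary array $\sigma^4_{(i-1)\Delta_n}\zeta_i$ with $\zeta_i=\tfrac13\chi_i^4+\tfrac13\chi_{i-1}^4-2\chi_i^2\chi_{i-1}^2$ and identify the long-run variance $\var(\zeta_i)+2\Cov(\zeta_i,\zeta_{i-1})=\tfrac{64}{3}+\tfrac{16}{3}=\tfrac{80}{3}$, which indeed reproduces the paper's constant and is a more transparent way to see where $\sqrt{3/80}$ comes from.

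Two points need more care than your sketch suggests. First, the phrase ``conditionally on $\sigma$ the $\chi_i$ are i.i.d.'' is not available here: Assumption \ref{assvolaglobal}(iii) only makes $\varrho$ independent of $(W,\nu)$, while $\nu$ may depend on $W$ (leverage), so one cannot condition on the volatility path; the argument has to be run unconditionally via stable convergence, and the orthogonality conditions (the analogue of \eqref{J4}) are then an essential part of the proof, not a formality. Second, the stable limit theorems you cite are stated for arrays whose conditional means are asymptotically negligible, whereas for your $1$-dependent array $\E[\zeta_i\,|\,\F_{(i-1)\Delta_n}]$ is an order-one, mean-zero function of $\chi_{i-1}$ whose partial sums are themselves of order $\sqrt{n}$; before invoking such a theorem you must perform a telescoping/Hoeffding-type decomposition into a martingale difference array plus a negligible boundary term (or run a genuine big-block/small-block argument) --- this is exactly what the paper's $U_i$ and $C_iC_{i-1}$ decomposition accomplishes, and it is also where the $2\Cov$ contribution enters the variance. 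Finally, a small slip: the sample mean $\tfrac{1}{n-1}\sum_iQ_{n,i}$ does not converge to a constant times $\int_0^1\sigma_s^4\,ds$; under the hypothesis it equals $\vartheta_n^2+\mathcal{O}_{\P}(n^{-1/2})$, and it is the constancy in $i$ of $\E[(\Delta_i^n\varrho)^2]$ guaranteed by \eqref{rhosmooth} that makes this (possibly non-negligible when $\aalpha\le 1/4$) bias cancel in the cusum, while the bridge term $t\int_0^1\sigma_s^4\,dB_s$ in \eqref{cusumclt} arises from the stable limit of the full-sample fluctuation.
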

As an immediate consequence of Theorem \ref{upperglobal}, we obtain 
\begin{align}
\sqrt{\frac{3}{80}}\;{V}_n^{\dagger} \stackrel{\omega-(st)}{\longrightarrow} V^{\dagger}, \quad V^{\dagger} = \sup_{0 \leq t \leq 1}\Bigl|\int_0^t \sigma_s^4 d B_s - t \int_0^1 \sigma_s^4 d B_s \Bigr|\,.
\end{align}
In order to construct a test $\psi^{\dagger}$ based on ${V}_n^{\dagger}$, the key object are the (conditional) quantiles
\begin{align}
{q}_{\alpha}({V}^{\dagger}|\F) = \inf\bigl\{x \geq 0\, : \, \P({V}^{\dagger} \leq x | \F) \geq \alpha \bigr\}.
\end{align}
The latter depend on the unknown volatility $(\sigma_t)_{t\in[0,1]}$, and are therefore not a priori available. One way to circumvent this problem is to locally estimate $(\sigma_t)_{t\in[0,1]}$ and to apply an appropriate bootstrap procedure to approximate ${q}_{\alpha}({V}^{\dagger}|\F)$. Alternatively, a standardized version of \eqref{cusumclt} facilitates a test based on a Kolmogorov-Smirnov limit law which is given in Proposition \ref{corrupperglobal} and requires a slightly stronger additional condition. For both, consider for $K_n\rightarrow\infty$, $K_n/n\rightarrow 0$ 
\begin{align}\label{eq_vola_est_for_boot_1}
\hat{\sigma}_{i\Delta_n}^4=\frac{n^2}{3K_n}\sum_{j=i-K_n}^{i}(\Delta_j^n X)^4\,,K_n+1\le i\le n\,.
\end{align}
For a sequence of i.i.d.\,standard normals $\{Z_i\}_{1 \leq i \leq \lfloor nK_n^{-1}\rfloor }$, denote with
\begin{align}\label{boots}
\hat{V}_n^{\dagger} = \sup_{0 \leq t \leq 1}\bigl|\hat{S}_{nt} - t \hat{S}_{n}\bigr|, \quad \hat{S}_{nt} =  \Big(\frac{K_n}{n}\Big)^{1/2}\sum_{i = 1}^{\lfloor nt/K_n \rfloor} \hat{\sigma}_{(iK_n+1)\Delta_n}^4 Z_i.
\end{align}
Based on $\hat{V}_n^{\dagger}$, we construct the approximative (conditional) quantiles 
\begin{align}
\hat{q}_{\alpha}(\hat{V}_n^{\dagger}|\F) = \inf\bigl\{x \geq 0\, : \, \P(\hat{V}_n^{\dagger} \leq x | \F) \geq \alpha \bigr\}.
\end{align}
We can compute $\hat{q}_{\alpha}(\hat{V}_n^{\dagger}|\F)$ as accurately as we want using Monte Carlo approximations. Testing problem \ref{testingproblem} is now addressed  with the test:
\begin{align}\label{eq:defn:test:global:boot}
\psi_{\alpha}^{\dagger}\bigl((X_{i\Delta_n})_{0\le i\le n}\bigr) = \left\{\begin{array}{cl} 1, & \mbox{if $\sqrt{\frac{3}{80}}\,{V}_n^{\dagger} > \hat{q}_{1-\alpha}(\hat{V}_n^{\dagger}|\F)$,}\\ 0, & \mbox{otherwise.} \end{array}\right.
\end{align}
Observe that this test does not require any pre-specification of $\aalpha,\aalpha'$. It reacts to the change under the alternative for any $\aalpha,\aalpha'$. From a statistical perspective, it is important that 
\begin{itemize}
\item[(A)] $\psi_{\alpha}^{\dagger}$ (asymptotically) correctly controls the type I error under $H_0$,
\item[(B)] $\psi_{\alpha}^{\dagger}$ provides optimal power (in minimax sense),
\end{itemize}
established by Theorem \ref{thm:bootstrap:global} and Theorem \ref{lowerglobal}.
\begin{theo}\label{thm:bootstrap:global}
Grant Assumption \ref{assvolaglobal}.
Then, for any fixed $\alpha > 0$,
$\begin{aligned}
\bigl|\P\bigl({V}_n^{\dagger} \leq \hat{q}_{\alpha}(\hat{V}_n^{\dagger}|\F) \bigr) - \alpha \bigr| \to 0.
\end{aligned}$
\end{theo}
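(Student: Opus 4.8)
The plan is to establish the convergence $\bigl|\P\bigl({V}_n^{\dagger} \leq \hat{q}_{\alpha}(\hat{V}_n^{\dagger}|\F) \bigr) - \alpha \bigr| \to 0$ by controlling two separate approximations: first, that the true statistic $\sqrt{3/80}\,{V}_n^{\dagger}$ is close (in the appropriate conditional sense) to $V^{\dagger} = \sup_{0\le t\le 1}|\int_0^t \sigma_s^4\,dB_s - t\int_0^1 \sigma_s^4\,dB_s|$, which is already granted by Theorem~\ref{upperglobal}; and second, that the bootstrapped statistic $\hat{V}_n^{\dagger}$ converges, conditionally on $\F$, to the \emph{same} limit $V^{\dagger}$. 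Once both conditional convergences are in place, a standard quantile-continuity argument (the limiting conditional law of $V^{\dagger}$ given $\F$ is a.s.\ continuous, being the sup of a non-degenerate centered Gaussian process with a.s.\ positive variance since $\inf_t\sigma_t^2>0$) transfers convergence of the bootstrap laws to convergence of the bootstrap quantiles $\hat{q}_{\alpha}(\hat{V}_n^{\dagger}|\F)$ to $q_{\alpha}(V^{\dagger}|\F)$, and then Slutsky-type reasoning combined with Theorem~\ref{upperglobal} yields the claim.

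The bootstrap-side convergence is the substantive step. I would argue conditionally on $\F$, so that $\hat{S}_{nt} = (K_n/n)^{1/2}\sum_{i=1}^{\lfloor nt/K_n\rfloor}\hat\sigma_{(iK_n+1)\Delta_n}^4 Z_i$ is, given $\F$, a sum of independent (non-identically distributed) centered Gaussian increments, hence itself a Gaussian process with independent increments whose conditional variance over $[0,t]$ is $(K_n/n)\sum_{i=1}^{\lfloor nt/K_n\rfloor}\hat\sigma_{(iK_n+1)\Delta_n}^8$. The key deterministic input is that the localized quarticity estimator $\hat\sigma_{i\Delta_n}^4$ from \eqref{eq_vola_est_for_boot_1} is consistent for $\sigma_{i\Delta_n}^4$ uniformly along the grid; with $K_n\to\infty$, $K_n/n\to 0$ this is a routine locally-bounded-volatility estimate (consistency of block-wise realized quarticity, as in \cite{alvaetal2012} type arguments), so that the Riemann-sum conditional variance converges uniformly in $t$ to $\int_0^t \sigma_s^8\,ds$ in probability. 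A conditional functional CLT for the triangular array of independent Gaussian increments (e.g.\ via finite-dimensional convergence plus tightness from the Gaussian increment bound) then gives that, conditionally on $\F$ and in probability, $\hat{S}_{n\cdot} \Rightarrow \int_0^\cdot \sigma_s^4\,dB_s$ in the Skorokhod space, whence $\hat{V}_n^{\dagger} = \sup_t|\hat{S}_{nt} - t\hat{S}_n| \Rightarrow V^{\dagger}$ conditionally by the continuous mapping theorem.

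The main obstacle is handling the \emph{conditional} modes of convergence carefully and uniformly: convergence in distribution conditionally on $\F$, in $\P$-probability, has to be made rigorous (e.g.\ in the sense of convergence of conditional characteristic functions, or bounded-Lipschitz distance of conditional laws, tending to $0$ in probability), and then one needs that the limiting conditional quantile map $x\mapsto q_\alpha(V^{\dagger}|\F)$ is a.s.\ continuous at $\alpha$ so that $\hat q_\alpha(\hat V_n^{\dagger}|\F)\to q_\alpha(V^{\dagger}|\F)$ in probability. The anti-concentration needed for that continuity follows from the nondegeneracy $\int_0^t\sigma_s^8\,ds>0$ combined with classical results on the continuity of the distribution of the supremum of a Gaussian process (Tsirelson-type). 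Finally, one writes $\P({V}_n^{\dagger}\le \hat q_\alpha) = \P(\sqrt{3/80}\,{V}_n^{\dagger}\le \sqrt{3/80}\,\hat q_\alpha)$ and, using that $\sqrt{3/80}\,{V}_n^{\dagger}\Rightarrow V^{\dagger}$ (Theorem~\ref{upperglobal}, stably hence jointly with $\F$-measurable sequences) while $\sqrt{3/80}\,\hat q_\alpha \to q_\alpha(V^{\dagger}|\F)$ in probability, a subsequence/almost-sure-representation argument together with the continuity of the conditional c.d.f.\ of $V^{\dagger}$ at its $\alpha$-quantile gives $\P({V}_n^{\dagger}\le\hat q_\alpha)\to \E[\,\P(V^{\dagger}\le q_\alpha(V^{\dagger}|\F)\mid\F)\,] = \alpha$, which is exactly the assertion.
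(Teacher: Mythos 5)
Your proposal is correct in substance and follows the same overall architecture as the paper's proof: (i) show the bootstrap law given $\F$ is close to the conditional law of $V^{\dagger}$, (ii) an anti-concentration/continuity property of that conditional law, (iii) deduce convergence of the bootstrap quantiles to $q_{\alpha}(V^{\dagger}|\F)$ as in \eqref{eq:quantiles:converge}, and (iv) conclude via the stable convergence of Theorem \ref{upperglobal}. Where you differ is in how the two key ingredients are established. For (i), the paper does not pass through a conditional functional CLT at all: it couples the bootstrap directly with the limit by writing $\hat{S}_{nt}=\int_0^t\widetilde{\sigma}_s^4\,dB_s$ with the \emph{same} Brownian motion $B$ appearing in $V^{\dagger}$ (Lemma \ref{lem:aux:globalboot:1}), so that a single Burkholder bound on $\E\bigl[|V^{\dagger}-\hat{V}_n^{\dagger}|\,\big|\,\F\bigr]$, combined with uniform consistency of \eqref{eq_vola_est_for_boot_1}, settles the matter; this is more elementary than your route and sidesteps the need to formalize ``conditional weak convergence in probability'' and tightness for the triangular array, which is the heaviest part of your plan (though your Gaussian-with-independent-increments argument is essentially a rediscovery of the same coupling, since given $\F$ the process you describe is exactly a time-changed Brownian motion). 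For (ii), the paper discretizes $[0,1]$ and invokes the finite-dimensional Gaussian anti-concentration bound of Lemma 2.1 in \cite{kato} (Lemma \ref{lem:global:statistic:is:cont}), giving quantitative uniform continuity on compacta away from zero, whereas you invoke Tsirelson-type continuity of the law of a Gaussian supremum; both suffice for quantile convergence, the paper's version being quantitative and self-contained, yours shorter but relying on a deeper external result and still requiring (as you should make explicit) that the conditional c.d.f.\ is strictly increasing at $q_{\alpha}$, which follows from nondegeneracy, $\inf_t\sigma_t^2>0$, and full support of the Gaussian supremum on $(0,\infty)$. One bookkeeping point in your last step: since $\hat{V}_n^{\dagger}$ approximates $V^{\dagger}$ itself, one has $\hat{q}_{\alpha}(\hat{V}_n^{\dagger}|\F)\to q_{\alpha}(V^{\dagger}|\F)$ \emph{without} the factor $\sqrt{3/80}$, so the correct comparison is $\P\bigl(\sqrt{3/80}\,V_n^{\dagger}\le\hat{q}_{\alpha}(\hat{V}_n^{\dagger}|\F)\bigr)\to\alpha$; your claim that $\sqrt{3/80}\,\hat{q}_{\alpha}\to q_{\alpha}(V^{\dagger}|\F)$ is off by this constant, a slip that mirrors the scaling inconsistency already present in the theorem's printed statement versus the test \eqref{eq:defn:test:global:boot}, and does not affect the validity of the argument once the scaling is fixed.
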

Before discussing property (B) of the test, let us first touch on a second approach that avoids a bootstrap. Slightly modified assumptions allow for the following standardized version of \eqref{testglobal}.
\begin{prop}\label{corrupperglobal}Grant Assumption \ref{assvolaglobal} and the hypothesis of Testing problem \ref{testingproblem} with $\aalpha>1/4$.  In this case \eqref{cusumclt} holds true. Moreover, 
$(3/80)^{1/2}\,\bar V_n^{\dagger}$ with
\[\bar V_n^{\dagger}=n^{-1/2}\max_{K_n+1\le m\le n}\big|\sum_{i=K_n+1}^m (\hat\sigma_{(i-2)\Delta_n}^4)^{-1}Q_{n,i}-(m/n)\sum_{i=K_n+1}^n(\hat\sigma_{(i-2)\Delta_n}^4)^{-1}Q_{n,i}\big|\] weakly converges to a Kolmogorov-Smirnov law and the associated limiting process in \eqref{cusumclt} becomes a standard Brownian bridge.
\end{prop}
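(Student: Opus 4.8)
The plan is to deduce Proposition~\ref{corrupperglobal} from Theorem~\ref{upperglobal} in two steps: first establish that the estimated quartic normalizations $\hat\sigma^4_{(i-2)\Delta_n}$ are consistent (uniformly over the relevant index range) for $\sigma^4_{i\Delta_n}$, and then feed this into the functional convergence \eqref{cusumclt} to obtain, after standardization, a limit whose covariance structure is that of a standard Brownian bridge rather than the volatility-dependent process $\int_0^{\cdot}\sigma^4_s\,dB_s - (\cdot)\int_0^1\sigma^4_s\,dB_s$.

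First I would record that Theorem~\ref{upperglobal} already gives \eqref{cusumclt} under Assumption~\ref{assvolaglobal} together with the hypothesis of Testing problem~\ref{testingproblem}; the extra restriction $\aalpha>1/4$ is needed precisely because, as remarked after the assumption, Assumption~\ref{assvolaglobal}(iv) is automatic for $\aalpha>1/4$ and because the estimator \eqref{eq_vola_est_for_boot_1} requires $K_n\to\infty$ with $K_n/n\to0$ while still averaging enough increments to control its fluctuations under the roughness of $\sigma$. Next I would prove a uniform law of large numbers for the local quarticity estimator: writing $n^2(\Delta_j^nX)^4 \approx 3\sigma^4_{j\Delta_n}\chi^{(4)}_j$ with $\E[\chi^{(4)}_j]=1$ (using the standard decomposition of the semi-martingale increment into its leading Gaussian part plus negligible drift and higher-order terms, cf.\ the quarticity estimator of \cite{bn}), one gets $\hat\sigma^4_{i\Delta_n} = \sigma^4_{i\Delta_n} + o_{\P}(1)$, and in fact $\max_{K_n+1\le i\le n}|\hat\sigma^4_{(i-2)\Delta_n}/\sigma^4_{i\Delta_n}-1| = o_{\P}(1)$, where the maximal deviation is controlled by the block length $K_n\to\infty$ (variance term $O((nK_n)^{-1/2}\sqrt{\log n})$-type bounds from Gaussian concentration for the $\chi^{(4)}_j$) balanced against the $\aalpha$-modulus of continuity of $\sigma^2$ over blocks of length $K_n\Delta_n$ (bias term $O((K_n/n)^{\min(1/2,\aalpha)})$, finite because $\aalpha>1/4$).

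Given the uniform consistency of the normalizations, I would then write
\[
n^{-1/2}\sum_{i=K_n+1}^{\lfloor nt\rfloor}(\hat\sigma^4_{(i-2)\Delta_n})^{-1}Q_{n,i}
= n^{-1/2}\sum_{i=K_n+1}^{\lfloor nt\rfloor}(\sigma^4_{i\Delta_n})^{-1}Q_{n,i} + R_{n}(t),
\]
and argue that $\sup_t|R_n(t)|=o_{\P}(1)$ by combining the uniform ratio bound above with the tightness/boundedness already implicit in \eqref{cusumclt}. For the main term, the same martingale-CLT argument underlying Theorem~\ref{upperglobal} applies with $Q_{n,i}$ replaced by $(\sigma^4_{i\Delta_n})^{-1}Q_{n,i}$: the conditional variance of the rescaled partial sum now converges to $\frac{80}{3}\int_0^t (\sigma^4_s)^{-2}\,\E[\,\cdot\,|\mathcal F_s]\,ds$, but the leading quartic moments cancel the $\sigma^8_s$ factors so that the limit is $\sqrt{80/3}\,(\beta_t - t\beta_1)$ for a standard Brownian motion $\beta$, i.e.\ a standard Brownian bridge after the cusum centering; hence $(3/80)^{1/2}\bar V_n^{\dagger}$ converges weakly to $\sup_{0\le t\le1}|\beta_t - t\beta_1|$, a Kolmogorov--Smirnov law. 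The contribution of the initial $K_n$ discarded summands is $O_{\P}(\sqrt{K_n/n})=o_{\P}(1)$ and does not affect the limit.

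The main obstacle I expect is the uniform (in $i$) control of the self-normalizing denominators $\hat\sigma^4_{(i-2)\Delta_n}$: one needs the error $\max_i|\hat\sigma^4_{(i-2)\Delta_n}-\sigma^4_{i\Delta_n}|$ to be not just $o_{\P}(1)$ but small enough, when multiplied by the (maximal) partial-sum process of $Q_{n,i}$, that the remainder $R_n$ is uniformly negligible; since the cusum sum itself is only $O_{\P}(\sqrt n)$ in the appropriate scaling, a crude bound suffices, but it must be established uniformly over $m$, which is where the maximal inequalities (and the restriction $\aalpha>1/4$ guaranteeing a summable bias) enter. A secondary technical point is keeping track of the lag offsets ($i-2$ in the estimator versus $i$ in $Q_{n,i}$) and the edge effects at the lower summation limit $K_n+1$; these are routine once $K_n/n\to0$ and $K_n\to\infty$ at a suitable polynomial rate compatible with \eqref{assk}-type conditions. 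The remaining steps are direct adaptations of the proof of Theorem~\ref{upperglobal} and of the standard continuous mapping/Skorokhod arguments already invoked there.
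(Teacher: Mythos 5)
Your overall route is the same as the paper's: normalize each $Q_{n,i}$ by a local quarticity estimate, reduce to the martingale analysis behind Theorem \ref{upperglobal} and Proposition \ref{propupperglobal}, note that the weighting turns the conditional variance $(80/3)\sigma_s^8$ into the constant $80/3$, and conclude a standard Brownian bridge limit and the Kolmogorov--Smirnov law. The genuine gap lies in how you treat the \emph{uncentered} part of $Q_{n,i}$, which is exactly where $\aalpha>1/4$ is needed. Under the hypothesis the summands carry a mean level of order $\E[(\Delta_i^n\varrho)^2]\approx\vartheta_n^2\le Kn^{-2\aalpha}$ (the terms $III_i$ in the decomposition \eqref{decomposition}), which for general $\aalpha>0$ is \emph{not} $o(n^{-1/2})$; in Theorem \ref{upperglobal} this is harmless only because the cusum centering at the global sample mean removes a common constant exactly. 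Once each summand is multiplied by a different weight $(\hat\sigma_{(i-2)\Delta_n}^4)^{-1}$, that cancellation is destroyed, and the bias terms must be negligible individually; this is precisely the opening step of the paper's proof, where $\aalpha>1/4$ gives $n^{-1/2}\sum_{i}(\sigma_{(i-1)\Delta_n}^2-\sigma_{(i-2)\Delta_n}^2)^2\pn 0$ (equivalently $n^{1/2}\vartheta_n^2\to 0$). You instead attribute $\aalpha>1/4$ to Assumption \ref{assvolaglobal} (iv) being automatic and to bias control in the estimator \eqref{eq_vola_est_for_boot_1}, neither of which is the operative reason (Assumption \ref{assvolaglobal} is granted in full, and the estimator's bias $(K_n/n)^{\aalpha}$ vanishes for every $\aalpha>0$). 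Consequently your key step --- bounding $\sup_t|R_n(t)|$ by the maximal weight error times ``the tightness implicit in \eqref{cusumclt}'' --- does not go through: \eqref{cusumclt} controls only the centered cusum, whereas a weighted sum $\sum_i a_i Q_{n,i}$ with varying $a_i$ also picks up the mean level of the $Q_{n,i}$, which aggregates to order $n^{1-2\aalpha}$ and is $o(\sqrt n)$ only when $\aalpha>1/4$, a fact you never invoke.

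A secondary point: in your main term you normalize by $(\sigma_{i\Delta_n}^4)^{-1}$, which is $\mathcal{F}_{i\Delta_n}$-measurable and hence anticipative relative to the martingale differences $2U_i-2C_iC_{i-1}$ living on $[(i-2)\Delta_n,i\Delta_n]$; with leverage ($\nu$ dependent on $W$) this breaks the martingale structure that drives the stable CLT. This is why the statement and the paper insist on the lag $(i-2)$: $\hat\sigma_{(i-2)\Delta_n}^4$ and $\sigma_{(i-2)\Delta_n}^4$ are $\mathcal{G}_{i-2,n}$-measurable, so the weighted leading terms remain martingale differences, and the predictability is part of the structure rather than a ``routine lag offset''. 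Also, the stochastic error of $\hat\sigma^4$ is of order $K_n^{-1/2}$ per block, not $(nK_n)^{-1/2}$; and note that the paper needs much less than your uniform ratio bound --- it only requires the events $\mathcal{A}_i=\{\hat\sigma_{(i-2)\Delta_n}^4\ge \sigma_{(i-2)\Delta_n}^4/2\}$ to hold jointly with probability tending to one, the swap to the true normalization then being handled through predictable-weight martingale estimates as in Proposition \ref{propupperglobal}.
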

\begin{theo}\label{lowerglobal}For this testing problem, the minimax distinguishable boundary satisfies
\begin{align}
b_n \propto\,\big(n^{-1/2+2\aalpha'}+n^{-2(\aalpha-\aalpha')}\big)\,.\end{align}
In particular, for $b_n =\KLEINO\big(n^{-1/2+2\aalpha'}+n^{-2(\aalpha-\aalpha')}\big)$ a consistent test does not exist:
\[\lim_{n\rightarrow\infty} \inf_{\psi} \gamma_{\psi}\bigl(\aalpha,b_n \bigr) = 1\,.\]
\end{theo}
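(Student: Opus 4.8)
First, $b_n\gtrsim n^{-1/2+2\aalpha'}+n^{-2(\aalpha-\aalpha')}$ suffices, which is the easy direction and follows from Theorem~\ref{upperglobal}. From $n(\Delta_i^nX)^2\approx\sigma_{i\Delta_n}^2\chi_i^2$ and $\E[Z^4]=3$ for $Z\sim N(0,1)$ one gets $\E[Q_{n,i}\mid\sigma]=(\sigma_{i\Delta_n}^2-\sigma_{(i-1)\Delta_n}^2)^2$, so under the alternative the centred cusum of \eqref{cusumclt}, which is $O_\P(1)$, is shifted by a deterministic bump of order $\sqrt n\,(b_n n^{-2\aalpha'}-Kn^{-2\aalpha})$ near $\td$; for $b_n$ a large enough multiple of the stated rate this diverges, while the bootstrap critical value $\hat q_{1-\alpha}(\hat V_n^{\dagger}|\F)$ stays $O_\P(1)$, so the test $\psi_\alpha^{\dagger}$ of \eqref{eq:defn:test:global:boot} (or the standardised variant built on $\bar V_n^{\dagger}$, Proposition~\ref{corrupperglobal}) has power $\to1$. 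Robustness to the finitely many discontinuities of $\nu$ is automatic, since each enters only one summand $Q_{n,i}$ and contributes $O(n^{-1/2})$ after normalisation.

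The substance is the lower bound $\lim_{n\to\infty}\inf_\psi\gamma_\psi=1$ when $b_n=\KLEINO(n^{-1/2+2\aalpha'}+n^{-2(\aalpha-\aalpha')})$. As for Theorem~\ref{thm_lowerbound} I would pass to an informative sub-experiment and a pair of priors of vanishing statistical distance. Restrict to $a\equiv0$, $\nu$ constant, no price jumps, and $\sigma_t^2=c+\varrho_t$ with $c>0$ fixed and $\varrho$ a centred Gaussian process; invoking the Le Cam equivalences used for Theorem~\ref{thm_lowerbound} (in the spirit of \cite{gaussian}), for inference on $\varrho$ the experiment is asymptotically equivalent to observing $D_i=(\sigma_{i\Delta_n}^2-\sigma_{(i-1)\Delta_n}^2)+\eta_i$, $i=2,\dots,n$, with centred noise $\eta_i=\sigma_{i\Delta_n}^2(\chi_i^2-1)-\sigma_{(i-1)\Delta_n}^2(\chi_{i-1}^2-1)$ of variance of exact order one. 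Fix $\td=\tfrac12$ and $\vartheta_n^2=Kn^{-2\aalpha}$. For the null prior take $\varrho$ a small-amplitude fractional Brownian motion of Hurst index $\aalpha$ (Example~\ref{example:fractional}), localised on a tube so that $\sigma^2$ stays in a fixed compact subset of $(0,\infty)$, scaled so that $\E[(\Delta_i^n\varrho)^2]\equiv\vartheta_n^2$; for the alternative prior glue to it at $\td$ a second, independent fractional Brownian motion of Hurst index $\aalpha'$ scaled to increment variance $v_n^{\star}:=\max(\vartheta_n^2,\,b_n n^{-2\aalpha'})$ on $[\td,1]$. Both are valid paths in the sense of Assumption~\ref{assvolaglobal}: \eqref{nusmooth} holds trivially and Assumption~\ref{assvolaglobal}$(iv)$ as in Example~\ref{example:fractional}.

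Now distinguish two regimes. If $b_n n^{-2\aalpha'}\le\vartheta_n^2$, i.e.\ $b_n\le Kn^{-2(\aalpha-\aalpha')}$, then $v_n^{\star}=\vartheta_n^2$ and the single fractional-Brownian path above lies in the hypothesis of Testing problem~\ref{testingproblem} (being $\aalpha$-regular) and, since $\vartheta_n^2\ge b_n n^{-2\aalpha'}$, also in its alternative set; for that $\sigma^2$, $\P_\sigma(\psi=1)+\P_\sigma(\psi=0)=1$ for every $\psi$, hence $\inf_\psi\gamma_\psi=1$. In the complementary regime $b_n n^{-2\aalpha'}>\vartheta_n^2$ --- which under $b_n=\KLEINO(n^{-1/2+2\aalpha'}+n^{-2(\aalpha-\aalpha')})$ forces $\aalpha>1/4$ and $b_n=\KLEINO(n^{-1/2+2\aalpha'})$, so that $\vartheta_n^2,v_n^{\star}=\KLEINO(n^{-1/2})$ --- the two laws differ only on their post-$\td$ coordinates, which are (approximately) centred Gaussian with covariances $\sigma_\eta^2 I+\Sigma_0$ and $\sigma_\eta^2 I+\Sigma_1$, where $\sigma_\eta^2\asymp1$ and $\Sigma_j$ is the covariance of the scaled Hurst-$\aalpha$, resp.\ Hurst-$\aalpha'$, increments, satisfying $\|\Sigma_j\|_F^2\lesssim n\,v_j^2$ with $v_0=\vartheta_n^2$, $v_1=v_n^{\star}\asymp b_n n^{-2\aalpha'}$, by summability of the increment correlations (Example~\ref{example:fractional}(ii)). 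Expanding the Gaussian Hellinger affinity,
\begin{align*}
H^2(\P_0,\P_1)\ \lesssim\ \sigma_\eta^{-4}\,\|\Sigma_0-\Sigma_1\|_F^2\ \lesssim\ n\,\vartheta_n^4+n\,(b_n n^{-2\aalpha'})^2\ =\ n^{1-4\aalpha}+b_n^2\,n^{1-4\aalpha'}\ \longrightarrow\ 0 .
\end{align*}
Hence $\|\P_0-\P_1\|_{TV}\to0$ and $\inf_\psi\gamma_\psi\ge1-\|\P_0-\P_1\|_{TV}\to1$; as $\inf_\psi\gamma_\psi\le1$ always, the claim follows, the two summands of $b_n^{\mathrm{opt}}$ arising respectively from the slack $\vartheta_n^2\le Kn^{-2\aalpha}$ allowed under $H_0$ (the overlap regime, binding for $\aalpha\le1/4$) and from the $n^{-1/2}$ noise floor of the cusum (the Hellinger bound).

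The main obstacle is the first reduction: making rigorous the asymptotic Le Cam equivalence of the high-frequency record with the Gaussian increment model in this non-stationary change-point setting, and then carrying the Hellinger estimate through despite the exact short-range dependence of fractional increments --- this precludes a clean coordinate-wise factorisation and forces one to handle Frobenius norms of the full post-$\td$ covariance blocks and to check that the lag-one correlation of the $\eta_i$ does not depress the effective noise level; the tube localisation of $\varrho$ adds a further, but routine, approximation error. Once the model is in this form the signal-versus-noise bookkeeping that delivers $n^{-1/2+2\aalpha'}+n^{-2(\aalpha-\aalpha')}$ is straightforward.
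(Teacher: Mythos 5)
Your overlap argument for the regime $b_n\le K n^{-2(\aalpha-\aalpha')}$ is valid (a single $\aalpha$-regular prior then lies in both classes of Testing problem \ref{testingproblem}, forcing $\gamma_\psi\ge 1$), and your upper-bound sketch matches what is actually proved in Proposition \ref{prop:optimal:test:global}. The gap is in the complementary regime, which is the substance of the theorem. First, your reduction to the differenced observations $D_i$ is a lossy transformation of the data: for a lower bound you must control the total variation distance between the laws of the \emph{full} observation vector under the two priors; closeness of the laws of a statistic of the data proves nothing, since a test may use the discarded information. Second, and more seriously, the step you yourself flag as the ``main obstacle'' is not merely technical -- it is the whole proof, and the route you propose to it does not work. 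The Le Cam reductions used for Theorem \ref{thm_lowerbound} rest on sufficiency of block averages with $k_n\gg\sqrt{n}$ degrees of freedom, so that the chi-square-to-Gaussian Kullback--Leibler error is $\mathcal{O}(k_n^{-1})$ per block and $\mathcal{O}(n k_n^{-2})=\KLEINO(1)$ in total. Here the signal sits at lag one, so no block averaging is available; coordinate-wise, a $\chi^2_1$-type scale observation is \emph{not} close to a Gaussian location observation in KL/TV, and these order-one errors accumulate over $n$ coordinates. Consequently the inequality $H^2(\P_0,\P_1)\lesssim\sigma_\eta^{-4}\|\Sigma_0-\Sigma_1\|_F^2$, which is a formula for exactly Gaussian laws, has no justification for the true laws (Gaussian variance mixtures with dependent fBm mixing); the bound $n\vartheta_n^4+b_n^2n^{1-4\aalpha'}\to 0$ is precisely what has to be \emph{proved}, not a consequence of any cited equivalence. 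The tube localisation of the fBm prior (needed for positivity and boundedness) further perturbs the law and would also have to be carried through the distance estimate.

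The paper avoids all of this by a different choice of prior: in Theorem \ref{thm_lower_global_aux_theta} the volatility is $\sigma_s=|1+\vartheta_n U_i|$ with i.i.d.\ bounded, symmetric, mean-zero $U_i$, so that the likelihood ratio of the exact chi-square mixtures factorises over $i$ and a direct Taylor expansion of the one-dimensional mixture integrals gives $\mathbf{D}(\Q_1\|\Q_0)=\mathcal{O}(n\vartheta_{n,1}^4)$ -- the second-order contribution per coordinate is $\mathcal{O}(\vartheta_{n,1}^4)$ and the third-order term cancels by symmetry of $U$. Hence the experiments are indistinguishable whenever $|\vartheta_{n,0}^2-\vartheta_{n,1}^2|=\KLEINO(n^{-1/2})$, and choosing $\vartheta_{n,0}=n^{-\aalpha}$, $\vartheta_{n,1}=b_n^{1/2}n^{-\aalpha'}$ yields both terms of the boundary in one stroke (your two regimes correspond to the slack $n^{-2\aalpha}$ allowed under $H_0$ and the $n^{-1/2}$ noise floor, so your bookkeeping agrees with the paper's result). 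Note also that the classes in Testing problem \ref{testingproblem} are defined purely through moment conditions on discrete increments, so the piecewise-constant i.i.d.\ prior is admissible and fBm realism is unnecessary. To repair your argument you would either have to adopt such a product prior and expand the exact KL, or genuinely establish a TV bound for the dependent Gaussian-mixture laws, which is a substantially harder and unaddressed task.
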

\begin{rem}The lower bound in Theorem \ref{lowerglobal} reveals that detecting an alternative with ``too smooth'' volatility, $\aalpha'>1/4$, is not possible in a high-frequency setting.\qed\end{rem}
The boundary $b_n$ in Theorem \ref{lowerglobal} is of slightly different nature than the one in Theorem \ref{thm_lowerbound}. Roughly speaking, the testing problem in Section \ref{sec:4.1} can be associated with a high-dimensional statistical experiment, whereas the one here is attached to a univariate statistical experiment. In this case, an optimal test $\psi = \psi_{\alpha}$ can only reach the lower bound up to a pre-specified nominal level $1-\alpha$, $0 < \alpha < 1$, see e.g.\,\cite{ingster}. Equivalently, we call a sequence of tests $\psi_n$ minimax-optimal if for any $b_n'$ with $n^{-1/2+2\aalpha'}+n^{-2(\aalpha-\aalpha')} = \KLEINO(b_n')$ 
\begin{align}\label{defn:optimal:global}
\lim_{n\rightarrow\infty} \gamma_{\psi_n}\bigl(\aalpha,b_n'\bigr) = 0.
\end{align}
\begin{prop}\label{prop:optimal:test:global}
The test $\psi_{\alpha}^{\dagger}$ in \eqref{eq:defn:test:global:boot} is minimax-optimal. A corresponding test based on Proposition \ref{corrupperglobal} is also minimax-optimal if $\aalpha > 1/4$. In particular, under the alternative of Testing problem \ref{testingproblem} we have ${V}_n^{\dagger}\pn\infty$, when
\begin{align}\label{eq:prop:optimal:test:global:b_n}
n^{-1/2+2\aalpha'}+n^{-2(\aalpha-\aalpha')} = \KLEINO(b_n')\,. 
\end{align}

\end{prop}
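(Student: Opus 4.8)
The plan is to derive the minimax-optimality of $\psi_\alpha^\dagger$ from two ingredients that are already available plus one divergence statement. Control of the type I error is exactly Theorem \ref{thm:bootstrap:global}, so everything reduces to showing that the type II error vanishes, i.e.\ that $V_n^\dagger\pn\infty$ under any alternative of Testing problem \ref{testingproblem} with $n^{-1/2+2\aalpha'}+n^{-2(\aalpha-\aalpha')}=\KLEINO(b_n')$. First I would note that for each fixed $\alpha>0$ the random critical value $\hat q_{1-\alpha}(\hat V_n^\dagger|\F)$ is $\mathcal{O}_{\P}(1)$: conditionally on $\F$, $\hat V_n^\dagger$ converges in distribution to $V^\dagger$ — this is the content underlying Theorem \ref{thm:bootstrap:global} — and the conditional law of $V^\dagger$ is continuous, whence $\hat q_{1-\alpha}(\hat V_n^\dagger|\F)\to q_{1-\alpha}(V^\dagger|\F)<\infty$ a.s.; for the standardized test of Proposition \ref{corrupperglobal} (valid for $\aalpha>1/4$) the critical value is even a deterministic Kolmogorov--Smirnov quantile. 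Consequently $V_n^\dagger\pn\infty$ forces $\P_\sigma(\psi_\alpha^\dagger=0)\to0$, uniformly over the alternative class with $\td$ ranging in a compact subinterval of $(0,1)$, so that $\beta_{\psi_\alpha^\dagger}(\aalpha,b_n')\to0$; together with $\alpha_{\psi_\alpha^\dagger}(\aalpha)\to\alpha$ and letting the nominal level $\alpha=\alpha_n\downarrow0$ slowly this gives \eqref{defn:optimal:global}, and the matching lower bound of Theorem \ref{lowerglobal} then yields minimax-optimality in the sense made precise before the statement.

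To prove $V_n^\dagger\pn\infty$ I would set $d_{n,i}=\E[Q_{n,i}]$, $\bar d_n=(n-1)^{-1}\sum_{i=2}^n d_{n,i}$, $m_0=\lfloor n\td\rfloor$ and keep only the index $m=m_0$ in the maximum in \eqref{testglobal}:
\[ \sqrt{n-1}\,V_n^\dagger\ \geq\ \Big|\sum_{i=2}^{m_0}\big(d_{n,i}-\bar d_n\big)\Big|\ -\ \max_{2\le m\le n}\Big|\sum_{i=2}^m\big(Q_{n,i}-d_{n,i}\big)-\tfrac{m-1}{n-1}\sum_{i=2}^n\big(Q_{n,i}-d_{n,i}\big)\Big|. \]
The centered maximum on the right is $\mathcal{O}_{\P}(\sqrt n)$, because the tightness arguments behind Theorem \ref{upperglobal} only use that $\sigma^2$ is bounded above and below, which is still true under the alternative: the extra roughness of $\varrho$ on $[\td,1]$ resides in its increments, not its level, so the $n^{-1/2}$-rescaled centered cusum process remains tight. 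For the deterministic term, the first-moment analysis of $Q_{n,i}$ yields $d_{n,i}=\E[(\Delta_i^n\varrho)^2]+\E[(\Delta_i^n\nu)^2]+R_{n,i}$ with $\sum_{i=2}^n|R_{n,i}|=\KLEINO(\sqrt n)$, where $\Delta_i^n\varrho$ is the increment of Assumption \ref{assvolaglobal}$(iv)$, the independence of $\varrho$ from $(W,\nu)$ kills the cross term up to negligible pieces, and $R_{n,i}$ absorbs the drift and the error in replacing $n(\Delta_i^n X)^2$ by a $\chi^2$-weighted local volatility. By \eqref{rhosmooth} and \eqref{nusmooth} one then gets $\sum_{i=2}^{m_0}d_{n,i}=\mathcal{O}(n^{1-2\aalpha}+\sqrt n)$, whereas \eqref{rhosmooth:alternative} gives $\sum_{i=m_0+1}^n d_{n,i}\ge(n-m_0)\,b_n'\,n^{-2\aalpha'}$, and therefore
\[ \Big|\sum_{i=2}^{m_0}\big(d_{n,i}-\bar d_n\big)\Big|\ \geq\ \tfrac{m_0-1}{n-1}\sum_{i=m_0+1}^n d_{n,i}-\sum_{i=2}^{m_0}d_{n,i}\ \gtrsim\ \td(1-\td)\,n\,b_n'\,n^{-2\aalpha'}-\mathcal{O}\big(n^{1-2\aalpha}+\sqrt n\big). \]
Dividing by $\sqrt{n-1}$, the leading piece is of order $\td(1-\td)\,b_n'\,n^{1/2-2\aalpha'}$, which diverges since $n^{-1/2+2\aalpha'}=\KLEINO(b_n')$, and it dominates the correction $n^{1/2-2\aalpha}+\mathcal{O}(1)$ because $n^{-2(\aalpha-\aalpha')}=\KLEINO(b_n')$; hence $V_n^\dagger\geq(\text{a deterministic sequence}\to\infty)-\mathcal{O}_{\P}(1)\pn\infty$. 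The same computation with $Q_{n,i}$ replaced by $(\hat\sigma_{(i-2)\Delta_n}^4)^{-1}Q_{n,i}$ — whose weights are $\mathcal{O}_{\P}(1)$ and bounded away from $0$ since $\hat\sigma_{(i-2)\Delta_n}^4$ consistently estimates the bounded and bounded-below $\sigma^4$ — gives $\bar V_n^\dagger\pn\infty$ when $\aalpha>1/4$.

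The hard part will be the two uniform estimates used in the second step: the first-moment expansion $d_{n,i}=\E[(\Delta_i^n\varrho)^2]+\E[(\Delta_i^n\nu)^2]+R_{n,i}$ with $\sum_i|R_{n,i}|=\KLEINO(\sqrt n)$ — isolating the $\varrho$-signal from the drift, the time-discretization error and the leverage-induced cross terms, uniformly over the alternative class — and the maximal bound $\mathcal{O}_{\P}(\sqrt n)$ on the centered cusum of the $Q_{n,i}$ in the \emph{inhomogeneous} regime where the increments of $\varrho$ are enlarged on $[\td,1]$. Both are essentially re-runs of the proof of Theorem \ref{upperglobal} carried out under the alternative rather than under $H_0$; once they are in place, the assembly in the first paragraph finishes the proof.
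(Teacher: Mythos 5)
Your proposal is correct and follows essentially the same route as the paper: size control via Theorem \ref{thm:bootstrap:global} together with $\hat{q}_{1-\alpha}(\hat{V}_n^{\dagger}|\F)=\mathcal{O}_{\P}(1)$, and power by lower-bounding the cusum at $m=\lfloor n\td\rfloor$ through the deterministic signal in $\E[(\Delta_i^n\varrho)^2]$ (giving the order $\td(1-\td)(b_n'n^{1/2-2\aalpha'}-n^{1/2-2\aalpha})$) minus a centered part that is $\mathcal{O}_{\P}(\sqrt{n})$ by the machinery of Lemma \ref{propR} and Proposition \ref{propupperglobal}. What you single out as the ``hard part'' (re-running the Theorem \ref{upperglobal} estimates under the alternative) is exactly what the paper compresses into its appeal to Proposition \ref{propupperglobal}, Lemma \ref{propR} and the triangle inequality, so no new idea is missing.
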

\begin{rem}
Truncation in \eqref{testglobal}, analogously as in Section \ref{sec:3.2}, facilitates a method robust to jumps of $X_t$. The time of change $\td$ can be estimated using the $\operatorname{argmax}$. Precise results on the latter aspects are left for future research.
\qed
\end{rem}
We have established a minimax-optimal test for global changes. The methods from Section \ref{sec:3} react under some conditions also to global changes, but forfeit optimality. Combining both approaches provides the statistician with suitable devices to analyze volatility dynamics.

\section{Simulations\label{sec:6}}

\begin{figure}[ht!]
\fbox{
\includegraphics[width=7.5cm]{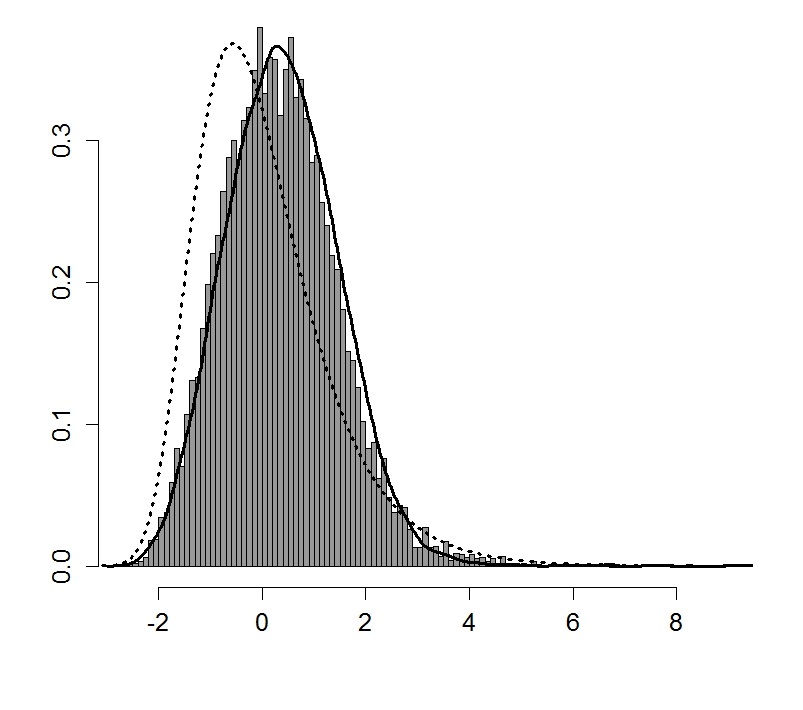}~\includegraphics[width=7.5cm]{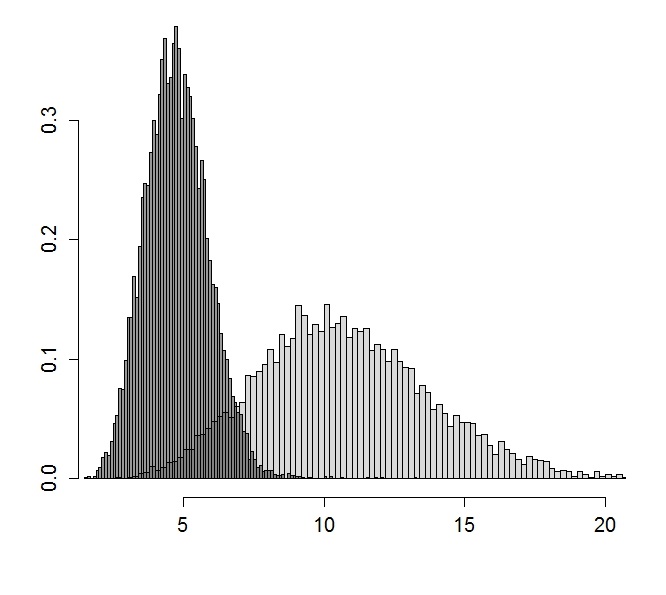}}
\fbox{
\includegraphics[width=7.5cm]{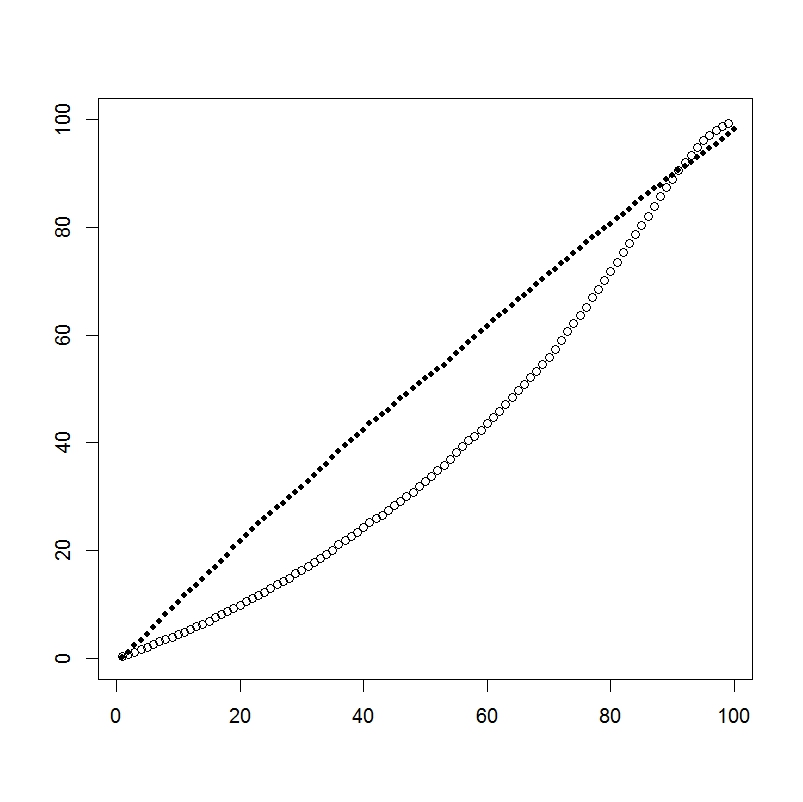}~\includegraphics[width=7.5cm]{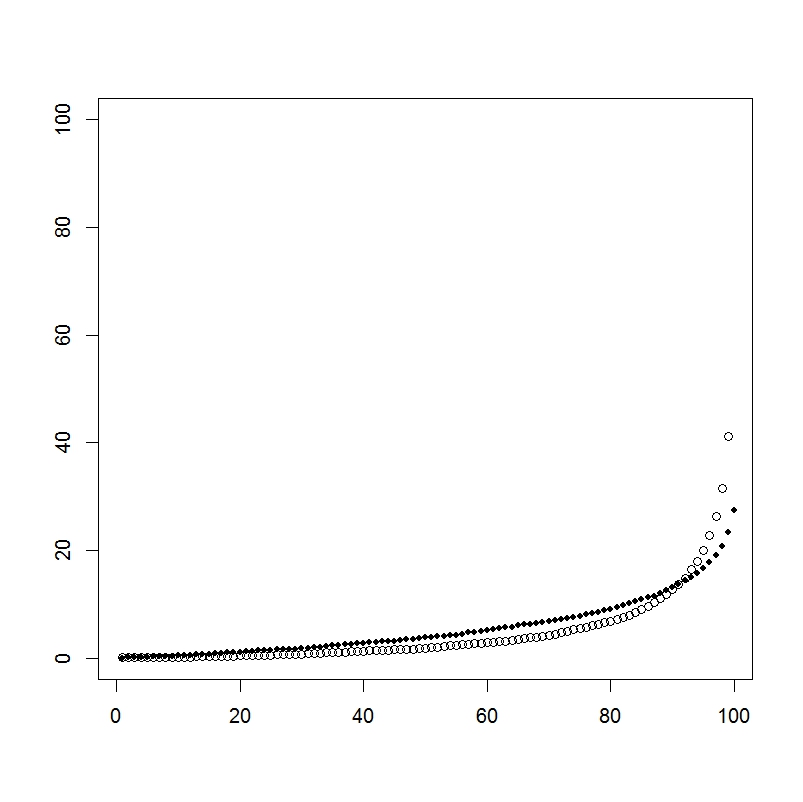}}
\caption{\label{Fig:2}Top: Histograms of \eqref{statV2tr} for $k_{1000}=275$ under hypothesis and alternative (right) and rescaled version comparing left hand side and limit law of \eqref{thmj2} and bootstrapped law (left); limit law density marked by dotted and bootstrapped by solid line. Bottom: Empirical size (left) and power (right) of the test by comparing empirical percentiles to ones of limit law under $H_0$ (light points) and to the bootstrapped percentiles (dark points).}
\end{figure}

We examine finite-sample properties of the proposed methods in a simulation study. First, consider $n=1000$ observations at regular times of \eqref{sm} with a stochastic semi-martingale volatility model
\begin{align}\label{volasim2}\sigma_t=\left(\int_0^t c\cdot\rho \,dW_s+\int_0^t \sqrt{1-\rho^2}\cdot c \,dW_s^{\bot}\right)\cdot v_t\,\end{align}
which fluctuates around a deterministic seasonality function
\begin{align}\label{volasim}v_t=1-0.2\sin\big(\tfrac{3}{4}\pi\,t),\,t\in[0,1]\,,\end{align}
with $c=0.1$ and $\rho=0.5$, where $W^{\bot}$ is a standard Brownian motion independent of $W$. We set the start value $X_0=4$ and the constant drift $a=0.1$. \eqref{volasim} mimics a realistic volatility shape with strong decrease after opening and slight increase before closing and the model poses an intricate setup to discriminate jumps from continuous motion based on the $n=1000$ discrete recordings of $X$. 

Under the local alternative, we add one jump of size $0.2$ at fixed time $t=2/3$ to $\sigma_t$, which equals the range of the continuous movement and shifts the volatility back to its maximum start value. This is in line with effects evoked by surprise elements from macroeconomic news in the financial context; see for instance Figure \ref{Fig:Intro}. Changing the time of the volatility jump does not affect the results substantially, though. One jump of $X$ at a uniformly drawn jump arrival time is implemented for the hypothesis and the alternative as well, and under the alternative $X$ additionally exhibits a common jump of $X$ and $\sigma$ at $t=2/3$. All these jumps are $N(0.5,0.1)$ distributed. 

Since $X$ comprises jumps, we apply the test statistic \eqref{statV2tr}. We focus on $V_{n,u_n}^*$ with overlapping blocks as it significantly outperforms the test with non-overlapping blocks. For the truncation sequence we set $u_n=\sqrt{2\log{(n)}}n^{-1/2}\approx 3.72\,n^{-1/2}$; see Remark \ref{trunc}. In all cases, we iterate $10,000$ Monte Carlo runs.

Figure \ref{Fig:2} illustrates simulation results for $k_{1000}=275$, but minor modifications of $k_{n}$ do not change the results substantially. Null and alternative are reasonably well distinguished, but the approximation of the limit law is somewhat imprecise, which is typical for limit theorems with extreme value distributions in finite-sample applications. Therefore, it is common practice in change-point literature to apply bootstrap procedures; see e.g.\,\cite{wuzhao2007}. We apply here a wild bootstrap-type procedure and use the statistics \eqref{statXtr} to pre-estimate the (in practice) unknown volatility and a smoothed version applying a linear filter with equal weights and $k_n$ lags to derive an estimated volatility shape. Then the statistics \eqref{statV2} are iteratively simulated, with $X$ being a discretized It\^{o} process without jumps and drift and with the pre-estimated volatility, to obtain critical values from the bootstrapped distribution. In light of the intricate setup, Figure \ref{Fig:2} confirms a high finite-sample accuracy. The density curve of the bootstrapped law in Figure \ref{Fig:2} is obtained from a kernel density estimate with R's standard bandwidth selection using Silverman's rule of thumb. While the size based on critical values from the limit law is not so precise, the bootstrapping works considerably well. The power for typical testing levels is reasonably large. 

If one is interested in the approximation of the critical values by the limit law, Figure \ref{Fig:4} provides more insight. Here, we have visualized the simulation results for $n=10,000$ observations, using different block widths. Setting $k_{10000}=500$, the limiting law is very well approximated and the power looks good as well. However, similar effects as before can be seen for a too large choice of $k_n$. Therefore it appears reasonable to use a bootstrap procedure in such a setting as well.

\begin{figure}[t]
\fbox{\parbox{.98\linewidth}{{\begin{center}Testing fractional OU log-volatility against jump\end{center}}
\includegraphics[width=7.5cm]{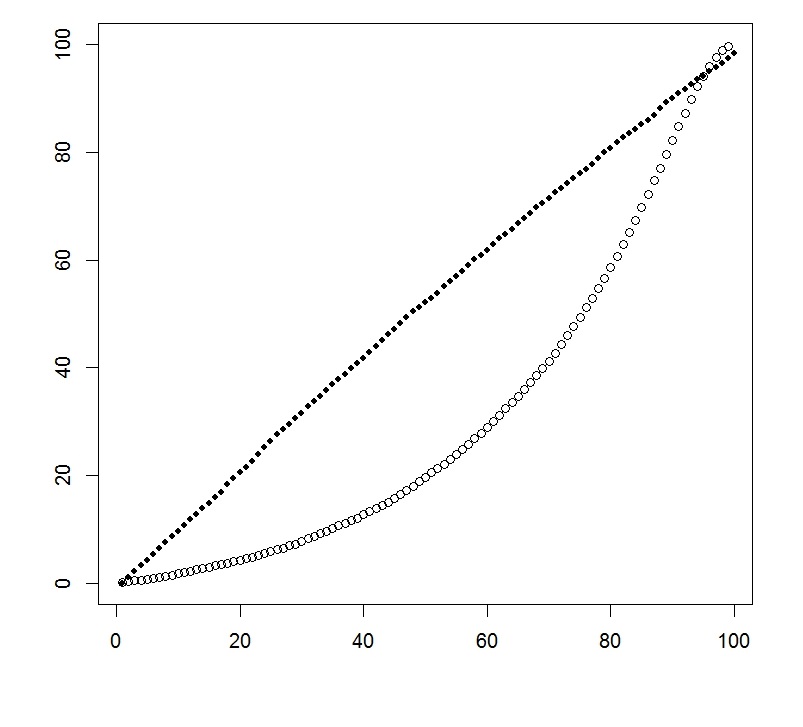}~\includegraphics[width=7.5cm]{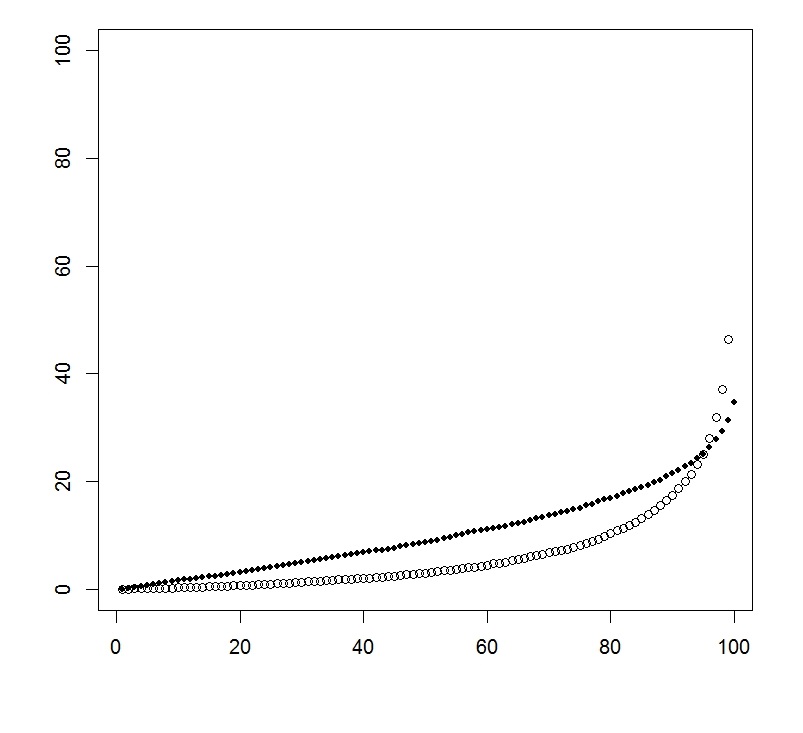}
}}
\fbox{\parbox{.98\linewidth}{{\begin{center}Testing semi-martingale against fractional volatility\end{center}}
\includegraphics[width=7.5cm]{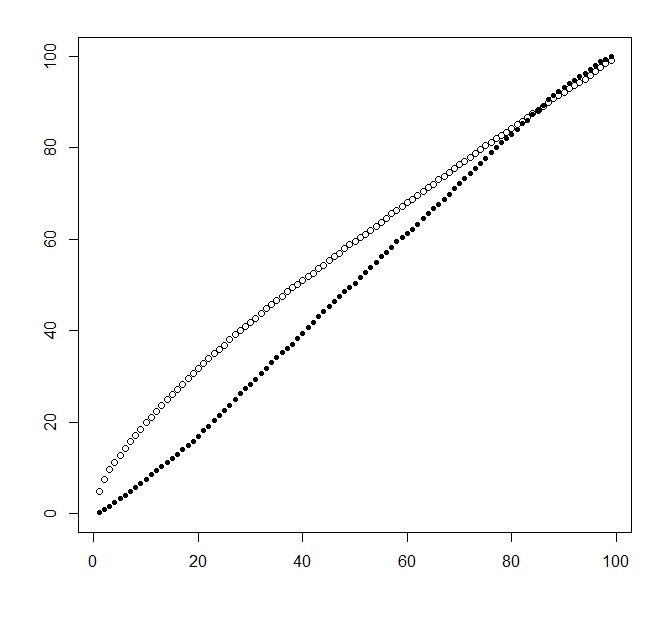}~\includegraphics[width=7.5cm]{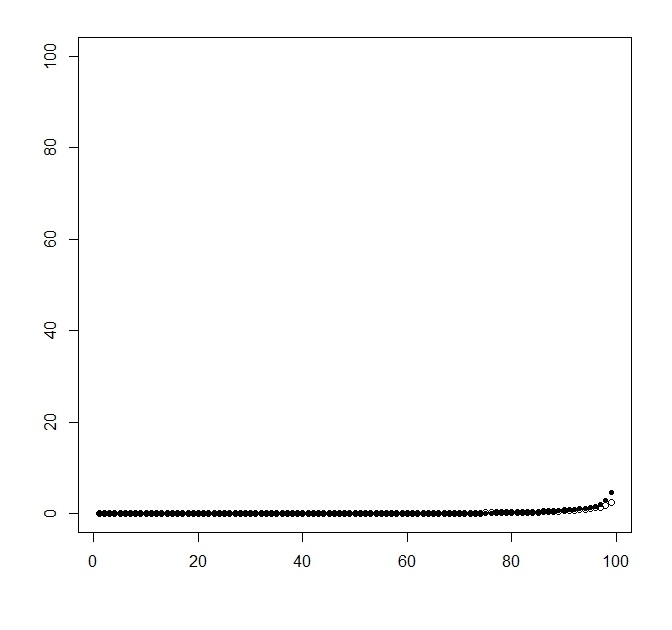}
}}
\caption{\label{Fig:3}Empirical size (left) and power (right) of the tests by comparing empirical percentiles to ones of the limit law under $H_0$ (light points) and to the bootstrapped ones (dark points). Top: Test \eqref{statV2tr} with $k_{1000}=275$; bottom: Test \eqref{testglobal}.}
\end{figure}

In the financial literature, many stochastic volatility models rely on fractional non-semi-martingale processes and recently in particular the interest in volatilities with small regularity has increased; see e.g.\ \cite{gatheral}. To see how our methods perform in such models, we modify our setup using the prominent fractional log-volatility model by \cite{comte}; i.e.\,replacing the semi-martingale above by a fractional OU-process 
\begin{align}\label{volasim3}d(\log(\tilde\sigma_t))=-0.1\log(\tilde\sigma_t)\,dt+0.1\,dB_t^H\,,\,\sigma_t=\tilde\sigma_t\cdot v_t\end{align}
with a fractional Brownian motion $(B_t^H)_{0\le t\le 1}$ with Hurst parameter $H$. The fractional process is implemented following Choleski's method with a code similar to the one in Appendix A.3 of \cite{fBm}.

The upper part of Figure \ref{Fig:3} presents the finite-sample precision of the test for a small Hurst parameter $H=0.2$. The outcomes are only slightly less accurate than for semi-martingale volatility and broadly give a similar picture. Therefore there is even finite-sample precision for detecting jumps in a fractional volatility process with a small Hurst parameter. Coming back to our introductory data example from Figure \ref{Fig:1} for intra-day prices on March 18th, 2009, the test rejects the null for both 3M and GE with $p$-values very close to zero. The point in time where the difference of adjacent statistics is maximized estimates the time of the structural change under the alternative. In both examples we find grid point 285, corresponding to 02:15 p.m. EST, as the estimated change-point.

\begin{figure}[t]
\includegraphics[width=7.5cm]{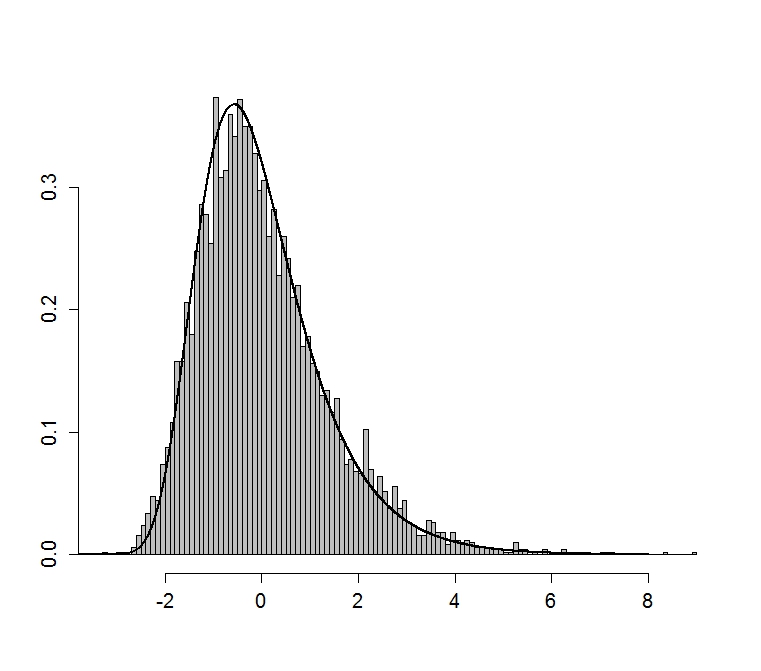}~\includegraphics[width=7.5cm]{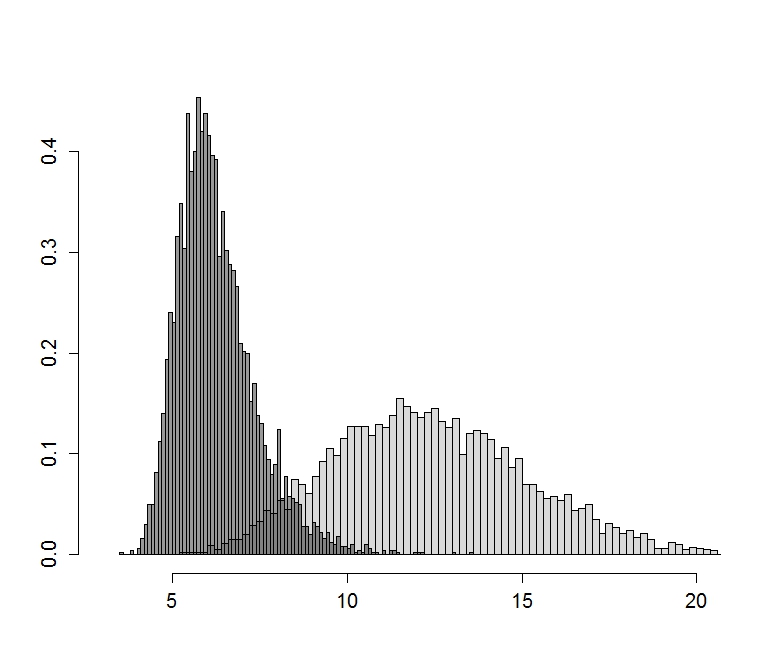}\\%}
\includegraphics[width=3.9cm]{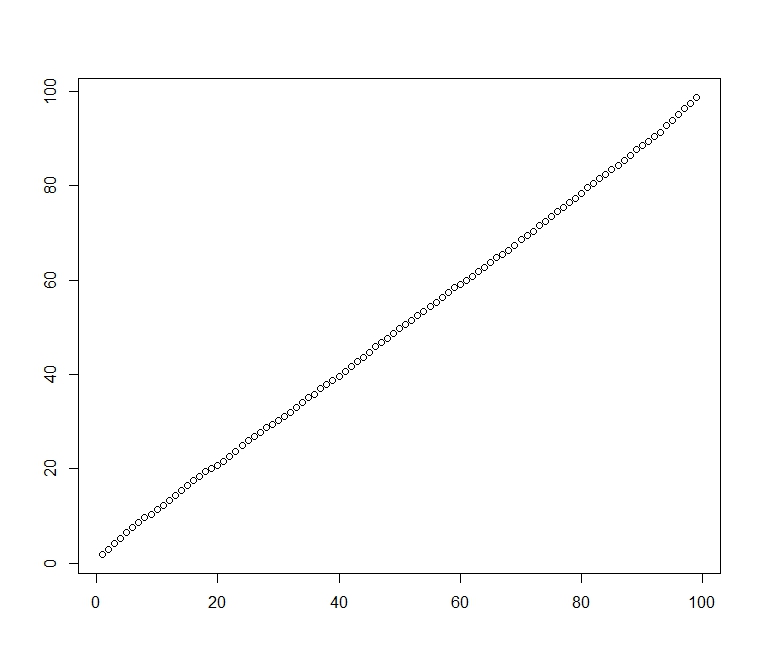}~\includegraphics[width=3.9cm]{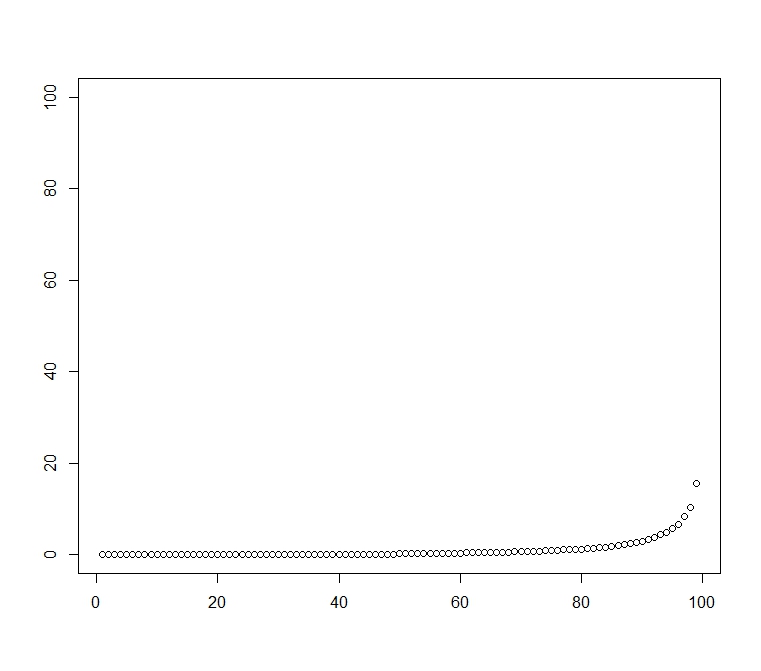}~\includegraphics[width=3.45cm]{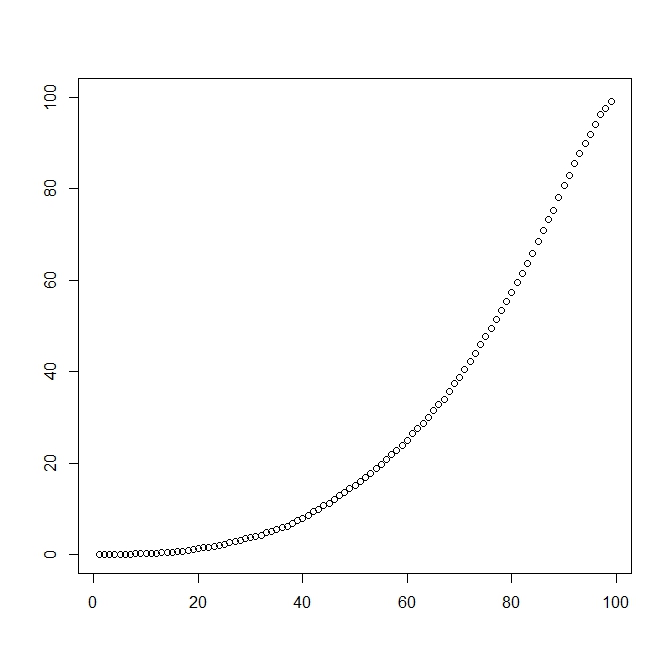}~\includegraphics[width=3.45cm]{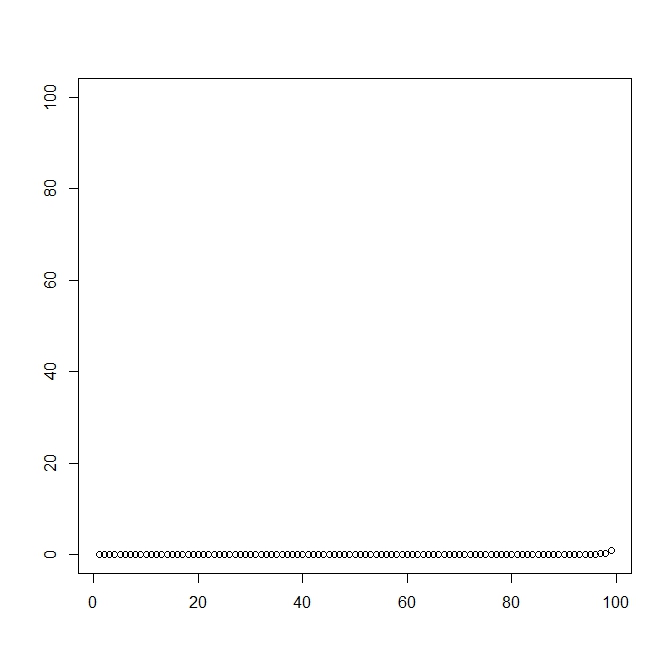}%}
\caption[caption]{\label{Fig:4}Finite sample precision of nonparametric test \eqref{statV2} for volatility model \eqref{volasim} against jump under the alternative for sample size $n=10,000$.\\ Top: Histograms of \eqref{statV2} for $k_{10000}=500$ under hypothesis and alternative (right) and rescaled version comparing left hand side and limit law of \eqref{thm1lt2} (left); limit law density marked by solid line.\\ Bottom: Empirical size and power of the test by comparing empirical percentiles to ones of limit law under $H_0$ for $k_{10000}=500$ (left) and $k_{10000}=1000$ (right).}
\begin{center}\includegraphics[width=7.3cm]{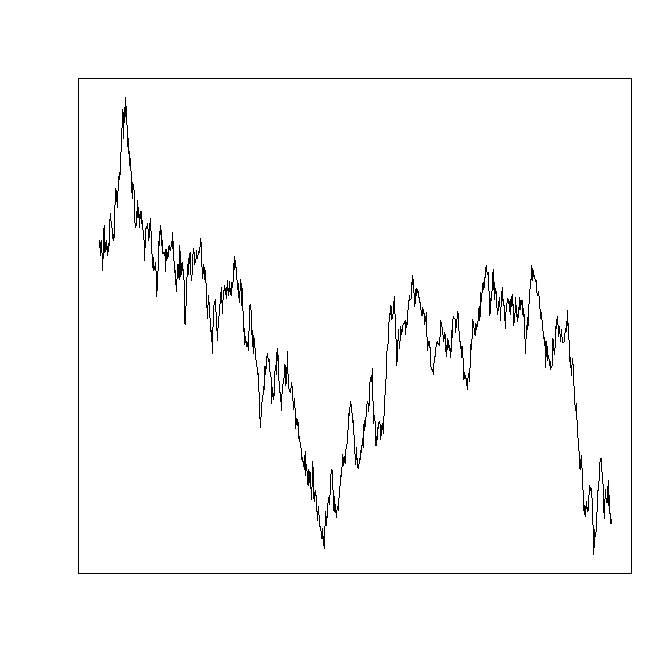}\hspace*{1.2cm}\includegraphics[width=7cm]{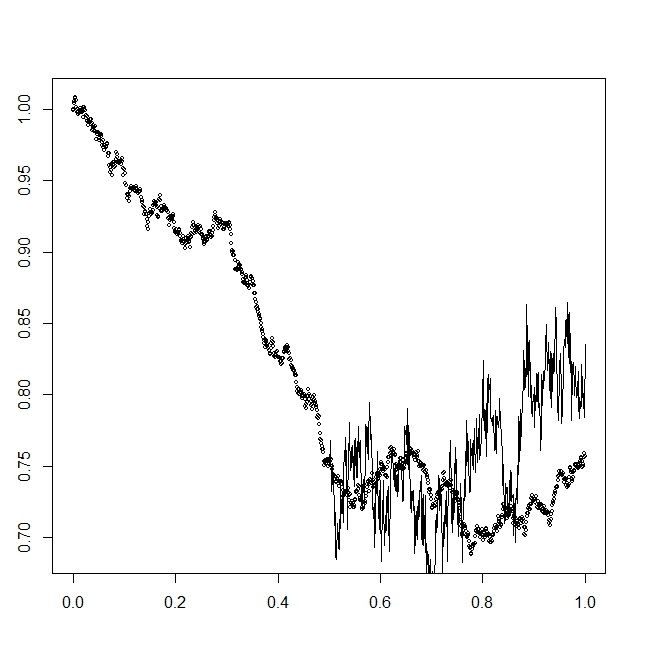}\end{center}
\caption{\label{Fig:5}Test of semi-martingale against fractional volatility: Typical paths of the log-price under the alternative, left, and of the volatility under null (dots) and under alternative (lines), right.}
\end{figure}

Finally, we examine the test for global changes based on \eqref{testglobal} in a simulation. Thereto, consider hypothesis \eqref{volasim2} for the volatility against the alternative that $(\sigma_t)_{t\ge\td}$ follows \eqref{volasim3} with Hurst parameter $H=0.15$, where under the alternative the change in smoothness happens at $\td=0.5$; see Figure \ref{Fig:5} for an illustration. The lower part of Figure \ref{Fig:3} confirms a remarkable finite sample performance of this test. Modifications of $H$ under the alternative do not affect the results substantially. Therefore, we expect that the methodology in Section \ref{sec:5} opens up valuable new ways for inference on volatility's regularity, useful for studies as the one in \cite{gatheral}.  

\clearpage
\appendix
\section{Proof of Theorem 3.2}
First, we reduce the proof of Theorem \ref{thm1} to Propositions \ref{prop1}-\ref{thm_mod_wu}. The main part in the analysis of $V_n$ from \eqref{statV} is to replace it by the statistic
\begin{align} \label{statU}
U_n =  \max_{i=0,\ldots,\lfloor n/k_n \rfloor -2} \vert Y_{n,i}/Y_{n,i+1} - 1 \vert,
\end{align}
in which the original statistics $RV_{n,i}$ from \eqref{statX} are approximated by
\begin{align} \label{statY}
Y_{n,i} = \frac{n}{k_n} \sum_{j=1}^{k_n} \sigma_{ik_n\Delta_n}^2 (\Delta_{ik_n + j}^n W)^2.
\end{align}
Up to different (random) factors in front, the maximum in $U_n$ is constructed from functionals of the i.i.d.\ increments of Brownian motion, which helps a lot in the derivation of its asymptotic behaviour. We start with a result on the approximation error due to replacing $V_n$ by $U_n$.

\begin{prop} \label{prop1}
Suppose that we are under the null. If Assumption \ref{assVola} and \eqref{assk} hold, then we have
\[
\sqrt{\log{(n)}\,k_n}\,\big(V_n - U_n \big)\pn 0.
\]
\end{prop}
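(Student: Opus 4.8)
The plan is to show that replacing the realized volatilities $RV_{n,i}$ by their Gaussian proxies $Y_{n,i}$ changes the ratio statistic by a negligible amount after multiplication by $\sqrt{\log(n)k_n}$. The starting point is the elementary observation that for positive quantities $a,b$ one has $|a/b-1| = |(a-b)/b|$, and more usefully, comparing two such ratios,
\begin{align*}
\Big|\frac{RV_{n,i}}{RV_{n,i+1}} - \frac{Y_{n,i}}{Y_{n,i+1}}\Big| \le \frac{|RV_{n,i}-Y_{n,i}|}{RV_{n,i+1}} + \frac{Y_{n,i}}{Y_{n,i+1}}\cdot\frac{|RV_{n,i+1}-Y_{n,i+1}|}{RV_{n,i+1}}.
\end{align*}
Since $V_n$ and $U_n$ are maxima over $i$ of these ratios minus one, $|V_n - U_n|$ is bounded by the maximum over $i$ of the right-hand side. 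Thus the whole proposition reduces to two ingredients: a uniform-in-$i$ lower bound on the denominators $RV_{n,i+1}$ (and $Y_{n,i+1}$), and a uniform-in-$i$ upper bound on the numerator differences $|RV_{n,i}-Y_{n,i}|$, each controlled at the rate $o_{\P}((\log(n)k_n)^{-1/2})$.

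First I would handle the denominators. On $\Omega^c$, Assumption \ref{assVola}(2) gives $\sigma_t^2 \ge \sigma_-^2 > 0$, so $Y_{n,i+1} = \frac{n}{k_n}\sigma_{(i+1)k_n\Delta_n}^2\sum_{j}(\Delta^n W)^2$ concentrates around $\sigma_{(i+1)k_n\Delta_n}^2 \ge \sigma_-^2$; a standard chi-square deviation bound together with a union bound over the $m_n = \lfloor n/k_n\rfloor$ blocks shows $\min_i Y_{n,i}$ stays bounded away from zero with probability tending to one (using $k_n \to \infty$ faster than a power of $n$, which makes the Gaussian tail beat the polynomial number of blocks). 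The same bound transfers to $\min_i RV_{n,i}$ once the numerator approximation below is established. Consequently the denominators can be replaced by a deterministic positive constant at the cost of an asymptotically negligible error, and it remains to bound $\sqrt{\log(n)k_n}\max_i |RV_{n,i}-Y_{n,i}|$.

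For the numerator difference I would expand
\begin{align*}
RV_{n,i} - Y_{n,i} = \frac{n}{k_n}\sum_{j=1}^{k_n}\Big((\Delta^n_{ik_n+j}X)^2 - \sigma_{ik_n\Delta_n}^2(\Delta^n_{ik_n+j}W)^2\Big),
\end{align*}
write $\Delta^n X$ on the block as $\int \sigma_s dW_s + \int a_s\, ds$, and split the difference according to (a) the drift contribution, which is $\tOuu(\Delta_n)$ per increment and hence trivially negligible at the required rate; (b) the difference $\int_{\cdot}\sigma_s dW_s - \sigma_{ik_n\Delta_n}\Delta^n W = \int_{\cdot}(\sigma_s - \sigma_{ik_n\Delta_n})dW_s$, whose squared increments and cross terms with $\sigma_{ik_n\Delta_n}\Delta^n W$ are controlled by the modulus-of-continuity bound $|\sigma_s - \sigma_{ik_n\Delta_n}| \le \KK_n(k_n\Delta_n)^\aalpha$ from Assumption \ref{assVola}(3) valid on each block. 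The deterministic part of the resulting bound is of order $\KK_n(k_n\Delta_n)^\aalpha \cdot \sigma_+$, and the stochastic fluctuation around it, after summing $k_n$ nearly-i.i.d. terms and dividing, is of order $k_n^{-1/2}$ times a factor that a maximal inequality plus union bound over $m_n$ blocks turns into $\sqrt{\log(n)}\,k_n^{-1/2}$; combining, $\max_i|RV_{n,i}-Y_{n,i}| = \tOuu\big(\KK_n(k_n\Delta_n)^\aalpha + \sqrt{\log(n)/k_n}\,\big)$ — wait, the second term needs care and is actually the dominant stochastic one, handled by a Bernstein-type bound for sums of squares of Gaussian-type variables.

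The main obstacle, then, is the sharp stochastic control: showing that $\sqrt{\log(n)k_n}\max_i|RV_{n,i}-Y_{n,i}| \pn 0$ requires that both the bias term $\sqrt{\log(n)k_n}\cdot\KK_n(k_n\Delta_n)^\aalpha = \sqrt{k_n}(k_n\Delta_n)^\aalpha\sqrt{\log(n)}$ and the variance term vanish — the bias term vanishing is exactly the second half of condition \eqref{assk}, so the proposition is tight and one cannot afford any slack in the estimates. Technically, the cross term $\frac{n}{k_n}\sum_j \sigma_{ik_n\Delta_n}(\Delta^n W)\cdot\int_{\cdot}(\sigma_s-\sigma_{ik_n\Delta_n})dW_s$, being a sum of martingale-increment products, has mean zero and must be bounded via Burkholder-Davis-Gundy and a chaining/union-bound argument over the $m_n$ blocks, using that $\log(m_n) \sim \log(n)$; carrying the factor $\KK_n$ (random, a.s. finite) through this uniformly is the delicate point, resolved by working on the stopping-time sets $\{T_n \ge 1\}$ from Assumption \ref{assVola}(3) on which $\KK_n$ is effectively deterministic, and a localization argument to pass to $\Omega^c$.
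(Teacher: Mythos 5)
Your overall architecture coincides with the paper's proof: a localization step to make the bounds on $a$, $\sigma$ and the modulus of continuity global, the elementary ratio-difference decomposition of $|V_n-U_n|$ as in \eqref{VmU}, a uniform lower bound on the denominators obtained from $\sigma_t^2\ge\sigma_-^2$ plus concentration of the block averages with a union bound over the $\lfloor n/k_n\rfloor$ blocks, and an It\^o-type expansion of $RV_{n,i}-Y_{n,i}$ into a drift part, a smoothness-bias part of order $(k_n\Delta_n)^{\aalpha}$, and martingale cross terms, each controlled by high moments, Markov's inequality and a union bound, with the bias term killed exactly by the second half of \eqref{assk}.

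There is, however, one quantitative step that, as written, breaks the argument: your intermediate claim $\max_i|RV_{n,i}-Y_{n,i}|=\mathcal{O}_{\P}\bigl(\KK_n(k_n\Delta_n)^{\aalpha}+\sqrt{\log(n)/k_n}\bigr)$. With an additive $\sqrt{\log(n)/k_n}$ term, multiplication by $\sqrt{\log(n)k_n}$ leaves a $\log(n)$ contribution, so the proposition would not follow; and your proposed fix (a Bernstein-type bound for sums of squares of Gaussians) only controls the fluctuation of $RV_{n,i}$ or $Y_{n,i}$ \emph{individually}, which genuinely is of size $\sqrt{\log(n)/k_n}$, not the fluctuation of their difference. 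The point you need to make explicit — and which the paper's decomposition \eqref{decomp} delivers — is that $RV_{n,i}$ and $Y_{n,i}$ are built from the \emph{same} Brownian increments, so the chi-square-scale fluctuations cancel identically: writing $(\Delta^n_{ik_n+j}X)^2-\sigma^2_{ik_n\Delta_n}(\Delta^n_{ik_n+j}W)^2$ via It\^o's formula, every stochastic term carries either a drift factor or the modulus-of-continuity factor $(k_n\Delta_n)^{\aalpha}$ inside the stochastic integral, so the stochastic part of the difference is of order $k_n^{-1/2}(k_n\Delta_n)^{\aalpha}$ up to logarithms — there is no $k_n^{-1/2}$-sized fluctuation without the smoothness factor. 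Once this is stated, your BDG-plus-union-bound scheme (with moment order chosen large enough to beat the polynomial number of blocks) closes the proof exactly as the paper does; as a minor additional caveat, the cross term you call mean-zero is not exactly so under leverage (its conditional mean is $\mathcal{O}(\Delta_n(k_n\Delta_n)^{\aalpha})$), which the It\^o form of the decomposition sidesteps since the bias is then wholly contained in $\int(\sigma_s^2-\sigma^2_{ik_n\Delta_n})\,ds$.
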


Recall that the variables $Y_{n,i}$ are not only computed over different intervals, but come with different volatilities in front as well. In order to obtain a statistic which is independent of $\sigma$ let us define
\begin{align} \label{statYtil}
\WY_{n,i} = \frac{n}{k_n} \sum_{j=1}^{k_n} \sigma_{(i-1)k_n\Delta_n}^2 (\Delta_{ik_n + j}^n W)^2,
\end{align}
where the volatility factor is shifted in time now. Set then
\begin{align} \label{statUtil}
\WU_n =  \max_{i=0,\ldots,\lfloor n/k_n \rfloor -2} \vert Y_{n,i}/\WY_{n,i+1} - 1 \vert.
\end{align}

\begin{prop} \label{prop2}
Suppose that we are under the null. If Assumption \ref{assVola} and \eqref{assk} hold, then we have
\[
\sqrt{\log{(n)}\,k_n}\,\big(U_n - \WU_n \big)\pn 0.
\]
\end{prop}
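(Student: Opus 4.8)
The plan is to bound $\bigl|U_n-\WU_n\bigr|$ directly, exploiting that $\WU_n$, unlike $U_n$, is exactly free of $\sigma$. Abbreviate $A_i=\sum_{j=1}^{k_n}(\Delta_{ik_n+j}^nW)^2$, so that $Y_{n,i}=(n/k_n)\sigma_{ik_n\Delta_n}^2A_i$ and, by the very definition of $\WY_{n,i+1}$ in \eqref{statYtil}, $\WY_{n,i+1}=(n/k_n)\sigma_{ik_n\Delta_n}^2A_{i+1}$. Hence $Y_{n,i}/\WY_{n,i+1}=A_i/A_{i+1}$ carries no volatility factor at all, whereas $Y_{n,i}/Y_{n,i+1}=(\sigma_{ik_n\Delta_n}^2/\sigma_{(i+1)k_n\Delta_n}^2)\,A_i/A_{i+1}$. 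Using the elementary inequality $|\max_i|a_i|-\max_i|b_i||\le\max_i|a_i-b_i|$ with $a_i=Y_{n,i}/Y_{n,i+1}-1$ and $b_i=Y_{n,i}/\WY_{n,i+1}-1$, I would first reduce the claim to
\[
\bigl|U_n-\WU_n\bigr|\ \le\ \max_{0\le i\le m_n-2}\Bigl|\frac{\sigma_{ik_n\Delta_n}^2}{\sigma_{(i+1)k_n\Delta_n}^2}-1\Bigr|\cdot\frac{A_i}{A_{i+1}},
\]
and then control the two factors separately.

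For the volatility ratio I would run the usual localization: on $\Omega^c\cap\{T_n>1\}$, which captures essentially all the mass of $\Omega^c$, Assumption~\ref{assVola}(3) gives $|\sigma_s-\sigma_r|\le\KK_n|s-r|^\aalpha$ for $s,r\le1$; together with $\inf_t\sigma_t^2\ge\sigma_-^2>0$ and the upper local boundedness of $\sigma$ this yields $\max_i|\sigma_{ik_n\Delta_n}^2/\sigma_{(i+1)k_n\Delta_n}^2-1|\le C\,\KK_n(k_n\Delta_n)^\aalpha$ with a random constant $C$. If the $\KK_n$ form a genuine random sequence, one restricts further to $\{\KK_n\le K\}$ and sends $K\to\infty$ at the end, keeping in mind that \eqref{assk} is arranged so that $\KK_n\sqrt{k_n}(k_n\Delta_n)^\aalpha\sqrt{\log n}\to0$ still holds.

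The substance of the proof lies in showing $\max_iA_i/A_{i+1}\pn1$. Because consecutive blocks use increments of $W$ over disjoint time intervals, $A_0,\dots,A_{m_n-1}$ are i.i.d., each equal in law to $\Delta_n$ times a $\chi^2_{k_n}$ variable, and there are $m_n\asymp n/k_n$ of them. I would apply a standard exponential tail bound for the chi-square distribution together with a union bound over the $m_n$ blocks at deviation level $t_n\asymp\sqrt{\log(n/k_n)/k_n}$, obtaining $\max_i|(n/k_n)A_i-1|\pn0$; here the first part of \eqref{assk}, $k_n^{-1}\Delta_n^{-\epsilon}\to0$, is what forces $k_n$ to grow at least polynomially in $n$, so that $\log(n/k_n)/k_n\to0$, i.e.\ $t_n\to0$, while the union bound still closes. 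Writing $(n/k_n)A_i=1+\xi_i$ with $\Xi_n:=\max_i|\xi_i|\pn0$ then gives $\max_iA_i/A_{i+1}\le(1+\Xi_n)/(1-\Xi_n)\pn1$.

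Putting the bounds together,
\[
\sqrt{\log(n)\,k_n}\,\bigl|U_n-\WU_n\bigr|\ \le\ C\cdot\bigl(\KK_n\sqrt{k_n}\,(k_n\Delta_n)^\aalpha\sqrt{\log n}\bigr)\cdot\max_i\frac{A_i}{A_{i+1}},
\]
where the bracketed factor tends to $0$ by \eqref{assk} and the last factor is $\mathcal{O}_{\P}(1)$, so the left-hand side vanishes in probability, which is the assertion. The hard part will be the uniform estimate $\max_iA_i/A_{i+1}\pn1$, i.e.\ calibrating the chi-square concentration rate against the number $n/k_n$ of blocks; by comparison, turning the random modulus-of-continuity bound of Assumption~\ref{assVola}(3) into the deterministic H\"older estimate used above is routine bookkeeping.
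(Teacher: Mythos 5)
Your proposal is correct and follows essentially the paper's own argument: after cancelling the common factor $\sigma^2_{ik_n\Delta_n}$ (your $Y_{n,i}/\WY_{n,i+1}=A_i/A_{i+1}$ is just a rearrangement of the paper's identity $Y_{n,i}/Y_{n,i+1}-Y_{n,i}/\WY_{n,i+1}=Y_{n,i}(\WY_{n,i+1}-Y_{n,i+1})/(Y_{n,i+1}\WY_{n,i+1})$), everything reduces, exactly as in the paper, to the modulus-of-continuity bound $\sqrt{k_n\log(n)}\,\big|\sigma^2_{ik_n\Delta_n}-\sigma^2_{(i+1)k_n\Delta_n}\big|\le K\sqrt{k_n}(k_n\Delta_n)^{\aalpha}\sqrt{\log(n)}\to 0$ via \eqref{assk}, combined with a uniform control over the $\sim n/k_n$ blocks of the normalized sums of squared Brownian increments from above and below. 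The only cosmetic difference is that you get this uniform control from exponential chi-square tail bounds plus a union bound, whereas the paper uses high-moment Markov bounds (and equation (22) of Vetter (2012) for the uniform lower bound on the denominator); both are routine and valid since \eqref{assk} forces $k_n$ to grow polynomially, so $\log(n)/k_n\to 0$.
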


In the final step, we replace $\WY_{n,i+1}$ in the denominator by its limit $\sigma_{ik_n\Delta_n}^2$. Set
\begin{align} \label{statVtil}
\WV_n =  \max_{i=0,\ldots,\lfloor n/k_n \rfloor -2} \Big\vert \frac{Y_{n,i}-\WY_{n,i+1}}{\sigma_{ik_n\Delta_n}^2} \Big\vert.
\end{align}

\begin{prop} \label{prop3}
Suppose that we are under the null. If condition \eqref{assk} is satisfied, then we have
\[\sqrt{\log{(n)}\,k_n}\,\big(
\WU_n - \WV_n\big) \pn 0.
\]
\end{prop}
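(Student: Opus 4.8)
The starting point is the observation that the time shift built into $\WY_{n,i+1}$ makes its (random) volatility prefactor coincide with the one of $Y_{n,i}$: both equal $\sigma^2_{ik_n\Delta_n}$. Consequently, setting
\[
\Bchi_i=\frac{n}{k_n}\sum_{j=1}^{k_n}(\Delta_{ik_n+j}^n W)^2,
\]
we have $Y_{n,i}=\sigma^2_{ik_n\Delta_n}\,\Bchi_i$ and $\WY_{n,i+1}=\sigma^2_{ik_n\Delta_n}\,\Bchi_{i+1}$, so the volatility cancels \emph{exactly} in both statistics:
\[
\WU_n=\max_{i}\Big|\frac{\Bchi_i}{\Bchi_{i+1}}-1\Big|=\max_i\frac{|\Bchi_i-\Bchi_{i+1}|}{\Bchi_{i+1}},
\qquad
\WV_n=\max_i\big|\Bchi_i-\Bchi_{i+1}\big|.
\]
This is precisely why the statement dispenses with any hypothesis on $\sigma$: after the reduction of Propositions \ref{prop1} and \ref{prop2}, the remaining objects are pure functionals of the i.i.d.\ Brownian increments, and only condition \eqref{assk} on $k_n$ enters.

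Next I would invoke the elementary bound $\big|\max_i A_i-\max_i B_i\big|\le\max_i|A_i-B_i|$ with $A_i=|\Bchi_i/\Bchi_{i+1}-1|$ and $B_i=|\Bchi_i-\Bchi_{i+1}|$, combined with the pointwise identity $A_i-B_i=|\Bchi_i-\Bchi_{i+1}|\,(\Bchi_{i+1}^{-1}-1)$, to obtain
\[
\big|\WU_n-\WV_n\big|\le\Big(\max_i|\Bchi_i-\Bchi_{i+1}|\Big)\Big(\max_i\frac{|1-\Bchi_{i+1}|}{\Bchi_{i+1}}\Big).
\]
The variables $k_n\Bchi_i$ are independent over the blocks $i=0,\dots,m_n-1$ and each $\chi^2_{k_n}$-distributed, so $\E[\Bchi_i]=1$. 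A Bernstein-type tail estimate for sums of independent $\chi^2_1$ variables, together with a union bound over the at most $m_n\le n$ blocks, yields $\max_i|\Bchi_i-1|=\mathcal{O}_{\P}\bigl(\sqrt{\log(m_n)/k_n}\bigr)$; hence $\max_i|\Bchi_i-\Bchi_{i+1}|=\mathcal{O}_{\P}\bigl(\sqrt{\log(m_n)/k_n}\bigr)$ and, on an event of probability tending to one, $\Bchi_{i+1}\ge 1/2$ uniformly in $i$. Multiplying the two factors gives $\big|\WU_n-\WV_n\big|=\mathcal{O}_{\P}\bigl(\log(m_n)/k_n\bigr)$.

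Finally, multiplying by the normalization and using $m_n\le n$,
\[
\sqrt{\log(n)\,k_n}\,\big|\WU_n-\WV_n\big|=\mathcal{O}_{\P}\!\Big(\frac{\sqrt{\log(n)}\,\log(m_n)}{\sqrt{k_n}}\Big)=\mathcal{O}_{\P}\!\Big(\frac{(\log n)^{3/2}}{\sqrt{k_n}}\Big).
\]
The first summand of \eqref{assk} forces $k_n\Delta_n^{\epsilon}\to\infty$, i.e.\ $k_n$ eventually dominates a power $n^{\epsilon}$ of $n$, so the right-hand side tends to zero and $\sqrt{\log(n)k_n}\,(\WU_n-\WV_n)\pn 0$, as claimed. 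The only step that requires genuine care is the uniform concentration bound for $\max_i|\Bchi_i-1|$ with the correct logarithmic rate; since we only need an $\mathcal{O}_{\P}$-statement and $k_n$ grows polynomially in $n$, a crude exponential Markov inequality plus a union bound is amply sufficient, and no extreme-value asymptotics (which are needed later in the proof of Theorem \ref{thm1} itself) enter here.
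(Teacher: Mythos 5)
Your argument is correct, and at its heart it is the same reduction the paper uses: after the time shift, $\WU_n-\WV_n$ is controlled by $\max_i |\Bchi_i-\Bchi_{i+1}|\,|1-\Bchi_{i+1}|/\Bchi_{i+1}$, i.e.\ a product of two uniformly small factors over a denominator bounded away from zero, which is precisely the paper's representation $\max_i\bigl|(Y_{n,i}-\WY_{n,i+1})(\WY_{n,i+1}-\sigma_{ik_n\Delta_n}^2)\bigr|/(\WY_{n,i+1}\sigma_{ik_n\Delta_n}^2)$ once $\sigma_{ik_n\Delta_n}^2$ is cancelled. The differences are in the execution, and they work in your favour. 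You cancel the volatility exactly at the outset, so nothing about $\sigma$ beyond strict positivity enters (consistent with the proposition assuming only \eqref{assk}), whereas the paper keeps $\sigma$ and controls the denominators via its bounds and the argument of equation (22) in \cite{vett2012}. You then bound both factors by a Bernstein-type tail estimate for $\chi^2_{k_n}$ averages plus a union bound, getting the explicit rates $\max_i|\Bchi_i-1|=\mathcal{O}_{\P}\bigl(\sqrt{\log(m_n)/k_n}\bigr)$ and $\sqrt{k_n\log(n)}\,|\WU_n-\WV_n|=\mathcal{O}_{\P}\bigl((\log n)^{3/2}k_n^{-1/2}\bigr)$, while the paper works with high-order Markov/moment bounds as in (\ref{step19})--(\ref{step20}), which give weaker (but still sufficient) uniform bounds. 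Finally, your multiplicative bookkeeping, which distributes the normalization $\sqrt{k_n\log(n)}$ over \emph{both} small factors, is what actually closes the estimate: isolating $\sqrt{k_n\log(n)}\max_i|Y_{n,i}-\WY_{n,i+1}|$ on its own, as in the paper's displayed splitting, yields a quantity of order $\sqrt{\log(n)\log(m_n)}$ that does not vanish, so your version is the cleaner and more careful way to finish; the price is only the (routine) sub-exponential concentration input, and you rightly observe that no extreme-value asymptotics are needed at this stage.
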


From Propositions \ref{prop1} to \ref{prop3} we have $\sqrt{\log{(n)}\,k_n}\,\big(V_n - \WV_n\big) \pn 0$, while
\begin{align}\label{keystatisticno}
\WV_n =  \max_{i=0,\ldots,\lfloor n/k_n \rfloor -2} \Big \vert \frac 1{k_n} \sum_{j=1}^{k_n} (\sqrt{n}\Delta_{ik_n + j}^n W)^2  - \frac 1{k_n} \sum_{j=1}^{k_n} (\sqrt{n}\Delta_{(i+1)k_n + j}^n W)^2  \Big \vert
\,.\end{align}
This statistic corresponds to the statistic $D_n$ given in (13) of \cite{wuzhao2007}; see as well Proposition \ref{thm_mod_wu}. Precisely, after subtracting the mean on both sides above, their $(X_k)_{1\le k\le n}$ correspond to $\big((\sqrt{n}\Delta_{k}^n W)^2) -1\big)_{1\le k\le n}$, which forms an i.i.d\ sequence of shifted $\chi^2_1$-variables.

In the same fashion we can prove that the asymptotics of $V_n^*$ in \eqref{statV2} can be traced back to the statistics $D_n^*$ in (12) of \cite{wuzhao2007}, see Proposition \ref{thm_mod_wu}.
\begin{prop}\label{prop4}We have that $\sqrt{\log{(n)}\,k_n}\,\big(V_n^*-\tilde V_n^*\big)\pn 0$, with
\begin{align}\label{keystatistic}
\WV^*_n =  \max_{i=k_n,\ldots, n-k_n } \Big \vert \frac 1{k_n} \sum_{j=i+1}^{i+k_n} \big((\sqrt{n}\Delta_{j}^n W)^2  - (\sqrt{n}\Delta_{ j-k_n}^n W)^2 \big) \Big \vert
\,.\end{align}
\end{prop}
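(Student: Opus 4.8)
The plan is to repeat, for the moving-window statistic $V_n^*$, the three-step reduction carried out for $V_n$ in Propositions \ref{prop1}--\ref{prop3}, but now performed uniformly over all $\sim n$ overlapping windows at once. Write $A_{n,i} = \frac{n}{k_n}\sum_{j=i-k_n+1}^{i}(\Delta_j^n X)^2$ and $B_{n,i} = \frac{n}{k_n}\sum_{j=i+1}^{i+k_n}(\Delta_j^n X)^2$, so that $V_n^* = \max_{i=k_n,\ldots,n-k_n}|A_{n,i}/B_{n,i}-1|$. First I would localise by the standard stopping-time argument based on Assumption \ref{assVola} (3): it suffices to treat the case where $a$ and $\sigma$ are bounded and $\sigma$ obeys a deterministic modulus $w_\delta(\sigma)\le K\delta^{\aalpha}$. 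Then, inside both $A_{n,i}$ and $B_{n,i}$, I would successively replace (a) $\Delta_j^n X$ by $\Delta_j^n(\sigma\!\cdot\!W)$, absorbing the drift square and the drift--martingale cross term; and (b) $\Delta_j^n(\sigma\!\cdot\!W)$ by $\sigma_{(i-k_n)\Delta_n}\Delta_j^n W$ in the numerator block and by $\sigma_{i\Delta_n}\Delta_j^n W$ in the denominator block, absorbing $\int(\sigma_s-\sigma_{\mathrm{anchor}})\,dW$. Each substitution produces a residual bounded, uniformly in $i$, by the quantities already estimated in Propositions \ref{prop1}--\ref{prop3} for a single increment; the only change is that the maximum now runs over $\le n$ indices, so a union bound costs a factor $\sqrt{\log n}$ rather than $\sqrt{\log m_n}$, and since $\log m_n\le\log n$ no order degrades.

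Second, I would linearise the ratio. With $\widehat{A}_{n,i}=\frac1{k_n}\sum_{j=i-k_n+1}^{i}(\sqrt n\,\Delta_j^n W)^2$ and $\widehat{B}_{n,i}=\frac1{k_n}\sum_{j=i+1}^{i+k_n}(\sqrt n\,\Delta_j^n W)^2$, the leading term of $A_{n,i}/B_{n,i}-1$ is
\[
\frac{\widehat{A}_{n,i}-\widehat{B}_{n,i}}{\widehat{B}_{n,i}}
\;+\;\frac{\bigl(\sigma_{(i-k_n)\Delta_n}^2-\sigma_{i\Delta_n}^2\bigr)\widehat{A}_{n,i}}{\sigma_{i\Delta_n}^2\,\widehat{B}_{n,i}}
\;+\;R_{n,i},
\]
where $R_{n,i}$ gathers the residuals from (a) and (b). The anchoring is chosen precisely so that $|\sigma_{(i-k_n)\Delta_n}^2-\sigma_{i\Delta_n}^2|\le CK(k_n\Delta_n)^{\aalpha}$; together with the uniform bounds $\sigma_-^2\le\sigma_t^2\le C$ this makes the middle term $O_{\P}\bigl(K(k_n\Delta_n)^{\aalpha}\bigr)$ uniformly, hence $o_{\P}\bigl((k_n\log n)^{-1/2}\bigr)$ by the second part of \eqref{assk}; the residuals $R_{n,i}$ are $o_{\P}\bigl((k_n\log n)^{-1/2}\bigr)$ uniformly for the same reason, exactly as in Propositions \ref{prop1}--\ref{prop3}. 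Expanding $1/\widehat{B}_{n,i}=1+(1-\widehat{B}_{n,i})+(1-\widehat{B}_{n,i})^2+\cdots$ (valid uniformly with high probability) and observing that $\widehat{B}_{n,i}-\widehat{A}_{n,i}$ is exactly the increment inside $\WV_n^*$ in \eqref{keystatistic}, it remains to show that $\sqrt{k_n\log n}\,\max_i\bigl|(\widehat{A}_{n,i}-\widehat{B}_{n,i})(1-\widehat{B}_{n,i})\bigr|\pn0$, together with the analogous higher-order terms.

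Third, all remaining uniform bounds follow from classical maximal and exponential inequalities for partial sums of the i.i.d.\ shifted $\chi^2_1$ variables $(\sqrt n\,\Delta_j^n W)^2-1$, which already underlie Propositions \ref{prop1}--\ref{prop3} and the invariance principle of \cite{wuzhao2007} invoked afterwards. They give $\max_i|1-\widehat{B}_{n,i}|=O_{\P}(\sqrt{\log n/k_n})$ and $\max_i|\widehat{A}_{n,i}-\widehat{B}_{n,i}|=O_{\P}(\sqrt{\log n/k_n})$, so the cross term is $O_{\P}(\log n/k_n)$ and $\sqrt{k_n\log n}$ times it is $O_{\P}\bigl((\log n)^{3/2}k_n^{-1/2}\bigr)$, which tends to $0$ because $k_n$ grows faster than a power of $n$ by the first part of \eqref{assk}. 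Collecting the estimates, $|V_n^*-\WV_n^*|$ is bounded uniformly by the sum of the above $o_{\P}\bigl((k_n\log n)^{-1/2}\bigr)$ terms, which yields $\sqrt{\log(n)\,k_n}\,(V_n^*-\WV_n^*)\pn0$.

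The step I expect to be the main obstacle is precisely this uniform-over-$n$-windows control of the \emph{quadratic-in-error} terms: each error is individually negligible, but one must check that after multiplying by the large normalisation $\sqrt{k_n\log n}$ the products of two $O_{\P}(\sqrt{\log n/k_n})$ quantities still vanish, which is exactly what forces $k_n^{-1/2}(\log n)^{3/2}\to0$ --- the content of the lower bound on $k_n$ in \eqref{assk}. By contrast, the overlapping structure of the windows is harmless here (it becomes essential only later, for the dependence structure of $\WV_n^*$ in the invariance principle): since we only ever bound a maximum by a union bound, overlap costs nothing beyond the $\sqrt{\log n}$ already accounted for.
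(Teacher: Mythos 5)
Your proposal follows essentially the same route as the paper: the paper likewise reduces $V_n^*$ to the anchored statistic with $\sigma_{(i-k_n)\Delta_n}^2$ in the numerator block and $\sigma_{i\Delta_n}^2$ in the denominator block, controls the maximum over all $n$ overlapping windows by the same high-moment Markov/union bounds (the larger index set only costs a polynomial factor, absorbed by taking the moment order large, since $k_n\gtrsim n^{\epsilon}$), uses the identical smoothness bound $K\sqrt{k_n}(k_n\Delta_n)^{\aalpha}\sqrt{\log n}$ for the anchor difference, and then removes the random denominator exactly as in the Proposition \ref{prop3}-type step, where the product of two $\mathcal{O}_{\P}(\sqrt{\log n/k_n})$ errors is negligible after multiplication by $\sqrt{k_n\log n}$. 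Your geometric-series expansion of $1/\widehat{B}_{n,i}$ and the phrasing of the union-bound cost as a $\sqrt{\log n}$ factor are only cosmetic variations on the paper's ratio identities and probability summation, so the argument is correct and matches the paper's proof.
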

Theorem 1 of \cite{wuzhao2007} establishes limit theorems of the form \eqref{thm1lt} and \eqref{thm1lt2} under more restrictive assertions on $k_n$ than \eqref{assk}, as they consider the behavior for a class of weakly dependent random sequences $(X_k)_{k\ge 1}$. The next proposition provides a more specific limit theorem tailored to the asymptotic analysis of the statistics \eqref{keystatisticno} and \eqref{keystatistic}. In particular, instead of using the strong approximation theory under weak dependence from \cite{wu2006} employed by \cite{wuzhao2007} to prove their Theorem 1, we rely on classical bounds for the approximation error in the invariance principle for i.i.d.\,variables with existing moments. This is applicable in a more general setup with much smaller block lengths $k_n$.
\begin{prop}\label{thm_mod_wu}
Consider a sequence $(X_k)_{k\in\N}$ of i.i.d.\,random variables with $\Var\bigl[X_k\bigr] = \varsigma^2$ and $\E\bigl[|X_k|^p\bigr] < \infty$ for some $p \geq 4$. If
\begin{align}\label{eq_condi_kn}
k_n^{-p/2} n =  \KLEINO\bigl((\log (n))^{-p/2}\bigr),
\end{align}
then with $m_n=\lfloor n/k_n\rfloor$ the statistic
\begin{align*}
D_n^* = \frac{1}{k_n}\max_{k_n \leq i \leq n - k_n}\biggl|\sum_{j = i +1}^{k_n + i} X_j  - \sum_{j = i- k_n+1}^{i} X_j\biggr|.
\end{align*}
obeys the weak convergence:
\begin{align*}
\sqrt{\log (m_n)} (k_n^{1/2}\varsigma^{-1}) D_n^* - 2 \log{(m_n)} - \frac{1}{2}\log \log (m_n) - \log 3 \xrightarrow{w} V\,,
\end{align*}
where $V$ is distributed according to \eqref{V}. The statistic
\[
D_n=\max_{1\le i\le \lfloor n/k_n\rfloor-2}\big|\sum_{j=1}^{k_n}X_{ik_n+j}-X_{(i+1)k_n+j}\big|
\]
using non-overlapping blocks satisfies under the same assumptions
\begin{align*}\sqrt{\log(m_n)} \big(\big(k_n^{1/2}\varsigma^{-1}\big) D_{n} - [4 \log(m_n) - 2 \log(\log (m_n))]^{1/2}\big) \weak V\,.\end{align*}
\end{prop}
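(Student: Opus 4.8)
The plan is to transfer the problem from the i.i.d.\ sequence $(X_k)$ to a Brownian motion by a strong invariance principle and then to invoke classical extreme-value asymptotics for maxima of (moving sums of) Gaussian variables. What lets us avoid the weakly dependent strong approximation of \cite{wu2006} used in \cite{wuzhao2007}, and hence allow much smaller $k_n$, is that for i.i.d.\ summands the Koml\'{o}s--Major--Tusn\'{a}dy embedding under a polynomial moment assumption supplies a coupling with error $\KLEINO(n^{1/p})$, and condition \eqref{eq_condi_kn} is exactly what makes this error negligible against the normalisation $\sqrt{\log(m_n)}\,k_n^{1/2}/\varsigma$. \textbf{Step 1 (strong approximation).} Write $S_i=\sum_{j=1}^{i}X_j$ and assume $\E X_1=0$ without loss of generality. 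As $\E|X_1|^p<\infty$ with $p\ge4>2$, by the Koml\'{o}s--Major--Tusn\'{a}dy/Major embedding we may realise $(X_k)$ on a space carrying a standard Brownian motion $B$ with $\max_{1\le i\le n}|S_i-\varsigma B_i|=\KLEINO(n^{1/p})$ a.s.; set $\xi_j:=B_j-B_{j-1}$, an i.i.d.\ standard normal sequence. Every block sum entering $D_n^{*}$ or $D_n$ is a difference of at most four of the $S_i$, so by the triangle inequality, uniformly in the index, $\bigl|k_n D_n^{*}-\varsigma\,k_n\widetilde D_n^{*}\bigr|\le 4\max_{1\le i\le n}|S_i-\varsigma B_i|=\KLEINO(n^{1/p})$, where $\widetilde D_n^{*}:=k_n^{-1}\max_{k_n\le i\le n-k_n}\bigl|\sum_{j=i+1}^{i+k_n}\xi_j-\sum_{j=i-k_n+1}^{i}\xi_j\bigr|$, and similarly for the non-overlapping analogue $\widetilde D_n$.

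\textbf{Step 2 (negligibility, and the non-overlapping statistic).} Multiplying the last bound by $\sqrt{\log(m_n)}\,k_n^{-1/2}\varsigma^{-1}$ turns the coupling error into $\KLEINO\bigl(\sqrt{\log(m_n)}\,k_n^{-1/2}n^{1/p}\bigr)$, which vanishes precisely when $k_n\gg(\log m_n)\,n^{2/p}$; since $\log m_n\le\log n$ and \eqref{eq_condi_kn} is equivalent to $n^{2/p}\log n=\KLEINO(k_n)$, this holds, so it suffices to prove both limits for the Gaussian statistics $\widetilde D_n^{*}$, $\widetilde D_n$ (with $\varsigma$ re-inserted). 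For $\widetilde D_n$ the blocks are disjoint; the standardised differences $\tilde G_i:=(2k_n)^{-1/2}\bigl(\sum_{j=1}^{k_n}\xi_{ik_n+j}-\sum_{j=1}^{k_n}\xi_{(i+1)k_n+j}\bigr)$, $1\le i\le m_n-2$, form a one-dependent stationary standard Gaussian sequence (consecutive terms share a block), which by classical Gaussian extremes theory has extremal index $1$, so $\max_i|\tilde G_i|$ has the Gumbel asymptotics of the maximum of $m_n$ i.i.d.\ standard normals; inverting $u\mapsto 2\,m_n\,\P(N(0,1)>u)$ with $\P(N(0,1)>u)\sim(u\sqrt{2\pi})^{-1}e^{-u^2/2}$ gives the centring $[4\log(m_n)-2\log\log(m_n)]^{1/2}$ and the tilt $\pi^{-1/2}$ of \eqref{V}.

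\textbf{Step 3 (the overlapping statistic).} This is the core. Here $\sum_{j=i+1}^{i+k_n}\xi_j-\sum_{j=i-k_n+1}^{i}\xi_j=B_{i+k_n}-2B_i+B_{i-k_n}$, so $Z_i:=(2k_n)^{-1/2}(B_{i+k_n}-2B_i+B_{i-k_n})$ is a unit-variance stationary Gaussian sequence with covariance $r(h)=\E[Z_0Z_h]$ piecewise linear in $h/k_n$, equal to $1-\tfrac{3}{2}|h|/k_n$ for $|h|\le k_n$ and to $0$ for $|h|\ge2k_n$. Because the dependence range $2k_n$ grows, one passes to the continuous-time picture: by Brownian scaling $Z_i=\zeta(i/k_n)$ for a stationary Gaussian process $\zeta$ of unit variance with local covariance $r_\zeta(s)=1-\tfrac{3}{2}|s|+\KLEINO(|s|)$ as $s\to0$ and correlation range $2$, i.e.\ of Pickands index $\alpha=1$ with constant $C=\tfrac{3}{2}$, and $(Z_i)$ is the discretisation of $\zeta$ on $k_n^{-1}\mathbb Z$ over a window of length $\asymp m_n$. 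Since the relevant levels satisfy $u\asymp\sqrt{2\log m_n}$, so that $u^2/k_n\to0$, this grid is dense in Pickands' sense and the discrete and continuous maxima of $|\zeta|$ over the window are asymptotically equivalent. Inserting $\alpha=1$, $C=\tfrac{3}{2}$, $H_1=1$ into the Pickands--Piterbarg tail expansion, and doubling for the absolute value, yields $\P\bigl(\sup_{[0,T]}|\zeta|>u\bigr)\sim 3\,(2\pi)^{-1/2}\,T\,u\,e^{-u^2/2}$ with $T\asymp m_n$; inverting this reproduces the centring $2\log(m_n)+\tfrac{1}{2}\log\log(m_n)+\log 3$ and the tilt $\pi^{-1/2}$ of \eqref{V} — the $\log3=\log(2\cdot\tfrac{3}{2})$ being the two-sided factor times the local-covariance coefficient, the $\tfrac{1}{2}\log\log(m_n)$ the signature of the critical index $\alpha=1$ (the surviving power of $u$ in the prefactor), and $\pi^{-1/2}$ the residual normalisation. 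This is precisely the Gaussian specialisation of the computation behind Theorem~1 of \cite{wuzhao2007}, and it runs under only $k_n\to\infty$, $k_n=\KLEINO(n)$, both implied by \eqref{eq_condi_kn}.

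\textbf{Main difficulty.} Steps~1 and~2 are routine once the polynomial-moment KMT embedding and elementary Gaussian extreme-value theory are at hand. The substance is Step~3: setting up the stationary Gaussian process $\zeta$ and carrying out the Pickands/upcrossing analysis carefully enough to pin down the exact normalising constants — in particular the $\log3$ and the $\pi^{-1/2}$ — together with the verification that the discrete maximum over integer indices shares the limit of the continuous-time maximum, which is where $k_n\to\infty$ (density of the grid) is used.
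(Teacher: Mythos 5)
Your proposal is correct and follows essentially the same route as the paper: a KMT-type strong approximation of the partial sums (the paper uses the Sakhanenko/KMT tail bound with Markov's inequality, you use the a.s.\ $\KLEINO(n^{1/p})$ rate, which is equivalent in substance and likewise exploits \eqref{eq_condi_kn}), followed by Gaussian extreme-value asymptotics for the second-difference process — where the paper cites Lemmas 1 and 2 of \cite{wuzhao2007} (with $\alpha=1$, $D_{H,1}=3$) and you re-derive the same Pickands--Piterbarg computation, correctly recovering the constants $\log 3$, $\tfrac12\log\log m_n$ and $\pi^{-1/2}$. The only small inaccuracy is the closing remark that Step 3 needs only $k_n\to\infty$, $k_n=\KLEINO(n)$: the dense-grid (and discrete-to-continuous) argument in fact requires $\log m_n=\KLEINO(k_n)$, which holds here because \eqref{eq_condi_kn} forces $n^{2/p}\log n=\KLEINO(k_n)$, so the proof as applied is unaffected.
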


As all moments of the $\chi_1^2$ distribution exist and $k_n$ is at least of polynomial growth in $n$, Proposition \ref{thm_mod_wu} applied to \eqref{keystatisticno} and \eqref{keystatistic} implies Theorem \ref{thm1}. We start with the proof of Proposition \ref{thm_mod_wu} and then show by proving Propositions \ref{prop1}-\ref{prop4} that the preliminary reductions are in order.

\vspace*{.25cm}

\bf{Proof of Proposition \ref{thm_mod_wu}.}
\rm By a simple rescaling argument, we can restrict ourselves to the case $\Var\bigl[X_k\bigr] =1$. The Donsker-Prokhorov invariance principle guarantees weak convergence of partial sums of $(X_k)_{k\in\N}$, rescaled with $\sqrt{n}$, to the law of the standard Brownian motion as $n\rightarrow\infty$.
Let $\bigl(Z_j\bigr)_{j \in \N}$ be a sequence of centered i.i.d.\, Gaussian random variables with $\E\bigl[Z_j^2\bigr] = \E\bigl[X_j^2\bigr]=1$.
Observe that
\begin{align*}
\max_{k_n \leq i \leq n - k_n}\biggl|\sum_{j = i +1}^{k_n + i} \bigl(X_j - Z_j\bigr) - \sum_{j = i- k_n+1}^{i} \bigl(X_j - Z_j\bigr) \biggr| \leq 4 \max_{k_n \leq i \leq n}\biggl|\sum_{j = 1}^{i} \bigl(X_j - Z_j \bigr) \biggr|.
\end{align*}
We exploit the classical theory on bounds for the approximation error of partial sums of the above type associated with the invariance principle provided by the seminal works of \cite{kmt1}, \cite{kmt2}, \cite{zaitsev}, and related literature.
Let $(x_n)_{n\in\N}$ be a sequence with $x_n \geq 0$ for all $n$. According to Theorem 4 of \cite{kmt2} or equivalently (1.6) of \cite{sakhanenko} with Markov inequality, the sequence $\bigl(Z_j\bigr)_{j \in \N}$ can be constructed in such a manner that
\begin{align*}
\P\biggl(\max_{k_n \leq i \leq n }\biggl|\sum_{j = 1}^{i} \bigl(X_j - Z_j \bigr) \biggr| \geq x_n \biggr) \le C_1 \frac{1}{x_n^{p}} \sum_{j = 1}^n \E\bigl[|X_j|^{p}\bigr] \le C_2\frac{n}{x_n^{p}}\,,
\end{align*}
with constants $C_1,C_2$ which may depend on $p$.
Selecting $x_n = \sqrt{k_n} \delta_n$ with $\delta_n = (\log (n))^{-1/2}$, the conditions to apply Theorem 4 of \cite{kmt2} are in order and we get from condition \eqref{eq_condi_kn} and the above that
\begin{align}\label{eq_thm_wu_mod_2}
\max_{k_n \leq i \leq n}\biggl|\sum_{j = i +1}^{k_n + i} \bigl(X_j - Z_j\bigr) - \sum_{j = i- k_n+1}^{i} \bigl(X_j - Z_j\bigr) \biggr| = \KLEINO_{\P}\bigl(\sqrt{k_n} (\log (n))^{-1/2} \bigr).
\end{align}
Denote with $\mathbb{B}(k) = \sum_{j = 1}^k Z_j$ and define
\begin{align*}
H(u) = \bigl(\1(0 \leq u < 1) - \1(-1 < u < 0) \bigr)/\sqrt{2}.
\end{align*}
Then by \eqref{eq_thm_wu_mod_2}, it follows that
\begin{align*}
\frac{\sqrt{k_n}D_n^*}{\sqrt{2}} &= \frac{1}{\sqrt{2 k_n}} \max_{k_n \leq i \leq n - k_n}\bigl|\mathbb{B}(i + k_n) - 2 \mathbb{B}(i) + \mathbb{B}(i - k_n)\bigr| + \frac{\KLEINO_{\P}(1)}{\sqrt{\log (n)}}\\&= \frac{1}{\sqrt{k_n}} \sup_{s \in [k_n,n-k_n]}\biggl|\int_{\R} H\left(\frac{s-u}{k_n}\right)d\mathbb{B}(u)\biggr| + \frac{\mathcal{O}(R_n)}{\sqrt{k_n}} + \frac{\KLEINO_{\P}(1)}{\sqrt{\log (n)}},
\end{align*}
where $R_n = \sup\bigl\{|\mathbb{B}(u) - \mathbb{B}(u')|\,:\, u,u' \in [0,n], \, |u-u'| \leq 1 \bigr\} = \KLEINO_{\P}\bigl(\sqrt{\log (n)}\bigr)$ by standard properties of Brownian motion. Then, since $(\log (n))^6 = \KLEINO\bigl(k_n)$ by condition \eqref{eq_condi_kn}, we may apply the limit theorem from Lemma 2 in \cite{wuzhao2007} with $\alpha = 1$, $D_{H,1} = 3$ and $b_n =m_n^{-1}$ for $m_n = \lfloor n/k_n \rfloor$ ; see Definition 1 and Lemma 2 of \cite{wuzhao2007}. In the same manner, we can use that
\begin{align*}\sqrt{k_n}\Bigg(\sum_{j = i +1}^{k_n + i} X_j  -\hspace*{-.15cm} \sum_{j = i- k_n+1}^{i}\hspace*{-.15cm} X_j\hspace*{-.075cm}\Bigg)\hspace*{-.1cm} =\hspace*{-.1cm} \sqrt{k_n}\Bigg(\sum_{j = i +1}^{k_n + i}Z_j  - \hspace*{-.15cm} \sum_{j = i- k_n+1}^{i} \hspace*{-.15cm}Z_j\hspace*{-.075cm} \Bigg)\hspace*{-.1cm}+\KLEINO_{\P}\big(\hspace*{-.05cm}\sqrt{\log{\hspace*{-.05cm}(n)}}\big),\end{align*}
for $\E[X_j^2]=1$ with $\bigl(Z_j\bigr)_{j \in \N}$ again a sequence of centered i.i.d.\,Gaussian random variables. Lemma 1 of \cite{wuzhao2007} then ensures the limit theorem for non-overlapping blocks.
This completes the proof of Proposition \ref{thm_mod_wu}.\qed \\[.2cm]

\bf{Proof of Proposition \ref{prop1}.}
\rm First, a standard argument as e.g.\, laid out in Section 4.4.1 in \cite{JP} allows us to strengthen Assumption \ref{assVola} and to assume that all local conditions are in fact global. That is, we assume without loss of generality that $|a_s| \leq K$, $0 < \sigma_-^2 < \sigma_s^2 < K$ and $w_\delta(\sigma)_{1} \leq K \delta^\aalpha$ for a generic constant $K$.

Let $(a_i)_{i=1,\ldots,m}$ and $(b_i)_{i=1,\ldots,m}$ be arbitrary reals. Obviously, for an arbitrary $i$ we have
\[
|a_i| \leq |a_i - b_i| + |b_i| \leq \max_{i=1,\ldots,m} |a_i - b_i| + \max_{i=1,\ldots,m} |b_i|.
\]
Therefore the inequality holds with the left hand side replaced with $ \max_{i=1,\ldots,m} |a_i|$ as well. Applied to $V_n$ and $U_n$ it is simple to deduce
\begin{align} \label{VmU}
|V_n - U_n| \leq&  \max_{i=0,\ldots,\lfloor n/k_n \rfloor -2} \vert RV_{n,i}/RV_{n,i+1} - 1 - (Y_{n,i}/Y_{n,i+1} - 1) \vert  \\ \nonumber \leq&  \max_{i=0,\ldots,\lfloor n/k_n \rfloor -2} \Big \vert RV_{n,i} \left(\frac1{RV_{n,i+1}} - \frac1{Y_{n,i+1}}\right) \Big \vert +   \max_{i=0,\ldots,\lfloor n/k_n \rfloor -2} \Big \vert \frac{RV_{n,i} - Y_{n,i}}{Y_{n,i+1}} \Big \vert.
\end{align}
Let us begin with the second term on the right hand side above. For all $\epsilon > 0$ and all constants $D > 0$, we have
\begin{align}
&\notag\P\left( \max_{i=0,\ldots,\lfloor n/k_n \rfloor -2} \Big \vert \frac{\sqrt{k_n\log(n)}(RV_{n,i} - Y_{n,i})}{Y_{n,i+1}} \Big \vert > \epsilon \right) \\&\notag\leq \P\left( \max_{i=0,\ldots,\lfloor n/k_n \rfloor -2} \sqrt{k_n\log(n)}|RV_{n,i} - Y_{n,i}| \cdot \max_{i=0,\ldots,\lfloor n/k_n \rfloor -2} 1/\vert {Y_{n,i+1}} \vert > \epsilon \right) \\&  \leq\P\left( \max_{i=0,\ldots,\lfloor n/k_n \rfloor -2} \hspace*{-.05cm}\sqrt{k_n\log(n)}|RV_{n,i} - Y_{n,i}| > \frac{\epsilon}{D} \right)\hspace*{-.05cm} + \hspace*{-.05cm}\P\left(\max_{i=0,\ldots,\lfloor n/k_n \rfloor -2} 1/\vert {Y_{n,i+1}} \vert > D \right)\hspace*{-.05cm}.\label{twoprob}
\end{align}
To keep the notation readable, here and below we use standard probabilities and expectations without an extra indication that we are on the set $\Omega^c$.

Since we have $\sigma_t^2 \ge \sigma_-^2>0$, we can use the same arguments as in the proof of equation (22) in \cite{vett2012} to derive
\[
\P\left(\max_{i=0,\ldots,\lfloor n/k_n \rfloor -2} 1/\vert {Y_{n,i+1}} \vert > D \right) = \P\left(\min_{i=0,\ldots,\lfloor n/k_n \rfloor -2} \vert {Y_{n,i+1}} \vert < D^{-1} \right) \to 0
\]
with e.g.\, $D^{-1}=\sigma_-^2/2$. The intuition behind this result is that the probability of a mean of $k_n$ i.i.d.\ variables with all moments deviating too much from its expectation becomes exponentially small in $k_n$. A similar argument will be given in (\ref{step19}) later. Also, here we need that $k_n$ is of (at least) polynomial growth, which is included in \eqref{assk}.

On the other hand, using It\^o formula we obtain
\begin{align} \label{decomp}
& \sqrt{k_n\log(n)}(RV_{n,i} - Y_{n,i}) = \frac{n\sqrt{\log(n)}}{\sqrt{k_n}} \sum_{j=1}^{k_n} \big((\Delta_{ik_n+j}^n X)^2 - \sigma_{ik_n\Delta_n}^2 (\Delta_{ik_n + j}^n W)^2\big) \\ \nonumber &= \frac{n\sqrt{\log(n)}}{\sqrt{k_n}}\hspace*{-.05cm} \Bigg(\sum_{j=1}^{k_n} 2 \hspace*{-.1cm} \int_{(ik_n+j-1)\Delta_n}^{(ik_n+j)\Delta_n} (X_s \hspace*{-.05cm} -\hspace*{-.05cm}  X_{(ik_n+j-1)\Delta_n}) a_s \,ds +  \sum_{j=1}^{k_n} \int_{(ik_n+j-1)\Delta_n}^{(ik_n+j)\Delta_n} \hspace*{-.05cm} (\sigma_s^2 \hspace*{-.05cm} -\hspace*{-.05cm}  \sigma_{ik_n\Delta_n}^2)\, ds\hspace*{-.1cm} \Bigg) \\ &\quad + \frac{n\sqrt{\log(n)}}{\sqrt{k_n}} \sum_{j=1}^{k_n} 2 \hspace*{-.1cm} \int_{(ik_n+j-1)\Delta_n}^{(ik_n+j)\Delta_n} \hspace*{-.05cm} \big((X_s \hspace*{-.05cm} -\hspace*{-.05cm}  X_{(ik_n+j-1)\Delta_n}) \sigma_s \hspace*{-.05cm} - \hspace*{-.05cm} (W_s \hspace*{-.05cm} - \hspace*{-.05cm} W_{(ik_n+j-1)\Delta_n}) \sigma_{ik_n\Delta_n}^2\big)\, dW_s\,.  \nonumber
\end{align}
Using this decomposition, we split the discussion of \[\P\left( \max_{i=0,\ldots,\lfloor n/k_n \rfloor -2} \sqrt{k_n\log(n)}|RV_{n,i} - Y_{n,i}| > \epsilon/D \right)\] into three parts. For the first term, observe that
\begin{align*}
& \P\bigg( \max_{i=0,\ldots,\lfloor n/k_n \rfloor -2} \frac{n\sqrt{\log(n)}}{\sqrt{k_n}} \Big \vert \sum_{j=1}^{k_n} 2 \int_{(ik_n+j-1)\Delta_n}^{(ik_n+j)\Delta_n} (X_s - X_{(ik_n+j-1)\Delta_n}) a_s\, ds \Big \vert > \epsilon/(3D) \bigg) \\ &\leq \sum_{i=0}^{\lfloor n/k_n \rfloor -2} \P\bigg(\frac{n\sqrt{\log(n)}}{\sqrt{k_n}} \Big \vert \sum_{j=1}^{k_n} 2 \int_{(ik_n+j-1)\Delta_n}^{(ik_n+j)\Delta_n} (X_s - X_{(ik_n+j-1)\Delta_n}) a_s ds \Big \vert > \epsilon/(3D)\bigg) \\  &\leq (\epsilon/(3D))^{-r} \sum_{i=0}^{\lfloor n/k_n \rfloor -2} \E\bigg[\bigg \vert \frac{n\sqrt{\log(n)}}{\sqrt{k_n}} \sum_{j=1}^{k_n} 2 \int_{(ik_n+j-1)\Delta_n}^{(ik_n+j)\Delta_n} (X_s - X_{(ik_n+j-1)\Delta_n}) a_s\, ds \bigg \vert^r \bigg],
\end{align*}
for all integers $r$. Applying a standard bound based on Jensen's and Minkowski's inequalities yields
\begin{align*}
&\E\bigg[\bigg \vert \frac{n\sqrt{\log(n)}}{\sqrt{k_n}} 2 \int_{(ik_n+j-1)\Delta_n}^{(ik_n+j)\Delta_n} (X_s - X_{(ik_n+j-1)\Delta_n}) a_s\, ds \bigg \vert^r \bigg] \\
   & \leq K_r \Bigg(\frac{n\sqrt{\log(n)}}{\sqrt{k_n}} \int_{(ik_n+j-1)\Delta_n}^{(ik_n+j)\Delta_n} \E[|X_{s} - X_{(ik_n+j-1)\Delta_n}|^r]^{1/r} ds \Bigg)^r\,.
\end{align*}
$K_r$ here and below denotes a generic constant depending on $r$. Burkholder-Davis-Gundy inequality gives for any $s\in[(ik_n+j-1)\Delta_n,(ik_n+j)\Delta_n]$:
\begin{align*}
\E[|X_{s} - X_{(ik_n+j-1)\Delta_n}|^r] &\leq K_r n^{-r/2}\,,\\
\E\Big[\Big \vert \frac{n\sqrt{\log(n)}}{\sqrt{k_n}} 2 \int_{(ik_n+j-1)\Delta_n}^{(ik_n+j)\Delta_n} (X_s - X_{(ik_n+j-1)\Delta_n}) a_s \,ds \Big \vert^r \Big] &\leq K_r (nk_n)^{-r/2}\log^{r/2}(n)\,.
\end{align*}
We conclude that
\begin{align}
& \label{mink1}\P\bigg( \max_{i=0,\ldots,\lfloor n/k_n \rfloor -2} \frac{n\sqrt{\log(n)}}{\sqrt{k_n}} \Big \vert \sum_{j=1}^{k_n} 2 \int_{(ik_n+j-1)\Delta_n}^{(ik_n+j)\Delta_n} (X_s - X_{(ik_n+j-1)\Delta_n}) a_s \,ds \big \vert > \epsilon/(3D) \bigg) \\ \notag&\quad \leq (\epsilon/(3D))^{-r} \lfloor n/k_n \rfloor K_r k_n^{r/2} n^{-r/2}\log^{r/2}(n) \to 0
\end{align}
for $r>2$ arbitrary. Regarding the second term in (\ref{decomp}), on $\Omega^c$ we have
\begin{align*}
& \max_{i=0,\ldots,\lfloor n/k_n \rfloor -2} \Big \vert \frac{n\sqrt{\log(n)}}{\sqrt{k_n}} \sum_{j=1}^{k_n} \int_{(ik_n+j-1)\Delta_n}^{(ik_n+j)\Delta_n} (\sigma_s^2 - \sigma_{ik_n\Delta_n}^2) \,ds \Big \vert \\ & \leq\max_{i=0,\ldots,\lfloor n/k_n \rfloor -2} \frac{n\sqrt{\log(n)}}{\sqrt{k_n}} \sum_{j=1}^{k_n} \int_{(ik_n+j-1)\Delta_n}^{(ik_n+j)\Delta_n} |\sigma_s^2 - \sigma_{ik_n\Delta_n}^2| ds \\
&\leq \sqrt{k_n} w_{k_n\Delta_n}(\sigma)_{1}\sqrt{\log(n)} \leq K \sqrt{k_n} (k_n\Delta_n)^{\aalpha}\sqrt{\log(n)}\,,
\end{align*}
which converges to zero by \eqref{assk}. Observe that addends above involve interlacing time intervals such that for $\sigma$ a continuous It\^{o} semi-martingale the bound above applies with $\aalpha=1/2$ and is sharp.

Finally, we have the further decomposition
\begin{align} \label{decomp2}
&(X_s - X_{(ik_n+j-1)\Delta_n}) \sigma_s - (W_s - W_{(ik_n+j-1)\Delta_n}) \sigma_{ik_n\Delta_n}^2 =\sigma_s \int_{(ik_n+j-1)\Delta_n}^s a_u \,du \\
&\quad\quad + (\sigma_s - \sigma_{ik_n\Delta_n}) \int_{(ik_n+j-1)\Delta_n}^s
\sigma_u \,  dW_u + \sigma_{ik_n \Delta_n} \int_{(ik_n+j-1)\Delta_n}^s (\sigma_u - \sigma_{ik_n\Delta_n})\, dW_u\,. \nonumber
\end{align}
We proceed in a similar way as above:
\begin{align} \nonumber
& \P\bigg( \max_{i=0,\ldots,\lfloor n/k_n \rfloor -2} \frac{n\sqrt{\log(n)}}{\sqrt{k_n}} \Big \vert \sum_{j=1}^{k_n} 2 \int_{(ik_n+j-1)\Delta_n}^{(ik_n+j)\Delta_n} \sigma_s \int_{(ik_n+j-1)\Delta_n}^s a_u du dW_s \Big \vert > \epsilon/(9D) \bigg) \\ & \leq(\epsilon/(9D))^{-r} \sum_{i=0}^{\lfloor n/k_n \rfloor -2} \E\bigg[\Big \vert \frac{n\sqrt{\log(n)}}{\sqrt{k_n}} \sum_{j=1}^{k_n} 2 \int_{(ik_n+j-1)\Delta_n}^{(ik_n+j)\Delta_n} \sigma_s \int_{(ik_n+j-1)\Delta_n}^s a_u \,du \,  dW_s \Big \vert^r \bigg]\,. \label{step17}
\end{align}
Precisely, let $r=2m$ and set
\[
c_s = \sum_{j=1}^{k_n} \sigma_s \int_{(ik_n+j-1)\Delta_n}^s a_u\, du 1_{[(ik_n+j-1)\Delta_n, (ik_n+j)\Delta_n)}(s)\,.
\]
Then we have in a similar way as before
\begin{align*}
& \E\Big[\Big \vert \frac{n\sqrt{\log(n)}}{\sqrt{k_n}} \sum_{j=1}^{k_n} 2 \int_{(ik_n+j-1)\Delta_n}^{(ik_n+j)\Delta_n} \sigma_s \int_{(ik_n+j-1)\Delta_n}^s a_u \,du \,  dW_s \Big \vert^{2m} \Big] \\ &= 2^{2m} \Big(\frac{n\sqrt{\log(n)}}{\sqrt{k_n}}\Big)^{2m} \E\Big[\Big \vert \int_{ik_n\Delta_n}^{(i+1)k_n\Delta_n} c_s \,dW_s \Big \vert^{2m} \Big] \\ &=  2^{2m}
\Big(\frac{n\sqrt{\log(n)}}{\sqrt{k_n}}\Big)^{2m} \E\Big[\Big( \int_{ik_n\Delta_n}^{(i+1)k_n\Delta_n} c_s^2 ds \Big)^{m} \Big] \\ & \leq K_m \Big(\frac{n\sqrt{\log(n)}}{\sqrt{k_n}}\Big)^{2m} \Big(  \int_{ik_n\Delta_n}^{(i+1)k_n\Delta_n} \E[c_s^{2m}]^{1/m} ds \Big)^{m}.
\end{align*}
With
\[
\E[c_s^{2m}] = \sum_{j=1}^{k_n} \E\Big[\sigma_s^{2m} \Big(\int_{(ik_n+j-1)\Delta_n}^s a_u du\Big)^{2m}\Big] 1_{[(ik_n+j-1)\Delta_n, (ik_n+j)\Delta_n)}(s) \leq K_m \Delta_n^{2m}\,,
\]
we obtain
\begin{align}\label{mink2}
\Big(\frac{n\sqrt{\log(n)}}{\sqrt{k_n}}\Big)^{2m} \Big(  \int_{ik_n\Delta_n}^{(i+1)k_n\Delta_n} \E[c_s^{2m}]^{1/m} ds \Big)^{m} &\leq K_m \Big(\frac{n\sqrt{\log(n)}}{\sqrt{k_n}}\Big)^{2m} (k_n \Delta_n^3)^m\\ &= K_m \Delta_n^{m}\log^{2m}(n)\notag .
\end{align}
By choosing $m$ large enough, the term in \eqref{step17} converges to zero. Similarly,
\begin{align*}
& \E\Big[\Big \vert \frac{n\sqrt{\log(n)}}{\sqrt{k_n}} \sum_{j=1}^{k_n} 2 \int_{(ik_n+j-1)\Delta_n}^{(ik_n+j)\Delta_n}(\sigma_s - \sigma_{ik_n\Delta_n}) \int_{(ik_n+j-1)\Delta_n}^s
\sigma_u\, dW_u  dW_s \Big \vert^{2m} \Big] \\ & \leq K_m \Big(\frac{n\sqrt{\log(n)}}{\sqrt{k_n}}\Big)^{2m} \Big(  \int_{ik_n\Delta_n}^{(i+1)k_n\Delta_n} \Big( \sum_{j=1}^{k_n} \E \Big[(\sigma_s - \sigma_{ik_n\Delta_n})^{2m} \\ &\hspace*{4cm} \times \Big(\int_{(ik_n+j-1)\Delta_n}^s
\sigma_u dW_u\Big)^{2m} 1_{[(ik_n+j-1)\Delta_n, (ik_n+j)\Delta_n)}(s)\Big] \Big)^{1/m}  ds \Big)^{m} \\ & \leq K_m \Big(\frac{n\sqrt{\log(n)}}{\sqrt{k_n}}\Big)^{2m} \Big(  \int_{ik_n\Delta_n}^{(i+1)k_n\Delta_n} \Big( \sum_{j=1}^{k_n} \E \Big[w_{k_n\Delta_n}(\sigma)_{1}^{2m}\\ &\hspace*{4cm} \times \Big(\int_{(ik_n+j-1)\Delta_n}^s
\sigma_u dW_u\Big)^{2m} 1_{[(ik_n+j-1)\Delta_n, (ik_n+j)\Delta_n)}(s)\Big] \Big)^{1/m}  ds \Big)^{m}  \\ & \leq K_m \Big(\frac{n\sqrt{\log(n)}}{\sqrt{k_n}}\Big)^{2m} \Big(  \int_{ik_n\Delta_n}^{(i+1)k_n\Delta_n} \big( (k_n\Delta_n)^{2m\aalpha} \Delta_n^m \big)^{1/m} \,ds \Big)^{m} \leq K_m (k_n\Delta_n)^{2m\aalpha}.
\end{align*}
The same upper bound is obtained for the third term in (\ref{decomp2}). Again, choosing $m$ large enough yields convergence to zero. Altogether, we conclude
\[
\P\left( \max_{i=0,\ldots,\lfloor n/k_n \rfloor -2} \sqrt{k_n\log(n)}|RV_{n,i} - Y_{n,i}| > \epsilon/D \right) \to 0,
\]
and we are done with the second term on the right hand side of \eqref{VmU}.
Next, consider the first term on the right hand side of \eqref{VmU}. For any $\epsilon > 0$ and any $D>0$:
\begin{align*}
&\P\left( \max_{i=0,\ldots,\lfloor n/k_n \rfloor -2} \sqrt{k_n\log(n)}\big \vert RV_{n,i} \left(\frac1{RV_{n,i+1}} - \frac1{Y_{n,i+1}}\right) \big \vert > \epsilon \right) \\ & \leq\P\left( \max_{i=0,\ldots,\lfloor n/k_n \rfloor -2} \sqrt{k_n\log(n)}|RV_{n,i} (Y_{n,i+1} - RV_{n,i+1})| > \epsilon/D \right)\\
&\hspace*{7cm}  + \P\left(\max_{i=0,\ldots,\lfloor n/k_n \rfloor -2} 1/\vert {Y_{n,i+1}RV_{n,i+1}} \vert > D \right)\,.
\end{align*}
Observe that
\begin{align*}
& \P\left(\min_{i=0,\ldots,\lfloor n/k_n \rfloor -2} \vert {Y_{n,i+1}RV_{n,i+1}} \vert < D^{-1} \right) \\ & \leq\P\left(\min_{i=0,\ldots,\lfloor n/k_n \rfloor -2} \vert Y_{n,i+1} \vert < D^{-1/2} \right) + \P\left(\min_{i=0,\ldots,\lfloor n/k_n \rfloor -2} \vert {RV_{n,i+1}} \vert < D^{-1/2} \right) \\ & \leq\P\left(\min_{i=0,\ldots,\lfloor n/k_n \rfloor -2} \vert Y_{n,i+1} \vert < D^{-1/2} \right) + \P\left(\min_{i=0,\ldots,\lfloor n/k_n \rfloor -2} \vert {Y_{n,i+1}} \vert < 2D^{-1/2} \right) \\&\hspace*{6.4cm}+ \P\left(\max_{i=0,\ldots,\lfloor n/k_n \rfloor -2} \vert {RV_{n,i+1}} - Y_{n,i+1} \vert > D^{-1/2} \right).
\end{align*}
All three terms on the right hand side have already been discussed above for an appropriate choice of $D$, the latter term even with an additional factor $\sqrt{k_n \log(n)}$. Similarly, for all $\Gamma > 0$ we have
\begin{align*}
& \P\left( \max_{i=0,\ldots,\lfloor n/k_n \rfloor -2} \sqrt{k_n\log(n)}|RV_{n,i} (Y_{n,i+1} - RV_{n,i+1})| > \epsilon/D \right) \\ &\leq \P\left( \max_{i=0,\ldots,\lfloor n/k_n \rfloor -2} |RV_{n,i}| > \Gamma \right) + \P\left( \max_{i=0,\ldots,\lfloor n/k_n \rfloor -2} \sqrt{k_n\log(n)}|Y_{n,i+1} - RV_{n,i+1}| > \epsilon/(D\Gamma) \right).
\end{align*}
Here we only have to focus on the first term, for which we use
\begin{align} \label{step18}
& \P\left( \max_{i=0,\ldots,\lfloor n/k_n \rfloor -2} |RV_{n,i}| > \Gamma \right) \\ & \leq\P\left( \max_{i=0,\ldots,\lfloor n/k_n \rfloor -2} |Y_{n,i}| > \Gamma/2 \right) + \P\left( \max_{i=0,\ldots,\lfloor n/k_n \rfloor -2} |Y_{n,i+1} - RV_{n,i+1}| > \Gamma/2 \right).
\nonumber
\end{align}
The same arguments which were leading to equation (22) in \cite{vett2012} show that the first probability becomes arbitrarily small for large enough $\Gamma$, this time because we may assume $\sigma$ is bounded from above. The second probability has already been discussed above. \qed \\[.2cm]

\bf{Proof of Proposition \ref{prop2}.} \rm
We have to show convergence in probability to zero of
\[
 \sqrt{k_n\log(n)} \max_{i=0,\ldots,\lfloor n/k_n \rfloor -2} \Big \vert \frac{Y_{n,i}}{Y_{n,i+1}} - \frac{Y_{n,i}}{\WY_{n,i+1}} \Big \vert =  \max_{i=0,\ldots,\lfloor n/k_n \rfloor -2} \Big \vert \frac{\sqrt{k_n\log(n)} Y_{n,i}(\WY_{n,i+1}-Y_{n,i+1})}{Y_{n,i+1}\WY_{n,i+1}} \Big \vert.
\]
Using equation (22) in \cite{vett2012} again, we may focus on the numerator above only, and for the same reason as in \eqref{step18} it suffices to prove convergence to zero of
\begin{align} \label{step20}
& \P\left(\max_{i=0,\ldots,\lfloor n/k_n \rfloor -2} \sqrt{k_n\log(n)}|\WY_{n,i+1}-Y_{n,i+1}| > \epsilon \right) \\ \nonumber
&= \P\bigg(\max_{i=0,\ldots,\lfloor n/k_n \rfloor -2} \sqrt{k_n\log(n)} \big|\sigma_{ik_n\Delta_n}^2 - \sigma_{(i+1)k_n\Delta_n}^2\big| \bigg \vert \frac n{k_n} \sum_{j=1}^{k_n} (\Delta_{(i+1)k_n + j}^n W)^2 \bigg \vert > \epsilon \bigg) \\ \nonumber & \leq\P\left(\max_{i=0,\ldots,\lfloor n/k_n \rfloor -2} \sqrt{k_n\log(n)} \big|\sigma_{ik_n\Delta_n}^2 - \sigma_{(i+1)k_n\Delta_n}^2\big| > \epsilon/2 \right)\\
 &\nonumber\hspace*{5cm}+ \P\left(\max_{i=0,\ldots,\lfloor n/k_n \rfloor -2} \Big \vert \frac n{k_n} \sum_{j=1}^{k_n} (\Delta_{(i+1)k_n + j}^n W)^2 \Big \vert > 2 \right)
\end{align}
for all $\epsilon > 0$. Regarding the first quantity, recall that on $\Omega^c$ by \eqref{assk}
\begin{align*}
\max_{i=0,\ldots,\lfloor n/k_n \rfloor -2} \sqrt{k_n\log(n)} |\sigma_{ik_n\Delta_n}^2 - \sigma_{(i+1)k_n\Delta_n}^2| &\leq \sqrt{k_n\log(n)} w_{k_n\Delta_n}(\sigma)_{1} \\ &\leq K \sqrt{k_n} (k_n\Delta_n)^{\aalpha}\sqrt{\log(n)} \to 0\,.
\end{align*}
On the other hand,
\begin{align} \label{step19}
&\P\Big(\max_{i=0,\ldots,\lfloor n/k_n \rfloor -2} \Big \vert \frac{n\sqrt{\log(n)}}{k_n} \sum_{j=1}^{k_n} (\Delta_{(i+1)k_n + j}^n W)^2 \Big \vert >  2 \Big)
\\ \nonumber &\leq\sum_{i=0}^{\lfloor n/k_n \rfloor -2}\hspace*{-.1cm} \P \Big(\Big \vert \frac{n\sqrt{\log(n)}}{k_n} \sum_{j=1}^{k_n} (\Delta_{(i+1)k_n + j}^n W)^2 \Big \vert  >  2 \Big) \\ \nonumber &\leq \sum_{i=0}^{\lfloor n/k_n \rfloor -2} \P \Big(\Big \vert\frac{\sqrt{\log(n)}}{k_n} \sum_{j=1}^{k_n} \big((\sqrt{n}\Delta_{(i+1)k_n + j}^n W)^2 - 1\big) \Big \vert > 1 \Big)  \\ \nonumber &\leq \sum_{i=0}^{\lfloor n/k_n \rfloor -2} \E\Big[ \Big \vert \frac{\sqrt{\log(n)}}{k_n} \sum_{j=1}^{k_n} \big((\sqrt{n}\Delta_{(i+1)k_n + j}^n W)^2 - 1\big) \Big \vert^{2m} \Big]
\end{align}
for all integers $m$. Due to the i.i.d.\ structure, the latter term is bounded by $K_m (n/k_n) k_n^{-m}\log^m(n)$, which converges to zero for $m$ large enough. \qed\\[.2cm]

\bf{Proof of Proposition \ref{prop3}.} \rm
We have to show convergence to zero in probability of
\begin{align*}
&\sqrt{k_n\log(n)} \max_{i=0,\ldots,\lfloor n/k_n \rfloor -2} \Big \vert \frac{Y_{n,i}-\WY_{n,i+1}}{\WY_{n,i+1}} - \frac{Y_{n,i}-\WY_{n,i+1}}{\sigma_{ik_n\Delta_n}^2} \Big \vert\\
 &\hspace*{4cm}= \sqrt{k_n\log(n)} \max_{i=0,\ldots,\lfloor n/k_n \rfloor -2} \Big \vert \frac{(Y_{n,i}-\WY_{n,i+1})(\WY_{n,i+1}-\sigma_{ik_n\Delta_n}^2)}{\WY_{n,i+1}\sigma_{ik_n\Delta_n}^2}   \Big \vert\,.
\end{align*}
It is sufficient to focus on the numerator, and we discuss two terms separately, using
\begin{align*}
& \P\left(\max_{i=0,\ldots,\lfloor n/k_n \rfloor -2} \sqrt{k_n\log(n)} \vert (Y_{n,i}-\WY_{n,i+1})(\WY_{n,i+1}-\sigma_{ik_n\Delta_n}^2) \vert > \epsilon \right) \\ & \leq \P\left(\max_{i=0,\ldots,\lfloor n/k_n \rfloor -2} \sqrt{k_n\log(n)} \vert Y_{n,i}-\WY_{n,i+1} \vert > \sqrt{\epsilon} \right)\\
&\hspace*{3.5cm} + \P\left(\max_{i=0,\ldots,\lfloor n/k_n \rfloor -2} \vert \WY_{n,i+1}-\sigma_{ik_n\Delta_n}^2 \vert > \sqrt{\epsilon} \right)\,.
\end{align*}
The first term has already been discussed in (\ref{step20}), while
\begin{align*}
& \P\left(\max_{i=0,\ldots,\lfloor n/k_n \rfloor -2} \vert \WY_{n,i+1}-\sigma_{ik_n\Delta_n}^2 \vert > \sqrt{\epsilon} \right)  \\ &= \P \left(\max_{i=0,\ldots,\lfloor n/k_n \rfloor -2} \sigma_{ik_n\Delta_n}^2 \Big \vert \frac 1{k_n} \sum_{j=1}^{k_n} ((\sqrt{n}\Delta_{(i+1)k_n + j}^n W)^2 - 1) \Big \vert > \sqrt \epsilon \right) \\&  \leq\P \left(\max_{i=0,\ldots,\lfloor n/k_n \rfloor -2} \Big \vert \frac 1{k_n} \sum_{j=1}^{k_n} ((\sqrt{n}\Delta_{(i+1)k_n + j}^n W)^2 - 1) \Big \vert > \sqrt \epsilon/K \right)
\end{align*}
using $\sigma^2 \leq K$. The claim follows from (\ref{step19}). \qed\\[.2cm]

\bf{Proof of Proposition \ref{prop4}.} \rm
For the test statistic $V_n^*$ from \eqref{statV2} our proof follows the same stages as the one for $V_n$ via Propositions \ref{prop1}, \ref{prop2} and \ref{prop3} above. We start proving $$\sqrt{k_n\,\log(n)}\,\big(V_n^*-U_n^*\big)\pn 0$$ for
\begin{align*}U_n^*=\max_{i=k_n,\ldots,n-k_n}\Bigg\vert \frac{\frac{n}{k_n}\sum_{j=i-k_n+1}^{i}\sigma_{(i-k_n)\Delta_n}^2(\Delta_j^nW)^2}{\frac{n}{k_n}\sum_{j=i+1}^{i+k_n}\sigma_{i\Delta_n}^2(\Delta_j^nW)^2}-1\Bigg\vert\,.\end{align*}
Similar to \eqref{VmU}, we find that
\begin{align*}
|V_n^*-U_n^*|\le &\max_{i=k_n,\ldots,n-k_n}\hspace*{-.05cm}\Bigg\vert  \sum_{j=i-k_n+1}^{i}\hspace*{-.05cm}(\Delta_j^nX)^2\bigg(\Big( \sum_{j=i+1}^{i+k_n}(\Delta_j^n X)^2\Big)^{-1}\hspace*{-.05cm}-\Big(\sum_{j=i+1}^{i+k_n}\sigma_{i\Delta_n}^2(\Delta_j^nW)^2\Big)^{-1}\bigg)\Bigg\vert\\
&+\max_{i=k_n,\ldots,n-k_n}\Bigg\vert\frac{\frac{n}{k_n}\sum_{j=i-k_n+1}^{i}\big((\Delta_j^nX)^2-\sigma_{(i-k_n)\Delta_n}^2(\Delta_j^nW)^2\big)}{\frac{n}{k_n}\sum_{j=i+1}^{i+k_n}\sigma_{i\Delta_n}^2(\Delta_j^nW)^2}\Bigg\vert\,.\end{align*}
Following an inequality analogous to \eqref{twoprob}, the key step is to show that
\begin{align}\label{keyprop4}\P\Big(\sqrt{\log(n)k_n}\max_{i=k_n,\ldots,n-k_n} \Big\vert \frac{n}{k_n}\sum_{j=i-k_n+1}^{i}\big((\Delta_j^nX)^2-\sigma_{(i-k_n)\Delta_n}^2(\Delta_j^nW)^2\big) \Big\vert>\epsilon/D \Big)\rightarrow 0\,,\end{align}
while for $\sigma_t$ bounded from below we readily obtain
\begin{align*}\P\Big(\min_{i=k_n,\ldots,n-k_n}\frac{n}{k_n}\Big\vert\sum_{j=i+1}^{i+k_n}\sigma_{i\Delta_n}^2(\Delta_j^nW)^2\Big\vert<D^{-1}\Big)\rightarrow 0\,.\end{align*}
For the proof of \eqref{keyprop4} we proceed with a decomposition analogous to \eqref{decomp} and for the first term along the same lines as above leading to \eqref{mink1}. However, the maximum extends now over the larger set of all indices $i=k_n,\ldots,n-k_n$, and thus instead of \eqref{mink1} the upper bound yields
\((\epsilon/(3D))^{-r}n^{1-\frac{r}{2}}k_n^{r/2}\) \(\log^{r/2}(n)\), which is a factor $k_n$ larger than above. Still, choosing $r$ sufficiently large the term tends to zero. The same reasoning applies to all terms for which we have used Jensen's and generalized Minkowski's inequalities above as \eqref{mink2}.

Upper bounds exploiting the smoothness of the volatility remain as before, for instance
\begin{align*} \sqrt{\log(n)k_n} \max_{i=k_n,\ldots,n-k_n}\Big\vert \frac{n}{k_n}\sum_{j=1}^{k_n}\int_{(j+i-1)\Delta_n}^{(j+i+k_n-1)\Delta_n}(\sigma_s^2-\sigma^2_{i\Delta_n})\,ds\Big\vert &\le K \frac{n\sqrt{\log(n)}}{\sqrt{k_n}}\sum_{j=1}^{k_n}(j\Delta_n)^{\aalpha}\Delta_n\\ &\le K\sqrt{k_n}(k_n\Delta_n)^{\aalpha}\sqrt{\log(n)}\end{align*}
with a constant $K$ on Assumption \ref{assVola}. In this fashion, all terms generalizing the expressions in the proofs of Propositions \ref{prop1}, \ref{prop2} and \ref{prop3} are controlled and we conclude Proposition \ref{prop4}. \qed

\section{Proof of Proposition 3.5}
Recall the definition of the general It\^{o} semi-martingale in \eqref{sm2}. Again, by the usual localization procedure, we can work under the reinforced assumption that the process $X_t$ and its jumps $\Delta X_t$ are bounded as well. We will then work with the decomposition $X_t=X_0+C_t+J_t$, where $J_t$ denotes the pure jump martingale
\[
J_t = \int_0^t\int_{\mathds{R}}\delta(s,x) (\mu-\nu)(ds,dx)
\]
and the continuous part becomes
\[
C_t=\int_0^t \tilde a_s\,ds+\int_0^t\sigma_s\,dW_s
\]
with $\tilde a_s = a_s + \int_{\mathds{R}}\bar\kappa(\delta(s,x))\lambda(dx)$. The latter integral is finite for bounded jumps.

We shall prove only \eqref{thmj2} of Proposition \ref{propjumps} by showing that
\begin{align}\notag\label{resjumps}&\sqrt{k_n\log(n)}\Bigg(\max_{i=k_n,\ldots,n-k_n}\Bigg|\frac{\frac{n}{k_n} \sum_{j=i-k_n+1}^{i}(\Delta_j^n X)^2\1_{\{|\Delta_j^n X|\le u_n\}}}{\frac{n}{k_n} \sum_{j=i+1}^{i+k_n}(\Delta_j^n X)^2\1_{\{|\Delta_j^n X|\le u_n\}}}-1\Bigg|\\
&\hspace*{6cm}-\max_{i=k_n,\ldots,n-k_n}\Bigg|\frac{\frac{n}{k_n} \sum_{j=i-k_n+1}^{i}(\Delta_j^n C)^2}{\frac{n}{k_n} \sum_{j=i+1}^{i+k_n}(\Delta_j^n C)^2}-1\Bigg|\Bigg)\pn 0\,.
\end{align}

Following a decomposition of the error term as in \eqref{VmU}, we have to show that
\begin{align*}&\sqrt{k_n\log(n)}\max_{i=k_n,\ldots,n-k_n}\hspace*{-.05cm}\Bigg|\hspace*{-.05cm}\frac{\frac{n}{k_n}\hspace*{-.05cm} \sum_{j=i-k_n+1}^{i}(\Delta_j^n X)^2\1_{\{|\Delta_j^n X|\le u_n\}}}{\frac{n}{k_n} \sum_{j=i+1}^{i+k_n}(\Delta_j^n X)^2\1_{\{|\Delta_j^n X|\le u_n\}}}-\frac{\frac{n}{k_n} \hspace*{-.05cm}\sum_{j=i-k_n+1}^{i}(\Delta_j^n X)^2\1_{\{|\Delta_j^n X|\le u_n\}}}{\frac{n}{k_n} \sum_{j=i+1}^{i+k_n}(\Delta_j^n C)^2}\hspace*{-.05cm}\Bigg|\\
&+\sqrt{k_n\log(n)}\max_{i=k_n,\ldots,n-k_n}\Bigg|\frac{\frac{n}{k_n} \sum_{j=i-k_n+1}^{i}\Big((\Delta_j^n X)^2\1_{\{|\Delta_j^n X|\le u_n\}}-(\Delta_j^n C)^2\Big)}{\frac{n}{k_n} \sum_{j=i+1}^{i+k_n}(\Delta_j^n C)^2}\Bigg|\pn 0\,.\end{align*}
Both terms are handled similarly and we restrict to the second one. It suffices to prove that
\begin{align}\P\Bigg(\max_{i=k_n,\ldots,n-k_n}\frac{n\sqrt{\log(n)}}{\sqrt{k_n}}\Big|\sum_{j=i-k_n+1}^{i}\Big((\Delta_j^n X)^2\1_{\{|\Delta_j^n X|\le u_n\}}-(\Delta_j^n C)^2\Big)\Big|>\frac{\epsilon}{D}\Bigg)\to 0\,,\end{align}
for all $\epsilon>0$ and constants $D>0$, as \eqref{resjumps} then follows with \eqref{twoprob} and the same bound for the second probability as in the proof of Proposition \ref{prop1}. As $\max_{1\le i\le n}|\Delta_i^n C|=\KLEINO_{a.s.}(u_n)$ by basic extreme value theory we can work on a subset of $\Omega$ where $\max_{1\le i\le n}|\Delta_i^n C|=\KLEINO(u_n)$. Observe that on this subset
\begin{align*}&\max_{i=k_n,\ldots,n-k_n}\Big|\sum_{j=i-k_n+1}^{i}(\Delta_j^n X)^2\1_{\{|\Delta_j^n X|\le u_n\}}-(\Delta_j^n C)^2\Big)\Big|\\ &\le K  \max_{i=k_n,\ldots,n-k_n} \Big(\sum_{j=i-k_n+1}^{i}\hspace*{-.15cm}\1_{\{|\Delta_j^n X|>u_n\}}(\Delta_j^n C)^2 + \hspace*{-.1cm}\sum_{j=i-k_n+1}^{i}\hspace*{-.1cm}\big((|\Delta_j^n J|\wedge u_n)^2\hspace*{-.05cm}+\hspace*{-.05cm}(|\Delta_j^n J| \wedge u_n)|\Delta_j^n C|\big)\hspace*{-.05cm}\Big)\end{align*}
with some constant $K$.

Pertaining the first addend and using $\max_i(\Delta_i^n C)^2=\mathcal{O}_{\P}(\Delta_n\log(n))$, we have to ensure that
\begin{align*}\max_{i=k_n,\ldots,n-k_n}\sum_{j=i-k_n+1}^i \1_{\{|\Delta_j^n X|> u_n\}}=\KLEINO_{\P}\big(\sqrt{k_n}/\log^{3/2}{(n)}\big)\,.\end{align*}
Let $p$ with $1 < p < (2r\tau)^{-1}$ be arbitrary. We use the decomposition $X = X^{'n} + X^{''n}$ with
\[
X^{''n}_t = \int_0^t\int_{\mathds{R}}\delta(s,x) \1_{\{\gamma(x) > u_n^p\}} \mu(ds,dx), \qquad X^{'n}_t = X_t- X^{''n}_t,
\]
and define $A_j^n = \{ |\Delta_j^n X^{'n}| \leq u_n/2 \}$. Finally, $N^n$ is the counting process
\[
N^n_t = \int_0^t\int_{\mathds{R}} \1_{\{\gamma(x) > u_n^p\}} \mu(ds,dx).
\]
We know from (13.1.10) in \cite{JP}  that
\[
\E \Big[\max_{i=k_n,\ldots,n-k_n}\sum_{j=i-k_n+1}^i \1_{\{|\Delta_j^n X|> u_n\}} \1_{\{(A_j^n)^{\complement}\}}\Big] \leq \sum_{j=1}^n \P\big((A_j^n)^{\complement}\big) \to 0
\]
for all such $p$. Then, using $\1_{\{|\Delta_j^n X|> u_n\}} \1_{\{A_j^n\}} \leq \1_{\{|\Delta_j^n X^{''n}|> u_n/2\}}$, all we have to show are conditions under which
\begin{align*}\max_{i=k_n,\ldots,n-k_n}\sum_{j=i-k_n+1}^i \1_{\{|\Delta_j^n N^n| \geq 1\}}=\KLEINO_{\P}\big(\sqrt{k_n}/\log^{3/2}{(n)}\big)\,.\end{align*}
Obviously,
\[
\sum_{j=i-k_n+1}^i \1_{\{|\Delta_j^n N^n| \geq 1\}} \leq N^n_{i\Delta_n} - N^n_{(i-k_n)\Delta_n},
\]
and $N^n$ is a Poisson process with parameter $\int_{\mathds{R}} \1_{\{\gamma(x) > u_n^p\}} \lambda(dx) \leq K u_n^{-rp}$; see (13.1.14) in \cite{JP}. It is clearly enough if the probability of more than $l<\infty$ jumps on one block converges to zero, i.e.\,
\begin{align*}
\P\Big(\bigcup_{j=k_n}^n\big\{N_{(j+1)\Delta_n}^{n}-N^{n}_{(j-k_n+1)\Delta_n}\hspace*{-.05cm}\ge l\big\}\hspace*{-.05cm}\Big)\hspace*{-.05cm}\le \hspace*{-.05cm} n\P\big(N^n_{k_n\Delta_n}\ge l\big)\hspace*{-.05cm}\le \hspace*{-.05cm}Kn\Delta_n^lk_n^lu_n^{-rpl}\hspace*{-.05cm}=\hspace*{-.05cm}K k_n^l\Delta_n^{l(1-rp\tau)-1}.
\end{align*}
Thus, we need the condition that for some $p > 1$ and some $l<\infty$:
\begin{align}\label{con1}k_n^l\Delta_n^{l(1-rp\tau)-1}\rightarrow 0~~\mbox{and}~~2r\tau<1\,.\end{align}
Bounding the second term above comprising small jumps in case of non-truncation poses a more delicate mathematical problem. We restrict to the quadratic jump terms as the cross terms lead in the same way to an obsolete weaker criterion. For finite activity it is enough to ensure that $nk_n^{-1/2}\sqrt{\log(n)}u_n^2\rightarrow 0$. Else, define the sequence of random variables
\begin{align*}\mathcal{Z}_i=\big(|\Delta_i J|\wedge u_n\big)^2-\E\big[\big(|\Delta_i J|\wedge u_n\big)^2\big], \quad i=1,\ldots,n\,.\end{align*}
Note from equation (54) in \cite{aitjac10} that we can bound moments of $\big(|\Delta_i J|\wedge u_n\big)^2$ in the following way:
\begin{align*}\E\Big[\big(|\Delta_i^n J|\wedge u_n\big)^2\big|\mathcal{F}_{(i-1)\Delta_n}\Big]& \le K \Delta_n u_n^{2-r}\,,\\
\var\Big(\big(|\Delta_i^n J|\wedge u_n\big)^2\big|\mathcal{F}_{(i-1)\Delta_n}\Big)&\le \E\Big[\big(|\Delta_i^n J|\wedge u_n\big)^4\big|\mathcal{F}_{(i-1)\Delta_n}\Big]\le u_n^2 K \Delta_n u_n^{2-r} = K \Delta_n u_n^{4-r}\,,\end{align*}
for all $i=1,\ldots,n$. We decompose
\begin{align*}\max_{i=k_n,\ldots,n-k_n}\Big\vert \sum_{j=i-k_n+1}^i \big(|\Delta_j J|\wedge u_n\big)^2\Big\vert &\le \max_{i=k_n,\ldots,n-k_n}\Big\vert \sum_{j=i-k_n+1}^i \mathcal{Z}_j\Big\vert\\ &\quad +\max_{i=k_n,\ldots,n-k_n} \sum_{j=i-k_n+1}^i \E\Big[\big(|\Delta_j J|\wedge u_n\big)^2\Big]\,,\end{align*}
where the condition
\begin{align}\label{con2}\sqrt{k_n}u_n^{2-r}\sqrt{\log(n)}\rightarrow 0 \end{align}
renders the second term with the expectation asymptotically negligible. Yet, the derivation of the maximum in the first term from its expectation can in general become much larger. Observe that
\begin{align*}\max_{i=k_n,\ldots,n-k_n}\Big\vert \sum_{j=i-k_n+1}^i \mathcal{Z}_j\Big\vert= \max_{i=k_n,\ldots,n-k_n}\Big\vert \sum_{j=1}^i \mathcal{Z}_j-\sum_{j=1}^{i-k_n}\mathcal{Z}_j\Big\vert\le 2\max_{i=k_n,\ldots,n}\Big\vert \sum_{j=1}^i \mathcal{Z}_j\Big\vert\,.\end{align*}
Having a sequence of independent and centered random variables, we can apply Kolmogorov's maximal inequality:
\begin{align}\P\Big(\max_{i=k_n,\ldots,n-k_n}\Big\vert \sum_{j=i-k_n+1}^i \mathcal{Z}_j\Big\vert>\lambda \Big)\le \frac{n}{\lambda^2}\var(\mathcal{Z}_1)\le \lambda^{-2}u_n^{4-r}\,.
\end{align}
Thereby we conclude that $\max_{i=k_n,\ldots,n-k_n}\Big\vert \sum_{j=i-k_n+1}^i \mathcal{Z}_j\Big\vert=\mathcal{O}_{\P}\big(u_n^{2-r/2}\big)$. We obtain the condition
\begin{align}\label{con3}\frac{n\sqrt{\log(n)}}{\sqrt{k_n}}u_n^{2-r/2}\rightarrow 0\,.\end{align}
In conclusion, the conditions \eqref{con1}, \eqref{con2} and \eqref{con3} ensure \eqref{resjumps}. A careful computation finally proves that \eqref{con2} is in fact obsolete, which yields our claim.
\hfill\qed

\section{Proof of the lower bound and consistency for the local problem}
{\bf{Proof of Theorem \ref{thm_lowerbound}.}} The proof is based on equivalences of statistical experiments in the strong Le Cam sense. After information-theoretic reductions, we subsequently move to statistical experiments that allow a simpler treatment; see \eqref{eq_iinformation_relation} below. Our final experiment $\mathcal{E}_4$ is a special high-dimensional signal detection problem, from which we will deduce the lower bound by classical arguments.

First consider alternatives with a jump as in \eqref{hypo}. Here, throughout this proof, we set 
\begin{align}\label{kproof}k_n = c_k \bigl(\sqrt{\log (m_n)}n^{\aalpha}/\KK_n\bigr)^{\frac{2}{2\aalpha + 1}}\,,\end{align}
with a constant $c_k > 0$. In the preliminary step, we first grant the experimenter additional knowledge. We restrict to a sub-class of $\mathcal{S}^J_{\theta}\bigl(\aalpha,b_n,\KK_n\bigr)$ from \eqref{jumpalternatives}, where we have one jump at time $\td \in (0,1)$ in the volatility, $|\sigma_{\td}^2-\sigma_{\td-}^2|\ge b_n$. Then, we assume that $\td nk_n^{-1} \in \{1,2,\ldots,\lfloor n/k_n\rfloor-1\}$, such that the jump time is in the set of observation grid points which are multiples of $k_n$. Furthermore, we can stick to $X_0 = 0$ and $a_s = 0, s \in [0,1]$. From an information-theoretic view, obtaining this additional knowledge can only decrease the lower boundary on minimax distinguishability. Consequently, a lower bound derived for the sub-model carries over to the less informative general situation.

To ease the exposition, we first set $\sigma_-^2=1$ and $\KK_n=1$ and generalize the result at the end of this proof. Next, denote with $[a]_{b} = a \mod b$ and let
\begin{align}
\sigma_{j\Delta_n}^2= \begin{cases} 1+(k_n - [j]_{k_n})^{\aalpha}n^{-\aalpha}, & \td n \leq j < \td n + k_n,\\
1, & \text{else}. \end{cases}
\end{align}
The discretized squared volatility exhibits a jump (resp.\,change-point) of order $b_n$ at $\td$ and then decays on the window $[\td,\td+k_n\Delta_n]$ smoothly with regularity $\aalpha$ and is constant elsewhere. It suffices to consider the sub-class $\varSigma_{\theta}\subset \mathcal{S}_{\theta}^J\bigl(\aalpha,b_n,\KK_n\bigr)$ of squared discretized volatility processes of the above form for which it remains unknown on which window the jump occurs.

Introduce a sequence $r_n$ with $r_n\rightarrow\infty$ such that $r_nk_n^{-1}\rightarrow 0$ as $n\to \infty$. We specify the following stepwise approximation of $(\sigma^2_{j\Delta_n})_{0\le j\le n}\in\varSigma_{\theta}$:
\begin{align*}
\widetilde{\sigma}_{j\Delta_n}^2 = \begin{cases}1+ (k_n - i r_n)^{\aalpha}n^{-\aalpha}, & \td n + (i-1) r_n \leq j \leq \td n + i r_n,\, 1 \leq i \leq \tfrac{k_n}{r_n},\\
1, & \text{else}. \end{cases}
\end{align*}
Denote the observations by $\eta_j = \sigma_{(j-1)\Delta_n}\big(W_{j\Delta_n}-W_{(j-1)\Delta_n}\big)$ and \(\widetilde{\eta}_j = \widetilde{\sigma}_{(j-1)\Delta_n}\)\\ \(\big(W_{j\Delta_n}-W_{(j-1)\Delta_n}\big)$, $j=1,\ldots,n\), respectively, with $W$ the Wiener process in \eqref{sm}.
In the sequel, it is convenient to distinguish the two cases where $\aalpha > 1/2$ and $\aalpha \leq 1/2$.\\
{\bf Case} $\aalpha > 1/2$: As alluded to above, we relate different experiments:
\begin{description}
\item[$\bf{\mathcal{E}_1}$]: Observe $\big(\eta_j\big)_{1\le j\le n}$ and information $\td nk_n^{-1} \in \{1,2,\ldots,\lfloor n/k_n\rfloor-1\}$ is provided.
\item[$\bf{\mathcal{E}_2}$]: Observe $\big(\widetilde{\eta}_j\big)_{1\le j\le n}$ and information $\td nk_n^{-1} \in \{1,2,\ldots,\lfloor n/k_n\rfloor-1\}$ is provided.
\item[$\bf{\mathcal{E}_3}$]: Observe $\chiv = \big(\bigl(\widetilde{\sigma}^2_{ik_n\Delta_n}\chi_i\bigr)_{i\in\mathcal{I}_1},\bigl(\widetilde{\sigma}^2_{\td+(i-1)r_n\Delta_n}\tilde\chi_i\bigr)_{i\in\mathcal{I}_2}\big)$, where indices $(ik_n,i\in\mathcal{I}_1)$ expand over all multiples of $k_n$, except the one where the jump is located, i.e.\ $\mathcal{I}_1=\{1,\ldots,\td nk_n^{-1}-1,\td n k_n^{-1}+1,\ldots,\lfloor n/k_n\rfloor-1\}$, and $(\td n+(i-1)r_n,i\in\mathcal{I}_2)$ over all multiples of $r_n$ in the window of length $k_n\Delta_n$ where $(\sigma_j^2)$ is non-constant, i.e.\ $\mathcal{I}_2=\{1,2,\ldots,k_nr_n^{-1}\}$. $(\chi_{i})_{i\in\mathcal{I}_1}$ and $(\tilde{\chi}_{i})_{i\in\mathcal{I}_2}$ are i.i.d.\,random variables having chi-square distribution with degrees of freedom $k_n$ for $i \in \mathcal{I}_1$ and $r_n$ for $i \in \mathcal{I}_2$. Moreover, information $\td nk_n^{-1} \in \{1,2,\ldots,\lfloor n/k_n\rfloor-1\}$ is provided.
\item[$\bf{\mathcal{E}_4}$]: We observe $\xiv= \big(\bigl(k_n^{-1/2}\xi_i\widetilde{\sigma}_{ik_n\Delta_n}^2 + \widetilde{\sigma}_{ik_n\Delta_n}^2\bigr)_{i \in \mathcal{I}_1}, \bigl(r_n^{-1/2}\tilde\xi_{i}\widetilde{\sigma}_{\td+(i-1)r_n\Delta_n}^2 + \widetilde{\sigma}_{\td+(i-1)r_n\Delta_n}^2\bigr)_{i \in \mathcal{I}_2}\big)$  where $(\xi_i,\tilde \xi_i)$ are i.i.d.\,standard normal random variables. Moreover, information $\td nk_n^{-1} \in \{1,2,\ldots,\lfloor n/k_n\rfloor-1\}$ is provided. 
\end{description}
When considering the above experiments, we always have $(\sigma_{j\Delta_n}^2)\in\varSigma_{\theta}$ (or $(\widetilde{\sigma}^2_{j\Delta_n})\in\varSigma_{\theta}$) as unknown parameter that index a family of probability measures $\{\P_{(\sigma_{j\Delta_n}^2)}\}$. For the sake of readability, we move this formalism to the background and omit subscripts indicating the parameter space.
We show the following relations for the experiments, where $\thicksim$ marks strong Le Cam equivalence and $\approx$ asymptotic equivalence:
\begin{align}\label{eq_iinformation_relation}
\mathcal{E}_1 \approx \mathcal{E}_2 \thicksim \mathcal{E}_3 \approx \mathcal{E}_4.
\end{align}
Finally, we shall derive the lower bound in $\bf{\mathcal{E}_4}$ which carries over to $\bf{\mathcal{E}_1}$ by the above relations and thus also to our general model. The proof is now divided into four main steps.\\

\vspace{-0.3cm}
{\bf Step 1} $\mathcal{E}_1 \approx \mathcal{E}_2$: For random variables $U, V$ and their laws $\P_U,\P_V$, we denote the Kullback-Leibler divergence ${\bf D}\bigl(U\| V\bigr)={\bf D}\bigl(\P_U\| \P_V\bigr) = \int \log\big(d \P_U/d \P_V\big) d \P_U$. For normal families with unknown variance $\P_{\theta}=N(0,\theta)$, it is known that
\begin{align*}{\bf D}\big(\P_{\theta}\| \P_{\theta'}\big)=\E_{\theta}\Big[\log\Big(\frac{d\P_{\theta}}{d\P_{\theta'}}\Big)\Big]=-\frac12 \Big(\log\Big(\frac{\theta}{\theta'}\Big)+1-\frac{\theta}{\theta'}\Big)\,,\end{align*}
such that for $\theta=\theta'+\delta$ and considering asymptotics where $\delta\rightarrow 0$, we obtain
\begin{align}\label{kl}{\bf D}\big(\P_{\theta'+\delta}\| \P_{\theta'}\big)&=-\frac12 \Big(\log\Big(1+\frac{\delta}{\theta'}\Big)-\frac{\delta}{\theta'}\Big)= \frac{\delta^2}{4(\theta')^2}+\mathcal{O}\big(\delta^3\big)\,.\end{align}
As $\mathcal{E}_1$ and $\mathcal{E}_2$ share a common space on which the considered random variables are accommodated, asymptotic equivalence holds if $\|\P_{(\eta_j)}-\P_{(\tilde\eta_j)} \|_{TV}\rightarrow 0$ as $n\rightarrow\infty$ where $\|\cdot\|_{TV}$ denotes the total variation distance and $\P_{(\eta_j)}$ the law of observations $(\eta_j)$. We exploit Pinsker's inequality
\begin{align}
\bigl\|\P_{(\eta_j)} -  \P_{(\tilde{\eta}_j)}\bigr\|_{TV}^2 \leq \frac{1}{2}{\bf D}\bigl((\eta_j)\| (\tilde{\eta}_j)\bigr)\,.
\end{align}
By Gaussianity and independence of Brownian increments, implying additivity of the Kullback-Leibler divergences, it follows with \eqref{kl} for a piecewise constant approximation of a function with regularity $\aalpha$ on $k_nr_n^{-1}$ intervals of length $r_n\Delta_n$: %that
\begin{align*}
 {\bf D}\bigl((\eta_j)\| (\tilde{\eta}_j)\bigr)= \mathcal{O}\bigl(1\bigr) \sum_{i = 1}^{k_nr_n^{-1}} \sum_{j = 1}^{r_n}  \big(j\Delta_n\big)^{2\aalpha} = \mathcal{O}\bigl(n^{-2\aalpha} k_n r_n^{2\aalpha}\bigr)\,,
\end{align*}
which tends to zero for $r_nk_n^{-1}=\mathcal{O}(n^{-\epsilon})$ for some $\epsilon>0$. \\

\vspace{-0.3cm}
{\bf Step 2} $\mathcal{E}_2 \thicksim \mathcal{E}_3$: The vector of averages
\begin{align*}\bigg(\Big(k_n^{-1} \sum_{j=1}^{k_n}\widetilde \eta_{ik_n+j-1}^2\Big)_{i\in \mathcal{I}_1}\, ,\,\Big(r_n^{-1} \sum_{j =1}^{r_n}\widetilde \eta_{\td n+(i-1)r_n+j-1}^2\Big)_{i \in \mathcal{I}_2}\bigg)\,
\end{align*}
forms a sufficient statistic for $(\widetilde{\sigma}_{j-1}^2)_{1\le j\le n}$. Thereby we conclude, see e.g.\, Lemma 3.2 of \cite{brownlow}, the strong Le Cam equivalence.\\

\vspace{-0.3cm}
{\bf Step 3} $\mathcal{E}_3 \approx \mathcal{E}_4$:~
Let $\chiv^{\diamond} = \big(k_n^{-1/2}\bigl(\widetilde{\sigma}^2_{ik_n\Delta_n}(\chi_i-k_n)\bigr)_{i\in\mathcal{I}_1},r_n^{-1/2}\bigl(\widetilde{\sigma}^2_{\td+(i-1)r_n\Delta_n}(\tilde\chi_i-r_n)\bigr)_{i\in\mathcal{I}_2}\big)$ and $\xiv^{\diamond} = \big(\bigl(\xi_i\widetilde{\sigma}_{ik_n\Delta_n}^2 \bigr)_{i \in \mathcal{I}_1}, \bigl(\tilde\xi_{i}\widetilde{\sigma}_{\td+(i-1)r_n\Delta_n}^2 \bigr)_{i \in \mathcal{I}_2}\big)$. In both experiments random variables are accommodated on the same space. Rescaling and a location shift yield with Pinsker's inequality
\begin{align*}
\bigl\|\P_{\chiv} -  \P_{\xiv}\bigr\|^2_{TV} = \bigl\|\P_{\chiv^{\diamond}} -  \P_{\xiv^{\diamond}}\bigr\|^2_{TV}\leq \frac{1}{2} {\bf D}\bigl(\chiv^{\diamond}\| \xiv^{\diamond}\bigr)\,.
\end{align*}
By independence, it follows that
\begin{align*}
{\bf D}\bigl(\chiv^{\diamond}\|\xiv^{\diamond}\bigr)\hspace*{-.00cm} &\leq \hspace*{-.00cm}\sum_{i \in \mathcal{I}_1} \hspace*{-.00cm}{\bf D}\Big(k_n^{-1/2}\widetilde{\sigma}^2_{\frac{ik_n}{n}}(\chi_i-k_n)\big\| \xi_i\widetilde{\sigma}_{\frac{ik_n}{n}}^2\Big) \hspace*{-.00cm}\\
&\quad +\hspace*{-.00cm} \sum_{i \in \mathcal{I}_2} \hspace*{-.00cm}{\bf D}\Big(r_n^{-1/2}\widetilde{\sigma}^2_{\td+\frac{(i-1)r_n}{n}}(\tilde\chi_i-r_n)\big\| \tilde\xi_{i}\widetilde{\sigma}_{\td+\frac{(i-1)r_n}{n}}^2\Big).
\end{align*}
An application of Theorem 1.1 in \cite{goetze} yields
\begin{align*}
&\sum_{i \in \mathcal{I}_1} {\bf D}\big(k_n^{-1/2}\widetilde{\sigma}^2_{ik_n\Delta_n}(\chi_i-k_n)\big\| \xi_i\widetilde{\sigma}_{ik_n\Delta_n}^2\big)=\mathcal{O}\big(nk_n^{-2}\big)\,, \\
&\sum_{i \in \mathcal{I}_2} {\bf D}\big(r_n^{-1/2}\widetilde{\sigma}^2_{\td+(i-1)r_n\Delta_n}(\tilde\chi_i-r_n)\big\| \tilde\xi_{i}\widetilde{\sigma}_{\td+(i-1)r_n\Delta_n}^2\big)=\mathcal{O}\big(k_nr_n^{-2}\big)\,.
\end{align*}
For $\aalpha > 1/2$, we have $nk_n^{-2} = \KLEINO(1)$. Choosing $r_n$ sufficiently large such that $k_nr_n^{-2} = \KLEINO(1)$, it follows that
\begin{align}
\bigl\|\P_{\chiv} -  \P_{\xiv}\bigr\|_{TV}  = \KLEINO\bigl(1\bigr)\,,
\end{align}
what ensures the claimed asymptotic equivalence.\\

\vspace{-0.3cm}
{\bf Step 4}: By the previous steps, it suffices to establish a lower bound for the distinguishability in experiment $\mathcal{E}_4$. Adding an additional drift, which gives clearly an equivalent experiment, we consider observations $\xiv= \big(\bigl(k_n^{-1/2}\xi_i\widetilde{\sigma}_{ik_n\Delta_n}^2 + \widetilde{\sigma}_{ik_n\Delta_n}^2-1\bigr)_{i \in \mathcal{I}_1}, \bigl(r_n^{-1/2}\tilde\xi_{i}\widetilde{\sigma}_{\td+(i-1)r_n\Delta_n}^2 + \widetilde{\sigma}_{\td+(i-1)r_n\Delta_n}^2-1\bigr)_{i \in \mathcal{I}_2}\big)$. Then, the testing problem can be interpreted as a high dimensional location signal detection problem in the sup-norm. More precisely, we test the hypothesis
\begin{align}\label{signaltest}
H_0:\sup_j(\tilde\sigma_j^2-1) = 0
~~\text{
against the alternative}~~
H_1:  \sup_j(\tilde\sigma_j^2-1) \ge  b_n\,,
\end{align}
and we are interested in the maximal value $b_n \to 0$ such that the hypothesis $H_0$ and $H_1$ are non-distinguishable in the minimax sense.
Non-distinguishability in the minimax sense is formulated as
\begin{align}\label{eq_lower_bound_highfim_setup}
\lim_{n \to \infty}\inf_{\psi}\gamma_{\psi}\bigl(\aalpha,b_n\bigr) = 1,
\end{align}
and the detection boundary here is $b_n\propto(k_n\Delta_n)^{\aalpha}\propto n^{-\frac{\aalpha}{2\aalpha+1}}$. In order to show \eqref{eq_lower_bound_highfim_setup}, we proceed in the fashion of Section 3.3.7 of \cite{ingster}. Let $\P_{\xiv} $ be the law of the observations. We consider the probability measures
\begin{align*}
\P_0 = \P_{\xiv} \times \P_{\td_{0}} \quad \text{and} \quad \P_1 = \P_{\xiv} \times \P_{\td_{1}},
\end{align*}
where $\P_{\td_0}$ means the hypothesis of the test applies (no jump) and $\P_{\td_{1}}$ draws a jump-time $\td$ with $\td nk_n^{-1}\in\{1,\ldots,\lfloor n/k_n\rfloor-1\}$ uniformly from this set. Therefore, $\P_0$ represents the probability measure without signal, and $\P_1$ the measure where a signal is present. It then follows that
\begin{align*}
\inf_{\psi }\gamma_{\psi}\bigl( \aalpha,b_n\bigr) &\geq 1 - \frac{1}{2}\bigl\|\P_1 - \P_0 \bigr\|_{TV} \geq 1 - \frac{1}{2}\bigl|\E_{P_0}\bigl[L_{0,1}^2 - 1\bigr]\bigr|^{1/2},
\end{align*}
with $L_{0,1} = d\P_1/d\P_0$ the likelihood ratio of the measures $\P_1$ and $\P_0$. For the validity of \eqref{eq_lower_bound_highfim_setup}, it thus suffices to establish
\begin{align}\label{eq_likelohood_ratio_to_one}
\E_{\P_0}\bigl[L_{0,1}^2\bigr] \to 1 \quad \text{as $n \to \infty$.}
\end{align}
To this end, for given $\td$ we denote with $u_{i}^{\td nk_n^{-1}}  = \widetilde{\sigma}_{\td+(i-1)r_n\Delta_n}^4$ for $i \in \mathcal{I}_2$, $v_i^{\td nk_n^{-1}} =(u_i^{1/2}-1)r_n^{1/2}$. We first perform some preliminary computations. Denote with $\varphi_Y(x)$ the density function of a Gaussian random variable $Y$, not necessarily standard normal, and
for $a,b \in \{1,\ldots, \lfloor n/k_n \rfloor -1\}$
\begin{align*}
I_{a,b}(x,y) := \prod_{i \in \mathcal{I}_2} \frac{\varphi_{\tilde\xi_i (u_i^{a})^{1/2} + v_i^{a}}(x_i)}{\varphi_{\tilde\xi_{i}}(x_i)}\,\prod_{i \in \mathcal{I}_2} \frac{\varphi_{\tilde\xi_i (u_i^{b})^{1/2} + v_i^{b}}(y_i)}{\varphi_{\tilde\xi_{i}}(y_i)}\,.
\end{align*}
Then, we have that \(I_{a,b} := \int I_{a,b}(x,y) \prod_{i  \in \mathcal{I}_2} \varphi_{\tilde\xi_{i}}(x_i) d x_i \prod_{i \in \mathcal{I}_2} \varphi_{\tilde\xi_{i}}(y_i) d y_i = 1.\)
Next, for $a \in \{1,\ldots, \lfloor n/k_n \rfloor -1\}$, consider
\begin{align*}
II_a(x) := \prod_{i\in \mathcal{I}_2}\left(\frac{\varphi_{\tilde\xi_i (u_i^{a})^{1/2} + v_i^{a}}(x_i)}{\varphi_{\tilde\xi_{i}}(x_i)}\right)^2.
\end{align*}
Observe that for a standard Gaussian random variable $Z$ and $s,t \in \R$, $|s| < 1/2$:
\begin{align}
\E\bigl[\exp(sZ^2 + t Z)\bigr] = (1-2s)^{-1/2}\,\exp\left(\frac{t^2}{2 - 4 s}\right)\,. 
\end{align}
This, together with the inequality 
\begin{align*}
C_0 k_n \left(k_n\Delta_n\right)^{2\aalpha} \leq r_n \sum_{i = 0}^{k_n/r_n-1} \left((k_n - ir_n)\Delta_n\right)^{2\aalpha} \leq k_n  \left(k_n\Delta_n\right)^{2\aalpha}
\end{align*}
for some constant $C_0 > 0$ and routine calculations yield for some $C_0 \leq C_1 \leq 1$:
\begin{align*}
II_a := \int II_a(x) \prod_{i\in \mathcal{I}_2} \varphi_{\tilde \xi_i}(x_i) d x_i \leq e^{C_1 k_n  \left(k_n\Delta_n\right)^{2\aalpha}}\bigl(1 + \KLEINO(1)\bigr).
\end{align*}
With all the preliminary calculations completed, we are now ready to derive a bound for
\begin{align*}
\E_{\P_0}\bigl[L_{0,1}^2\bigr] - 1&= \sum_{\substack{a,b = 1\\ a \neq b}}^{\lfloor n/k_n\rfloor - 1} \P\bigl(\td n k_n^{-1} = a\bigr)\P\bigl(\td n k_n^{-1} = b\bigr)\bigl(I_{a,b} - 1\bigr)\\&\quad + \sum_{a = 1}^{\lfloor n/k_n\rfloor - 1} \P\bigl(\td n k_n^{-1} = a\bigr)^2\bigl(II_a -1 \bigr)
\end{align*}
where the first sum vanishes. For an appropriate choice of $c_k > 0$ in \eqref{kproof}, we have that $k_n (k_n\Delta_n)^{2\aalpha} = C_2 \log(n/k_n)$ for some $C_2 < C_1^{-1}$.
Since $\P\bigl(\td n k_n^{-1} = a\bigr) = k_n\Delta_n$, we thus obtain
\begin{align}\nonumber
\bigl|\E_{P_0}\bigl[L_{0,1}^2\bigr]-1\bigr| &\le \sum_{a=1}^{\lfloor n/k_n\rfloor } \P\bigl(\td n k_n^{-1} = a\bigr)^2 \big(e^{C_1 k_n (k_n\Delta_n)^{2\aalpha}}-1\big) \\
\label{eq_likelihood_ratio_equation_1} &= (1+\KLEINO(1))k_n\Delta_n\, e^{C_1 k_n \left(k_n\Delta_n\right)^{2\aalpha}}\,.
\end{align}
We conclude \eqref{eq_likelohood_ratio_to_one} using
\begin{align*}
k_n\Delta_n e^{C_1 k_n (k_n\Delta_n)^{2\aalpha}} = k_n\Delta_n\exp\big({C_1\,C_2 \log\big(n/k_n\big)}\big) = (k_n\Delta_n)^{1 - C_1\,C_2} = \KLEINO(1)\,.
\end{align*}
{\bf Case} $\aalpha \leq 1/2$: The only time we make use of the condition $\aalpha > 1/2$ above is in Step 3 to obtain $n/k_n^2 = \KLEINO(1)$. The necessity of this relation is due to the large number of blocks $n/k_n$, when operating with the entropy bounds. To establish the lower bound, this constraint can be removed by granting the experimenter even more additional information what is briefly sketched in the following. Indeed, suppose we know in addition that $\td n \in \bigl\{k_n,2 k_n ,\ldots, l_n k_n \bigr\}$ where $l_n = n^{\ld}\ll n/k_n$, $\ld > 0$ arbitrarily small but strictly positive and such that $l_n \in \N$. Using the sufficiency argument of Step 2, we can gather all the information contained in $(\eta_i)_{l_n k_n < i \leq n}$ in one single average $(n - (l_n+1)k_n)^{-1} \sum_{i = l_n k_n+1}^{n} \eta_i^2$. Then, one can repeat Steps 3 and 4, subject to the weaker condition $l_n/k_n = \KLEINO(1)$. Selecting $\ld > 0$ sufficiently small for each $0 < \aalpha \leq 1$, this is always possible. Substituting $nk_n^{-1}$ by $l_n$ in the sum and (squared) probability in Step 4, we obtain instead of \eqref{eq_likelihood_ratio_equation_1}
\begin{align*}\bigl|\E_{P_0}\bigl[L_{0,1}^2\bigr]-1\bigr|= (1+\KLEINO(1))l_n^{-1}\, e^{C_1 k_n \left(k_n\Delta_n\right)^{2\aalpha}}\,.\end{align*}
For an appropriate choice of $c_k > 0$ in \eqref{kproof}, $k_n (k_n\Delta_n)^{2\aalpha} = C_2 \log(n/k_n)$ with $C_2 < \mathfrak{l} \,C_1^{-1}$. Hence, we conclude that the term tends to zero and the lower bound in Step 4 gives the same minimax detection boundary.\\

\vspace{-0.3cm}
Let us now touch on the general case with some $\sigma_{-}^2 > 0$ and sequences $\KK_n$. We can divide formulas in \eqref{signaltest} by $\sigma_{-}^2$ to rescale. Exactly the same arguments lead to $\lim_{n\rightarrow\infty}\inf_{\psi}\gamma_{\psi}(\aalpha,b_n)=1$ for $k_n$ given in \eqref{kproof} with $b_n\le L_n (k_n\Delta_n)^{\aalpha}\sigma^2_-$, which gives the general result.
Finally, we remark on the regularity alternative $H_1^{\text{R}}$. The proof is along the same lines as for jumps where instead of a jump of size $\KK_n(k_n\Delta_n)^{\aalpha}$ at unknown location, we observe a sudden, more regular increase in $\sigma_t^2$ of size $\KK_n(k_n\Delta_n)^{\aalpha+\aalpha'}$, where we exploit the regularity $\aalpha'$ in sets $\mathcal{S}_{\td}^{\text{R}}$. Hence, the jump gets replaced with a gradual regular increase. 
Then, the arguments are almost identical. This also highlights the fact that at (or below) the boundary $b_n$, the different alternatives $H_1$ and $H_1^{\text{R}}$ are not distinguishable.\hfill\qed\\[.2cm]

{\bf{Proof of Theorem \ref{thm_upper1}.}}
Using similar arguments as in the proof of Theorem \ref{thm1} and in particular Proposition \ref{prop4}%, given in \cite{BJV}
, one derives for
\begin{align*}
\overline{V}_{n,i} =   \frac{ \big\vert\frac{n}{k_n^{\diamond}}\sum_{j=i-k_n^{\diamond}+1}^{i}\big((\Delta_{j}^n X)^2- \E\bigl[(\Delta_{j}^n X)^2\bigr]\big)-\frac{n}{k_n^{\diamond}}\sum_{j=i+1}^{i+k_n^{\diamond}} \big((\Delta_{j}^n X)^2- \E\bigl[(\Delta_{j}^n X)^2\bigr]\big)\big\vert}{\frac{n}{k_n^{\diamond}}\sum_{j=i+1}^{i+k_n^{\diamond}} (\Delta_{j}^n X)^2}\,, \end{align*}
$\quad k_n^{\diamond} \leq i \leq n - k_n^{\diamond}$, that under the alternatives $H_1$ and $H_1^{\text{R}}$, it holds that
\begin{align}\label{eq_thm_upper1_1}
\sqrt{k_n^{\diamond}}\overline{V}_{n,i} = \mathcal{O}_P\bigl(1\bigr), \quad  k_n^{\diamond} \leq i \leq n - k_n^{\diamond}.
\end{align}
Based on \eqref{eq_thm_upper1_1}, a simple estimate yields
\begin{align}\label{eq_thm_upper 1_2}\nonumber
V_{n}^* &\geq -\overline{V}_{n,\lfloor n\td\rfloor } + \frac{n}{k_n^{\diamond}}\biggl|\int_{\td - k_n^{\diamond}\Delta_n}^{\td} \sigma_s^2\, ds - \int_{\td}^{\td + k_n^{\diamond}\Delta_n} \sigma_s^2\, ds\biggr| \frac{\bigl(1 - \KLEINO_{\P}(1)\bigr)}{\sigma_{\td}^2}\\& \geq - \mathcal{O}_{\P}\bigl(\big(k_n^{\diamond}\big)^{-1/2}\bigr) + \frac{n}{k_n^{\diamond}}\biggl|\int_{\td - k_n^{\diamond}\Delta_n}^{\td} \sigma_s^2\, ds - \int_{\td}^{\td + k_n^{\diamond}\Delta_n} \sigma_s^2\, ds\biggr| \frac{\bigl(1 - \KLEINO_{\P}(1)\bigr)}{\sup_{0 \leq t \leq 1}\sigma_t^2}\,.
\end{align}
Observe that in order to prove $\gamma_{\psi^{\diamond}}(\aalpha,b_n^{\diamond}) \to 0$, it suffices to show that
\begin{align}\label{eq_thm_upper 1_3}
\P\bigl(V_n^{*} \geq 2 C^{\diamond} \sqrt{2 \log(m_n^{\diamond})/k_n^{\diamond}}\bigr) &\to 1 \quad \text{under $H_1$ or $H_1^{\text{R}}$,}\\
\label{eq_thm_upper 1_3.5}\mbox{and}~~
\P\bigl(V_n^{*} < 2 C^{\diamond} \sqrt{2 \log(m_n^{\diamond})/k_n^{\diamond}}\bigr) &\to 1 \quad \text{under $H_0$}.
\end{align}

{\bf Case} $H_1$: Using that $(\sigma_t^2 - \Delta \sigma_t^2)_{t \in [0,1]} \in \Sigma(\aalpha,\KK_n)$ and $\Delta \sigma_t \geq 0$, we get
\begin{align*}
\frac{n}{k_n^{\diamond}}\sup_{t \geq \td}\biggl|\int_{\td- k_n^{\diamond}\Delta_n}^{\td} \sigma_s^2 \,ds - \int_{\td}^{\td + k_n^{\diamond}\Delta_n} \sigma_s^2\, ds\biggr| \geq b_n - 2 \KK_n \left(k_n^{\diamond}\Delta_n\right)^{\aalpha}.
\end{align*}
Hence \eqref{eq_thm_upper 1_3} follows for $\aalpha'=0$ in \eqref{ass_b_n_diamond} with \eqref{eq_thm_upper 1_2}.\\

\vspace{-0.3cm}
{\bf Case } $H_1^{\text{R}}$: For $\sigma_t^2 \in \mathcal{S}_{\td}^{\text{R}}\bigl(\aalpha,\aalpha',b_n^{\diamond},\KK_n, k_n\bigr)$, we have that
\begin{align*}
\sigma^2_{\td + h} \geq \sigma^2_{\td} +  b_n h^{\aalpha'} \quad \text{or} \quad \sigma^2_{\td + h} \leq \sigma^2_{\td} -  b_n h^{\aalpha'}, \quad \text{for $0 \le h \leq 2k_n\Delta_n$}.
\end{align*}
It follows that 
\begin{align*}
\frac{n}{k_n^{\diamond}}\biggl|\int_{\td + k_n^{\diamond}\Delta_n}^{\td+2 k_n^{\diamond}\Delta_n} \big(\sigma_s^2-\sigma^2_{s-k_n^{\diamond}\Delta_n}\big) \,ds \biggr| &\geq b_n \big(k_n^{\diamond}\Delta_n\big)^{\aalpha'}.
\end{align*}
Therefore \eqref{eq_thm_upper 1_3} follows for \eqref{ass_b_n_diamond} with \eqref{eq_thm_upper 1_2}.\\

\vspace{-0.3cm}
{\bf Case } $H_0$: Under the hypothesis we employ the upper bound
\begin{align*}
V_{n}^* \leq \max_{k_n^{\diamond} \leq i \leq n -k_n^{\diamond}} \overline{V}_{n,i} + \KK_n (k_n^{\diamond}\Delta_n)^{\aalpha}
\end{align*}
to prove \eqref{eq_thm_upper 1_3.5}. \eqref{eq_kn_and_mn} implies
\begin{align*}
2 C^{\diamond} \sqrt{2 \log(m_n^{\diamond})/k_n^{\diamond}} \geq 2 \sqrt{2 \log(m_n^{\diamond})/k_n^{\diamond}} + \KK_n (k_n^{\diamond}\Delta_n)^{\aalpha},
\end{align*}
and hence it suffices to show that $\P\bigl(\max_{k_n^{\diamond} \leq i \leq n -k_n^{\diamond}} \overline{V}_{n,i} \leq  \sqrt{2 \log(m_n^{\diamond})/k_n^{\diamond}}\bigr) \to 1$. This, however, follows from a direct adaption of Theorem \ref{thm1}, which completes the proof.\hfill\qed\\[.2cm]

\section{Proofs of Section 4.2}
{\bf{Proof of Proposition \ref{propest}.}} We use the following elementary lemma to prove Proposition \ref{propest}.
\begin{lem}\label{lem_argmax_bound}
Let $f(t)$ and $g(t)$ be functions on $[0,\td]$ such that $f(t)$ is increasing. As long as $f(\td) - f(\td - \gamma) \geq \sup_{0 \leq t \leq \td}|g(t)|$ for some $\gamma \in [0,\td]$, we have that
\begin{align*}
\operatorname{argmax}_{0 \leq t \leq \td}\bigl(f(t)+g(t)\bigr) \geq \td - \gamma.
\end{align*}
An analogous result holds if $f(t)$ and $g(t)$ are functions on $[\td,1]$ and $f(t)$ is decreasing.
\end{lem}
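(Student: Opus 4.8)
The plan is to prove Lemma \ref{lem_argmax_bound} by an elementary deterministic comparison; the statement is purely about fixed functions $f$ and $g$, so no probabilistic machinery is needed here. I would argue by contradiction. Suppose some $\hat t\in[0,\td]$ attains $\max_{0\le t\le\td}\bigl(f(t)+g(t)\bigr)$ but satisfies $\hat t<\td-\gamma$. Since $\td\in[0,\td]$, optimality of $\hat t$ gives $f(\hat t)+g(\hat t)\ge f(\td)+g(\td)$, hence $f(\td)-f(\hat t)\le g(\hat t)-g(\td)$. Because $f$ is increasing and $\hat t<\td-\gamma$, we have $f(\hat t)\le f(\td-\gamma)$, so $f(\td)-f(\hat t)\ge f(\td)-f(\td-\gamma)$. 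Chaining these two inequalities yields $f(\td)-f(\td-\gamma)\le g(\hat t)-g(\td)$: the guaranteed increase of $f$ over the last window $[\td-\gamma,\td]$ would have to be fully absorbed by the change of $g$ between $\hat t$ and $\td$. Bounding the right-hand side by the noise level of $g$ — which is exactly what the hypothesis $f(\td)-f(\td-\gamma)\ge\sup_{0\le t\le\td}|g(t)|$ is designed to rule out — produces a contradiction, so every maximizer of $f+g$ lies in $[\td-\gamma,\td]$, which is the assertion.

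For the companion statement on $[\td,1]$ with $f$ decreasing I would either run the mirror argument (assume a maximizer $\hat t>\td+\gamma$, compare with the left endpoint $\td$, use $f(\hat t)\le f(\td+\gamma)$ to bound $f(\td)-f(\hat t)\ge f(\td)-f(\td+\gamma)$ from below, and conclude $\hat t\le\td+\gamma$) or, more economically, reduce to the first part via the reflection $t\mapsto\td+1-t$ of $[\td,1]$ onto itself: it turns a decreasing $f$ into an increasing function, sends $\td$ to the right endpoint $1$, maps the window $[\td,\td+\gamma]$ to $[1-\gamma,1]$, and leaves $\sup|g|$ unchanged, so the half already proved applies verbatim and translates back to $\hat t\le\td+\gamma$.

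There is no real obstacle — the lemma is essentially a one-line inequality manipulation — and the only point that needs a touch of care is the bookkeeping of the two values of $g$ entering the comparison: what actually has to be dominated is $g(\hat t)-g(\td)$, i.e.\ the oscillation $\sup_s g(s)-\inf_s g(s)$ of $g$ rather than a one-sided quantity. When the lemma is invoked in the proof of Proposition \ref{propest}, $f$ plays the role of the deterministic tent built from the local integrated-volatility increments around the change-point and $g$ collects the centered stochastic fluctuations, whose order is already controlled by the earlier $\mathcal{O}_{\P}$-estimates; one then applies the two halves of the lemma on $[0,\td]$ and $[\td,1]$ separately to localize $\widehat\td_n$ from both sides and obtain the rate \eqref{estrate}.
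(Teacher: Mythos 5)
The paper offers no proof of this lemma at all (it is invoked as ``elementary''), and your contradiction argument is exactly the intended one: compare the value of $f+g$ at a putative maximizer $\hat t<\td-\gamma$ with its value at $t=\td$, and use monotonicity of $f$ to arrive at $f(\td)-f(\td-\gamma)\le g(\hat t)-g(\td)$. Up to that point the write-up is correct, and your reduction of the decreasing case on $[\td,1]$ by reflection (or by the mirrored argument) is fine modulo relabelling the interval.

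The genuine issue is the concluding step. You assert that the hypothesis $f(\td)-f(\td-\gamma)\ge\sup_{0\le t\le\td}|g(t)|$ ``rules out'' the inequality you derived, but it does not: $g(\hat t)-g(\td)$ can be as large as $2\sup_{t}|g(t)|$, so no contradiction follows from the hypothesis as literally stated. In fact the lemma is false by exactly this factor: take $f\equiv 0$ on $[0,\td-\gamma]$, rising by $M=\sup_t|g(t)|$ on $[\td-\gamma,\td]$, with $g\equiv M$ near $0$ and $g\equiv -M$ on $[\td-\gamma,\td]$; then $f+g$ attains the value $M$ near $0$ but at most $0$ on $[\td-\gamma,\td]$, so the maximizer lies to the left of $\td-\gamma$. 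You clearly noticed this — your closing remark says the quantity to be dominated is the oscillation $\sup_t g(t)-\inf_t g(t)$ — but you leave the main argument claiming a contradiction it does not deliver, which is having it both ways. The repair is to state and use the lemma with the hypothesis $f(\td)-f(\td-\gamma)\ge 2\sup_{0\le t\le\td}|g(t)|$ (equivalently, at least the oscillation of $g$), after which your argument closes verbatim; this costs nothing in the application inside Proposition \ref{propest}, because there $f(i^*\Delta_n)-f(i^*\Delta_n-\gamma_n)=\lfloor\gamma_n n\rfloor\,\delta\, k_n^{-1/2}$ exceeds $\sup_t|g(t)|=\mathcal{O}_{\P}\bigl(\sqrt{\log(n)}\bigr)$ by a factor tending to infinity under the choice of $\gamma_n$, so the extra factor $2$ is absorbed without changing the rate \eqref{estrate}.
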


For $\td\in(0,1)$ define $i^*=\lceil \td n\rceil$, the smallest integer such that $i^* \Delta_n$ is larger or equal than $\td$. While $(\sigma_t^2)_{t\in[0,1]}$ is the squared volatility process containing one jump at time $\td$, denote by $(\tilde{\sigma}_t^2)_{t\in[0,1]}$ the same path without jump, such that
\begin{align*}
\sigma_{i\Delta_n}^2 = \tilde\sigma^2_{i\Delta_n} + \delta \1(i \geq i^*)
\end{align*}
with jump size $\delta$. Without loss of generality, we assume $\delta > 0$. Define
\begin{align} \label{42}
f\bigl(i\Delta_n\bigr)=\left\{\begin{array}{cl} 0, &\mbox{if $i +k_n < i^*$,}\\ (i+k_n-i^*)k_n^{-1/2}\delta &\mbox{for}~i=i^*-k_n,\ldots,i^*\,,\\ \sqrt{k_n}\delta &\mbox{if $i>i^*$,}\end{array}\right.
\end{align}
and $(f(t))_{t\in[0,1]}$ the associated piecewise constant increasing step function. For $i=k_n,\ldots,n-k_n$:
\begin{align*}
&\sum_{j = i- k_n + 1}^{i}n(\Delta_j^n X)^2-\sum_{j = i + 1}^{i + k_n}n(\Delta_j^n X)^2\\
&=\big\{\sum_{j = i- k_n + 1}^{i}\bigl(n(\Delta_j^n X)^2 - \E[n(\Delta_j^n X)^2] \bigr)- \sum_{j = i + 1}^{i + k_n} \bigl(n(\Delta_j^n X)^2 - \E[n(\Delta_j^n X)^2] \bigr)\big\} \\
&+ \big\{ \sum_{j = i- k_n + 1}^{i}\bigl(\E[n(\Delta_j^n X)^2] - \tilde{\sigma}_{j\Delta_n}^2\bigr)- \sum_{j = i + 1}^{i + k_n} \bigl(\E[n(\Delta_j^n X)^2-\sigma_{j\Delta_n}^2] \bigr) \big\}\\
&+ \big\{\hspace*{-.05cm}\sum_{j = i- k_n + 1}^{i}\hspace*{-.05cm}\tilde{\sigma}_{j\Delta_n}^2 \hspace*{-.05cm}- \hspace*{-.1cm}\sum_{j = i + 1}^{i + k_n} \tilde{\sigma}_{j\Delta_n}^2 \big \}-\hspace*{-.05cm}\sum_{j = i + 1}^{i + k_n} \bigl(\sigma_{j\Delta_n}^2 \hspace*{-.1cm} - \tilde{\sigma}_{j\Delta_n}^2\bigr)\hspace*{-.05cm} =: A_i^n + B_i^n + C_i^n -\hspace*{-.05cm} \sum_{j = i + 1}^{i + k_n} \bigl(\sigma_{j\Delta_n}^2 \hspace*{-.05cm}- \tilde{\sigma}_{j\Delta_n}^2\bigr),
\end{align*}
with the obvious definition using the curly brackets. Thus, for the step function $(g(t))_{t\in[0,1]}$, with
\begin{align*}g\bigl(i\Delta_n\bigr)=k_n^{-1/2}\Bigg(\sum_{j = i- k_n + 1}^{i}n(\Delta_j^n X)^2-\sum_{j = i + 1}^{i + k_n}n(\Delta_j^n X)^2+\sum_{j = i + 1}^{i + k_n}\big(\sigma^2_{i^*\Delta_n}-\tilde{\sigma}^2_{i^*\Delta_n}\big)\Bigg)\end{align*}
for $i=k_n,\ldots,n-k_n$ and $g(i\Delta_n)=0$ else, we have that
\begin{align*}
&\sqrt{k_n}\,g\bigl(i\Delta_n\bigr)=A_{n,i}+B_{n,i}+C_{n,i}+D_{n,i}\,,\\
&~~~~~~~~-D_{n,i}=\sum_{j = i + 1}^{i + k_n}\big(\sigma^2_{j\Delta_n}-\tilde{\sigma}^2_{j\Delta_n}\big)-\big(\sigma^2_{i^*\Delta_n}-\tilde{\sigma}^2_{i^*\Delta_n}\big)\\
&\phantom{-D_{n,i}}~~~~~~~~~~=\sum_{j = i + 1}^{i + k_n}\big(\sigma^2_{j\Delta_n}-\sigma^2_{i^*\Delta_n}\big)+\big(\tilde{\sigma}^2_{i^*\Delta_n}-\tilde{\sigma}^2_{j\Delta_n}\big)\,.
\end{align*}
Exploiting the smoothness of $(\sigma_t)_{t\in[0,1]}$ by Assumption \ref{assVola}, we obtain
\begin{align*}
\max_{i=1,\ldots,n}\bigl|D_{n,i}\bigr| &= \max_{i^*-k_n \leq i \leq i^*}\bigl|D_{n,i}\bigr| \leq K \sup_{0 \leq s \leq 1}|\sigma_s| \max_{i^*-k_n \leq i \leq i^*}\sum_{j = i + 1}^{i + k_n}|\sigma_{j\Delta_n} - \sigma_{i^*\Delta_n} |
\\&\leq K\sup_{0 \leq s \leq 1}|\sigma_s| k_n^{1+\aalpha}n^{-\aalpha} = \mathcal{O}_{a.s.}\bigl(\sqrt{k_n \log (n)}\bigr)\,,
\end{align*}
with some constant $K$ by \eqref{assk}. Proceeding similarly as in the proof of Theorem \ref{thm1}, it follows that
\begin{align}
\max_{i=1,\ldots,k_n}\bigl|A_{n,i}+B_{n,i}+C_{n,i}\bigr| = \mathcal{O}_{\P}\bigl(\sqrt{k_n \log (n)}\bigr)\,.
\end{align}
Altogether, we conclude that
\begin{align}\sup_{t\in[0,\td]}\bigl|g(t)\bigr|=\mathcal{O}_{\P}\big(\sqrt{\log(n)}\big)\,.\end{align}

Finally, using (\ref{42}), we see that $f(i\Delta_n)>|g(i\Delta_n)|>0$ holds for each $i=i^*-k_n/2,\ldots,i^*$, with probability tending to 1. In particular,
\begin{align}\label{eq_repres_Vdiamond}
V_{n,i}^{\diamond} = \bigl|f\bigl(i\Delta_n\bigr)+g\bigl(i\Delta_n\bigr)\bigr| = f\bigl(i\Delta_n\bigr) + \operatorname{sign}\bigl({g}(i\Delta_n)\bigr)\bigl|{g}\bigl(i\Delta_n\bigr)\bigr|\,
\end{align}
for those $i$. Furthermore,
$$f(i^*\Delta_n)-f(i^*\Delta_n-\gamma_n)=\lfloor \gamma_n n\rfloor \delta k_n^{-1/2}~~\mbox{for}~~\gamma_n\in[0,k_n/(2n)]\,.$$
Thus, we choose $\gamma_n$ such that
\begin{align}
\frac{\sqrt{k_n \log (n)}}{\delta n} = \KLEINO\bigl(\gamma_n\bigr).
\end{align}
Now the assumptions of Lemma \ref{lem_argmax_bound} are fulfilled, and we obtain, with probability tending to one,
\begin{align*}
i^*\Delta_n \geq \operatorname{argmax}_{i=k_n,\ldots, i^*} V_{n,i}^{\diamond}\; \Delta_n \geq i^*\Delta_n - \gamma_n,
\end{align*}
through an application of Lemma \ref{lem_argmax_bound} together with \eqref{eq_repres_Vdiamond}. A similar argument for $i > i^*$ shows
\begin{align*}
i^*\Delta_n \leq \operatorname{argmax}_{i=i^*,\ldots, n-k_n} V_{n,i}^{\diamond}\; \Delta_n \leq i^*\Delta_n + \gamma_n,
\end{align*}
from which one obtains $|\widehat{\td}_n - i^* \Delta_n | = \mathcal{O}_{\P}\bigl(\gamma_n\bigr)$, which completes the proof by definition of $i^*$.\hfill\qed \\[.2cm]

{\bf{Proof of Proposition \ref{prop_time_est_optimal}.}} Suppose that
\begin{align}\label{eq_prop_time_est_optimal_1}
\sqrt{k_n} \delta_n = \KLEINO\bigl(\sqrt{\log (n)}\bigr)
\end{align}
and we have a consistent estimator $\widehat{\td}^*$ for $\td$. Define
\begin{align}
T_{\widehat{\td}^* } = \frac{n}{k_n}\bigg(\sum_{j = \widehat{\td}^* n- k_n}^{\widehat{\td}^* n-1} \big(\Delta_j^n X\big)^2 - \sum_{j = \widehat{\td}^* n + 1}^{\widehat{\td}^* n+k_n} \big(\Delta_j^n X\big)^2 \bigg)\,.
\end{align}
Using the statistic $T_{\widehat{\td}^*}$, we can now test for jumps in the volatility $\sigma_t^2$. Note that due to \eqref{eq_prop_time_est_optimal_1}, it readily follows that this new test has a detection boundary $b_n = \KLEINO\bigl(\sqrt{\log (n)}/\sqrt{k_n}\bigr)$. This, however, is a contradiction to Theorem \ref{thm_lowerbound}, and hence such an estimator $\widehat{\td}^*$ cannot exist.\hfill\qed\\
Finally, to prove Proposition \ref{thm_multiple:change-points} we may proceed exactly as in the proof of Proposition \ref{propest}.
\section{Proofs of Section \ref{sec:5}}
{\bf{Proof of Theorem \ref{upperglobal}.}}
\begin{lem}\label{propR}
Decompose $X_t$ from \eqref{sm} in $A_t=\int_0^t a_s\,ds$ and $M_t=X_0+\int_0^t \sigma_s\,dW_s$. Define for $i=2,\ldots,n$:
\begin{align}\label{R}R_{n,i}=n^2\Big(\big((\Delta_i^n M)^2-(\Delta_{i-1}^n M)^2\big)^2-\frac{2}{3}\big((\Delta_i^n M)^4+(\Delta_{i-1}^n M)^4\big)\Big)\,,\end{align}
\begin{align}\label{Udagger}U_n^{\dagger}=\frac{1}{\sqrt{n-1}}\max_{m=2,\ldots,n}\Big|\sum_{i=2}^m\Big(R_{n,i}-\frac{\sum_{i=2}^{n}R_{n,i}}{n-1}\Big)\Big|\,.\end{align}
Then $|V_n^{\dagger}-U_n^{\dagger}|\pn 0$.
\end{lem}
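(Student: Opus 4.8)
The plan is to reduce the claim to showing that the drift contributes negligibly to the cusum of the $Q_{n,i}$, and then to exploit that odd moments of Brownian increments vanish. First I would invoke the standard localization procedure (Section 4.4.1 of \cite{JP}), as already in force in the enclosing proof of Theorem \ref{upperglobal}, to assume that $a$ and $\sigma$ are bounded and $\sigma$ bounded away from zero, so that $\E[|\Delta_i^n M|^p] \leq C_p \Delta_n^{p/2}$ for every $p$ and $|\Delta_i^n A| \leq K \Delta_n$. Since $\max_{m}|\sum_{i\le m}(c_i - \bar{c})| \leq 2 \max_m |\sum_{i\le m} c_i|$ for arbitrary reals $(c_i)$, one has $|V_n^{\dagger} - U_n^{\dagger}| \leq \frac{2}{\sqrt{n-1}} \max_{2 \leq m \leq n}|\sum_{i=2}^{m}(Q_{n,i} - R_{n,i})|$, so it suffices to prove $\frac{1}{\sqrt{n}}\max_{2\le m\le n}|\sum_{i=2}^{m}(Q_{n,i}-R_{n,i})| \pn 0$.

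Writing $\Delta_i^n X = \Delta_i^n M + \Delta_i^n A$ and expanding, $Q_{n,i} - R_{n,i}$ becomes a finite sum of monomials $c\, n^2 (\Delta_i^n M)^{p}(\Delta_{i-1}^n M)^{q}(\Delta_i^n A)^{p'}(\Delta_{i-1}^n A)^{q'}$ with $p+q+p'+q' = 4$ and $p'+q' \geq 1$. The monomials with $p'+q' \geq 2$ are $\mathcal{O}(n^{-1})$ in $L^1$ (the two drift factors cost $\Delta_n^2$ against the prefactor $n^2$, with at least one more $\Delta_n$ left from the remaining factors), so summing over $i$ and dividing by $\sqrt n$ they are $\mathcal{O}_{\P}(n^{-1/2})$ by Markov's inequality. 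The substantive terms are those with exactly one drift factor, $G_{n,i} = c\, n^2 (\Delta_i^n M)^{p}(\Delta_{i-1}^n M)^{q}\Delta_{\star}^n A$ with $p+q = 3$ and $\star \in \{i,i-1\}$.

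For such a term --- take $G_{n,i} = n^2(\Delta_i^n M)^3\Delta_i^n A$ for definiteness --- I would freeze the coefficients on $((i-1)\Delta_n,i\Delta_n]$, replacing $\sigma_s$ by $\sigma_{(i-1)\Delta_n}$ and $a_s$ by $a_{(i-1)\Delta_n}$, obtaining $\bar{G}_{n,i} = n^2 \sigma_{(i-1)\Delta_n}^3 a_{(i-1)\Delta_n}\Delta_n (\Delta_i^n W)^3$. The decisive point is that $\E[(\Delta_i^n W)^3 \mid \mathcal{F}_{(i-1)\Delta_n}] = 0$, so $(\bar G_{n,i})_i$ is a martingale difference array (for the other choices of $(p,q,\star)$ the frozen part is again a martingale difference, possibly with respect to the coarser grid filtration and after an elementary additive split that pulls out the $\mathcal{F}_{(i-1)\Delta_n}$-measurable mean of the Brownian factor), with $\E[\bar G_{n,i}^2] = \mathcal{O}(n^4\Delta_n^5) = \mathcal{O}(n^{-1})$; Doob's $L^2$ maximal inequality then gives $\frac{1}{\sqrt n}\max_m|\sum_{i=2}^m \bar G_{n,i}| = \mathcal{O}_{\P}(n^{-1/2})$. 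The remainder $G_{n,i} - \bar G_{n,i}$ is a sum of products each containing at least one coefficient increment $\int_{(i-1)\Delta_n}^{i\Delta_n}(\sigma_s - \sigma_{(i-1)\Delta_n})\,dW_s$, $\int_{(i-1)\Delta_n}^{i\Delta_n}(\sigma_s - \sigma_{(i-1)\Delta_n})\,ds$ or $\int_{(i-1)\Delta_n}^{i\Delta_n}(a_s - a_{(i-1)\Delta_n})\,ds$; using Burkholder--Davis--Gundy and Hölder, the $\sigma$-increment contributions are dominated by the Hölder-type $L^8$-smoothness of $\sigma$ granted by Assumption \ref{assvolaglobal} and produce a power saving $\Delta_n^{\gamma}$ with $\gamma>0$ over $\bar G_{n,i}$, while the $a$-increment contribution is controlled only qualitatively, via $\int_0^1 \E[|a_s - a_{\lfloor s/\Delta_n\rfloor \Delta_n}|^2]\,ds \to 0$ (dominated convergence, $a$ bounded and c\`adl\`ag). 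Bookkeeping then yields $\sum_{i=2}^n \E|G_{n,i} - \bar G_{n,i}| = \KLEINO(\sqrt n)$, hence $\frac{1}{\sqrt n}\max_m|\sum_{i=2}^m(G_{n,i} - \bar G_{n,i})| \pn 0$; summing over the finitely many monomials finishes the proof.

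The main obstacle is that the crude estimate $|Q_{n,i} - R_{n,i}| = \mathcal{O}_{\P}(n^{-1/2})$ only gives $\frac{1}{\sqrt n}\sum_{i=2}^n |Q_{n,i}-R_{n,i}| = \mathcal{O}_{\P}(1)$, i.e.\ there is no slack; the convergence to zero genuinely relies on the cancellation coming from the vanishing odd Gaussian moments (which turns the leading frozen pieces into martingale differences handled by Doob) together with the control of the coefficient-freezing remainders --- and among those the $a$-freezing remainder is the delicate one, because $a$ is assumed only c\`adl\`ag and hence carries no quantitative modulus of continuity, so one must be content with the $\KLEINO(1)$ supplied by dominated convergence rather than with a rate.
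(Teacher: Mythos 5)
Your proposal is correct and follows essentially the same route as the paper's proof: expand $Q_{n,i}-R_{n,i}$, note that only the monomials with exactly one drift increment and three martingale increments are relevant, freeze the coefficients at the left grid points so that the leading terms become martingale differences (vanishing odd Gaussian moments) handled by Doob's maximal inequality, and show that the coefficient-freezing error is negligible. One small caveat: Assumption \ref{assvolaglobal} does not literally supply a pointwise $L^8$ H\"older modulus for the $\varrho$-part of $\sigma^2$ (only bounds on time-averaged increments, and only with exponent $\aalpha$ possibly tiny), but this is harmless here, since the same qualitative c\`adl\`ag/dominated-convergence argument you invoke for the $a$-freezing remainder also disposes of the $\sigma$-freezing remainder, which is all that the $\sqrt{n}$-normalization requires.
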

\begin{proof}
The inequality \eqref{VmU} implies that
\begin{align*}|V_n^{\dagger}-U_n^{\dagger}|\hspace*{-.05cm}\le \hspace*{-.05cm}\frac{1}{\sqrt{n-1}}\Big(\max_{m=2,\ldots,n}\Big|\sum_{i=2}^m\big(Q_{n,i}- R_{n,i}\big)\Big|+ \Big|\sum_{i=2}^{n}\big(Q_{n,i}- R_{n,i}\big)\Big|\Big)\,,\end{align*}
and it suffices to discuss the first addend. To this end, we start with the decomposition
\begin{align*}n^2\big(\hspace*{-.025cm}(\Delta_j^n X)^4\hspace*{-.025cm}-\hspace*{-.025cm}(\Delta_j^n M)^4\hspace*{-.025cm}\big)&\hspace*{-.025cm}=\hspace*{-.025cm}n^2\Big(\hspace*{-.025cm}4(\Delta_j^n M)^3\Delta_j^n A\hspace*{-.025cm}+\hspace*{-.025cm}6(\Delta_j^n M)^2(\Delta_j^n A)^2%\\
%& \quad 
+4\Delta_j^n M(\Delta_j^n A)^3+(\Delta_j^n A)^4\hspace*{-.025cm}\Big),\end{align*}
and from $|\Delta_j^n A| \leq Kn^{-1}$ and $\E[|\Delta_i^n M|^p] \leq Kn^{-p/2}$ by boundedness of all coefficients, it is easy to see that the only term to discuss is the leading one, i.e.\, $n^2 (\Delta_j^n M)^3\Delta_j^n A$. Under Assumption \ref{assvolaglobal}, one may approximate both processes inside $\Delta_j^n A$ and $\Delta_i^n M$ by their value at $(j-1)\Delta_n$, and the error due to this approximation has exactly the correct small order. For the remaining approximated versions of $n^2 (\Delta_j^n M)^3\Delta_j^n A$, standard martingale techniques prove the required bound. The term involving $n^2\big((\Delta_{j-1}^n X)^4-(\Delta_{j-1}^n M)^4\big)$ is discussed analogously. Finally, 
\begin{align*}&n^2\big((\Delta_{j-1}^n X\Delta_j^n X)^2-(\Delta_{j-1}^n M\Delta_j^n M)^2\big)=n^2\Big(\Delta_j^n A\big((\Delta_{j-1}^n M)^2\Delta_j^n A\\
&\hspace*{1cm}+2(\Delta_{j-1}^n M)^2\Delta_j^n M+2\Delta_{j-1}^n M\Delta_{j-1}^n A\Delta_{j}^n M+2\Delta_{j-1}^n M\Delta_{j-1}^n A\Delta_{j}^n A\big)\\
& \hspace*{1cm}+\Delta_{j-1}^n A\big((\Delta_{j}^n M)^2\Delta_{j-1}^n A+2(\Delta_{j}^n M)^2\Delta_{j-1}^n M+2\Delta_{j}^n M\Delta_{j-1}^n A\Delta_{j}^n A\big)\\
&\hspace*{1cm}+\Delta_j^n A\Delta_{j-1}^n A\big(\Delta_j^n A\Delta_{j-1}^n A+2\Delta_j^n M\Delta_{j-1}^n M\big)\Big).
\end{align*}
Again, the only relevant terms are those involving one factor of increments of $A$ and three factors of increments of $M$. The same reasoning as above gives the required result. 
\end{proof}
\begin{prop}\label{propupperglobal}On the hypothesis of\, Testing problem \ref{testingproblem} and Assumption \ref{assvolaglobal}, we have the functional convergence
\begin{align}\label{ucusumclt}
\frac{1}{\sqrt{n-1}}\sum_{i=2}^{\lfloor nt\rfloor}\Big(R_{n,i}-\E\bigl[(\Delta_i^n\varrho)^2\bigr]\Big)\stackrel{\omega-(st)}{\longrightarrow}\int_0^t v_s\,dB_s,
\end{align}
weakly in the Skorokhod space with $v_s^2=(80/3)\sigma_s^8$ and $(B_s)$ a Brownian motion independent of $\mathcal{F}$. 
\end{prop}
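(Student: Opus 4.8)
The plan is to peel off all contributions of the rough component $\varrho$ --- which, after centering by $\E[(\Delta_i^n\varrho)^2]$, disappear thanks to Assumption \ref{assvolaglobal}(iv) --- and to reduce the remaining object to a genuine $(\F_{i\Delta_n})$-martingale triangular array, to which the stable central limit theorem of \cite{JP} applies. First I would localise exactly as in the proof of Lemma \ref{propR}, so that $\sigma^2$ is bounded away from $0$ and $\infty$ and the moment estimates \eqref{nusmooth}, \eqref{rhosmooth}, \eqref{rhosmooth2} hold globally. Write $s_i = \sigma_{(i-1)\Delta_n}$, $\chi_i = \sqrt{n}\,\Delta_i^n W$ (so the $\chi_i$ are i.i.d.\ $N(0,1)$), $g(x) = x^2-1$ and $\phi(x) = \tfrac13 x^4 - 2x^2 + 1$ (note $\E[\phi(\chi_1)]=0$). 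By It\^o's formula one has the exact identity $n(\Delta_i^n M)^2 = \bar{\sigma}_i^2 + m_i$ with $\bar{\sigma}_i^2 = n\int_{(i-1)\Delta_n}^{i\Delta_n}\sigma_u^2\,du$ and $m_i = 2n\int_{(i-1)\Delta_n}^{i\Delta_n}\big(\int_{(i-1)\Delta_n}^{s}\sigma_u\,dW_u\big)\sigma_s\,dW_s$, a martingale increment with $\E[m_i\mid\F_{(i-1)\Delta_n}] = 0$. Putting $\beta_i = \bar{\sigma}_i^2 - \bar{\sigma}_{i-1}^2$, definition \eqref{R} expands into
\begin{align*}
R_{n,i} = \beta_i^2 + 2\beta_i(m_i-m_{i-1}) + (m_i-m_{i-1})^2 - \tfrac23\big((\bar{\sigma}_i^2+m_i)^2 + (\bar{\sigma}_{i-1}^2+m_{i-1})^2\big).
\end{align*}

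The key algebraic facts are that $\beta_i = \Delta_i^n\nu + \Delta_i^n\varrho$ \emph{exactly} --- with $\Delta_i^n\nu := n\int_{(i-1)\Delta_n}^{i\Delta_n}(\nu_s-\nu_{s-\Delta_n})\,ds$ defined as in Assumption \ref{assvolaglobal}(iv) --- and that, after substituting $m_i\mapsto s_i^2 g(\chi_i)$ and $\bar{\sigma}_i^2\mapsto s_i^2$, the display collapses (using $(1+g(x))^2=x^4$) to $\beta_i^2 + 2\beta_i s_i^2(g(\chi_i)-g(\chi_{i-1})) + s_i^4\,\xi_i$, where $\xi_i = (\chi_i^2-\chi_{i-1}^2)^2 - \tfrac23(\chi_i^4+\chi_{i-1}^4)$ is exactly the object in \eqref{R}. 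The main obstacle, and the bulk of the work, is to show that after summing over $2\le i\le\lfloor nt\rfloor$ and normalising by $(n-1)^{-1/2}$, everything besides the leading term $s_i^4\xi_i$ and the centred bias $(\Delta_i^n\varrho)^2 - \E[(\Delta_i^n\varrho)^2]$ is $\toucp 0$. The approximation errors (from $m_i\mapsto s_i^2 g(\chi_i)$ and $\bar{\sigma}_i^2\mapsto s_i^2$) and the cross term $2\beta_i s_i^2(g(\chi_i)-g(\chi_{i-1}))$ all have the form (a Brownian functional with $\F_{(i-1)\Delta_n}$-conditional mean zero, up to leverage contributions governed by $\Delta_i^n\nu$) times (a factor controlled by $\Delta_i^n\nu$, $\Delta_i^n\varrho$ or an increment of $\sigma^2$ at lag $\Delta_n$); hence by an $L^2$ estimate --- together with the covariance bound accounting for the $1$-dependence of $(\xi_i)$ --- the normalised sum of each has $L^2$-norm $\mathcal{O}_{\P}\big(\sqrt{n^{-2\eps}+n^{-2\aalpha}}\,\big)$, using $\E[(\Delta_i^n\nu)^2]\le Kn^{-1-2\eps}$, $\E[(\Delta_i^n\varrho)^2]\le Kn^{-2\aalpha}$ and $\varrho\perp(W,\nu)$.

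For the remaining bias, $\beta_i^2 = (\Delta_i^n\nu)^2 + 2\Delta_i^n\nu\,\Delta_i^n\varrho + (\Delta_i^n\varrho)^2$: the first two terms are negligible in $L^1$ by the same moment bounds and Cauchy--Schwarz, while $(n-1)^{-1/2}\sum_{i\le\lfloor nt\rfloor}\big((\Delta_i^n\varrho)^2 - \E[(\Delta_i^n\varrho)^2]\big)\toucp 0$ is precisely Assumption \ref{assvolaglobal}(iv). This reduces the proposition to establishing $(n-1)^{-1/2}\sum_{i=2}^{\lfloor nt\rfloor}s_i^4\,\xi_i\stackrel{\omega-(st)}{\longrightarrow}\int_0^t v_s\,dB_s$. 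Here I would use the decomposition $\xi_i = \phi(\chi_i) + \phi(\chi_{i-1}) - 2g(\chi_i)g(\chi_{i-1})$, reindex the $\phi(\chi_{i-1})$-part, replace the resulting bulk coefficient $s_i^4 + s_{i+1}^4$ by $2 s_i^4$ (the error negligible by the same $L^2$ argument, the coefficient difference being an increment of $\sigma^2$ at lag $\Delta_n$) and drop the two boundary summands, which are $\mathcal{O}_{\P}\big((\log n)^2\big)\cdot(n-1)^{-1/2}$. This turns the sum into $(n-1)^{-1/2}\sum_{i=2}^{\lfloor nt\rfloor}\zeta_{n,i}$ with $\zeta_{n,i} = 2 s_i^4\big(\phi(\chi_i) - g(\chi_i)g(\chi_{i-1})\big)$, a martingale-difference array for $(\F_{i\Delta_n})$ since $\E[\phi(\chi_i)\mid\F_{(i-1)\Delta_n}] = 0$ and $\E[g(\chi_i)g(\chi_{i-1})\mid\F_{(i-1)\Delta_n}] = g(\chi_{i-1})\,\E[g(\chi_1)] = 0$.

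It then remains to check the hypotheses of the stable functional martingale CLT (\cite{JP}): the conditional Lindeberg condition is immediate because $\sigma$ is bounded and the $\chi_i$ have all moments; the orthogonality relations $\tfrac{1}{\sqrt n}\sum\E[\zeta_{n,i}(W_{i\Delta_n}-W_{(i-1)\Delta_n})\mid\F_{(i-1)\Delta_n}] = 0$, and likewise against any bounded martingale orthogonal to $W$ and against $\varrho$, hold term by term because $\phi$ and $g$ are even functions while $\Delta_i^n W$ is odd (and $\varrho\perp W$), which forces the limit $B$ to be independent of $\F$; and for the conditional variance, the identities $\E[\phi(\chi_1)^2]=\tfrac83$, $\E[\phi(\chi_1)g(\chi_1)]=0$, $\E[g(\chi_1)^2]=2$ give $\E[\zeta_{n,i}^2\mid\F_{(i-1)\Delta_n}] = 4 s_i^8\big(\tfrac83 + 2 g(\chi_{i-1})^2\big)$, so that, with the law of large numbers $\tfrac1n\sum_{i\le\lfloor nt\rfloor}s_i^8 g(\chi_{i-1})^2\pn 2\int_0^t\sigma_s^8\,ds$ and $\tfrac1n\sum_{i\le\lfloor nt\rfloor}s_i^8\pn\int_0^t\sigma_s^8\,ds$, one obtains $\tfrac1n\sum_{i\le\lfloor nt\rfloor}\E[\zeta_{n,i}^2\mid\F_{(i-1)\Delta_n}]\pn\int_0^t v_s^2\,ds$ with $v_s^2 = 4\sigma_s^8\big(\tfrac83 + 2\,\E[g(\chi_1)^2]\big) = \tfrac{80}{3}\,\sigma_s^8$, as claimed. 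Combining this stable functional convergence with the reduction above yields \eqref{ucusumclt}.
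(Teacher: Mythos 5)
Your route is genuinely different from the paper's. The paper never freezes the volatility in the leading term: via It\^o's formula it writes $R_{n,i}=I_i+II_i+III_i$ (see \eqref{decomposition}), where $I_i$ is built from the \emph{exact} martingale differences $C_i=(\Delta_i^nM)^2-\int_{(i-1)\Delta_n}^{i\Delta_n}\sigma_s^2ds$, $N_i$ and the exact bracket $[C,C]_i$, and only $II_i,III_i$ carry volatility increments --- and these are precisely $\Delta_i^n\nu+\Delta_i^n\varrho$, the quantities that \eqref{nusmooth}, \eqref{rhosmooth}--\eqref{rhosmooth2} and Assumption \ref{assvolaglobal}(iv) control; the within-block approximation $\sigma_s\approx\sigma_{(i-1)\Delta_n}$ is only used inside the conditional-variance computation, where the $1/n$ normalization makes crude per-term errors harmless. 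You instead pre-Gaussianize: you freeze $\sigma$ at the left block endpoint already in the $\sqrt{n}$-normalized sum, reduce to the explicit array $s_i^4\xi_i$, and then use the $\phi/g$ decomposition with reindexing to get a genuine martingale-difference array. That part of your argument is correct and clean --- the moment bookkeeping $\E[\phi(\chi_1)^2]=8/3$, $\E[\phi g]=0$, $\E[g^2]=2$ and the resulting $v_s^2=(80/3)\sigma_s^8$ agree with the paper, and your treatment of $\beta_i^2$, of the $(\Delta_i^n\varrho)^2$-centering via Assumption \ref{assvolaglobal}(iv), and of the CLT conditions is in substance the paper's.

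The soft spot is the sentence asserting that \emph{all} freeze-the-coefficient errors are ``a conditionally centered Brownian functional times a small factor'' with $L^2$-norm $\mathcal{O}_{\P}(\sqrt{n^{-2\eps}+n^{-2\aalpha}})$. Term by term this is false: writing $r_i=m_i-s_i^2g(\chi_i)$ and $b_i=n\int_{(i-1)\Delta_n}^{i\Delta_n}(\sigma_u^2-s_i^2)du$, error pieces such as $s_i^2g(\chi_i)r_i$ and $s_i^2\chi_i^2b_i$ have $\F_{(i-1)\Delta_n}\vee\sigma(\varrho)$-conditional means of the order of the \emph{within-block} increment of $\sigma^2$, i.e.\ order $n^{-\aalpha}$, and a crude bound then gives $n^{1/2-\aalpha}$ after summation, which diverges for $\aalpha<1/2$. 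Negligibility in fact holds, but only because of a cancellation built into the quarticity correction: conditionally on $\varrho$ and in the absence of leverage, $\E[R_{n,i}\,|\,\F_{(i-2)\Delta_n}\vee\sigma(\varrho_s,s\le1)]=\beta_i^2$ \emph{exactly}, for an arbitrary volatility path inside the block, so the biases of the individual pieces cancel and the total error is a ($1$-dependent) conditionally centered array; leverage then only contributes through $\nu$-increments of order $n^{-1/2-\eps}$. Your sketch neither states nor uses this cancellation, and in addition the per-term $L^2$ size of these errors is governed by within-block increments $\varrho_s-\varrho_{(i-1)\Delta_n}$, for which the hypothesis of Testing problem \ref{testingproblem} gives no moment bound --- it only controls the time-averaged lag-$\Delta_n$ increments $\Delta_i^n\varrho$. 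So to make your reduction rigorous you either need to add a (mild, but unstated) within-block regularity condition on $\varrho$ and carry out the conditional-centering/covariance bookkeeping sketched above, or follow the paper's decomposition, which is arranged exactly so that no within-block increments of $\varrho$ ever enter the $\sqrt{n}$-normalized error sums.
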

\begin{proof}
Denote the set of time instants $[i\Delta_n,(i+1)\Delta_n]$ subject to discontinuities in $(\sigma_t)_{t\ge 0}$ with $\mathcal{J}_n$. Since $|\mathcal{J}_n|<\infty$ for $n\rightarrow\infty$, we have that
\[\frac{1}{\sqrt{n-1}}\sum_{j \in \mathcal{J}_n}\,R_{n,j}\pn 0\]
as $n\rightarrow\infty$ on Assumption \ref{assvolaglobal} $(ii)$. Therefore, we may assume throughout this proof that \eqref{nusmooth} is satisfied, for simpler notation on $[0,1]$. Standard localization arguments allow us to assume that \eqref{rhosmooth} and \eqref{nusmooth} apply on $[0,1]$ with global constant $K$. It\^{o}'s formula gives
\begin{align*}C_i=(\Delta_i^n M)^2-\int_{(i-1)\Delta_n}^{i\Delta_n}\sigma_s^2\,ds=2\,\int_{(i-1)\Delta_n}^{i\Delta_n}(M_s-M_{(i-1)\Delta_n})\,dM_s\,,\end{align*}
which forms a martingale difference sequence in $i=1,\ldots,n$. For its quadratic variation, we obtain
\begin{align*}[C,C]_i&=4\int_{(i-1)\Delta_n}^{i\Delta_n}(M_s-M_{(i-1)\Delta_n})^2\,d[M,M]_s\\
&=4\int_{(i-1)\Delta_n}^{i\Delta_n}\Big(\int_{(i-1)\Delta_n}^{s}\sigma_t\,dW_t\Big)^2\,\sigma_s^2\,ds\,.\end{align*}
Applying It\^{o}'s formula to $(\Delta_i^n M)^4$ yields
\begin{align*}\frac{2}{3} (\Delta_i^n M)^4\hspace*{-.05cm}&=\hspace*{-.05cm}\frac{8}{3}\int_{(i-1)\Delta_n}^{i\Delta_n}\hspace*{-.15cm}\Big(\int_{(i-1)\Delta_n}^{s}\hspace*{-.15cm}\sigma_t\,dW_t\Big)^3\sigma_s\,dW_s\hspace*{-.05cm}+\hspace*{-.05cm}4\int_{(i-1)\Delta_n}^{i\Delta_n}\hspace*{-.15cm}\Big(\int_{(i-1)\Delta_n}^{s}\hspace*{-.15cm}\sigma_t \,dW_t\Big)^2\sigma_s^2\,ds\\[.1cm]
&=N_i+[C,C]_i\,,\end{align*}
with a martingale difference sequence $(N_i)_{1\le i\le n}$. We decompose
\begin{align}\nonumber R_{n,i}&=n^2\Big(C_i-C_{i-1}+\int_{(i-1)\Delta_n}^{i\Delta_n}(\sigma_s^2-\sigma_{s-\Delta_n}^2)\,ds\Big)^2-\frac{2}{3}\big((\Delta_i^n M)^4+(\Delta_{i-1}^n M)^4\big)\\
&\label{decomposition} =I_i+II_i+III_i\,,\\
&\hspace*{-.25cm}\mbox{with}\nonumber~~~I_i=n^2\big(C_i^2-[C,C]_i+C_{i-1}^2-[C,C]_{i-1}-2C_iC_{i-1}-N_i-N_{i-1}\big)\,,\\
&\nonumber~~~~~\,II_i=2n^2(C_i-C_{i-1})\int_{(i-1)\Delta_n}^{i\Delta_n}(\sigma_s^2-\sigma_{s-\Delta_n}^2)\,ds\,,\\
&\nonumber~~~~III_i=n^2\Big(\int_{(i-1)\Delta_n}^{i\Delta_n}(\sigma_s^2-\sigma_{s-\Delta_n}^2)\,ds\Big)^2\,.
\end{align}
The terms $(II_i)_{1\le i\le n}$ and $(III_i)_{1\le i\le n}$ are asymptotically negligible. We start proving that
\begin{align}\label{term2}\sup_{t\in[0,1]}\Big|\sum_{i=2}^{\lfloor n t\rfloor }
II_{i}\Big|=\KLEINO_{\P}(\sqrt{n})\,.\end{align}
It suffices to prove that
\begin{align}\label{term2a}n^{-1/2}\,\E\Big[\sup_{t\in[0,1]}\Big|\sum_{i=2}^{\lfloor n t\rfloor }2n^2(C_i-C_{i-1})\int_{(i-1)\Delta_n}^{i\Delta_n}(\sigma_s^2-\sigma_{s-\Delta_n}^2)\,ds\Big|\Big]\rightarrow 0\,.\end{align}
As by Assumption \ref{assvolaglobal} we have that
\begin{align*}\int_{(i-1)\Delta_n}^{i\Delta_n}(\sigma_s^2-\sigma_{s-\Delta_n}^2)\,ds&=\int_{(i-1)\Delta_n}^{i\Delta_n}(\nu_s-\nu_{s-\Delta_n})\,ds +\int_{(i-1)\Delta_n}^{i\Delta_n}(\varrho_s-\varrho_{s-\Delta_n})\,ds\,,\end{align*}
\eqref{term2a} is implied by 
\begin{subequations}
\begin{align}\label{term2b}n^{-1/2}\,\E\Big[\sup_{t\in[0,1]}\Big|\sum_{i=2}^{\lfloor n t\rfloor }2n^2(C_i-C_{i-1})\int_{(i-1)\Delta_n}^{i\Delta_n}(\nu_s-\nu_{s-\Delta_n})\,ds\Big|\Big]\rightarrow 0\,,\\ \label{term2c}n^{-1/2}\,\E\Big[\sup_{t\in[0,1]}\Big|\sum_{i=2}^{\lfloor n t\rfloor }2n^2(C_i-C_{i-1})\int_{(i-1)\Delta_n}^{i\Delta_n}(\varrho_s-\varrho_{s-\Delta_n})\,ds\Big|\Big]\rightarrow 0\,.\end{align}
\end{subequations}
By Cauchy-Schwarz and Jensen's inequality we derive that
\begin{align*}&2n^2\E\Big[\sup_{t\in[0,1]}\Big|\sum_{i=2}^{\lfloor n t\rfloor }(C_i-C_{i-1})\int_{(i-1)\Delta_n}^{i\Delta_n}(\nu_s-\nu_{s-\Delta_n})\,ds\Big|\Big]\\
&\quad \le2n^2\E\Big[\Big|\sum_{i=2}^{n}(C_i-C_{i-1})^2\sum_{i = 2}^n\Bigl(\int_{(i-1)\Delta_n}^{i\Delta_n}(\nu_s-\nu_{s-\Delta_n})\,ds\Bigr)^2\Big|^{1/2}\Big]\\
&\quad \le2n^2\E\Big[\Big|\sum_{i=2}^{n}(C_i-C_{i-1})^2\sum_{i = 2}^n\Delta_n\int_{(i-1)\Delta_n}^{i\Delta_n}(\nu_s-\nu_{s-\Delta_n})^2\,ds\Big|^{1/2}\Big]\\
&\quad \le2n^2\Big(\sum_{i,j=2}^{n}\bigl(\E\bigl[(C_i-C_{i-1})^4\bigr]\bigr)^{1/2}\Delta_n \E\Big[\Bigl(\int_{(j-1)\Delta_n}^{j\Delta_n}(\nu_s-\nu_{s-\Delta_n})^2\,ds\Bigr)^2\Big]^{1/2}\Big)^{1/2}\\
&\quad \le2n^2\Big(\sum_{i,j=2}^{n}\bigl(\E\bigl[(C_i-C_{i-1})^4\bigr]\bigr)^{1/2}\Delta_n^{3/2} \E\Big[\int_{(j-1)\Delta_n}^{j\Delta_n}(\nu_s-\nu_{s-\Delta_n})^4\,ds\Big]^{1/2}\Big)^{1/2}\\
&\quad =\mathcal{O}\big(n^{2} \Delta_n^{3/2 +\epsilon} \bigr)=\KLEINO(n^{1/2})\,,
\end{align*}
since $\sup_{|s-t|\le \Delta_n}\bigl(\E\bigl[(\nu_s-\nu_t)^4\bigr]\bigr)^{1/4}\le \Delta_n^{1/2+\epsilon}$ for some $\epsilon>0$ and \(\E[(C_i-C_{i-1})^4]=\mathcal{O}(\Delta_n^4)\).
This proves \eqref{term2b}. In order to verify \eqref{term2c}, we exploit independence of $(\varrho_s)_{s \in [0,1]}$ and $(W_s)_{s \in [0,1]}$ by Assumption \ref{assvolaglobal}. In particular, we have
\begin{align}\label{eq_martingale_prop_condi}\nonumber
&\E\Big[C_i\int_{(i-1)\Delta_n}^{i\Delta_n}(\varrho_s-\varrho_{s-\Delta_n})\,ds \Big| \sigma(\varrho_s,{s\in[0,1]})\cup \mathcal{F}_{(i-1)\Delta_n}\Big] \\&= \int_{(i-1)\Delta_n}^{i\Delta_n}(\varrho_s-\varrho_{s-\Delta_n})\,ds\;\E\Big[C_i \Big| \sigma(\varrho_s,{s\in[0,1]})\cup \mathcal{F}_{(i-1)\Delta_n}\Big] = 0\,,
\end{align}
and analogously for the term with $C_{i-1}$, such that the martingale property follows by iterated expectations.
Thus, by Burkholder and Cauchy-Schwarz inequality, with some constant $K$:

\begin{align*}
&\E\Big[\sup_{t\in[0,1]}\Big(\sum_{i=2}^{\lfloor n t\rfloor }2n^2(C_i-C_{i-1})\int_{(i-1)\Delta_n}^{i\Delta_n}(\varrho_s-\varrho_{s-\Delta_n})\,ds\Big)^2\Big]\\
&\quad\le K\,n^4 \sum_{i=2}^{ n } \E\Big[(C_i-C_{i-1})^2\Big(\int_{(i-1)\Delta_n}^{i\Delta_n}(\varrho_s-\varrho_{s-\Delta_n})\,ds\Big)^2\Big]\\&\quad\le
K\,n^4\sum_{i=2}^{ n } \big(\E\big[(C_i-C_{i-1})^4]\E\big[(\Delta_i^n\varrho)^4\big]\big)^{1/2} =\mathcal{O}( n^{1-2\aalpha}\big)\,,
\end{align*}
where we used \eqref{rhosmooth2} in Hypothesis \ref{testingproblem} in the last step. This implies \eqref{term2c} and thus \eqref{term2a}. 
Next, we show that
\begin{align}\label{term3}\sup_{t\in[0,1]}\Big|\sum_{i=2}^{\lfloor n t\rfloor } \Big(
III_{i} - \E\bigl[(\Delta_i^n \varrho)^2\bigr]\Big)\Big|=\KLEINO_{\P}(\sqrt{n})\,.\end{align}
Decomposing $\sigma_s^2=\nu_s+\varrho_s$ again, the main point is to guarantee
\begin{align}\label{term3a}\sup_{t\in[0,1]}\Big|\sum_{i=2}^{\lfloor n t\rfloor }\Big((\Delta_i^n\varrho)^2-\E\big[(\Delta_i^n\varrho)^2\big]\Big)\Big|=\KLEINO_{\P}(\sqrt{n})\,,
\end{align}
as the other terms may be treated as for $(II_i)_{1\le i\le n}$ above. \eqref{term3a} is ensured by Assumption \ref{assvolaglobal} $(iv)$.\\
Denote by $U_i=C_i^2-[C,C]_i-N_i$, which forms a martingale difference sequence $(U_i)_{1\le i\le n}$. Observe that
\[\sum_{i=2}^m I_i=n^2\sum_{i=2}^m (U_i+U_{i-1}-2C_i C_{i-1})=n^2 \sum_{i=2}^{m-1}(2U_i-2C_iC_{i-1})+U_m-U_1\,,\]
which is a martingale sequence plus an asymptotically negligible remainder. We establish a functional stable central limit theorem for
\begin{align}\label{mart}\sum_{i=2}^{\lfloor nt\rfloor } Z_{n,i}=\frac{n^2}{\sqrt{n}} \sum_{i=2}^{\lfloor nt\rfloor }(2U_i-2C_iC_{i-1})\end{align}
based on Theorem 3--1 by \cite{jacod1}. Denote with $\mathcal{G}_{i,n}=\mathcal{F}_{i\Delta_n}=\sigma\big(\nu_s,W_s,\varrho_s;s\le i\Delta_n\big)$. Since we consider a martingale directly, the drift condition in Theorem 3--1 by \cite{jacod1} is trivial. Thus, conditions
\begin{subequations}
\begin{align}\label{J2}
\sum_{i=2}^{\lfloor nt\rfloor } \E\big[Z_{n,i}^2|\mathcal{G}_{i-1,n}\big]\pn \int_0^t v_s^2\,ds
\end{align}
with a predictable process $(v_s)_{s\ge 0}$, for all $\epsilon>0$:
\begin{align}\label{J3}
\sum_{i=2}^{\lfloor nt\rfloor } \E\big[Z_{n,i}^2\1_{\{Z_{n,i}>\epsilon\}}|\mathcal{G}_{i-1,n}\big]&\pn 0\,,\\
\label{J4}
\sum_{i=2}^{\lfloor nt\rfloor } \E\big[Z_{n,i}^2 (\mathcal{M}_{i\Delta_n}-\mathcal{M}_{(i-1)\Delta_n})|\mathcal{G}_{i-1,n}\big]&\pn 0\,,
\end{align}
\end{subequations}
for all bounded $\mathcal{G}_t$-martingales $(\mathcal{M}_t)_{t\ge 0}$ with $\mathcal{M}_0=0$ and $[W,M]=0$ or for $\mathcal{M}_t=W_t$,
imply the functional $(\mathcal{G}_t)$-stable limit theorem
\begin{align}\label{fsclt}\sum_{i=2}^{\lfloor nt\rfloor } Z_{n,i}\stackrel{\omega-(st)}{\longrightarrow}\int_0^t v_s\,dB_s\,,\end{align}
with $(B_t)_{t\ge 0}$ a Brownian motion defined on an orthogonal extension of $(\Omega,\mathcal{G},(\mathcal{G}_t),\P)$.
It\^{o}'s formula can be used to determine the relation
%\begin{subequations}
\begin{align}
\notag\var\big((\Delta_i^n M)^4|\mathcal{G}_{i-1,n}\big)=96\,\Delta_n^4\big(\sigma_{(i-1)\Delta_n}^8+\mathcal{O}_{a.s.}\big(n^{-(\aalpha\wedge 1/2)}\big)\big)\,,%\\
\end{align}
and similar ones for other powers, which also readily obtained by the mixed normality $\Delta_i^n M\sim MN (0,\int_{(i-1)\Delta_n}^{i\Delta_n}\sigma_s^2\,ds)$ and approximation of $\sigma$ to obtain measurability with respect to $\mathcal{G}_{i-1,n}$. 
Using the form of $Z_{n,i}$ from \eqref{mart}, we derive for the left hand side of \eqref{J2}
\begin{align*}\sum_{i=2}^{\lfloor nt\rfloor}\E\big[Z_{n,i}^2|\mathcal{G}_{i-1,n}\big]=n^3\sum_{i=2}^{\lfloor nt\rfloor}\Big(4\E\big[U_i^2|\mathcal{G}_{i-1,n}\big]+4\E\big[C_i^2C_{i-1}^2|\mathcal{G}_{i-1,n}\big]-8\E\big[U_iC_iC_{i-1}|\mathcal{G}_{i-1,n}\big]\Big)\,.\end{align*}
For a simpler notation we consider the three terms consecutively. First,
\begin{align*}n^3\sum_{i=2}^{\lfloor nt\rfloor}4\E\big[C_i^2C_{i-1}^2|\mathcal{G}_{i-1,n}\big]&=n^3\sum_{i=2}^{\lfloor nt\rfloor}4\Big((\Delta_{i-1}^n M)^2-\int_{(i-2)\Delta_n}^{(i-1)\Delta_n }\sigma_s^2\,ds\Big)^2\E\big[C_i^2|\mathcal{G}_{i-1,n}\big]\\
&=8n\sum_{i=2}^{\lfloor nt\rfloor}\big(\sigma_{(i-2)\Delta_n}^4+\mathcal{O}_{a.s.}\big(n^{-(\aalpha\wedge 1/2)}\big)\big)\Big((\Delta_{i-1}^n M)^2-\int_{(i-2)\Delta_n}^{(i-1)\Delta_n }\sigma_s^2\,ds\Big)^2\\
&\stackrel{\P}{\longrightarrow } 16\int_0^t \sigma_s^8\,ds\,.
\end{align*}
The last convergence is very close to the usual analysis of the variance of realized volatility, see Section 5 of \cite{bn}. We frequently use It\^{o}'s formula, especially for the second term:
\begin{align*}&n^3\sum_{i=2}^{\lfloor nt\rfloor}4\E\big[U_i^2|\mathcal{G}_{i-1,n}\big]=n^3\sum_{i=2}^{\lfloor nt\rfloor}4\E\Big[\Big((\Delta_{i}^n M)^2-\int_{(i-1)\Delta_n}^{i\Delta_n }\sigma_s^2\,ds\Big)^4+\frac49 (\Delta_{i}^n M)^8  \Big|\mathcal{G}_{i-1,n}\Big]\\
&\hspace*{6cm}-2 \E\Big[\Big((\Delta_{i}^n M)^2-\int_{(i-1)\Delta_n}^{i\Delta_n }\sigma_s^2\,ds\Big)^2\,\frac23 (\Delta_{i}^n M)^4  \Big|\mathcal{G}_{i-1,n}\Big]\\
&=n^{-1}\sum_{i=2}^{\lfloor nt\rfloor}4\big(\sigma_{(i-1)\Delta_n}^8+\mathcal{O}_{a.s.}\big(n^{-(\aalpha\wedge 1/2)}\big)\big)\Big(60+105\cdot \frac49-156 \cdot \frac23\Big)\\
&\stackrel{\P}{\longrightarrow } \frac{32}{3} \,\int_0^t \sigma_s^8\,ds\,.
\end{align*}
Finally, $\E\big[U_iC_iC_{i-1}|\mathcal{G}_{i-1,n}\big]=0$, since 
\[\E\big[U_iC_i|\mathcal{G}_{i-1,n}\big]=\E\Big[\Big(\hspace*{-.05cm}(\Delta_{i}^n M)^2-\int_{(i-1)\Delta_n}^{i\Delta_n }\hspace*{-.05cm}\sigma_s^2\,ds\Big)^3-\frac23 (\Delta_{i}^n M)^4 \Big(\hspace*{-.05cm}(\Delta_{i}^n M)^2-\int_{(i-1)\Delta_n}^{i\Delta_n }\hspace*{-.05cm}\sigma_s^2\,ds\Big) \Big|\mathcal{G}_{i-1,n}\Big]\]
vanishes. Thereby we derive \eqref{J2} with 
\begin{align}v_s^2=(80/3)\,\sigma_s^8\,.\end{align}
The Lindeberg criterion \eqref{J3} is ensured by the stronger Lyapunov condition
\begin{align*}
\sum_{i=2}^{\lfloor nt\rfloor } \E\big[Z_{n,i}^4|\mathcal{G}_{i-1,n}\big]=n^6\sum_{i=2}^{\lfloor nt\rfloor } \E\big[(2U_i-2C_iC_{i-1})^4|\mathcal{G}_{i-1,n}\big]=\mathcal{O}_{\P}(\Delta_n)&\pn 0\,.
\end{align*}
Condition \eqref{J4} for $\mathcal{M}=W$ follows with It\^{o}'s formula by the fact that $\E[(\Delta_i^n W)^p]=0$ and $\E[(\Delta_i^n M)^p]$ $=\KLEINO_{\P}(1)$ for all odd $p\ge 1$, using the usual approximation. It\^{o}'s formula also yields for any $p$ and bounded martingales $(\mathcal{M})_{t\ge 0}$ that \((\int_{(i-1)\Delta_n}^{i\Delta_n}\sigma_s\,dW_s)^p\) \(\int_{(i-1)\Delta_n}^{i\Delta_n}d\mathcal{M}_s\) equals a negligible remainder plus the term $\int_{(i-1)\Delta_n}^{i\Delta_n}\sigma_s^p\,d[W,\mathcal{M}]_s$ which vanishes, such that \eqref{J4} is satisfied. This completes the proof of \eqref{fsclt}.
\end{proof} 
Theorem \ref{upperglobal} follows with Lemma \ref{propR} and Proposition \ref{propupperglobal} by continuous mapping theorem.\\[.2cm]
{\bf{Proof of Proposition \ref{corrupperglobal}.}}
The condition $\aalpha>1/4$ guarantees that \[n^{-1/2}\,\sum_{i=1}^n \big(\sigma_{(i-1)\Delta_n}^2-\sigma_{(i-2)\Delta_n}^2\big)^2\pn 0\,,\]and analogously with fourth moments, since $n^{1/2-2\aalpha}\rightarrow 0$. Hence, a consistent estimator of $\sigma_{(i-2)\Delta_n}^4$ can be used to standardize $(Q_{n,i})_{K_n+1\le i\le n}$ and $(R_{n,i})_{K_n+1\le i\le n}$. Thereto, we write\\
\(\bar R_{n,i}=(\hat\sigma_{(i-2)\Delta_n}^4)^{-1}R_{n,i}\) with the estimator and $K_n$ from \eqref{eq_vola_est_for_boot_1} and
\[\bar U_n^{\dagger}=\frac{1}{\sqrt{n-1}}\max_{m=K_n+1,\ldots,n}\Big|\sum_{i=K_n+1}^m\Big(\bar R_{n,i}-\frac{\sum_{i=K_n+1}^{n}\bar R_{n,i}}{n-1}\Big)\Big|.\]
The proof is now traced back to the one of Theorem \ref{upperglobal}. Set
\[\mathcal{A}_i=\Big\{\hat\sigma_{(i-2)\Delta_n}^4\ge \sigma_{(i-2)\Delta_n}^4/2\Big\}\,.\]
Since $(\sigma_s^2)_{s\ge 0}$ is bounded from below and for $K_n\rightarrow\infty$ the estimator is consistent, based on Markov's inequality we obtain analogously as in the proof of Equation (22) of \cite{vett2012}, that $\P(\mathcal{A}_i^{\complement})=\KLEINO(n^{-1})$, such that 
\begin{align}\label{lastref}\P\Big(\bigcap_{i=K_n+1}^n\mathcal{A}_i\Big)\ge 1-\sum_{i=K_n+1}^n \P\big(\mathcal{A}_i^{\complement}\big)=1-\KLEINO(1)\,.\end{align}
Then, we use decomposition \eqref{decomposition} for $R_{n,i}$ and an analogous one for $\bar R_{n,i}$. By the fact that $\hat\sigma_{(i-2)\Delta_n}^4$ is $\mathcal{G}_{i-2,n}=\mathcal{F}_{(i-2)\Delta_n}$-measurable, the leading term of the latter has a martingale structure again. Similar estimates as in the proof of Proposition \ref{propupperglobal} yield that $|\tilde U_n^{\dagger}-\bar U_n^{\dagger}|\pn 0$ for
\[\tilde U_n^{\dagger}=\frac{1}{\sqrt{n-1}}\max_{m=K_n+1,\ldots,n}\Big|\sum_{i=K_n+1}^m\Big(\frac{R_{n,i}\1(\mathcal{A}_i)}{\sigma_{(i-2)\Delta_n}^4}-\frac{\sum_{i=K_n+1}^{n}R_{n,i}\1(\mathcal{A}_i)}{(n-1)\sigma_{(i-2)\Delta_n}^4}\Big)\Big|\,.\]
Observe that the analogue of the leading terms $(I_i)$ in the decomposition \eqref{decomposition} for decomposing $\tilde U_n^{\dagger}$ remain a martingale difference sequence (plus a negligible remainder), since $\sigma_{(i-2)\Delta_n}^4$ is $\mathcal{G}_{i-2,n}=\mathcal{F}_{(i-2)\Delta_n}$-measurable. Analogous terms to $II_i$ and $III_i$ in \eqref{term2} are handled as above. This readily proves Proposition \ref{corrupperglobal}.\\[.2cm]

{\bf{Proof of Theorem \ref{thm:bootstrap:global}.}}
First, let us state two preliminary Lemmas \ref{lem:aux:globalboot:1} and \ref{lem:global:statistic:is:cont}.
\begin{lem}\label{lem:aux:globalboot:1}
On Assumption \ref{assvolaglobal} it holds that $\begin{aligned}\E\bigl[|V^{\dagger} - \hat{V}_n^{\dagger}| \bigl|\F\bigr] = \KLEINO_{\P}(1)
\end{aligned}$.
\end{lem}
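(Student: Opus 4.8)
The plan is to realise $\hat V_n^\dagger$ and $V^\dagger$ on one probability space through an explicit coupling in which their difference is controlled by the oscillation of a \emph{single} Brownian motion over a vanishing range. This is legitimate for the later use in Theorem \ref{thm:bootstrap:global}, since the conditional quantile $\hat{q}_{\alpha}(\hat V_n^\dagger\mid\F)$ depends only on the conditional law of $\hat V_n^\dagger$ given $\F$; hence any joint law of $B$ and $(Z_i)_i$ compatible with their prescribed marginal conditional laws is admissible.

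First I would set up this coupling. By the usual localisation (Section 4.4.1 of \cite{JP}) we may assume $\sigma$ is bounded above and below. Put $\Gamma_t=\int_0^t\sigma_s^8\,ds$; it is continuous and, since $\inf_t\sigma_t^2>0$, strictly increasing. Conditionally on $\F$ the process $Y_t=\int_0^t\sigma_s^4\,dB_s$ is a continuous Gaussian martingale with $\langle Y\rangle_t=\Gamma_t$, so by Dambis--Dubins--Schwarz there is a Brownian motion $\beta$, independent of $\F$, with $Y_t=\beta_{\Gamma_t}$, hence $V^\dagger=\sup_{0\le t\le1}|\beta_{\Gamma_t}-t\,\beta_{\Gamma_1}|$. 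Writing $\hat\sigma^4_l:=\hat\sigma^4_{(lK_n+1)\Delta_n}$ and the $\F$-measurable times $\hat\gamma_l:=\tfrac{K_n}{n}\sum_{j=1}^l(\hat\sigma^4_j)^2$ (with $\hat\gamma_0=0$), I would \emph{define} the bootstrap innovations by
\begin{align*}
Z_l=\Big(\tfrac{n}{K_n}\Big)^{1/2}(\hat\sigma^4_l)^{-1}\big(\beta_{\hat\gamma_l}-\beta_{\hat\gamma_{l-1}}\big),\qquad 1\le l\le\lfloor nK_n^{-1}\rfloor .
\end{align*}
Given $\F$ these are i.i.d.\ $N(0,1)$ (disjoint Brownian increments of variance $\hat\gamma_l-\hat\gamma_{l-1}=\tfrac{K_n}{n}(\hat\sigma^4_l)^2$), and since this conditional law does not involve $\F$ they are also unconditionally independent of $\F$ -- a valid choice of innovations. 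Telescoping then yields the exact identity $\hat S_{nt}=\beta_{\hat\Gamma^{(n)}(t)}$ with $\hat\Gamma^{(n)}(t):=\hat\gamma_{\lfloor nt/K_n\rfloor}$, so that $\hat V_n^\dagger=\sup_{0\le t\le1}\big|\beta_{\hat\Gamma^{(n)}(t)}-t\,\beta_{\hat\Gamma^{(n)}(1)}\big|$.

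The core step, and the one I expect to be the main obstacle, is the uniform clock convergence $\delta_n':=\sup_{0\le t\le1}\big|\hat\Gamma^{(n)}(t)-\Gamma_t\big|\pn0$. Here one needs that the local realised quarticities $\hat\sigma^4_l$ are consistent for the spot quarticities $\sigma^4_{lK_n\Delta_n}$ -- the relevant concentration being that an average of $K_n$ fourth powers of rescaled Brownian increments deviates from its mean only with exponentially small probability, exactly as for equation (22) in \cite{vett2012} -- combined with the smoothness of $\nu$ in Assumption \ref{assvolaglobal}$(iii)$, the bounds \eqref{rhosmooth}--\eqref{rhosmooth2} on $\varrho$, and the finiteness of $\mathcal{T}$, so that only $O(1)$ blocks meet a discontinuity of $\sigma$. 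Summing the block-wise errors gives $\sup_m\big|\tfrac{K_n}{n}\sum_{l\le m}(\hat\sigma^4_l)^2-\Gamma_{mK_n/n}\big|\pn0$, and the incomplete last block contributes $O(K_n/n)$; alternatively, since $\hat\Gamma^{(n)}$ is nondecreasing and $\Gamma$ is continuous, pointwise convergence already forces uniform convergence. I regard the simultaneous control of the estimation error and the Riemann-sum error -- in the presence of the non-semimartingale component $\varrho$ -- as the delicate part; the remaining smoothness inputs are already quantified in Assumption \ref{assvolaglobal}.

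Finally I would transfer this to the statistics. On the event (of probability tending to one) that $\hat\Gamma^{(n)}(1)\le\Gamma_1+\delta_n'$ lies below a deterministic constant $C$, the elementary bound $|\sup_t a_t-\sup_t b_t|\le\sup_t|a_t-b_t|$ gives
\begin{align*}
\big|V^\dagger-\hat V_n^\dagger\big|\le\sup_{0\le t\le1}\big|\beta_{\Gamma_t}-\beta_{\hat\Gamma^{(n)}(t)}\big|+\big|\beta_{\Gamma_1}-\beta_{\hat\Gamma^{(n)}(1)}\big|\le 2\,\omega_\beta(\delta_n'),
\end{align*}
where $\omega_\beta$ is the modulus of continuity of $\beta$ on $[0,C]$. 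Since $\beta$ is independent of $\F$ and $\delta_n'$ is $\F$-measurable, $\E[\omega_\beta(\delta_n')\mid\F]=g(\delta_n')$ with $g(\varepsilon):=\E[\omega_\beta(\varepsilon)]$ nondecreasing and $g(\varepsilon)\downarrow0$ as $\varepsilon\downarrow0$ (dominated convergence, using $\E[\omega_\beta(C)]<\infty$). As $\delta_n'\pn0$ this yields $\E\big[|V^\dagger-\hat V_n^\dagger|\mid\F\big]\le 2g(\delta_n')+\KLEINO_{\P}(1)=\KLEINO_{\P}(1)$, which is the assertion.
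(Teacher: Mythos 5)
Your argument is correct, but it takes a genuinely different route from the paper. The paper couples the two statistics by identifying the bootstrap innovations with increments of the \emph{same} Brownian motion $B$ that appears in the limit, so that $\hat{S}_{nt}=\int_0^t\widetilde{\sigma}_s^4\,dB_s$ for a piecewise-constant interpolation $\widetilde{\sigma}^4$ of the estimated spot quarticity; the difference $\E[|V^{\dagger}-\hat{V}_n^{\dagger}|\,|\,\F]$ is then controlled on the event $\{\sup_t|\widetilde{\sigma}_t^4-\sigma_t^4|\le\delta_n^2\}$ via Markov, Burkholder and Jensen, i.e.\ through an $L^2$-bound on the difference of the integrands. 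You instead couple through a Dambis--Dubins--Schwarz time change: both $V^{\dagger}$ and $\hat{V}_n^{\dagger}$ become suprema of one Brownian path $\beta$ read along two clocks, and the error is bounded by the modulus of continuity of $\beta$ evaluated at the uniform clock distance $\delta_n'$, with the conditional expectation handled by independence of $\beta$ from $\F$ and $\F$-measurability of $\delta_n'$. Your observation that any coupling consistent with the prescribed conditional law of $\hat{V}_n^{\dagger}$ given $\F$ is admissible is correct and is exactly the freedom the paper also exploits (implicitly, by setting $\sqrt{K_n\Delta_n}Z_i=B_{(i+1)K_n\Delta_n}-B_{iK_n\Delta_n}$). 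What your route buys: the key input is only the \emph{integrated} consistency $\sup_t|\hat{\Gamma}^{(n)}(t)-\Gamma_t|\pn0$ of the clock, which is an averaged statement and hence weaker than the sup-norm closeness $\sup_t|\widetilde{\sigma}_t^4-\sigma_t^4|\pn 0$ the paper's event $\mathcal{A}_n$ requires (in particular the $\mathcal{O}(1)$ blocks meeting a discontinuity of $\sigma$ and the initial block only contribute $\mathcal{O}(K_n/n)$ to your clock, whereas they need separate care in a sup-norm statement); the price is the extra bookkeeping of the DDS construction and the explicit extension of $\beta$ beyond $\Gamma_1$. The delicate step you flag --- consistency of the block quarticity estimates under Assumption \ref{assvolaglobal} --- is treated at the same level of detail as in the paper (which disposes of it with a one-line reference analogous to \eqref{lastref}), so I do not count it as a gap; just make sure to state explicitly that the complement of your good event $\{\delta_n'\le 1\}$ is $\F$-measurable, so that its contribution to the conditional expectation is $\KLEINO_{\P}(1)$ simply because its indicator tends to zero in probability.
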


\begin{proof}[Proof of Lemma \ref{lem:aux:globalboot:1}]
Introduce the interpolated volatility
\begin{align}
\widetilde{\sigma}_t^4  = \left\{\begin{array}{cl} 0, & \mbox{if $t \in [0,K_n\Delta_{n})$,}\\ \hat{\sigma}_{iK_n\Delta_n}^4, & \mbox{if $t \in \bigl(iK_n\Delta_n, (i+1)K_n\Delta_n\bigr]$ and $1 \leq i  \leq n/K_n$.} \end{array}\right.
\end{align}
Observe that with a standard Brownian motion $(B_s)_{s\ge 0}$ independent of $\cal F$, we can write
\begin{align}\label{eq_thm_bootstrap_representation_1}
\hat{S}_{nt} = \int_0^t \widetilde{\sigma}_s^4 \,d B_s, \quad t \in [0,1],
\end{align}
setting in \eqref{boots} $\sqrt{K_n\Delta_n}Z_i = B_{(i+1)K_n\Delta_n} - B_{i K_n\Delta_n}$. Let $\delta_n \to 0$ and put $\mathcal{A}_n = \{\sup_{0 \leq t \leq 1}|\widetilde{\sigma}_t^4 - \sigma_t^4| \leq \delta_n^2 \}$. Then it follows similarly as for \eqref{lastref} that $\P\bigl(\mathcal{A}_n^{\complement}\bigr) \to 0$. Since $\mathcal{A}_n \in \F$, we thus obtain
\begin{align}\label{eq_thm_bootstrap_3}
\P\bigl(\E\bigl[|V^{\dagger} - \hat{V}_n^{\dagger}| \bigl|\F\bigr] \geq \delta_n \bigr) \leq \P\bigl(\E\bigl[|V^{\dagger} - \hat{V}_n^{\dagger}|\1(\mathcal{A}_n)\bigl|\F\bigr] \geq \delta_n \bigr) + \KLEINO(1).
\end{align}
Moreover, by \eqref{eq_thm_bootstrap_representation_1}, the triangle, Markov, Burkholder and Jensen inequality, we obtain with generic constant $K$:
\begin{align}\label{eq_thm_bootstrap_4} \nonumber
\P\bigl(\E\bigl[|V^{\dagger} - \hat{V}_n^{\dagger}|\1(\mathcal{A}_n)\bigl|\F\bigr] \geq \delta_n \bigr) &\leq \P\Bigl(\E\Bigl[\sup_{0 \leq t \leq 1}\Bigl|\int_0^t(\widetilde{\sigma}_s^4 -{\sigma}_s^4 )dB_s \Bigr|\1(\mathcal{A}_n)\bigl|\F\Bigr] \geq \delta_n/2 \Bigr) \\& \hspace*{-.9cm}\nonumber \le K\, \delta_n^{-1} \E\Bigl[\sup_{0 \leq t \leq 1}\Bigl|\int_0^t(\widetilde{\sigma}_s^4 -{\sigma}_s^4 )\1(|\widetilde{\sigma}_s^4 -{\sigma}_s^4 |\leq \delta_n^2)dB_s\Bigr|\Bigr]\\& \hspace*{-.9cm}\le K\, \delta_n^{-1} \biggl(\int_0^1 \E\bigl[(\widetilde{\sigma}_s^4 -{\sigma}_s^4 )^2\1(|\widetilde{\sigma}_s^4 -{\sigma}_s^4 |\leq \delta_n^2)\bigr] ds \biggr)^{1/2} \leq \delta_n\,.
\end{align}
Combining \eqref{eq_thm_bootstrap_3} and \eqref{eq_thm_bootstrap_4}, the claim follows.
\end{proof}

\begin{lem}\label{lem:global:statistic:is:cont}
Let $(B_s)$ be a standard Brownian motion independent of $\F$. Grant Assumption \ref{assvolaglobal}. The distribution of $V^{\dagger}$ conditional on $\F$ is uniformly continuous on compact sets $\mathcal{K}$ with $0 \not \in \mathcal{K}$, i.e; for any $\varepsilon > 0$ there exists a $\delta > 0$ (depending on $\mathcal{K}$) such that
\begin{align}
\sup_{x \in \mathcal{K}}\sup_{|y|\leq\delta}\bigl|\P\bigl(V^{\dagger} \leq x \bigl| \F \bigr) - \P\bigl(V^{\dagger} \leq x + y \bigl| \F \bigr) \bigr| \leq \varepsilon.
\end{align}
\end{lem}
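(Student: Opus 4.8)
\textbf{Proof plan for Lemma \ref{lem:global:statistic:is:cont}.}
The guiding idea is that, conditionally on $\F$, the variable $V^{\dagger}$ is the supremum of the absolute value of a centered Gaussian process whose covariance structure is sandwiched between deterministic quantities, so that the required modulus of continuity can be extracted from the general theory of extrema of Gaussian processes. Writing $Y_t = \int_0^t \sigma_s^4\,dB_s - t\int_0^1 \sigma_s^4\,dB_s$ we have $V^{\dagger} = \sup_{t\in[0,1]}|Y_t|$, and since $B$ is independent of $\F$ while $\sigma$ is $\F$-measurable, $(Y_t)_{t\in[0,1]}$ is, conditionally on $\F$, a centered continuous Gaussian process with $\F$-measurable covariance kernel. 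By the usual localization argument already used in the proof of Theorem \ref{upperglobal} we may assume $\sigma_-^8 \le \sigma_t^8 \le K^4$ for deterministic constants, whence for every $t$
\begin{align*}
\sigma_-^8\, t(1-t) \;\le\; \E\bigl[Y_t^2 \bigm| \F\bigr] \;=\; (1-t)^2\!\int_0^t\!\sigma_s^8\,ds + t^2\!\int_t^1\!\sigma_s^8\,ds \;\le\; K^4\, t(1-t)\,,
\end{align*}
and an analogous two-sided bound holds for the conditional incremental variances $\E[(Y_t-Y_{t'})^2\mid\F]$. This reduces the statement to a uniform continuity claim for a family of Gaussian suprema whose variance and metric-entropy parameters are controlled purely by $\sigma_-$ and $K$.

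For such processes, the conditional distribution function $F_\omega(x) := \P(V^{\dagger}\le x\mid\F)(\omega)$ is, for $\F$-a.e.\ $\omega$, absolutely continuous on $(0,\infty)$: this is the classical theorem on absolute continuity of the norm of a Gaussian element of $C[0,1]$ away from the left endpoint of its support (Tsirelson/Ylvisaker, cf.\ the Cameron--Martin shift argument), the left endpoint being $0$ with no atom there because $\P(Y\equiv 0\mid\F)=0$ on the non-degeneracy event. I would then upgrade this qualitative fact to a \emph{uniform} one. Fixing a compact $\mathcal{K}\subset(0,\infty)$ with $\mathcal{K}\subset[a,\infty)$, $a>0$, I would first use the upper variance bound and a maximal inequality to choose a deterministic $\eta>0$ so that $\sup_{t\in[0,\eta]\cup[1-\eta,1]}|Y_t|<a/2$ with conditional probability arbitrarily close to $1$ (uniformly in $\omega$), which lets one replace $V^{\dagger}$ by $\sup_{t\in[\eta,1-\eta]}|Y_t|$ for the purpose of evaluating $F_\omega$ on $\mathcal{K}$; on $[\eta,1-\eta]$ the variances are bounded below by a deterministic constant. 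Discretizing $[\eta,1-\eta]$ into $N$ points, a Chernozhukov--Chetverikov--Kato-type anti-concentration inequality for the maximum of the resulting Gaussian vector (its dimension entering only logarithmically, its smallest variance bounded below in terms of $\sigma_-$ and $\eta$), together with the Borell--Tsirelson--Ibragimov--Sudakov concentration inequality and a chaining bound on the discretization error (governed again by $\sigma_-,K$), yields a bound $\sup_{x\in\mathcal{K}}\P(x\le V^{\dagger}\le x+\delta\mid\F)(\omega)\le C(\mathcal{K})\,\delta$ with a constant $C(\mathcal{K})$ not depending on $\omega$. Since the symmetric bound in $y<0$ is identical, this gives
\begin{align*}
\sup_{x\in\mathcal{K}}\sup_{|y|\le\delta}\bigl|F_\omega(x)-F_\omega(x+y)\bigr| \;\le\; C(\mathcal{K})\,\delta \;\le\; \varepsilon
\end{align*}
as soon as $\delta\le\varepsilon/C(\mathcal{K})$ and $\delta$ is small enough that $\{x+y:x\in\mathcal{K},|y|\le\delta\}$ stays in a fixed compact subset of $(0,\infty)$, and the resulting $\delta$ is deterministic, as required.

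The conditioning and localization reduction and the pointwise-in-$\omega$ continuity of $F_\omega$ are routine. The main obstacle is making the modulus of continuity uniform in $\omega$: mere non-atomicity of the Gaussian supremum only gives continuity of $F_\omega$ for each fixed $\omega$, whereas a $\delta$ valid simultaneously for almost all $\omega$ forces one to quantify the anti-concentration of $\sup_t|Y_t|$, which is where the deterministic two-sided control of the conditional covariance and the restriction away from the endpoints $t\in\{0,1\}$ do the essential work.
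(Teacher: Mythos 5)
Your proposal is correct and follows essentially the same route as the paper's proof: restrict away from the endpoints $t\in\{0,1\}$ where the conditional variance of the bridge degenerates, discretize the interior, apply the Chernozhukov--Chetverikov--Kato-type anti-concentration bound (the paper cites Lemma 2.1 of \cite{kato}) with the deterministic two-sided variance control coming from $\sigma_-^2$ and the boundedness of $\sigma$, and absorb the discretization and edge contributions via maximal/moment inequalities before choosing the parameters in the same order. The only cosmetic difference is that you invoke Borell--TIS concentration and chaining for the discretization error where the paper uses a Burkholder $p$-th moment bound with a union bound, and your claimed linear modulus $C(\mathcal{K})\delta$ is slightly optimistic (the Kato bound gives $\delta\sqrt{1\vee\log(d/\delta)}$ plus polynomial remainders), but this does not affect the conclusion.
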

\begin{proof}[Proof of Lemma \ref{lem:global:statistic:is:cont}]
Denote with
\begin{align*}
w_t = \int_{0}^{t} \sigma_s^4\, d B_s - t \int_0^{1} \sigma^4_s \,d B_s,  \quad t \in [0,1],
\end{align*}
and for $r > 0$ we partition $[0,1] = [0,r] \cup (r,1-r] \cup (1-r,1]$, and $(r,1-r) = \cup_{i = 0}^{d-1} (t_i, t_{i+1}]$, where $t_0 = r, t_d = 1-r$, and for some $c>0$: $|t_{i+1} - t_i| \le c d^{-1}$. Since $\inf_{0 \leq  t \leq 1}\sigma_t^2 \geq C > 0$, we have
\begin{align*}
\varsigma_{r}^- = \min_{r \leq t_i \leq 1 - r}\E\bigl[w_{t_i}^2\bigl|\F\bigr] &= \min_{r \leq t_i \leq 1 - r}\biggl((1 - t_i)^2\int_0^{t_i} \sigma_s^8\, ds + t_i^2 \int_{t_i}^{1} \sigma_s^8 \,ds\biggr)\\&\geq \min_{r \leq t_i \leq 1- r}(1 - t_i)t_i C^4 = (1-r)r C^4 > 0.
\end{align*}
Similarly, since $\sup_{0 \leq t\leq 1} \sigma_t^2 \leq K$, we have $\max_{r \leq t_i \leq 1- r}\E[w_{t_i}^2|\F] \leq \varsigma_{r}^+ < \infty$. Next, the uniform bound and Burkholder's inequality yield with generic constant $K$ on Assumption \ref{assvolaglobal}
\begin{align}\label{eq:lem:global:statistic:is:cont:2}
\P\bigl(\sup_{r \leq t \leq 1-r}\min_{1 \leq i \leq d}|w_t - w_{t_i}| \geq x \bigl| \F \bigr) &\leq \P\bigl(\max_{1 \leq i \leq d-1}\sup_{t_{i} \leq t \leq t_{i+1}}|w_t - w_{t_i}| \geq x \bigl| \F \bigr) \\& \nonumber \le K\,x^{-p} \sum_{i = 1}^{d-1} \E\biggl[ \biggl(\int_{t_{i}}^{t_{i+1}}\sigma_s^8\,ds \biggr)^{p/2} \biggl| \F\biggr] \\& \nonumber\le K\,x^{-p} \E\bigl[\sup_{0 \leq t \leq 1}|\sigma_t^4|^p \bigl| \F \bigr] \sum_{i = 1}^d (t_{i+1} - t_i)^{p/2} \le K\, x^{-p} d^{-p/2 + 1}.
\end{align}
In a similar manner, we also derive that
\begin{align}\label{eq:lem:global:statistic:is:cont:3}
\P\bigl(\sup_{0 \le t \leq r}|w_t|, \sup_{1-r \le t \leq 1}|w_t| \geq x \bigl| \F \bigr) \le K\, x^{-p} r^{p/2}.
\end{align}
Using \eqref{eq:lem:global:statistic:is:cont:2} and \eqref{eq:lem:global:statistic:is:cont:3}, we obtain the lower bound
\begin{align}\label{eq:lem:global:statistic:is:cont:4}
\P\bigl(V^{\dagger} \leq x \bigl| \F \bigr) \geq \P\bigl(\max_{1 \leq i \leq d} \pm w_i \leq x - y \bigl| \F \bigr) - y^{-p} d^{-p/2 + 1} - x^{-p}r^{p/2},
\end{align}
%\max \max = \max
and the upper bound
\begin{align}\label{eq:lem:global:statistic:is:cont:5}
\P\bigl(V^{\dagger} \leq x \bigl| \F \bigr) \leq \P\bigl(\max_{1 \leq i \leq d} \pm w_{t_i} \leq x \bigl| \F \bigr).
\end{align}
Combining both \eqref{eq:lem:global:statistic:is:cont:4} and \eqref{eq:lem:global:statistic:is:cont:5} in turn yields
\begin{align}\label{eq:lem:global:statistic:is:cont:6}\nonumber
&\bigl|\P\bigl(V^{\dagger} \leq x + \delta \bigl| \F \bigr) - \P\bigl(V^{\dagger} \leq x \bigl| \F \bigr) \bigr| \\&\leq \bigl|\P\bigl(\max_{1 \leq i \leq d} \pm w_{t_i} \leq x + \delta \bigl| \F \bigr) - \P\bigl(\max_{1 \leq i \leq d} \pm w_i \leq x - \delta \bigl| \F \bigr) \bigr| + \delta^{-p} d^{-p/2 + 1} + x^{-p} r^{p/2}.
\end{align}
Since $0 < \varsigma_{r}^{\pm} < \infty$ for any $r > 0$, an application of Lemma 2.1 in \cite{kato} yields that
\begin{align}\label{eq:lem:global:statistic:is:cont:7}
\bigl|\P\bigl(\max_{1 \leq i \leq d} \pm w_{t_i} \leq x + \delta \bigl| \F \bigr) - \P\bigl(\max_{1 \leq i \leq d} \pm w_{t_i} \leq x \bigl| \F \bigr) \bigr|\leq C(\varsigma_{r}^{\pm}) \delta \sqrt{1 \vee \log(d/\delta)},
\end{align}
with $C(\varsigma_{r}^{\pm}) < \infty$ depending on $\varsigma_{r}^{\pm}$. Since $\mathcal{K}$ is compact and $0 \not \in \mathcal{K}$, we have $\sup_{x \in \mathcal{K}}x^{-p} \leq C_p(\mathcal{K}) < \infty$. For $p > 2$, select $d = \delta^{-q}$ with positive $q$ satisfying $2(p+q) < pq$. Then combining \eqref{eq:lem:global:statistic:is:cont:6} with \eqref{eq:lem:global:statistic:is:cont:7} yields
\begin{align*}
&\bigl|\P\bigl(V^{\dagger} \leq x + \delta \bigl| \F \bigr) - \P\bigl(V^{\dagger} \leq x \bigl| \F \bigr) \bigr| \le  2C(\varsigma_{r}^{\pm}) \delta (q\log(\delta) + 1) + C_p(\mathcal{K}) r^{p/2} + \delta^{-(p+q) + pq/2}.
\end{align*}
Given $\epsilon > 0$, we first select $r>0$ such that $C_p(\mathcal{K}) r^{p/2} \leq \epsilon/2$. This defines $C(\varsigma_{r}^{\pm})$. We may then select $\delta > 0$ such that $2C(\varsigma_{r}^{\pm}) \delta (q\log(\delta) + 1) + \delta^{-(p+q) + pq/2} \leq \epsilon/2$, hence the claim follows.
\end{proof}

We are now ready to proceed to the proof of Theorem \ref{thm:bootstrap:global}. Let $x > 0$. Due to Lemma \ref{lem:aux:globalboot:1} and \ref{lem:global:statistic:is:cont}, there exist $\delta_n,\epsilon_n \to 0$ such that
\begin{align*}
\P\bigl({V}^{\dagger} \leq x \big|\F\bigr) &\leq \P\bigl(\hat{V}_n^{\dagger} \leq x + |{V}^{\dagger} - \hat{V}_n^{\dagger}| \big|\F\bigr) \leq \P\bigl(\hat{V}_n^{\dagger} \leq x + \delta_n \big|\F\bigr) + \KLEINO_{\P}(1) \\&\leq \P\bigl(\hat{V}_n^{\dagger} \leq x \big|\F\bigr) + \epsilon_n+ \KLEINO_{\P}(1) \leq \P\bigl({V}^{\dagger} \leq x \big|\F\bigr) + 2\epsilon_n +\KLEINO_{\P}(1).
\end{align*}
By Lemma \ref{lem:global:statistic:is:cont}, $q_{\alpha}(V^{\dagger}|\F)$ is continuous for $\alpha > 0$. Hence we conclude from the above
that for any $\alpha > 0$
\begin{align}\label{eq:quantiles:converge}
\bigr|{q}_{\alpha}({V}^{\dagger}|\F) - \hat{q}_{\alpha}(\hat{V}_n^{\dagger}|\F) \bigr| = \KLEINO_{\P}(1).
\end{align}
Together with Theorem \ref{upperglobal} and \eqref{eq:quantiles:converge} this yields
\begin{align}
\P\bigl(\bar{V}_n^{\dagger} \leq \hat{q}_{\alpha}(\hat{V}_n^{\dagger}|\F) \bigr) \to \P\bigl(V^{\dagger} \leq  {q}_{\alpha}({V}^{\dagger}|\F) \bigr) = \alpha,
\end{align}
which completes the proof.\\[.2cm]

{\bf{Proof of Theorem \ref{lowerglobal}.}}
Denote by $(Z_i)_{1\le i\le n}$ a sequence of i.i.d.\,standard normally distributed random variables. Let $(U_i)_{1\le i\le n}$ be an i.i.d.\,sequence of random variables with bounded support and a symmetric distribution and such that $\E[U_i]=0$, $\E[U_i^2]=1$. We then consider the following special model for the volatility $\sigma_s$. 
\begin{align}\label{modellow}
\sigma_s = \bigl|1 + \vartheta_n U_{i}\bigr|, \quad s \in [(i-1)/n,i/n)\;,
\end{align}
and the associated observed process
\(
\big(\Delta_i^n X = \int_{(i-1)/n}^{i/n} \sigma_s d W_s = \bigl|1 +\vartheta_n U_{i}\bigr| Z_i \sqrt{\Delta_n}\big)_{1\le i\le n}
\).
The sequence of parameters $\vartheta_n$ is introduced here to determine the roughness by fluctuations of $\sigma_s$. We shall later see how it is directly related to $\vartheta_n$ in \eqref{rhosmooth}.

\begin{theo}\label{thm_lower_global_aux_theta}
Suppose we make observations $(\Delta_i^n X)_{1 \leq i \leq n}$ in model \eqref{modellow}. For parameter sequences $\vartheta_n, \vartheta_{n,0}, \vartheta_{n,1} \in [0,1)$, consider for $0<\td<1$ the null hypothesis $\mathcal{H}_0$ and alternative $\mathcal{H}_1$
\begin{itemize}
\item[$\mathcal{H}_0$:] $\vartheta_{n}= \vartheta_{n,0}$,~ for $1 \leq i \leq n$,
\item[$\mathcal{H}_1$:] $\vartheta_n = \vartheta_{n,1 }\neq \vartheta_{n,0}$,~ for $ \lfloor \td n\rfloor +1 \leq i \leq n$.
\end{itemize}
Then for $\tilde b_n = \KLEINO(n^{-1/2})$ and $|\vartheta_{n,0}^2 - \vartheta_{n,1}^2| \leq \tilde b_n$ we have
\begin{align*}
 \inf_{\psi} \gamma_{\psi}\bigl(\aalpha,\tilde b_n \bigr)  \to 1 \quad \text{as $n \to \infty$}. 
\end{align*}
with the notion of the global testing error in Equation \eqref{defn_minimax_optimal_test}.
\end{theo}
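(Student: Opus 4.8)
The plan is to establish that the two hypotheses are asymptotically indistinguishable in total variation: writing $\P_{\mathcal{H}_0}$, $\P_{\mathcal{H}_1}$ for the laws of the full sample $(\Delta_i^n X)_{1\le i\le n}$ under $\mathcal{H}_0$, $\mathcal{H}_1$, it suffices to prove $\|\P_{\mathcal{H}_1}-\P_{\mathcal{H}_0}\|_{TV}\to 0$. Indeed, by the standard two‑point reduction, for any test $\psi$ one has $\gamma_\psi(\aalpha,\tilde b_n)\ge\P_{\mathcal{H}_0}(\psi=1)+\P_{\mathcal{H}_1}(\psi=0)\ge 1-\|\P_{\mathcal{H}_1}-\P_{\mathcal{H}_0}\|_{TV}$, using that $\mathcal{H}_0$ is a legitimate null and $\mathcal{H}_1$ a legitimate alternative configuration in \eqref{modellow}. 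Since the observations are independent and the laws under $\mathcal{H}_0$, $\mathcal{H}_1$ coincide for $i\le\lfloor\td n\rfloor$, Pinsker's inequality together with the tensorization of the Kullback--Leibler divergence reduces the problem to a single‑coordinate estimate,
\[
\bigl\|\P_{\mathcal{H}_1}-\P_{\mathcal{H}_0}\bigr\|_{TV}^2\le\tfrac12\bigl(n-\lfloor\td n\rfloor\bigr)\,{\bf D}\bigl(P_{\vartheta_{n,1}}\big\| P_{\vartheta_{n,0}}\bigr),
\]
where $P_\vartheta$ denotes the marginal law of $|1+\vartheta U|Z$ with $U$ the common distribution of the $U_i$ and $Z$ standard normal. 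Everything thus comes down to the per‑coordinate bound ${\bf D}(P_{\vartheta_{n,1}}\|P_{\vartheta_{n,0}})=\mathcal{O}\bigl((\vartheta_{n,1}^2-\vartheta_{n,0}^2)^2\bigr)=\mathcal{O}(\tilde b_n^2)$, which combined with $\tilde b_n=\KLEINO(n^{-1/2})$ gives $\|\P_{\mathcal{H}_1}-\P_{\mathcal{H}_0}\|_{TV}^2=\mathcal{O}(n\tilde b_n^2)=\KLEINO(1)$.

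The decisive point, and the main obstacle, is that this per‑coordinate divergence must be \emph{quadratic} in $\vartheta_{n,1}^2-\vartheta_{n,0}^2$ and not merely linear; a crude coupling bound on the Hellinger distance only yields an order $|\vartheta_{n,1}^2-\vartheta_{n,0}^2|$ estimate, which is too weak. The quadratic order is forced by the symmetry of $U$: conditionally on $U_i$ the increment $\Delta_i^nX$ is centred Gaussian with variance $(1+\vartheta_n U_i)^2$, so $P_\vartheta$ is a scale mixture of normals; all odd moments of $U$ vanish, hence the even moments of $|1+\vartheta U|Z$ are polynomials in $\vartheta^2$, the density $f_\vartheta$ of $P_\vartheta$ depends on $t:=\vartheta^2$ alone, and — crucially — the $\vartheta$‑linear term in the Taylor expansion of $f_\vartheta$ around $\vartheta=0$ is annihilated by $\E[U]=0$. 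Concretely I would set $g_t:=f_{\sqrt t}$ and use ${\bf D}(P_{\vartheta_1}\|P_{\vartheta_0})\le\chi^2(g_{t_1}\|g_{t_0})=\int(g_{t_1}-g_{t_0})^2/g_{t_0}\,dy$ with $t_j=\vartheta_{n,j}^2$; writing $g_{t_1}(y)-g_{t_0}(y)=\int_{t_0}^{t_1}\partial_s g_s(y)\,ds$ and applying Cauchy--Schwarz in $s$ yields
\[
\chi^2(g_{t_1}\|g_{t_0})\le(t_1-t_0)^2\,\sup_{s,t}\int\frac{\bigl(\partial_s g_s(y)\bigr)^2}{g_t(y)}\,dy,
\]
the supremum over the (shrinking) relevant parameter range, so it remains to check this supremum is finite.

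For the finiteness one differentiates $g_s(y)=\E_U[\sigma^{-1}\phi(y/\sigma)]$, $\sigma=1+\sqrt s\,U$, under the expectation; the apparent $(2\sqrt s)^{-1}$ singularity coming from $\partial_s\sigma$ is cancelled because the $\sigma=1$ contribution is free of $U$ and $\E[U]=0$, leaving $\partial_s g_s(y)=\tfrac12 r(y)+\mathcal{O}(\sqrt s)$ with $r$ of the form $(\text{polynomial})\times\phi$, so uniformly $|\partial_s g_s(y)|\le C\,\mathrm{poly}(y)\,e^{-y^2/(2(1+c_0)^2)}$ where $c_0:=\sup_n\vartheta_n\cdot\|U\|_\infty$; on the other hand, since $\phi(y/\sigma)$ is increasing in $\sigma$, one has $g_t(y)\ge c\,e^{-y^2/(2(1-c_0)^2)}$. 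As $\vartheta_n\to 0$ in the regime relevant to Theorem \ref{lowerglobal} (so that $c_0$ is eventually arbitrarily small; it suffices that $c_0<3-2\sqrt2$), we get $\tfrac{1}{(1+c_0)^2}-\tfrac{1}{2(1-c_0)^2}>0$, the Gaussian factor in the numerator dominates, the integral converges and is bounded uniformly, and the per‑coordinate bound follows. This closes the chain of estimates and proves the theorem. Two routine remarks: one first invokes the usual localization so that the bounds on $U$ and on $\vartheta_n$ may be assumed global, and one may simply take the $U_i$ to be Rademacher, in which case $P_\vartheta$ is the equal mixture of $N(0,(1-\vartheta)^2)$ and $N(0,(1+\vartheta)^2)$ and all the above expansions become fully explicit.
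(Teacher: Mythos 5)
Your proposal is correct, and it reaches the conclusion by a genuinely different route than the paper. The paper first grants extra information ($\td$ known), reduces to the special case $\vartheta_{n,0}=0$ (invoking a terse ``translation argument'' for general $\vartheta_{n,0}$), discards the uninformative first half of the sample by sufficiency/ancillarity, and then controls the Kullback--Leibler divergence of the two product mixture laws by a direct Taylor expansion of the log-likelihood ratio, where the symmetry of $U$ kills the cubic term and yields $\mathbf{D}=\mathcal{O}(n\vartheta_{n,1}^4)$. You instead keep both $\vartheta_{n,0},\vartheta_{n,1}$ general, tensorize the KL divergence over the independent coordinates beyond $\lfloor\td n\rfloor$, and prove the per-coordinate bound $\mathbf{D}(P_{\vartheta_{n,1}}\|P_{\vartheta_{n,0}})=\mathcal{O}\bigl((\vartheta_{n,1}^2-\vartheta_{n,0}^2)^2\bigr)$ via the $\chi^2$-divergence, parametrizing the scale-mixture density by $t=\vartheta^2$ and showing $\partial_t g_t$ stays bounded near $t=0$ because $\E[U]=0$ cancels the $(2\sqrt t)^{-1}$ singularity; your Gaussian envelope/lower-bound comparison (convergent precisely when $c_0<3-2\sqrt 2$) then makes the Cauchy--Schwarz step legitimate. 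What your route buys: it treats $\vartheta_{n,0}\neq 0$ directly, so you do not need the paper's unexplained reduction to $\vartheta_{n,0}=0$, and the quadratic dependence on the difference of squares comes out structurally from the $t$-parametrization rather than from checking that odd terms in a long expansion vanish. What it costs: an explicit smallness restriction $\sup_n\vartheta_n\|U\|_\infty<3-2\sqrt2$, so strictly you prove the statement only in the regime $\vartheta_n\to 0$; this is, however, exactly the regime in which Theorem \ref{thm_lower_global_aux_theta} is applied in the proof of Theorem \ref{lowerglobal}, and the paper's own computation likewise presupposes $\vartheta_{n,1}\to 0$ (it drops the absolute value and Taylor-expands on that basis), so you are not covering less than what the paper actually establishes. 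One small remark: your appeal to full symmetry of $U$ (to argue $f_\vartheta$ depends on $\vartheta^2$ alone) is only motivational --- the operative ingredient in your bound is $\E[U]=0$, which is available; since $\vartheta_{n,0},\vartheta_{n,1}\geq 0$, defining $g_t=f_{\sqrt t}$ needs no evenness.
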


\begin{proof}[Proof of Theorem \ref{thm_lower_global_aux_theta}]
First, we may simplify the experiment slightly as long as we increase the provided information. Let us assume that it is known a priori that the time of change is $\td = 1/2$, basically to simplify the notation. Also, consider the special shifted case where we set $\vartheta_{n,0} = 0$, and hence only the change $\vartheta_{n,1}$ is of interest. The general case readily follows by a simple translation argument. Since $\vartheta_{n,0} = 0$, the sample $(\Delta_i^n X)_{1 \leq i \leq \lfloor n/2\rfloor}$ does not carry any information (ancillary statistic) about the parameter $\vartheta_{n,1}$, and it suffices to consider observations $(\Delta_i^n X)_{\lfloor n/2\rfloor +1 \leq i \leq n}$ by sufficiency. We shift indices such that we consider in the sequel the two statistical experiments
\begin{itemize}
\item[$\mathcal{E}_0$]: Observe $\{Z_j\}_{1 \leq j \leq \lfloor n/2\rfloor }$,
\item[$\mathcal{E}_1$]: Observe $\{Y_j\}_{1 \leq j \leq \lfloor n/2\rfloor}$ with $Y_j = |1 + \vartheta_{n,1} U_{j}| Z_j$.
\end{itemize}
We denote the associated (overall) probability measures with $\Q_0$ resp. $\Q_1$. Following the steps as in the proof of Theorem \ref{thm_lowerbound} we derive an analogous criterion to Equation \eqref{eq_likelohood_ratio_to_one}. Using Pinsker's inequality, we see that it suffices to establish
\begin{align}\label{eq_thm_lower_global_aux_theta_1}\nonumber
1 - \frac{1}{2}\bigl\|\Q_1 - \Q_0\bigr\|_{TV} &\geq 1 - \frac{1}{\sqrt{2}}\sqrt{\mathbf{D}\bigl(\Q_1\|\Q_0\bigr)} \\&= 1 - \KLEINO\bigl(1\bigr).
\end{align}
With a Lebesgue density $f_{U}$ of $U_1$, we can express the density $f_Y$ of $Y_1$ as
\[f_Y(y)=\int f_{Y|U_1=u_1}(y)f_U(u_1)\,du_1=\int \frac{1}{\sqrt{2\pi}\,|1+\vartheta_{n,1} u_1|}\exp{\Big(\frac{-y^2}{2(1+\vartheta_{n,1} u_1)^2}\Big)}f_U(u_1)\,du_1\,.\]
Considering $\vartheta_{n,1}\rightarrow 0$, we can drop the absolute value above. In the following we have to prove that above specified choices of $\tilde b_n$ and $\vartheta_{n,1}$ imply convergence of the Kullback-Leibler divergence to zero.
\begin{align*}&\E_{\Q_1}\Big[\log{\frac{d\Q_1}{d\Q_0}}\Big]=\\
&=\int\hspace*{-.1cm}\ldots\hspace*{-.1cm}\int\log\Bigg(\int \hspace*{-.1cm}\ldots \hspace*{-.1cm} \int \prod_{j=1}^{\lfloor n/2\rfloor}\exp{\Big(-\log(1+\vartheta_{n,1} u_j)-\frac{y_j^2}{2}\big((1+\vartheta_{n,1} u_j)^{-2}-1\big)\Big)}\\
&\hspace*{5cm} f_U({u_1})du_1\ldots f_U({u_{\lfloor n/2\rfloor}})du_{\lfloor n/2\rfloor}\Bigg)f_{Y}(y_1)dy_1\ldots f_{Y}(y_{\lfloor n/2\rfloor})dy_{\lfloor n/2\rfloor}\\
&=\int\hspace*{-.1cm}\ldots\hspace*{-.1cm}\int\log\Bigg(\int \hspace*{-.1cm}\ldots \hspace*{-.1cm} \int \prod_{j=1}^{\lfloor n/2\rfloor}\exp\Big(-\vartheta_{n,1} u_j+\frac{(\vartheta_{n,1} u_j)^2}{2}+\frac{y_j^2}{2}\big(2\vartheta_{n,1} u_j-3\vartheta_{n,1}^2u_j^2\big)\\
& \hspace*{1cm} +\mathcal{O}(\vartheta_{n,1}^3u_j^3+\vartheta_{n,1}^4u_j^4)\Big)\, f_U({u_1})du_1\ldots f_U({u_{\lfloor n/2\rfloor}})du_{\lfloor n/2\rfloor}\Bigg)f_{Y}(y_1)dy_1\ldots f_{Y}(y_{\lfloor n/2\rfloor})dy_{\lfloor n/2\rfloor}\\
&=\int \hspace*{-.1cm}\ldots\hspace*{-.1cm} \int\log\Bigg(\int \hspace*{-.1cm} \ldots \hspace*{-.1cm}\int \prod_{j=1}^{\lfloor n/2\rfloor}\exp{\Big((y_j^2-1)\vartheta_{n,1} u_j+\frac{(\vartheta_{n,1} u_j)^2}{2}\big(1-3y_j^2\big)+\mathcal{O}(\vartheta_{n,1}^3u_j^3+\vartheta_{n,1}^4u_j^4)\Big)}\\
&\hspace*{4.5cm}  f_U({u_1})du_1\ldots f_U({u_{\lfloor n/2\rfloor}})du_{\lfloor n/2\rfloor}\Bigg)f_{Y}(y_1)dy_1\ldots f_{Y}(y_{\lfloor n/2\rfloor})dy_{\lfloor n/2\rfloor}\\
&=\int\hspace*{-.1cm}\ldots\hspace*{-.1cm}\int\log\Bigg(\int \hspace*{-.1cm} \ldots \hspace*{-.1cm}\int \prod_{j=1}^{\lfloor n/2\rfloor}\Big(1+\vartheta_{n,1}u_j(y_j^2-1)+\frac{(\vartheta_{n,1} u_j)^2}{2}\big(2+y_j^4-5y_j^2\big)\\
&\hspace*{1cm}  +\mathcal{O}(\vartheta_{n,1}^3u_j^3+\vartheta_{n,1}^4u_j^4)\Big)f_U({u_1})du_1\ldots f_U({u_{\lfloor n/2\rfloor}})du_{\lfloor n/2\rfloor}\Bigg)f_{Y}(y_1)dy_1\ldots f_{Y}(y_{\lfloor n/2\rfloor})dy_{\lfloor n/2\rfloor}\\
&=\int \hspace*{-.1cm}\ldots\hspace*{-.1cm} \int\sum_{j=1}^{\lfloor n/2\rfloor}\Big(\frac{(\vartheta_{n,1})^2}{2}\big(2+y_j^4-5y_j^2\big)+\mathcal{O}(\vartheta_{n,1}^4)\Big) f_{Y_1}(y_1)dy_1\ldots f_{Y_n}(y_{\lfloor n/2\rfloor})dy_{\lfloor n/2\rfloor}\\
&=\mathcal{O}(n\,\vartheta_{n,1}^4)\,.
\end{align*}
The third order term vanishes by symmetry. Hence, if $\vartheta_{n,1}^4 = \KLEINO(n^{-1})$, then \eqref{eq_thm_lower_global_aux_theta_1} holds, which completes the proof.
\end{proof}
\noindent
Theorem \ref{lowerglobal} is deduced as a corollary of Theorem \ref{thm_lower_global_aux_theta}. 
Providing the experimenter additional information can only decrease the lower boundary on minimax distinguishability. We assume that it is known a priori that the time of change $\td = 1/2$, which puts us into the framework of Theorem \ref{thm_lower_global_aux_theta}. Set $\vartheta_{n,0} =  n^{-\aalpha}$ and $\vartheta_{n,1} = b_n^{1/2} n^{-\aalpha'}$ with $\aalpha \ge \aalpha'$. Since $\aalpha' < \aalpha$ reflects a larger fluctuation in our setup, we are only interested in the case where $\vartheta_{n,1} \geq \vartheta_{n,0}$ in our testing problem. 
If $\vartheta_{n,1} \geq \vartheta_{n,0}$, then 
$|\vartheta_{n,0}^2 - \vartheta_{n,1}^2|= \vartheta_{n,1}^2 - \vartheta_{n,0}^2  = \KLEINO\big(n^{-1/2}\big)$ leads to
\begin{align}
b_nn^{-2\aalpha'} - n^{-2\aalpha}=\KLEINO\big(n^{-1/2}\big)\,,
\end{align}
which in turns yields the condition
\begin{align}
b_n =\KLEINO\big(n^{-1/2+2\aalpha'}+n^{-2(\aalpha-\aalpha')}\big)\,.
\end{align}
The claim then follows from Theorem \ref{thm_lower_global_aux_theta} for $b_n=n^{-2(\aalpha-\aalpha')} \vee n^{-1/2 +2\aalpha'}$.\\[.2cm]

{\bf{Proof of Proposition \ref{prop:optimal:test:global}.}} We only show the claim for $\psi_{\alpha}^{\dagger}$, as optimality for a test based on Proposition \ref{corrupperglobal} follows in the same manner. We first show that 
\begin{align}\label{eq:prop:optimal:test:global:1}
{V}_n^{\dagger}\xrightarrow{\P} \infty 
\end{align}
under the alternative of Testing problem \ref{testingproblem} for any $b_n'$ satisfying \eqref{eq:prop:optimal:test:global:b_n}. Proposition \ref{propupperglobal}, Lemma \ref{propR} and the triangle inequality give
\begin{align}
{V}_n^{\dagger} \geq n^{-1/2}\Bigl|(1-\td)\sum_{i = 1}^{n \td}\E\bigl[(\Delta_i^n\varrho)^2\bigr] - \td \sum_{i = n\td + 1}^{n}\E\bigl[(\Delta_i^n\varrho)^2\bigr]\Bigr| - \mathcal{O}_{\P}(1).
\end{align} 
For large enough $n$, this is further bounded from below by
\begin{align}
{V}_n^{\dagger} \ge (1- \td)\td \bigl(  b_n' n^{1/2-2\aalpha'} - n^{1/2 -2\aalpha}\bigr) - \mathcal{O}_{\P}(1).
\end{align}
For $\td \in (0,1)$, it is now easy to see that \eqref{eq:prop:optimal:test:global:b_n} implies
\begin{align}
(1- \td) \td\bigl(  b_n' n^{1/2-2\aalpha'} - n^{1/2 -2\aalpha}\bigr) \to \infty, \quad \text{as $n \to \infty$,}
\end{align}
hence \eqref{eq:prop:optimal:test:global:1} follows. Next, we establish optimality of the test $\psi^{\dagger}_{\alpha}$. By Theorem \ref{thm:bootstrap:global} it is an asymptotic level $\alpha$-test. It is left to show consistency when \eqref{eq:prop:optimal:test:global:b_n} is valid.  From \eqref{eq:quantiles:converge} and $V^{\dagger} = \mathcal{O}_{\P}(1)$, we conclude that 
\begin{align}
\hat{q}_{1-\alpha}(\hat{V}_n^{\dagger}|\F) = \mathcal{O}_{\P}(1) = \KLEINO_{\P}\bigl((1- \td) \td\bigl(  b_n' n^{1/2-2\aalpha'} - n^{1/2 -2\aalpha}\bigr)\bigr)
\end{align}
for any fixed, finite $0 < \alpha < 1$. Thereby, $\P(V_n^{\dagger}>\hat q_{1-\alpha}(\hat V_n^{\dagger}|\mathcal{F}))\rightarrow 1$, which completes the proof. 
\bibliographystyle{chicago}%{imsart-number}
\bibliography{literatur}

\end{document}